\documentclass[11pt]{article}
\usepackage{latexsym,amssymb,amsmath,amsfonts,amsthm,enumitem}
\usepackage[usenames,dvipsnames]{color}
\definecolor{purple}{rgb}{0.9,0,0.8}

\setlength{\textwidth}{6.5in}
\setlength{\textheight}{9in}
\setlength{\topmargin}{-.5in}
\setlength{\oddsidemargin}{.0in}
\setlength{\evensidemargin}{.0in}

\newtheorem{theorem}{Theorem}
\newtheorem{lemma}{Lemma}

\newtheorem{cor}{Corollary}
\newtheorem{definition}{Definition}

\def\1{\mathbf{1}}
\def\Z{\mathbb{Z}}
\def\da{\downarrow}
\def\ua{\uparrow}
\def\lng{\langle}
\def\rng{\rangle}
\def\beq{ \begin{equation} }
\def\eeq{ \end{equation} }
\def\mn{\medskip\noindent}

\def\ep{\epsilon}
\def\eps{\epsilon}
\def\dlt{\delta}
\def\tld{\tilde}

\def\square{\vcenter{\vbox{\hrule height .4pt
  \hbox{\vrule width .4pt height 5pt \kern 5pt
        \vrule width .4pt} \hrule height .4pt}}}

\def\RR{\mathbb{R}}
\def\R{\mathbb{R}}

\def\ZZ{\mathbb{Z}}
\def\diag{\text{Diag}}
\def\I{\mathcal{I}}
\def\F{\mathcal{F}}

\def\J{\mathcal{J}}
\def\K{\mathcal{K}}
\def\V{\mathcal{V}}

\def\sqz{\kern -0.2em}
\def\var{\hbox{Var}\,}

\def\clearp{}
\def\ctl{\mathtt{ctrl}}
\def\sgn{\text{sgn}}
\def\geo{\text{geometric}}

\usepackage{graphicx}
\graphicspath{%
    {converted_graphics/}
    {/}
}
\begin{document}

\title{Diffusion limit for the partner model at the critical value}
 \author{Anirban Basak, Rick Durrett, and Eric Foxall}

\date{\today}

\maketitle

\begin{abstract} The partner model is an SIS epidemic in a population with random formation and dissolution of partnerships, and with disease transmission only occuring within partnerships. Foxall, Edwards, and van den Driessche \cite{FED} found the critical value and studied the subcritical and supercritical regimes. Recently Foxall \cite{Fox} has shown that (if there are enough initial infecteds $I_0$) the extinction time in the critical model is of order $\sqrt{N}$. Here we improve that result by proving the convergence of $i_N(t)=I(\sqrt{N}t)/\sqrt{N}$ to a limiting diffusion. We do this by showing that within a short time, this four dimensional process collapses to two dimensions: the number of $SI$ and $II$ partnerships are constant multiples of the the number of infected singles. The other variable, the total number of singles, fluctuates around its equilibrium like an Ornstein-Uhlenbeck process of magnitude $\sqrt{N}$ on the original time scale and averages out of the limit theorem for $i_N(t)$. As a by-product of our proof we show that if $\tau_N$ is the extinction time of $i_N(t)$ (on the $\sqrt{N}$ time scale) then $\tau_N$ has a limit.
\end{abstract}

\section{Introduction}

In the partner model each of $N$ individuals can be susceptible or infected and in a partnership or not. So the system is described
by the five quantities $S_t$ and $I_t$, the number of single susceptible and infected individuals, and $SS_t$, $SI_t$, and $II_t$, the number of partnered pairs of the three possible combinations, at time $t$.
Infected individuals become healthy (and susceptible to re-infection) at rate 1. A susceptible individual with an infected partner
becomes infected at rate $\lambda$. Partnerships dissolve at rate $r_-$. Each pair of single individuals forms a partnership at rate $r_+/N$.

Foxall, Edwards, and van den Driessche \cite{FED} introduced this model and showed that despite the complexity of the model it is possible to find the critical value explicitly. To do this they used the continuous time Markov chain $X_t$ with state space $\{A,B,C,D,E,F,G\}$ and rates as shown in Figure \ref{fig:critval}. Thinking of a single infected individual in an otherwise susceptible population, we start in state $A$, and let $\tau$ be the first time $X_t$ enters $\{ D, E, F, G \}$. The basic reproduction number for the model is
\beq
R_0 = P_A(X_\tau = F) + 2 P_A(X_\tau = G)
\label{r0eq}
\eeq
which is the expected number of infected singles at time $\tau$. The critical value of $\lambda$ is
$$
\lambda_c = \sup\{ \lambda \ge 0 , R_0 \le 1\}
$$
with $\lambda_c < \infty$ if and only if $r_+ > 1 + 1/r_-$. There is an explicit formula for $\lambda_c$ but it is not very pretty since the
formulas for the hitting probabilities are somewhat complicated.

In \cite{FED} it was shown that

\begin{theorem} \label{FEDth}
If $R_0 < 1$ there are constants $T$, $C$ so that, from any initial configuration, with high probability the process dies out by time $T+ C\log N$. If $R_0>1$, then for any $\epsilon>0$ there are constants $T$, $C$, $\gamma$, such that from any initial configuration with at least $\ep N$ infected, with probability at least $1- e^{-\gamma N}$ the process survives for time $e^{\gamma N}$ and the frequencies of the five types $s_t=S_t/N, i_t=I_t/N,$ etc. are within $\ep$ of their equilibrium values $(s_*,i_*,ss_*,si_*,ii_*)$ when $T \le t \le e^{\gamma N}$.
\end{theorem}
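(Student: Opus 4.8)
The plan is to treat the subcritical and supercritical regimes separately, in each case using the fluid (ODE) limit to control the phase in which the infected fraction is macroscopic and then a second argument — Galton--Watson domination when $R_0<1$, a Lyapunov exit estimate when $R_0>1$ — to finish. Write $z^N_t = (S_t,I_t,SS_t,SI_t,II_t)/N$; the transition rates are density-dependent with uniformly bounded jumps, so Kurtz's theorem gives that $z^N$ stays uniformly close on $[0,T]$ to the solution $z(t)$ of the associated ODE $\dot z = F(z)$, and the boundedness and Lipschitz structure of $F$ upgrade this to the exponential form $P(\sup_{t\le T}|z^N_t-z(t)|>\eta)\le e^{-c(\eta,T)N}$. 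On the simplex $\Delta=\{(s,i,ss,si,ii)\ge 0: s+i+2ss+2si+2ii=1\}$ the disease-free point $p_0$ (with $i=si=ii=0$) is always a fixed point, its linearization on the infected compartments has leading eigenvalue with the same sign as $R_0-1$ (the next-generation-matrix identity), so $p_0$ is locally asymptotically stable exactly when $R_0<1$, while for $R_0>1$ there is in addition a locally stable endemic fixed point $p_*=(s_*,i_*,ss_*,si_*,ii_*)$ with $i_*>0$. The global ODE input I need is: when $R_0<1$, $p_0$ attracts all of $\Delta$ with a uniform time to enter any prescribed neighborhood, and when $R_0>1$, $p_*$ attracts $\{z\in\Delta:i\ge\epsilon\}$, again with a uniform entrance time; I would obtain these from an explicit Lyapunov function together with LaSalle's invariance principle.

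\emph{Subcritical case.} Pick $\delta'>0$ small (to be constrained below). By the ODE analysis the flow from anywhere in $\Delta$ has $i(t)\le\delta'/2$ for $t\ge T$, so by the fluid limit $I_t\le\delta' N$ by time $T$ with probability $\to 1$; restart there. As long as $I_t\le\delta' N$, an infected single sees an environment within $O(\delta')$ of the disease-free one, so $I_t$ is stochastically dominated from above by a Galton--Watson process in which each infected single runs an independent copy of $X_t$ from state $A$ until the hitting time $\tau$ and then spawns $0$, $1$, or $2$ new infected singles, with mean offspring $R_0+O(\delta')$. Choosing $\delta'$ small makes this mean $<1$; a subcritical Galton--Watson process started from at most $N$ ancestors dies out by continuous time $C\log N$ with probability $1-o(1)$ (its extinction time is the maximum of at most $N$ i.i.d.\ copies of the single-ancestor extinction time, which has an exponentially decaying tail), and its total progeny is at most $N\cdot O(1/(1-R_0))$, so a further shrinking of $\delta'$ keeps this $\le\delta N$ and the coupling valid throughout. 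Hence the process is extinct by time $T+C\log N$ with high probability.

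\emph{Supercritical case.} Fix $\epsilon>0$. By the ODE analysis every $z\in\Delta$ with $i\ge\epsilon$ enters the $\epsilon/2$-ball $B$ about $p_*$ by time $T$, and by the exponential fluid-limit estimate the particle system does so with probability $\ge 1-e^{-\gamma_1 N}$ from any configuration with at least $\epsilon N$ infected. Once in $B$ I would run a Lyapunov/large-deviation exit argument: with $V$ a strict Lyapunov function for $p_*$ on the $\epsilon$-ball, $\exp(\theta N V(z^N_t))$ is an approximate supermartingale away from a small core, which bounds by $e^{-2\gamma_2 N}$ the probability of leaving the $\epsilon$-ball within one unit of time starting from anywhere in $B$; a union bound over the $\lceil e^{\gamma N}\rceil$ unit windows (with $\gamma<\gamma_2$) shows that with probability $\ge 1-e^{-\gamma_2 N/2}$ the process stays within $\epsilon$ of $p_*$ for time $e^{\gamma N}$, where the frequencies are within $\epsilon$ of $(s_*,i_*,ss_*,si_*,ii_*)$ and $I_t\ge (i_*-\epsilon)N>0$, so it cannot have gone extinct. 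Combining the two estimates gives the claim for a suitable $\gamma$.

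The main obstacle is the global ODE analysis: producing a Lyapunov function (or a monotone-systems / uniform-persistence argument) that forces convergence to $p_0$ on all of $\Delta$ when $R_0<1$ and to $p_*$ on $\{i\ge\epsilon\}$ when $R_0>1$, with the uniform entrance times. Connecting $R_0$, which is defined through the auxiliary chain $X_t$, to the quantity that actually governs both the ODE linearization at $p_0$ and the Galton--Watson offspring mean is a second point requiring care, as is checking that the Galton--Watson coupling survives until extinction; the exponential fluid-limit and exit-time estimates are standard once the bounded, Lipschitz, nondegenerate structure of the model is verified.
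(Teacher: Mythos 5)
This statement is not proved in the paper you were given: it is Theorem 1.1--1.2 of Foxall, Edwards and van den Driessche \cite{FED}, quoted as background, so there is no internal proof to compare against. Judged on its own terms, your outline has a genuine gap: both halves of your argument are reductions to global statements about the mean-field ODE on the simplex that you never establish, and those statements are the real mathematical content of the theorem. Specifically, you assume (i) that for $R_0<1$ the disease-free point attracts all of $\Delta$ with a uniform entrance time, (ii) that for $R_0>1$ an endemic equilibrium $p_*$ exists, is locally asymptotically stable, and attracts the whole set of configurations with a macroscopic number of infecteds (note this set should be $\{i+si+2ii\ge\ep\}$, not $\{i\ge\ep\}$ with $i$ the fraction of infected \emph{singles}), and (iii) that the sign of the leading eigenvalue of the linearization at the disease-free point agrees with the sign of $R_0-1$, where $R_0$ is defined through the auxiliary chain of Figure 1. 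For a four-dimensional nonlinear system, ruling out other attractors or periodic orbits on $\{$infected fraction $\ge\ep\}$ and exhibiting the Lyapunov functions you invoke is precisely the hard part; saying it will follow ``from an explicit Lyapunov function together with LaSalle'' defers the crux rather than solving it. The surrounding probabilistic machinery you describe (Kurtz-type exponential fluid-limit bounds, Galton--Watson domination at low density, Freidlin--Wentzell-type exit bounds near a stable equilibrium) is standard and would indeed finish the proof \emph{given} those dynamical inputs, but as written the proposal is incomplete.

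Two further points. For the subcritical half you do not actually need the global ODE analysis: the singles process $Y_t$ is autonomous and equilibrates to $y_*N$ in $O(1)$ time (cf.\ the analysis of $y^N$ and $z^N$ in Sections 3--5 of the present paper), after which the drift of the linear combination $H=I+\gamma J+\eta K$ (with $(1,\gamma,\eta)$ a left eigenvector of the matrix $A$ of \eqref{Adef}, whose leading eigenvalue is strictly negative when $R_0<1$) is bounded above by $-cH$ plus controlled error terms at \emph{all} densities, giving exponential decay of $H$ from $O(N)$ to $O(1)$ in $O(\log N)$ time without any global Lyapunov function for the five-dimensional system; your GW domination (which additionally requires $y_t$ to stay near $y_*$ throughout the $O(\log N)$ window, and a concentration bound rather than just the expected total progeny) is then only needed, if at all, for the final endgame. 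For the supercritical half, by contrast, the global attraction to $p_*$ from an arbitrary configuration with $\ep N$ infecteds cannot be bypassed this way and must be argued in earnest, as is done in \cite{FED}; your sketch should not treat it as a routine verification.
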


\begin{figure}[t]
\begin{center}
\begin{picture}(280,140)
\put(30,100){$A=I$}
\put(140,100){$B=SI$}
\put(250,100){$C=II$}
\put(30,30){$D=S$}
\put(100,30){$E=S,S$}
\put(180,30){$F=S,I$}
\put(250,30){$G=I,I$}
\put(70,105){\vector(1,0){60}}
\put(85,90){$r_+y_*$}
\put(190,105){\vector(1,0){50}}
\put(210,115){$\lambda$}
\put(240,100){\vector(-1,0){50}}
\put(210,85){2}
\put(50,90){\vector(0,-1){40}}
\put(30,70){1}
\put(270,90){\vector(0,-1){40}}
\put(280,70){$r_-$}
\put(150,90){\vector(-3,-4){30}}
\put(115,60){1}
\put(170,90){\vector(3,-4){30}}
\put(205,60){$r_-$}
\end{picture}
\caption{Markov chain for computation of $R_0$}
\end{center}
\label{fig:critval}
\end{figure}
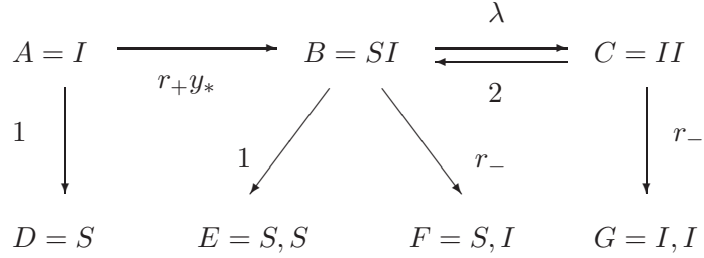

\noindent
To describe the equilibrium values we need some notation.
Let $Y_t = S_t+I_t$ be the number of single individuals and $y_t = Y_t/N$. $y_t$ approaches and remains close to a stationary value $y_*$
which is the unique equilibrium in $(0,1)$ for the ODE
$$
y' = r_-(1-y) - r_+ y^2
\label{eq:yeq}
$$
We will explain this result in more detail later, see \eqref{ystar}. The equilibrium frequency of singles, $y_*$,
is the solution of $r_-(1-y_*) = r_+ y^2_*$.

To find the number of single infecteds in equilibrium we let $i_t = I_t/N$ and note the three events that affect the number of infected singles are

\begin{itemize}
  \item $I \to S$ at rate $I_t = i_tN$,
  \item $I+I\to II$ at rate $(r_+/N)\binom{I_t}{2} \approx r_+ (i_t^2/2)N$, and
  \item $S + I \to SI$ at rate $(r_+/N)S_tI_t \approx r_+ i_t s_tN$.
\end{itemize}

\noindent
Fixing $i_t = i \in (0,y^*)$ we define probabilities
$$
p_S = \frac{1}{z} \qquad p_{II} = \frac{r_+ i}{2z} \qquad p_{SI} = \frac{r_+(y_*-i)}{z}
$$
where $z=1+r_+(y_*- i/2)$ is the sum of the numerators. Let
\begin{align*}
\Delta_S = -1 \qquad \Delta_{II} & = - 2 + P_C(X_\tau = F) + 2 P_C(X_\tau = G) \\
 \Delta_{SI} &= -1 + P_B(X_\tau = F) + 2 P_B(X_\tau=G)
\end{align*}
be the expected change in the number of single infecteds (at partnership breakup) due to the three events. Finally let
$$
\Delta(i) = p_S \Delta_S + p_{II}\Delta_{II} + p_{SI}\Delta_{SI}
$$
be the expected change in the number of infecteds per event. In equilibrium $\Delta(i_*)=0$. Having found $i_*$ and $s_* = y_* - i_*$,
it is routine to find $ii_*$, $si_*$, and $ss_*$; see Section 5 of \cite{FED} for more details. It's also worth noting that the condition $i_*=0$ is equivalent to $R_0=1$. \\

The analysis of the critical case was done in a second paper
by Foxall \cite{Fox}. The main result is

\begin{theorem} \label{Foxth}
Let $V_t$ be the number of infected vertices at time $t$. If $R_0=1$ then
\begin{itemize}
  \item there are $C,\gamma > 0$ so that from any initial configuration, with probability at least $ 1 -e^{-\gamma m}$, $V_{mC\sqrt{N}} = 0$, and
  \item if $V_0 \ge \sqrt{N}$ and $y_0 \ge y_* -(\log N)/\sqrt{N}$ there is $c>0$ so that $V_{c\sqrt{N}} \neq 0$
with probability at least $1 - e^{-c(\log N)^2}$.
\end{itemize}
\end{theorem}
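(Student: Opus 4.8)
The two assertions are upper and lower bounds on the extinction time of the number of infected vertices $V_t=I_t+SI_t+2\,II_t$, both on the $\sqrt N$ scale, and both follow from the fact that, after a short transient, $V_t$ behaves like a \emph{near-critical} branching process. Indeed, every infected vertex recovers at rate $1$, so $V_t$ jumps down by $1$ at rate $V_t$, while it jumps up by $1$ only when the susceptible in an $SI$ pair is infected, at rate $\lambda\,SI_t$. Using the quasi-equilibrium ratios $SI_t\approx c_1I_t$, $II_t\approx c_2I_t$ (so that $I_t\approx V_t/(1+c_1+2c_2)$), the net drift of $V_t$ is $\bigl(-1+\lambda c_1/(1+c_1+2c_2)\bigr)V_t$ up to corrections; the vanishing of this linear coefficient is precisely the criticality condition $R_0=1$ (equivalently $i_*=0$, equivalently $\Delta(0)=0$), so the dominant term is the next, \emph{quadratic} correction, of size $\asymp -V_t^2/N$, together with a quadratic variation accumulating at rate $\asymp V_t$. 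A process with this drift and this much noise, started at $V_0\asymp\sqrt N$, both dies out in time $\asymp\sqrt N$ and is unlikely to die much sooner.

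The first ingredient is a \textbf{collapse estimate}: from an arbitrary configuration, within time $O(\log N)$ the process is either already extinct or has entered a good set $\mathcal G$ on which $|SI_t-c_1I_t|+|II_t-c_2I_t|\le\eta\,I_t$ and $|Y_t-y_*N|\le\sqrt N\log N$. Given $I_t$, the fast pair $(SI_t,II_t)$ contracts toward $(c_1I_t,c_2I_t)$, which I would control with a Lyapunov function as in Section~5 of \cite{FED}; meanwhile $Y_t=S_t+I_t$ obeys to leading order the autonomous logistic equation $y'=r_-(1-y)-r_+y^2$ with globally attracting root $y_*$, so a drop of $Y_t$ by more than $\sqrt N\log N$ below $y_*N$ is a deviation of $\log N$ standard deviations for its Ornstein--Uhlenbeck-type fluctuations and costs $e^{-c(\log N)^2}$. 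On $\mathcal G$ the drift/diffusion description of $V_t$ above holds up to smaller-order errors.

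For the \textbf{first bullet} I would argue in two regimes on the good event. While $V_t\ge\epsilon N$, the quadratic negative drift forces $v_t=V_t/N$ to obey $v'\le-c'v^2$ up to controllable errors, so $V_t$ falls to $O(\sqrt N)$ within time $C_1\sqrt N$. Once $V_t\le K\sqrt N$, I would dominate $V_t$ from above (using that $V_t$ is, up to the averaging of the $Y_t$-fluctuations, critical or slightly subcritical) by a critical Feller branching process run at rate $\asymp V_t$; such a process started from $\le K\sqrt N$ reaches $0$ within time $C_2\sqrt N$ with probability at least some $p>0$ once $C_2$ is large. Hence from \emph{any} configuration the process is extinct by time $C\sqrt N$, $C=C_1+C_2$, with probability at least $p$; applying the Markov property over the blocks $[kC\sqrt N,(k{+}1)C\sqrt N]$, rerunning the collapse estimate at the start of each block and absorbing the $O(e^{-c(\log N)^2})$ failure probabilities, gives $P(V_{mC\sqrt N}\ne0)\le(1-p)^{m}\le e^{-\gamma m}$.

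For the \textbf{second bullet}, the hypotheses $V_0\ge\sqrt N$ and $y_0\ge y_*-(\log N)/\sqrt N$ put the process in $\mathcal G$ (the $\log N$ margin for $Y_0$ is built in) except on an event of probability $e^{-c(\log N)^2}$. On $\mathcal G$, over a window $[0,c\sqrt N]$ with $c$ a small absolute constant, the total negative drift removes at most $O(c\sqrt N)$ worth of infected mass while the martingale part of $V_t$ has quadratic variation $O(cN)$, so $V_t$ can fall below $\tfrac12\sqrt N$ only via an atypically large downward excursion. Bounding the probability of such an excursion by $e^{-c(\log N)^2}$ --- rather than by the mere constant that the birth--death caricature gives --- is where the $\log N$ margins in the hypotheses and a careful barrier argument (tracking a slowly varying coordinate like $\log V_t$, and using that an $II$ pair is ``sticky'' for an $\text{Exp}(r_-)$ time and therefore blocks extinction) are needed, and I expect this sharp lower bound near the absorbing boundary to be the \textbf{main obstacle}: when $V_t$ is small the collapse estimates and the ratios $c_1,c_2$ lose their meaning, so one must run both comparison chains simultaneously and uniformly down to $V_t=O(1)$ with errors that do not accumulate beyond $e^{-c(\log N)^2}$ over the $\asymp\sqrt N$ relevant steps. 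Sharpening these same estimates a little further is exactly what would yield the convergence of the rescaled extinction time $\tau_N$ promised in the abstract.
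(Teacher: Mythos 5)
First, a point of orientation: Theorem \ref{Foxth} is not proved in this paper at all --- it is quoted from Foxall \cite{Fox}, where its proof occupies essentially that entire article --- so there is no proof here to measure your argument against; the relevant comparison is with \cite{Fox}. On its own terms, your outline captures the right heuristics (criticality kills the linear drift and leaves a $-V^2/N$ correction with diffusivity of order $V$; collapse of $(I,SI,II)$ onto fixed ratios and of $Y$ onto $y_*N$), and the architecture you propose for the first bullet is the correct one: a per-block estimate, \emph{uniform over all initial configurations}, that extinction occurs within $C\sqrt N$ with probability at least $p>0$, iterated via the Markov property. Note that the $e^{-c(\log N)^2}$ ``good event'' failures must be absorbed into $p$ (lowering it), not added across blocks; otherwise the claimed bound $e^{-\gamma m}$ fails once $m$ is much larger than $e^{c(\log N)^2}$. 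The genuine gaps on this side are that the two technical pillars are asserted rather than proved: the uniform collapse estimate from an arbitrary configuration, and the claimed domination of $V_t$ by a critical branching process. The latter is delicate because when $Y_t>y_*N$ the effective reproduction number exceeds one, so no naive monotone coupling exists; one must first bring $Y_t$ and the ratios under quantitative control and only then run a branching comparison on a stopped version of the process. That is exactly where the bulk of the work lies both in \cite{Fox} and in the present paper (compare Lemmas \ref{maxz}, \ref{bdI}, \ref{ray} and the branching construction used to prove Lemma \ref{lem:Hdown}).

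For the second bullet you candidly leave the key estimate unproved (your ``main obstacle''), so the proposal does not prove that half; and in fact the bullet as transcribed here is stronger than the critical picture permits, so you cannot hope to close this gap in its literal form. Take initial data on the invariant ray with $x=1$, so that $V_0\asymp\sqrt N\ge\sqrt N$ and $y_0=y_*+O(N^{-1/2})$: these satisfy the hypotheses of Theorem \ref{finiv} and of the second bullet simultaneously, and Theorems \ref{finiv} and \ref{T0lim} then give $P(V_{c\sqrt N}=0)\to P(\tau_0(X)\le c)\ge e^{-C/c}>0$ (the lower bound by comparison with the driftless Feller diffusion), which eventually exceeds $e^{-c(\log N)^2}$. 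Your observation that the birth--death caricature yields only a constant failure probability from $V_0\asymp\sqrt N$ over a time window of length $c\sqrt N$ is exactly right, and no barrier argument at the absorbing boundary will convert that constant into $e^{-c(\log N)^2}$; the superpolynomial exponent in \cite{Fox} is tied to Gaussian-type control of the singles fluctuation $z^N$ and to the precise hypotheses and time window used there, so the statement should be checked against the original before attempting a proof in the quoted form.
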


\noindent
The goal of this paper is to obtain a more complete description of the process in the case $R_0=1$. In particular, for the rest of the paper we assume that $R_0=1$.\\

Theorem \ref{Foxth} shows that the extinction time $\tau_N$ is of order $N^{1/2}$ if $V_0 \ge \sqrt{N}$. By analogy with critical branching processes one might expect the time to be of order $N$ if $V_0 = N$ (same order as the initial values). To explain why $N^{1/2}$ is the right order of magnitude and to indicate what more precise result we would like to prove, we sketch the proof in the following simpler setting.

\mn
{\bf Example: Contact process on a complete graph with $N$ vertices.} Individuals die at rate 1, and give birth at rate $\beta$ to an offspring that is sent to a randomly chosen vertex, so the number of occupied vertices $X(t)$ is a Markov chain on $\{0, 1, \ldots N \}$ with transition rates
$$
q(k,k-1) = k \quad \hbox{and} \quad q(k,k+1) = \beta k (1 - k/N).
$$
The critical value for prolonged survival is $\beta_c=1$. 

\begin{theorem}\label{th:MFCP}
Let $x_N(t) = X(N^{1/2}t)/N^{1/2}$, with $\beta=1$. Then $x_N(t) \Rightarrow x_t$, the solution of
\beq
dx_t = - x_t^2 \, dt + \sqrt{2x_t} dB_t.
\label{MFCP}
\eeq
Let $\tau_0(x_N) = \inf\{t : x_N(t) = 0 \}$. If $x_N(0) \to \infty$, $\tau_0(x_N) \Rightarrow \tau_0(x)$, the hitting time of 0 for the diffusion process started at $\infty$.

\end{theorem}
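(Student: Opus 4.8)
The plan is to prove the process‑level convergence by a generator/martingale‑problem argument, and then to upgrade it to convergence of the extinction times by separately controlling $x_N$ near the two boundary points $0$ and $\infty$.

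\textbf{Diffusion approximation on compact intervals.} Write $x_N(t)=X(N^{1/2}t)/N^{1/2}$. When $x_N=x$, i.e.\ $X=k=N^{1/2}x$, the time change makes $x_N$ jump by $\pm N^{-1/2}$ at rates $N^{1/2}q(k,k\pm1)$, which for $\beta=1$ equal $Nx-N^{1/2}x^2$ (up) and $Nx$ (down). A second‑order Taylor expansion then gives, for smooth compactly supported $\phi$, a generator $\mathcal L_N\phi(x)=-x^2\phi'(x)+x\phi''(x)+O(N^{-1/2})$ uniformly on compact sets, and $-x^2\phi'+x\phi''$ is exactly the generator of the diffusion in \eqref{MFCP}. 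I would then prove tightness of $\{x_N\}$ (Aldous's criterion, or a moment bound on increments — note that although the jump rates are of order $N$, a jump has size $N^{-1/2}$, so the instantaneous quadratic variation is $\approx 2x_N$, of order one; a priori unboundedness of $x_N$ is handled by stopping at a large level, the stopping being removed using the bound $E[x_N(t)]\le 1/t$ from the next step). Every subsequential limit solves the martingale problem for $\mathcal L$; since the diffusion coefficient $\sqrt{2x}$ is only $\tfrac12$‑H\"older while the drift $-x^2$ is locally Lipschitz and dissipative (so there is no explosion), Yamada--Watanabe pathwise uniqueness applies and $0$ is absorbing, so the martingale problem is well posed and $x_N\Rightarrow x$ whenever $x_N(0)\to x_0\in[0,\infty)$.

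\textbf{Coming down from infinity.} Taking $f(k)=k$ in the generator of $X$ gives $\tfrac{d}{dt}E[X(t)]=-E[X(t)^2]/N\le -E[X(t)]^2/N$ by Jensen, hence $E[x_N(t)]\le x_N(0)/(1+x_N(0)t)\le 1/t$ for every $t>0$, uniformly in $N$ and in the starting configuration. Thus $x_N(t)$ is tight for each $t>0$ even when $x_N(0)\to\infty$. Using that both the chain (its graphical representation is monotone) and the limiting SDE (one‑dimensional comparison) are monotone in the initial condition, I would build the entrance law $x^{(\infty)}$ as the increasing limit as $M\to\infty$ of the diffusions started at level $M$ (a.s.\ finite at every $t>0$ since $E[x^{(\infty)}_t]\le 1/t$), and then sandwich $x_N$ started from $x_N(0)\to\infty$ between the copy started at a fixed level $M$ (from below) and the copy started from full occupancy (from above) to get $x_N\Rightarrow x^{(\infty)}$ on $[\epsilon,\infty)$ for every $\epsilon>0$. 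Because the drift $-x^2$ is strongly restoring, $x^{(\infty)}$ comes down to any fixed level in finite time and is then, like a Feller diffusion, absorbed at $0$ in finite time, so $\tau_0(x^{(\infty)})<\infty$ a.s.; and, as usual for a one‑dimensional diffusion absorbed at an accessible boundary, its law has no atoms.

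\textbf{Convergence of the extinction time.} The bound $\limsup_N P(\tau_0(x_N)\le t)\le P(\tau_0(x^{(\infty)})\le t)$ is immediate from $x_N\Rightarrow x^{(\infty)}$ since $\{w:\min_{[0,t]}w>0\}$ is open. The reverse inequality is the only real obstacle: weak convergence of paths does not by itself stop $x_N$ from lingering near $0$, so this must be ruled out directly. The point is that the birth rate $k(1-k/N)$ of $X$ never exceeds its death rate $k$, so by the strong Markov property $X$ restarted at a small value $m=N^{1/2}\zeta$ is dominated by a critical binary branching process $\bar X$ with $\bar X(0)=m$, for which $P(\bar X(N^{1/2}s)>0)=1-\bigl(N^{1/2}s/(1+N^{1/2}s)\bigr)^m\le Cm/(N^{1/2}s)=C\zeta/s$. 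Splitting at a time $t'<t$, $P(\tau_0(x_N)>t)\le P(x_N(t')\ge\zeta)+P(\tau_0(x_N)>t,\,0<x_N(t')<\zeta)$; the $\limsup_N$ of the first term is at most $P(x^{(\infty)}_{t'}\ge\zeta)$ by weak convergence (the event $\{w(t')\ge\zeta\}$ is closed), and the second term is at most $C\zeta/(t-t')$ by the branching estimate. Letting $N\to\infty$, then $\zeta\to0$, then $t'\uparrow t$ (using that $\tau_0(x^{(\infty)})$ has no atoms), we obtain $\limsup_N P(\tau_0(x_N)>t)\le P(\tau_0(x^{(\infty)})>t)$, which finishes the proof. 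I expect the coming‑down‑from‑infinity/entrance‑law construction and this ``no lingering near $0$'' estimate to be where essentially all the work lies; the diffusion approximation on compact intervals is comparatively routine.
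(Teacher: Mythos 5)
Your proposal is correct and follows essentially the same route as the paper: the Jensen-type mean bound $E[x_N(t)]\le 1/t$ to come down from infinity, convergence of drift $-x^2$ and diffusivity $2x$ (generator/martingale-problem form of Lemma \ref{lem:limproc}) for the process-level limit, and domination by a critical binary branching process to rule out lingering near $0$ and so transfer weak convergence to the extinction time. You simply fill in details the paper leaves implicit (tightness, Yamada--Watanabe uniqueness, the monotone entrance-law construction), but the argument is the same.
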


\begin{proof}
With $\beta=1$ we find
$$\frac{d}{dt}E[X_t] = -(E[X_t^2])/N$$
and using Jensen's inequality, $E[X_t]$ satisfies the differential inequality $y' \le -y^2/N$. Since $E[X_0] \le N$ this gives $E[X_t] \le N/(1+t)$, and using Markov's inequality $P(X_{\ep N^{1/2}} \le \ep^{-2} N^{1/2}) \ge 1-\ep$. Letting Let $x_N(t) = X(N^{1/2}t)/N^{1/2}$ this means that $x_N(\ep) \le \ep^{-2}$ with probability at least $1-\ep$. The drift of $x^N_t$ is
$$
N^{1/2} \, \frac{1}{N^{1/2}} \, [ - x_N(t) N^{1/2} + x_N(t) N^{1/2} (1-x_N(t)/N^{1/2}) ]
= -(x_N(t))^2
$$
while the diffusivity is
$$
N^{1/2} \, \frac{1}{N} \, [ x_N(t) N^{1/2} + x_N(t) N^{1/2} (1-x_N(t)/N^{1/2}) ]
\approx 2 x_N(t)
$$
(these terms are explained in Section \ref{sec:sampath}). The first result then follows from Lemma \ref{lem:limproc} in the next section. Then, as in the proof of Theorem \ref{T0lim} in Section \ref{subsec:extime}, to show $\tau_0(x_N) \Rightarrow \tau_0(x)$, it is enough to show that for each $\ep>0$ we can find $\dlt>0$, so that if $x_N(t) \le \dlt$ then $x_N(t+\ep)=0$ with probability at least $1-\ep$. This is easy to do once we note that $X_t$ is dominated by the critical branching process $\tilde X_t$ in which each particle splits in two, or dies, each at rate one. We know that $P(\tilde X_t=0 \mid \tilde X_0 = k) = (1-(1+t)^{-1})^k \ge 1-k/t$, which is at least $1-\ep$ if we let $t = k/\ep$. Letting $k=\delta N^{1/2}$, the result follows with $\delta=\ep^2$.\\
\end{proof}

In the above example, there are three main steps:
\begin{enumerate}[noitemsep,label={\roman*)}]
 \item Show $X_t$ comes down to $C_\ep \sqrt{N}$ within $\ep N^{1/2}$ time,
 \item Show that $x_N(t) = X_{N^{1/2}t}/N^{1/2}$ converges to a diffusion,
 \item Show that once $x_N(t)$ is small, it hits zero in a short time.
\end{enumerate}

\medskip

The corresponding result for the partner model follows the same three steps, but is more complicated because the process is four dimensional and there are two different time scales.\\

\noindent \textbf{Some notation.} To state our results and to avoid confusion between $SI$ and $S \cdot I$ etc we introduce alternative notations that we will use throughout the paper: $J=II$, $K=SI$ and $L=SS$. We refer to the stochastic process $(S,I,J,K,L)$ as the \emph{infection process}. We outline some further notational conventions below.\\

\noindent\textit{Asymptotic notation.} Let $a_N,b_N$ be sequences of real numbers.
\begin{itemize}[noitemsep]
\item $a_N = O(b_N)$ if $\limsup_{N\to\infty}|a_N/b_N| < \infty$.
\item $a_N = \Omega(b_N)$ if $\liminf_{N\to\infty}|a_N/b_N|>0$.
\item $a_N = o(b_N)$ if $\lim_{N\to\infty}|a_N/b_N| = 0$.
\item $a_N = \omega(b_N)$ if $\lim_{N\to\infty}|a_N/b_N| = \infty$.
\end{itemize}
Moreover, for efficiency of notation we will say that a certain property holds for $o(f(N)) \le t \le \omega(f(N))$ if there exist sequences $a_N,b_N$ with $a_N=o(f(N))$ and $b_N = \omega(f(N))$ such that the property holds for all $t \in [a_N,b_N]$.\\

\noindent \textit{Rescaling.} Throughout the paper, the placement of the time variable, as for example $I_t$ or $I(t)$, is chosen according to notational convenience and does not change the meaning. On the other hand, we will often want to rescale in either time or space, so we introduce the following notation for these purposes. When we need to distinguish the spatial scale, upper case is reserved for the originally defined variables $S,I,J,K,L$ and $Y=S+I$, and lower case denotes the following:
\begin{center}
$s_t^N = S_t/N$, $y_t^N = Y_t/N$, \\
\vspace{10pt}
$i_t^N = I_t/\sqrt{N}$, $j_t^N = J_t/\sqrt{N}$, and $k_t^N = K_t/\sqrt{N}$.
\end{center}
Note that $I,J,K$ are rescaled by $1/\sqrt{N}$, and not by $1/N$ as in Theorem \ref{FEDth}; as demonstrated by Theorem \ref{th:MFCP} this rescaling is more appropriate when $R_0=1$. To distinguish time scales we note that two scales will be relevant: the original time scale that we call fast, and the $\sqrt{N}$ time scale that we call slow, or long. We will use the superscript $^N$ for the fast time scale and the subscript $_N$ for the slow time scale. So, for example,
\begin{center}
$i_t^N = I_t/\sqrt{N}$ and $i_N(t)=I(\sqrt{N}t)/\sqrt{N}$.
\end{center}
This distinction appears in discussion as well as computation: for example, saying that $i_N$ reaches a certain value within $O(1)$ amount of time is the same as saying that $i^N$ reaches that value in $O(N^{1/2})$ time. 
For the sake of consistency, and to distinguish from limit processes, we will write $S^N, I^N$, etc. for the originally defined variables, on the original time scale. A few more processes will be introduced later, and they will follow the same notation for time scale; some will have upper and lower case versions, again to denote different spatial scales; the precise scaling is specified in each case.\\

\noindent \textit{Times.} We will use lower-case $s$ or $t$ for time variables, $T$ to denote a fixed (deterministic) time and $\tau$ for stopping times. For an $\R$-valued process $X$, we let
$$\tau^-_x(X,t) = \inf\{s \ge t \colon X_s \le x \} \quad \text{and} \quad \tau_y^+(X,t) = \inf\{s\ge t\colon X_s>y\},$$
and if $X$ is $\R_+$-valued we write $\tau_0(X,t)$ for $\tau_0^-(X,t)$. $\tau^-_x(X)$ denotes $\tau^-_x(X,0)$ and similarly in other cases. In one case we will need the following:
$$\tau_{x,y}(X,t) = \inf\{s \ge t \colon X_t \le x \ \text{or} \ X_t > y\}.$$
We will also define some labelled and unlabelled times such as $\tau,\tau^*,\tau_1,\tau_2,\dots$ when proving specific results, if they do not fit the above template, or to save on notation if they are written frequently.\\

We first describe the limit processes, followed by statements of the main results, and then we provide the workflow. Throughout the paper, if we say a statement holds with high probability (whp), then it has probability tending to $1$ as $N\to\infty$.

\bigskip

\noindent\textbf{Deterministic limits.} We show in Section \ref{sec:O1} that as $N\to \infty$, sample paths of $y_t^N$ and $(i_t^N,j_t^N,k_t^N)$ converge in distribution to deterministic limits $y_t$ and $(i_t,j_t,k_t)$. 
We find that $\lim_{t\to\infty}y_t = y_*$, the solution of $r_+y_*^2 = (1-y_*) r_-$, while $(i_t,j_t,k_t)$ converges as $t\to\infty$ to a point on the ray $(\alpha ,\beta , 1)\RR_+$
of fixed points for the linear system described in \eqref{ijk}, where $\alpha,\beta$ are given by \eqref{eq:invray}. 
This is reminiscent of (multiplicative) state space collapse in queueing networks where a vector
of queue lengths are all proportional to one of them. There are many results of this type.
For examples, see \cite{Harrison, Bramson, RJW, Stolyar, ShWi}.\\

\noindent\textbf{Diffusion limits.} We will show that the fluctuations $z_t^N = \sqrt{N}(y_t^N-y_*)$ are approximately an Ornstein-Uhlenbeck process
$$
dz = - \mu_z z dt + \sigma_z dB
$$
that evolves on the fast time scale, where $\mu_z,\sigma_z$ are some positive constants. On the other hand, the fluctuations of $(i^N,j^N,k^N)$ (once they are close to the ray) occur on the slow time scale. Since it stays close to a ray in phase space, $(i^N,j^N,k^N)$ is effectively one-dimensional, and we will show that $i_N(t)= I^N(\sqrt{N} t)/\sqrt{N}$ converges to the limit in \eqref{MFCP} but with different constants for the mean and variance. As in the previous result, the hitting time of $0$ converges. More precisely, we prove the following results, which are the main goal of this article. As pointed out earlier, $R_0=1$ is assumed throughout; for clarity, we recall this assumption in each of our main results.

\begin{theorem} \label{finiv}
Suppose that $R_0=1$, $|(i_N(0),j_N(0),k_N(0)) - (\alpha x,\beta x, x)| = O(N^{-\eps})$ for some $x,\eps>0$ and $|z_N(0)| = O(1)$ as
$N \to \infty$, where $\alpha$ and $\beta$ are defined in \eqref{eq:invray}. Then there are constants $\mu_X,\sigma^2_X>0$ such that for any fixed $T>0$, $i_N$ converges in distribution in $C[0,T]$ to the diffusion
\begin{align}
dX_t = -\mu_X X_t^2 dt + \sigma_X \sqrt{X_t} dB_t
\label{eq:lim-diff}
\end{align}
started from $X_0=\alpha x$. 
\end{theorem}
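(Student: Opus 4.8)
The plan is to collapse the four-dimensional infection process onto a single slowly-varying scalar and then identify the diffusion limit of that scalar. Recall from Section~\ref{sec:O1} that on the fast time scale the drift of $(i^N_t,j^N_t,k^N_t)$ is, to leading order, the linear system \eqref{ijk}; write $M$ for its matrix. The hypothesis $R_0=1$ is precisely the statement that $M$ has a simple zero eigenvalue, with right eigenvector $(\alpha,\beta,1)$ spanning the invariant ray and all other eigenvalues having strictly negative real part. Let $\pi=(\pi_i,\pi_j,\pi_k)$ be the corresponding left eigenvector, normalized so $\pi\cdot(\alpha,\beta,1)=1$, and set $w_N(t)=\pi_i\,i_N(t)+\pi_j\,j_N(t)+\pi_k\,k_N(t)$. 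Since $\pi M=0$, the functional $\pi\cdot(\cdot)$ is annihilated by the leading-order fast drift, so $w_N$ is the natural slow variable to track: the drift of $i_N$ itself contains a term $\sqrt N\,M_1\cdot(i_N,j_N,k_N)$, with $M_1$ the first row of $M$, which, although it vanishes exactly on the ray, is amplified by the time change and fails to converge once the $O(N^{-1/4})$ transverse fluctuations are accounted for, whereas this term is identically zero for $w_N$. I would then write the Dynkin decomposition $w_N(t)=w_N(0)+\int_0^t b_N(s)\,ds+M_N(t)$ with predictable bracket $\langle M_N\rangle_t=\int_0^t a_N(s)\,ds$, reading off $b_N,a_N$ by summing (rate)$\times$(jump of $\pi\cdot(i,j,k)$) and (rate)$\times$(jump)$^2$ over the transitions listed in the introduction, each weighted by the factor $\sqrt N$ from the time change.

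Next I would evaluate $b_N,a_N$ on the collapsed manifold, where $(i_N,j_N,k_N)=(i_N/\alpha)(\alpha,\beta,1)+o(1)$ and hence $w_N=i_N/\alpha+o(1)$. Only two transitions break the pure linear dynamics: $I+I\to II$, with rate $(r_+/N)\binom{I}{2}$, and $S+I\to SI$, whose rate $(r_+/N)S_tI_t=r_+y_*\,i^N\sqrt N+r_+(z^N-i^N)\,i^N$ splits into a fast linear part and a lower-order correction. Collecting all such corrections, projecting with $\pi$, and restricting to the ray gives $b_N(s)=-c_1\,w_N(s)^2+c_0\,z_N(s)\,i_N(s)+o(1)$ and $a_N(s)=c_2\,w_N(s)+o(1)$, for constants $c_1,c_2>0$ and some $c_0$; the diffusivity is linear in $w_N$ because $a_N$ is a quadratic form in the transition rates, all of which scale linearly along the ray. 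The only term not a function of $w_N$ alone is $\int_0^t c_0 z_N(s)i_N(s)\,ds=c_0 N^{-1/2}\int_0^{\sqrt N t}z^N_u i^N_u\,du$, which tends to $0$ in $L^1$: on the fast time scale $z^N$ relaxes to the mean-zero Ornstein--Uhlenbeck equilibrium described in the introduction, so the standard two-time-scale estimate — partition $[0,\sqrt N t]$ into blocks of length $\ell_N$ with $1\ll\ell_N\ll\sqrt N$, use the exponential mixing of $z^N$ to replace each block average of $z^N$ by its stationary mean $0$, bound the residual by a variance estimate, and use near-constancy of $i^N$ over a block — shows this integral is $O(N^{-1/4})$. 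Thus the fast fluctuations of the single count average out of the drift of $i_N$; since their amplitude is $O(1)$ rather than of order $N^{1/4}$, they also contribute nothing to the limiting quadratic variation.

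Putting this together, $w_N(t)=w_N(0)-c_1\int_0^t w_N(s)^2\,ds+M_N(t)+R_N(t)$ with $\sup_{t\le T}|R_N(t)|\to 0$ in probability and $\langle M_N\rangle_t=\int_0^t(c_2 w_N(s)+o(1))\,ds$. Combined with a uniform a priori bound keeping $i_N$ (equivalently $w_N$) from blowing up on $[0,T]$ — obtained as in Section~\ref{sec:sampath} from the nonpositive leading drift and the linear-in-$i_N$ diffusivity via a second-moment and Gronwall argument, which also yields tightness in $C[0,T]$ — this is a perturbed martingale problem of the type covered by Lemma~\ref{lem:limproc}, so $w_N\Rightarrow W$ in $C[0,T]$, where $dW=-c_1W^2\,dt+\sqrt{c_2}\sqrt W\,dB$ and $W_0=x$ (using $w_N(0)=\pi\cdot(i_N(0),j_N(0),k_N(0))\to\pi\cdot(\alpha x,\beta x,x)=x$). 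Finally, because the state space collapse gives $i_N(t)=\alpha\,w_N(t)+o(1)$ uniformly on $[0,T]$, the continuous mapping theorem yields $i_N\Rightarrow\alpha W=:X$, and $\alpha W$ solves \eqref{eq:lim-diff} with $\mu_X=c_1/\alpha$ and $\sigma_X^2=c_2\alpha$, started from $X_0=\alpha x$.

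The step I expect to be the real obstacle is maintaining the state space collapse \emph{uniformly in $t\in[0,T]$} while the averaging argument runs: the attraction to the invariant ray is exponential only on the fast time scale, and it is continually reinjected both by the $O(N^{-1/2})$ drift corrections and by transverse martingale noise, so one must show the transverse deviation stays $o(1)$ — in fact $O(N^{-1/4})$, the scale at which the fast restoring force balances the noise — throughout the interval, and one must separately handle the regime where $i_N$ is close to $0$, where only the absolute, not the relative, transverse deviation is controlled. Dovetailing this transverse control with the ergodic averaging of $z^N$, a second and coupled fast relaxation, is where most of the work lies; the computation of the drift and bracket and the appeal to the diffusion-approximation lemma are otherwise routine.
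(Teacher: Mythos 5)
Your scalar $w_N=\pi\cdot(i_N,j_N,k_N)$ is, up to a multiplicative normalization, exactly the paper's observable $h_N=(I^N+\gamma J^N+\eta K^N)(\sqrt N t)/\sqrt N$: the left null vector of $A$ is $(1,\gamma,\eta)$ from \eqref{leftev}, and the cancellation of the linear drift, the identification of the drift as $-c_1w_N^2+c_0z_Ni_N+o(1)$ and of the bracket as $c_2w_N+o(1)$ on the ray, the averaging of the $z_Ni_N$ term, the appeal to Lemma \ref{lem:limproc}, and the transfer back to $i_N$ through the collapse are all the same route the paper takes in Section \ref{subsec:weakconv}. The one genuinely different implementation is the averaging step: you propose block averaging plus exponential mixing of the autonomous chain $Y^N$, whereas the paper couples $z^N$ to a sign-symmetric discretized Ornstein--Uhlenbeck chain $\tld z^N$ and decomposes into excursions, freezing the Lipschitz observable at excursion starts so that the excursion integrals form a martingale (Lemmas \ref{lem:D}--\ref{lem:tldaveto0}). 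Your sketch can be made to work, but note that $i^N$ is correlated with $z^N$ within a block (the $S+I$ rate contains $z^N$), so ``replace the block average of $z^N$ by its stationary mean'' needs the same frozen-value/conditional-symmetry device the paper uses, together with a modulus-of-continuity estimate for $h_N$ over a block; this is not a triviality you can wave through.

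The genuine gaps are the two steps you yourself flag and then defer, because they are where the paper's proof actually lives. First, the a priori bound and the uniform-in-time collapse cannot be obtained by ``nonpositive leading drift plus Gronwall'': the negative drift in \eqref{fasthdrift} is $-c\,i_N^2$, not $-c\,h_N^2$, so it is useless until one proves a lower bound $I^N/H^N\ge\ep$ (Lemma \ref{bdHI}); moreover the unaveraged cross term $(\eta-1)r_+z_Ni_N$ is of size $\sqrt{\log N}\cdot i_N$ and not sign-definite, so the bound on $h_N$ (Lemma \ref{bdI}), the bound on $z_N$ (Lemma \ref{maxz}) and the persistence of the collapse (Lemma \ref{ray}, via the quadratic form $Q^N$ with the specific weights \eqref{eq:thetas} chosen so the cross terms cancel, controlled by the drift-barrier Lemma \ref{lem:driftbar}) must be run simultaneously up to interlocking stopping times and glued together as in Lemma \ref{notransient}; none of this is supplied by a second-moment argument. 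Second, the near-extinction regime: your drift and bracket identifications are only valid while the process is comparable to the ray and $H_N$ is large, and Lemma \ref{lem:limproc} requires the compensator and bracket conditions on all of $[0,T]$. The paper resolves this by stopping the controlled analysis at the floor $H_N\le N^{1/5}$ and then invoking a branching-process comparison (Lemma \ref{lem:Hdown}) to show that afterwards $H^N$ stays below $N^{0.24}$ and hits $0$ within $N^{1/4}$ fast time, so that $h_N$, $\mu(h_N)$ and $\sigma^2(h_N)$ are all $o(1)$ there and the convergence hypotheses hold trivially. Your proposal names this regime as a difficulty but offers no mechanism for it; without an argument of this type the verification of \eqref{eq:diff-bd-1}--\eqref{eq:diff-bd-2} past the first visit of $i_N$ to a neighborhood of $0$ is missing, and the proof is incomplete.
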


\vskip10pt

It is possible to compute the constants $\mu_X,\sigma_X$ from our proof; since the expressions are not particularly nice-looking and do not add much insight, we have omitted them. It is possible the assumptions on the rate of convergence of initial data may be relaxed; to do so one would require a more careful account of transient behavior. Since this paper is already lengthy, we have not pursued this extension.\\

Our next result builds on Theorem \ref{finiv} and allows the process to start from $\infty$. In the proof of Theorem \ref{infiv}, which is in Section \ref{sec:workflow}, we also point out why it makes sense to start the limiting diffusion \eqref{eq:lim-diff} from $\infty$.

\begin{theorem} \label{infiv}
Suppose that $R_0=1$, $i_N(0)+j_N(0)+k_N(0) \to \infty$ and $y_N(0) \to y_*$ as
$N \to \infty$. Then $i_N$ converges in distribution in $C[\ep,T]$ for any fixed $0<\ep<T<\infty$ to the diffusion \eqref{eq:lim-diff},
started from $X_0=\infty$.
\end{theorem}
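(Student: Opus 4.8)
\noindent The plan is to deduce Theorem~\ref{infiv} from Theorem~\ref{finiv} by running the process for a short slow time $\delta$, during which it simultaneously ``comes down from infinity'' and collapses onto the invariant ray $(\alpha,\beta,1)\RR_+$, then applying Theorem~\ref{finiv} starting from the (random) state reached at time $\delta$, and finally sending $\delta\to0$, using that the limiting diffusion \eqref{eq:lim-diff} itself comes down from infinity. Fix $\eps>0$ as in the statement and a parameter $\delta\in(0,\eps)$ to be sent to $0$ at the end.

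\emph{Step 1 (fast collapse and coming down).} I would first show that for each $\delta>0$ there are $\eps'>0$ and $M_\delta<\infty$ such that, conditionally on survival to slow time $\delta$ and with probability $\to1$ as $N\to\infty$, the state at slow time $\delta$ satisfies
\[
\big|(i_N(\delta),j_N(\delta),k_N(\delta))-(\alpha\xi_N,\beta\xi_N,\xi_N)\big|\le N^{-\eps'}\qquad\text{and}\qquad|z_N(\delta)|\le M_\delta,
\]
where $\xi_N:=k_N(\delta)$. This combines three facts, each involving only $o(\sqrt N)$ of fast time and hence instantaneous on the slow scale: the deterministic-limit and state-space-collapse results of Section~\ref{sec:O1}, which pull $(i^N,j^N,k^N)$ onto the ray; the Ornstein--Uhlenbeck relaxation of $z^N$, which makes $z_N(\delta)$ tight given only $y_N(0)\to y_*$; and Theorem~\ref{Foxth}, which ensures that, since $i_N(0)+j_N(0)+k_N(0)\to\infty$, the process survives to slow time $\delta$ with probability $\to1$ minus the extinction-by-$\delta$ probability (small for $\delta$ small). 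In the same step I would record a two-sided ``coming down from infinity'' control of $\xi_N$: an upper bound $\sup_N P(\xi_N>M)\to0$ as $M\to\infty$, via a differential inequality for $E[I^N+J^N+K^N]$ exactly as in the proof of Theorem~\ref{th:MFCP}; and a lower bound expressing that the rescaled total does not plunge to near $0$ in vanishing slow time, i.e.\ $\limsup_N P(\xi_N<K)\to0$ as $\delta\to0$ for each fixed $K$, which I would obtain from a Lyapunov/supermartingale estimate for $I^N+J^N+K^N$.

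\emph{Step 2 (conditioning and the diffusion from $\infty$).} Conditioning on $\F_\delta$, on the good event with $\xi_N$ in a compact subset of $(0,\infty)$ the state at time $\delta$ is an admissible initial configuration for Theorem~\ref{finiv} with $x=\xi_N$; applying that theorem — in the form, extracted from its proof, in which the convergence is uniform over $x$ in a fixed compact subset of $(0,\infty)$ and over $|z_N(\delta)|\le M_\delta$ — gives that, conditionally on $\F_\delta$, $(i_N(\delta+s))_{0\le s\le T-\delta}$ converges in distribution to the diffusion \eqref{eq:lim-diff} started from $\alpha\xi_N$. Separately, by comparison for one-dimensional SDEs the solution of \eqref{eq:lim-diff} is monotone in its initial value, so the diffusions $X^{(m)}$ from $\alpha m$ decrease as $m\uparrow\infty$ to a limit $X$ with $X_t<\infty$ a.s.\ for $t>0$; this $X$ is the entrance law from $\infty$ and is what ``$X_0=\infty$'' denotes (finiteness of $X_t$ follows from comparison with the solution of $\dot u=-\mu_X u^2$ together with control of the bounded martingale part, or a scale-function estimate). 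To finish, for coordinatewise-increasing bounded continuous $g$ and times $\eps\le t_1<\dots<t_n\le T$, decompose $E[g(i_N(t_1),\dots,i_N(t_n))]$ at slow time $\delta$ and, using monotonicity of \eqref{eq:lim-diff} on $\{\xi_N\le m\}$, the tail and survival estimates of Step 1, and the conditional convergence above, squeeze $\lim_N E[g(i_N(t_1),\dots,i_N(t_n))]$ between expressions that converge to $E[g(X_{t_1},\dots,X_{t_n})]$ upon sending $m\to\infty$ and then $\delta\to0$ (the lower bound here is where one uses that $\xi_N\to\infty$ as $\delta\to0$). Together with tightness of $\{i_N|_{[\eps,T]}\}$ in $C[\eps,T]$ — from the upper tail of $i_N(\eps)$ and the modulus-of-continuity bounds of the diffusion approximation, valid once $I^N=O(\sqrt N)$ — and pathwise uniqueness for \eqref{eq:lim-diff}, this yields $i_N\Rightarrow X$ in $C[\eps,T]$ with $X_0=\infty$.

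\emph{Main obstacle.} The hard part is the two-sided, uniform-in-$N$ control of the landing height $\xi_N$, especially the lower bound that $\xi_N\to\infty$ as $\delta\to0$: it encodes that the partner model, like its diffusion limit, comes down from infinity \emph{but no faster}, so that ``start from a good state at slow time $\delta$'' really converges, as $\delta\to0$, to ``start from $\infty$''. Equivalently, one must show that starting from $\infty$ and starting from a large finite height $m$ produce processes that are close on $[\eps,T]$, uniformly in $N$, once $m$ is large. A monotone coupling of the infection process in its initial data would supply this, but the constraint $S+I+2(J+K+L)=N$ and the partnership dynamics make the correct partial order unclear, so absent such a coupling one must prove the self-contained Lyapunov bound for $I^N+J^N+K^N$ directly. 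The remaining ingredients — the fast collapse, the conditioning on $\F_\delta$, and the $\delta\to0$ interchange of limits — are routine given Theorem~\ref{finiv} and the results of Section~\ref{sec:O1}.
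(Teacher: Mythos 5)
Your overall architecture matches the paper's proof: let the process come down from infinity to a large finite level within a short slow time, check that by then the a priori controls hold (so the state is an admissible initial condition in the sense of Theorem \ref{finiv}), then rerun the argument of Theorem \ref{finiv} from that (random) state and use that the limiting diffusion comes down from infinity, i.e.\ $\tau_C(X)\downarrow 0$ as $C\to\infty$, to remove the level. The paper does exactly this, with the ``comes down quickly but not too quickly'' statement packaged as Lemma \ref{downfast} (for small $\eps$, w.h.p.\ $\eps/C^3<\tau^-_C(h_N)\le 1/\eps C$) and the bounds on $|z_N|$ and $Q_N$ at that hitting time supplied by Lemma \ref{notransient}; your fixed-$\delta$ landing-height formulation is interchangeable with the paper's hitting-level formulation.

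The genuine gap is that you leave precisely this two-sided estimate---which you yourself flag as the main obstacle---unproven, and the routes you sketch for it would fail as stated. For the upper bound, a differential inequality for $E[I^N+J^N+K^N]$ ``exactly as in Theorem \ref{th:MFCP}'' breaks down twice: the linear terms in the drift do not cancel for $I^N+J^N+K^N$ (one must use the left-eigenvector combination $H^N=I^N+\gamma J^N+\eta K^N$), and even for $H^N$ the drift \eqref{fasthdrift} contains the indefinite-sign term $(\eta-1)r_+z_Ni_N$, a priori of size $\sqrt{\log N}\,i_N$, so Jensen plus the crude bound on $z_N$ only brings $h_N$ down to $O(\log N)$; getting to a constant level within $O(1/C)$ time is exactly why the paper needs the averaging Lemma \ref{aveto0} together with the lower bound $I^N/H^N\ge\eps$ of Lemma \ref{bdHI}. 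For the lower bound (``comes down no faster,'' which you correctly identify as what makes the landing height tend to $\infty$ as $\delta\to 0$), your unspecified Lyapunov/supermartingale estimate must again control the $z_Ni_N$ term, and under the present hypotheses $z_N(0)$ is only $o(N^{1/2})$, so the transient contribution $\int|z_N|$ has to be handled (the paper's Lemma \ref{lem:zint} plus the coupled Ornstein--Uhlenbeck excursion bounds); nothing in your sketch addresses this. A smaller point: Theorem \ref{Foxth} assumes $y_0\ge y_*-(\log N)/\sqrt{N}$, which does not follow from $y_N(0)\to y_*$, so the survival step cannot be cited from there---though on the paper's route no separate survival estimate is needed, since the lower bound in Lemma \ref{downfast} already keeps $h_N$ above the target level up to the time used. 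If at your Step 1 you simply invoke Lemmas \ref{downfast} and \ref{notransient}, the remainder of your plan is sound and essentially reproduces the paper's argument.
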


\vskip10pt

It is possible that in Theorem \ref{infiv} the assumption $y_N(0)\to y_*$ can be dropped. We do not pursue this direction in this paper. 
Lastly we show convergence of the hitting time.

\begin{theorem} \label{T0lim}
Under the assumptions of Theorem \ref{finiv} or \ref{infiv}, for
$$\tau_0(i_N+j_N+k_N) = \inf\{t:(i_N(t),j_N(t),k_N(t))=(0,0,0)\}$$
we have $\tau_0(i_N+j_N+k_N) \Rightarrow \tau_0(X)$,
the time to hit zero for the limiting diffusion \eqref{eq:lim-diff}.
\end{theorem}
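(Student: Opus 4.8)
The plan is to follow the template laid out in the excerpt for Theorem \ref{th:MFCP}, reducing the convergence of $\tau_0$ to the convergence of the process proved in Theorems \ref{finiv} and \ref{infiv} plus a ``hitting zero in short time from small values'' estimate. Write $V_N(t) = i_N(t)+j_N(t)+k_N(t)$ for the (rescaled, slow-time) total infection mass and $\tau_0(V_N)$ for its hitting time of the origin. The limiting diffusion $X_t$ of \eqref{eq:lim-diff} has $0$ as an absorbing point that it reaches in finite time a.s.\ from any starting value (including $\infty$, as discussed after Theorem \ref{infiv}), since $\mu_X x^2$ is a restoring drift and $\sigma_X\sqrt{x}$ degenerates at $0$; moreover $\tau_0(X)$ has a continuous distribution function, with no atoms, because the scale function and speed measure of $X$ are those of a regular diffusion near $0$ absorbed at $0$. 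So it suffices to show the two one-sided bounds for $\tau_0(V_N)$.

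First I would handle the upper bound (process dies quickly once small). By Theorems \ref{finiv}/\ref{infiv}, for fixed small $\eta>0$ and any fixed $\delta>0$ the rescaled process $i_N$ is within $\delta$ of $X$ on $C[\eta, T]$ whp, so the event $\{V_N(t_*) \le \delta'\}$ for some $t_* \le \tau_0(X)+\delta$ has probability close to $1$ for $\delta'$ small. The key lemma I need is: for each $\ep>0$ there is $c>0$ so that, from any infection configuration with $V_N \le c$ (equivalently $I^N+J^N+K^N \le c\sqrt N$) and $y^N$ anywhere feasible, we have $V_N(\,\cdot\,+\ep)=0$ with probability at least $1-\ep$. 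To prove this, dominate the infection process by a supercritical-free branching-type process: on the original time scale, the total number of infected vertices $V_t = I_t + 2J_t + K_t$ increases only through $S+I\to SI$ (at rate $\le \lambda K_t$, bounded by $\lambda$ per infected in a partnership) and decreases through recovery (rate $\ge$ const $\times$ number of infected singles) and partnership dissolution dynamics. The cleanest route is to couple $(I,J,K)$ from below by $0$ and from above by a multitype branching process with a bounded number of types and subcritical-or-critical mean matrix — the same kind of domination used in \cite{Fox} to get the $\sqrt N$ extinction-time bound — and use that a critical branching process started from $k$ particles is extinct by time $k/\ep$ with probability $\ge 1-k/t$. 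Taking $k = c\sqrt N$ and time $\ep\sqrt N$ on the fast scale (i.e.\ $\ep$ on the slow scale) with $c = \ep^2$ finishes this step; one must be slightly careful that once the infection is small, $y^N$ has relaxed to near $y_*$ so the branching mean matrix is genuinely critical, which is exactly the regime where Theorem \ref{Foxth} and the state-space-collapse results of Section \ref{sec:O1} apply.

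For the lower bound ($\tau_0(V_N)$ is not much smaller than $\tau_0(X)$), I would argue that $i_N \Rightarrow X$ in $C[\eta,T]$ forces $\liminf \tau_0(V_N) \ge \tau_0(X)-\eta$ in distribution: if $i_N$ hit $0$ much before $\tau_0(X)$ with non-negligible probability, the uniform closeness of paths on $C[\eta,T]$ would be violated at the hitting time, using that $X$ does not hit $0$ before $\tau_0(X)$ and (for the $\infty$ start) that whp $i_N(\eta)$ is bounded below so no mass has been lost on $[0,\eta]$ — here one uses the collapse result that $j_N,k_N$ are comparable to $i_N$, so $V_N=0$ iff $i_N=0$ up to the transient. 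Combining the two bounds and letting $\eta,\delta\downarrow 0$, and invoking continuity of $t\mapsto P(\tau_0(X)\le t)$ to upgrade the one-sided statements to $\tau_0(V_N)\Rightarrow \tau_0(X)$, completes the proof.

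The main obstacle I expect is the ``quick death from small values'' lemma, specifically producing a clean stochastic domination of the four-dimensional infection process by a tractable branching process that is uniformly (sub)critical over all small-infection configurations, including the transient window where $y^N$ may not yet equal $y_*$. Handling the coupling of $SI$ and $II$ partnership counts — which can transiently be large relative to $I$ if the process arrives at a small state from an atypical direction — and showing these excess partnerships also clear out in $O(\sqrt N)$ time is the delicate point; the $O(1)$-time collapse results of Section \ref{sec:O1} and the extinction bound of Theorem \ref{Foxth} are the tools I would lean on, but stitching them into a single uniform-in-configuration statement is where the real work lies.
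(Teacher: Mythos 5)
Your overall architecture (continuous mapping / path convergence for the lower bound, plus a ``hits zero quickly once small'' lemma for the upper bound) matches the paper's, and your lower-bound half is essentially the paper's argument via hitting-level functionals. The genuine gap is in the key lemma you yourself flag: the claim that from any configuration with $I^N+J^N+K^N\le c\sqrt N$ one can dominate the infection by a critical (or subcritical) multitype branching process and invoke the $1-k/t$ extinction bound. No such uniformly critical domination exists at the scale $c\sqrt N$. Under the best available control, $|z^N|=O(\sqrt{\log N})$, i.e.\ $S^N/N$ exceeds $y_*$ by as much as $C\sqrt{\log N/N}$, so any honest dominating branching process has offspring mean $1+\Theta(\sqrt{\log N/N})$, hence per-particle survival probability of order $\sqrt{\log N/N}$; started from $c\sqrt N$ particles it survives forever with probability about $1-\exp(-c'\sqrt{\log N})\to 1$, so the domination yields nothing on any time horizon, let alone $\ep\sqrt N$. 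The point is that the positive drift contribution $(\eta-1)r_+z_Ni_N$ cannot be handled by a worst-case bound on $z_N$; it must be \emph{averaged} over the fast Ornstein--Uhlenbeck oscillations. (Appealing to Theorem \ref{Foxth} does not rescue this either: its bound gives extinction within $m C\sqrt N$ with probability $1-e^{-\gamma m}$, which is not close to $1$ when the time window is a small $\ep\sqrt N$.)

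The paper fills exactly this hole with a two-stage argument. First, Lemma \ref{lem:Hdn1} (proved via Lemmas \ref{lem:exit-exp} and \ref{lem:exit-pr}) brings $h_N$ from a small constant $\dlt$ down to the level $N^{1/5}/\sqrt N$ within time $O(\dlt)$ on the slow scale: it is a level-crossing/exit-time argument over dyadic levels $a_k=2^k\dlt$, in which the troublesome $z_Ni_N$ term in $\mu(h_N)$ is killed by the averaging Lemma \ref{aveto0}, the exit from each interval $(a_{k-1},a_{k+1})$ in time $O(a_k)$ is forced by the diffusivity $\sigma^2(h_N)\asymp\sigma_*^2h_N$ (a second-moment/optional-stopping computation on $h_N^2$), and the upward exit probability is at most $1/3+\ep$, giving a downward-biased random walk over levels. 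Only after $H^N\le N^{1/5}$ does Lemma \ref{lem:Hdown} use a branching comparison, and there it is a genuine (whp exact, not just dominating) coupling to a critical process: the $I+I$ formation events and the nonlinear and $z^N$-corrections to the $S+I$ rate have total rate $o(N^{-1/4})$, so whp none occur on the $N^{1/4}$ fast-time window, and critical Galton--Watson height/size estimates kill the remaining $O(N^{1/5})$ particles in $o(N^{1/4})$ fast time, i.e.\ $o(1)$ slow time. So the ``stitching into a single uniform-in-configuration statement'' you anticipate is not achievable by domination alone; it requires the averaging machinery of Step 4 plus restricting the branching comparison to the regime $H^N\le N^{1/5}$ where criticality is exact.
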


\vskip10pt

 The separation of time scales between $y^N$ and the infection variables $i^N,j^N,k^N$ may remind the reader of the work of Kang and Kurtz \cite{KK} and Kang, Kurtz, and Popovic \cite{KKP} on chemical reaction networks. We found that writing our model in their framework does not simplify the difficult aspects of the proof, so we have opted instead to use a general result of \cite{EthierKurtz} for obtaining the diffusion limit, which is tailored to our context in Lemma \ref{lem:limproc}.\\

\noindent\textbf{Workflow.} There are seven main steps, described in greater detail in Section \ref{sec:workflow}. 
In order to make certain estimates it is helpful to define the following additional observables:
\begin{enumerate}[noitemsep,label={\roman*)}]
 \item The positive linear combination $H^N_t=I^N_t+\gamma J^N_t+\eta K^N_t$ and $h^N_t = H^N_t/\sqrt{N}$, where $(1,\gamma,\eta)$, defined by \eqref{leftev}, is a left eigenvector for the matrix $A$ given by \eqref{Adef}, that determines the linear system \eqref{ijk} for $(i_t,j_t,k_t)$. The variable $H^N$ is helpful to the analysis because the linear terms drop out of the equation for its drift.
 \item The rescaled infection vector $(U^N,V^N,W^N)=(I^N,\gamma J^N, \eta K^N)/H^N$, and its metastable equilibrium value $(u_*,v_*,w_*)$ which is given by \eqref{eq:uvw-lim}.
 \item The quantity $Q^N = \theta_2(U^N-u_*)^2 + \theta_1(V^N-v_*)^2$, where $\theta_1,\theta_2$, given by \eqref{eq:thetas}, are well-chosen positive constants. $Q^N$ measures the deviation of $(U^N,V^N,W^N)$ from equilibrium, so when $Q^N$ is small, $(i_t^N,j_t^N,k_t^N)$ is close to the invariant ray, a property which is essential to obtaining a limiting 1-dimensional equation for the diffusion.
\end{enumerate}

Of course, $H_N(t)=H^N(\sqrt{N}t)$, etc. The main steps, written in terms of the slow time scale, are sketched below in the context of Theorem \ref{infiv}, when $h_N(0)=\omega(1)$; for Theorem \ref{finiv}, we just need that if $|z_N(0)|,h_N(0)$ and $Q_N(0)$ are small, then they can be kept small for $\omega(1)$ amount of time.
\begin{enumerate}[noitemsep,label={\arabic*.}]
 \item \label{step:z} \textbf{Lemma \ref{maxz}.} Show that $|z_N(t)| = O(\sqrt{\log N})$ for $o(1) \le t \le \omega(1)$.
 \item \label{step:h1} \textbf{Lemma \ref{bdI}.} Show that $h_N(t) = O(\log N)$ for $o(1) \le t \le \omega(1)$.
 \item \label{step:ray} \textbf{Lemma \ref{ray}.} Show that $|Q_N(t)| = O(N^{-1/6})$ for $o(1) \le t \le \omega(1)$.
 \item \label{step:avg} \textbf{Lemma \ref{aveto0}.} Show that the integral of $i_N(t)z_N(t)$ averages to zero on finite time intervals.
 \item \label{step:h2} \textbf{Lemma \ref{downfast}.} Show for any $\ep>0$ there is $C_\ep>0$ so that $P(\tau_{C_\ep}^-(h_N) \le \ep)\ge 1-\ep$.
 \item \label{step:diff} \textbf{Lemma \ref{lem:Hdown}, Theorems \ref{finiv} and \ref{infiv}.} Show convergence of $i_N(t)$ to the diffusion limit.
 \item \label{step:down} \textbf{Lemma \ref{lem:Hdn1}.} Show that $\tau_0(h_N) \to 0$ in probability as $h_N(0) \to 0$, uniformly for large $N$.
\end{enumerate}

The organization of the rest of the paper is as follows: In Section \ref{sec:sampath} we gather some probability estimates and limit theorems that are used throughout the paper. In Section \ref{sec:O1} we describe the deterministic limits. In Section \ref{sec:workflow} we state precise lemmas relating to each of the workflow steps, and prove diffusion limits. The latter sections are devoted to proofs of the lemmas.

\section{Sample path estimation}\label{sec:sampath}
 
In this section we describe some sample path estimates, a diffusion limit theorem, and results on drift and diffusivity of functions of  continuous time Markov chains that are used throughout the paper. Any results that are not cited are proved in the Appendix. The natural setting for these results is semimartingales. For an overview of the semimartingale theory that is used here, we refer the reader to \cite{jacod}.\\

First we recall some standard definitions from semimartingale theory, noting along the way how the present context fits into this framework.\\

Let $(\Omega,\F,\bf F,P)$ be a filtered probability space that satisfy the usual conditions. This means that the filtration $\mathbf{F} = (\F_t)_{t \in \R_+}$ is right continuous in the sense that $\F_t = \bigwedge_{s>t}\F_s$ for each $t$, and each $\F_t$ in the filtration $\mathbf{F} = (\F_t)_{t \in \R_+}$ contains the $\bf P$ null sets of $\F$. In \cite{jacod} it is also assumed that $\F$ is $\bf P$ complete; if this is not the case then it is easy to check that completing $\F$ and then $\bf F$ with respect to null sets does not violate right continuity. In our case, the filtered probability space is that of the finite state continuous-time Markov chain corresponding to the state variables $(S^N,I^N,J^N,K^N,L^N)$, with the completion of the natural filtration. Since such a process is Feller, as shown in \cite[I.5]{protter}, the corresponding filtered space satisfies the usual conditions.\\

A procss $X$ is called \emph{optional} if it is measurable with respect to the $\sigma$-field (on $\Omega \times \R_+$) generated by all c\`adl\`ag adapted processes. All the processes considered here are optional. We assume the reader is familiar with the notions of stopping time, predictable time and process, localization and martingale.\\

Given a stochastic process $X$ we denote by $X_-$ the left-continuous process obtained from $X$. We further let $\Delta X= X- X_-$ denote the process of jumps. We say that $X$ has \emph{bounded jumps} if $|\Delta X| \le c$ a.s.~for some constant $c>0$, and let $\Delta_\star(X)$ denote the infimum of such values of $c$. $X$ is \emph{quasi-left continuous} (qlc) if $\Delta X_\tau=0$ a.s.~on $\{\tau<\infty\}$ for any predictable time $\tau$.\\

Given a process $A$, define the process $\var(A)$ by setting $\var(A)_t(\omega)$ equal to the total variation of the function $s\mapsto A_s(\omega)$ on the interval $[0,t]$. A process $A$ has \emph{finite variation} if $\var(A)_t(\omega)<\infty$ for each $t,\omega$, and is \emph{locally integrable} if it has a localizing sequence $(\tau_n)$ such that $E[\var(A)_{\tau_n}]<\infty$ for each $n$. The \emph{compensator} of a locally integrable process $A$, denoted $A^p$, is the unique predictable and locally integrable process such that $A-A^p$ is a local martingale (see \cite[I.3.18]{jacod}).\\

A \emph{semimartingale} (s-m) $X$ is a process that can be written as $X=X_0+M+A$, where $X_0$ is an $\F_0$-measurable random variable, $M$ is a local martingale and $A$ has finite variation. We call a semimartingale \emph{special} if it can be written as
\begin{equation}\label{eq:spec-sm}
X = X_0 + X^m + X^p 
\end{equation}
where $X^p$ is the compensator of $X$ and $X^m$ is a uniquely defined local martingale satisfying $X_0^m=0$.
By \cite[I.4.24]{jacod}, if $X$ has bounded jumps then it is special and $|\Delta X^m| \le 2\Delta_\star(X)$, and if it also qlc then using \cite[I.2.35]{jacod} in the proof of \cite[I.4.24]{jacod}, we have the more convenient estimate $\Delta_\star(X^m) \le \Delta_\star(X)$.\\

Recall that if a martingale $M$ is locally square-integrable then $M^2$ has a compensator, denoted $\langle M \rangle$ and called the predictable quadratic variation (pqv). Any local martingale $M$ with $M_0=0$ and bounded jumps is locally square integrable (see \cite[I.4.1]{jacod}). If $X$ is a special s-m and $X^m$ is locally square-integrable we will use $\lng X \rng$ to denote $\lng X^m \rng$. The following basic estimate is used repeatedly throughout the paper.
\begin{lemma}[{\cite[Lemma 3]{naming-game}}]\label{lem:sm-est}
Let $X$ be a quasi-left continuous semimartingale with bounded jumps and let $a,\phi>0$.
\begin{equation}\label{eq:sm-est2}
\text{If} \quad 0<\phi \Delta_\star(X)\le 1/2 \quad \text{then} \quad P\left( \sup_{t\ge 0} \left\{ |X_t^m| - \phi \langle X \rangle_t \right\} \ge a \right) \le 2e^{-\phi a}.
\end{equation}
\end{lemma}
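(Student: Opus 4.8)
The plan is the classical exponential--supermartingale estimate combined with Doob's maximal inequality. First I would record, from the facts recalled just above (\cite[I.4.24 and I.2.35]{jacod}), that since $X$ is a quasi-left continuous semimartingale with bounded jumps it is special, that $M:=X^m$ is a local martingale with $M_0=0$, $\Delta_\star(M)\le\Delta_\star(X)=:c$, and $M$ is again quasi-left continuous, and that, having bounded jumps and starting from $0$, $M$ is locally square integrable, so $\langle X\rangle=\langle M\rangle$ is well defined; moreover qlc forces both $\langle M\rangle$ and the compensator $\nu$ of the jump measure of $M$ to have no atoms in the time variable. Since $-X$ has the same $\Delta_\star$, is again qlc, satisfies $(-X)^m=-M$ and $\langle-X\rangle=\langle X\rangle$, it is enough to prove the one-sided estimate $P(\sup_{t\ge0}\{M_t-\phi\langle M\rangle_t\}\ge a)\le e^{-\phi a}$ and then apply it to $M$ and to $-M$ and take a union bound.

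For the one-sided estimate, put $N_t:=\phi M_t-\phi^2\langle M\rangle_t$ and $Z_t:=e^{N_t}$, so that $Z_0=1$ and $|\Delta N_t|=\phi|\Delta M_t|\le\phi c\le1/2$. The heart of the argument is the claim that $Z$ is a nonnegative supermartingale. I would obtain this from It\^o's formula applied to the exponential and $N$: using $\langle N^c\rangle=\phi^2\langle M^c\rangle$ and $\langle M\rangle=\langle M^c\rangle+\int x^2\,\nu$, and replacing the increasing (and locally integrable, since $M$ is locally square integrable) sum of jumps $\sum_{s\le t}e^{N_{s-}}\bigl(e^{\Delta N_s}-1-\Delta N_s\bigr)$ by its compensator, the finite-variation part of $Z$ becomes
$$-\frac{\phi^2}{2}\int_0^t e^{N_{s-}}\,d\langle M^c\rangle_s\;+\;\int_0^t\!\!\int e^{N_{s-}}\bigl(e^{\phi x}-1-\phi x-\phi^2 x^2\bigr)\,\nu(ds,dx),$$
which is nonincreasing in $t$: the first term obviously, and the second because $\nu$ is carried by $\{|x|\le c\}$, where $|\phi x|\le1/2$, and the scalar function $y\mapsto e^{y}-1-y-y^{2}$ vanishes together with its first derivative at $0$ and is concave on $[-\tfrac12,\tfrac12]$ (since $e^{y}<2$ there), hence is $\le0$ on $[-\tfrac12,\tfrac12]$. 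Thus $Z$ is a local supermartingale, and being nonnegative it is a genuine supermartingale with $E[Z_t]\le Z_0=1$.

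Finally, Doob's maximal inequality for nonnegative c\`adl\`ag supermartingales gives $P\bigl(\sup_{t\ge0}Z_t\ge e^{\phi a}\bigr)\le E[Z_0]\,e^{-\phi a}=e^{-\phi a}$, and since $\{Z_t\ge e^{\phi a}\}=\{M_t-\phi\langle M\rangle_t\ge a\}$ after dividing through by $\phi>0$, this is precisely the one-sided estimate; applying it to $M$ and $-M$ and adding the two probabilities yields \eqref{eq:sm-est2}. I expect the only genuinely delicate point to be the jump bookkeeping in It\^o's formula together with the identification of the compensators $\langle M\rangle$ and $\nu$ --- this is exactly where quasi-left continuity enters, ensuring neither has an atom in time so that the drift of $Z$ collapses to the clean expression above. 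Everything else is the one elementary scalar inequality and the routine upgrade from local supermartingale to supermartingale via nonnegativity, followed by the maximal inequality.
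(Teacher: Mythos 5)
Your argument is correct: the exponential supermartingale $e^{\phi M_t-\phi^2\langle M\rangle_t}$ (valid because $\phi|\Delta M|\le 1/2$ makes the jump term $e^{y}-1-y-y^{2}\le 0$, with qlc giving both $\Delta_\star(X^m)\le\Delta_\star(X)$ and the clean identification $\langle M\rangle=\langle M^c\rangle+\int x^2\,d\nu$), followed by Doob's maximal inequality and a union bound over $\pm M$, is exactly the standard route to this estimate. Note that the paper itself does not prove this lemma but imports it from \cite[Lemma 3]{naming-game}; your proof is essentially the argument behind that cited result, so there is nothing to add beyond the (correct) bookkeeping you already flag.
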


In order to obtain tidy expressions for $X^p$ and $\langle X \rangle$, we define quasi-absolute continuity, together with drift and diffusivity. To motivate the name, note that if $X$ is a special semimartingale with $X^m$ locally square-integrable, then as shown in \cite[Lemma 2]{naming-game}, $X$ is qlc iff both $X^p$ and $\langle X \rangle$ are continuous.

\begin{definition}
A special semimartingale $X$ with locally square-integrable martingale part $X^m$ is called \emph{quasi-absolutely continuous} (qac) if both $X^p$ and $\langle X \rangle$ are absolutely continuous. In this case define the \emph{drift} $\mu(X) = (\mu_t(X))_t$ and the \emph{diffusivity} $\sigma^2(X) = (\sigma_t(X))_t$ for Lebesgue-a.e. $t$ by
\begin{equation}\label{eq:mu-sig}
\mu_t(X) = \frac{d}{dt}X^p_t, \quad \sigma^2_t(X) = \frac{d}{dt}\langle X \rangle_t.
\end{equation}
\end{definition}

Letting $X = (S^N,I^N,J^N,K^N,L^N)$ denote the infection process, we note that the estimates of Lemma \ref{lem:sm-est} and the upcoming Lemma \ref{lem:driftbar} will be applied to observables of the form $f(X_t^\tau)$, where $f:R \to \R$ is Lipschitz, $R \subset \R^5$ is compact, and $\tau$ is a stopping time so that the stopped process $X_t^\tau = X_{t\wedge \tau}$ has $X_t^\tau \in R$ for $t \in \R_+$. Any such process is right continuous and has finite variation so is a s-m, and has bounded jumps since this is the case for $X$ and since $f$ is Lipschitz. Let $q_i(x)$, $i=1,\dots,n$ denote the rates of the various transitions ($S+I \to K, K \to J, J\to K$ etc.) and $\Delta_i$ their effect (for example, $S^N,I^N$ decreases by 1, $K^N$ increases by 1), and let $Y_t = f(X_t^\tau)$. By writing $Y_t$ as a sum of jumps and using the standard linear and quadratic martingales for Poisson processes, it is easy to show that $Y$ is qac (thus also qlc) and has 
\begin{align*}
\mu_t(Y) &= \1(t < \tau)\sum_{i=1}^n q_i(X_t)(f(X_t+\Delta_i)-f(X_t)), \\ 
\sigma^2_t(Y) &= \1(t < \tau)\sum_{i=1}^n q_i(X_t)(f(X_t+\Delta_i)-f(X_t))^2,
\end{align*}
a fact that we use ubiquitously when invoking Lemma \ref{lem:sm-est} below. 
In a few cases, we are interested in computing the drift for products of such processes, as well as processes of the form $f(X_t^\tau)g(t)$, where $g(t)$ is an absolutely continuous deterministic function (which is easily shown to be a qac s-m). 
Below we state a result from \cite{naming-game} that allows to find the drift for products of qac s-m.
\begin{lemma}[Product rule]\label{lem:qac-prod}
Suppose $X_t,Y_t$ are qac semimartingales on a common filtered probability space. Then $\lng X^m,Y^m \rng$ and $(XY)^p$ exist and are absolutely continuous. Denote $\mu_t(XY) = \frac{d}{dt}(XY)^p_t$. Then
$$\mu(XY) = \sigma(X,Y) + X_- \mu(Y) + Y_- \mu(X),$$ 
where $\sigma_t(X,Y) = \frac{d}{dt} \lng X^m,Y^m\rng_t$.
\end{lemma}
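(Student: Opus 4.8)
The plan is to run everything through the semimartingale integration-by-parts formula
$$X_t Y_t = X_0 Y_0 + \int_0^t X_{s-}\,dY_s + \int_0^t Y_{s-}\,dX_s + [X,Y]_t ,$$
substitute the canonical decompositions $X = X_0 + X^m + X^p$ and $Y = Y_0 + Y^m + Y^p$, and then sort the resulting terms into a local-martingale part vanishing at $0$ and a predictable, absolutely continuous, finite-variation part vanishing at $0$. By uniqueness of the canonical decomposition of a special semimartingale, the latter must then be $(XY)^p$, and differentiating it will yield the stated formula for $\mu_t(XY)$.

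First I would handle the bracket term. By bilinearity, $[X,Y] = [X^m,Y^m] + [X^m,Y^p] + [X^p,Y^m] + [X^p,Y^p]$. The observation recalled just before the definition of quasi-absolute continuity shows that a qac semimartingale is qlc, so $X^p$ and $Y^p$ are \emph{continuous} of finite variation; since $[M,V]\equiv 0$ whenever $V$ is continuous of finite variation, the last three terms vanish and $[X,Y] = [X^m,Y^m]$. As $X^m$ and $Y^m$ are locally square-integrable local martingales, the predictable covariation $\lng X^m, Y^m\rng$ exists and $[X^m,Y^m] - \lng X^m, Y^m\rng$ is a local martingale. To get absolute continuity of $\lng X^m, Y^m\rng$ I would invoke the Kunita--Watanabe inequality: its variation measure is dominated on every Borel set $B$ by $(\lng X^m\rng(B))^{1/2}(\lng Y^m\rng(B))^{1/2}$, and both $\lng X^m\rng = \lng X\rng$ and $\lng Y^m\rng = \lng Y\rng$ are absolutely continuous by hypothesis, so $\lng X^m, Y^m\rng \ll dt$; write $\sigma_t(X,Y)$ for its density.

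Next, split $\int_0^t X_{s-}\,dY_s = \int_0^t X_{s-}\,dY^m_s + \int_0^t X_{s-}\,dY^p_s$. Since $X$ is c\`adl\`ag adapted, $X_-$ is locally bounded and predictable, so the first integral is a local martingale, while the second equals $\int_0^t X_{s-}\mu_s(Y)\,ds$ because $Y^p_t = \int_0^t \mu_s(Y)\,ds$, hence is predictable of absolutely continuous finite variation. Doing the same for $\int_0^t Y_{s-}\,dX_s$ and collecting, we arrive at
$$X_t Y_t - X_0 Y_0 = N_t + \int_0^t\bigl(\sigma_s(X,Y) + X_{s-}\mu_s(Y) + Y_{s-}\mu_s(X)\bigr)\,ds ,$$
with $N$ a local martingale, $N_0 = 0$. (Here one localizes along a sequence on which $|X|,|Y|$ are bounded and on which $\var(Y^p),\var(X^p),\lng X\rng,\lng Y\rng$ are integrable, so that the $ds$-integral is genuinely locally integrable and $XY$ is special.) The integrand being predictable, uniqueness of the canonical decomposition identifies the $ds$-integral as $(XY)^p$, whence $\mu(XY) = \sigma(X,Y) + X_-\mu(Y) + Y_-\mu(X)$, as asserted; absolute continuity of $(XY)^p$ is built into this representation.

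I expect the only conceptual step to be the short one: the implication qac $\Rightarrow$ qlc forces $X^p$ and $Y^p$ to be continuous, which kills all the jump cross-terms in $[X,Y]$ and collapses the product formula to the shape it would have for continuous semimartingales. Everything else is standard semimartingale bookkeeping — choosing the localizing sequence, and citing in the right form the facts $[M,V]=0$ for continuous finite-variation $V$, existence of $\lng\cdot,\cdot\rng$ for locally square-integrable martingales, the Kunita--Watanabe inequality, and uniqueness of the canonical decomposition.
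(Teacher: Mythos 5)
Your proof is correct: integration by parts, the observation that qac $\Rightarrow$ qlc forces $X^p,Y^p$ to be continuous so that $[X,Y]=[X^m,Y^m]$, the Kunita--Watanabe bound to get $\lng X^m,Y^m\rng \ll dt$, and uniqueness of the canonical decomposition of the special semimartingale $XY$ (after the localization you indicate) together give exactly the stated identity. Note that the paper does not prove this lemma at all --- it is quoted from the reference \cite{naming-game} --- so there is nothing to compare against; your argument is the standard one that such a product rule rests on, and I see no gap in it.
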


The next result will allow us to obtain simple estimates for the drift of certain processes. As in \cite[I.3.4]{jacod}, if $H$ is optional and $A$ has locally finite variation, we let
$$(H\cdot A)_t(\omega) = \int_0^t H_s(\omega)dA_s(\omega),$$
for $\omega \in \Omega$ such that the above can be evaluated as a Stieltjes integral; a sufficient condition is that $H$ also has finite variation. Of course, if $A$ is absolutely continuous then $H\cdot A$ is differentiable and $\frac{d}{dt}(H\cdot A)_t = H_t \, \frac{d}{dt}A_t$ for Lebesgue-a.e.$\,t$. As noted in \cite[I.3.18]{jacod}, if $H$ is predictable and both $A, H\cdot A$ are locally integrable then $H \cdot A^p = (H\cdot A)^p$, which we use below.

\begin{lemma}[Taylor approximation]\label{lem:taylor}
Let $X$ be a qac s-m with bounded jumps and let $f \in C^2(\R)$. Then, $f(X)$ is a qac s-m and satisfies the following inequality for Lebesgue-a.e. $t$:
$$|\mu_t(f(X)) - f'(X_t)\mu_t(X)| \le \frac{1}{2}\sigma_t^2(X)\sup_{|x-X_t| \le \Delta_\star(X)}|f''(x)|.$$
\end{lemma}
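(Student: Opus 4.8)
The plan is to apply It\^o's formula to $f(X)$, read off the compensator by collecting the predictable terms, and estimate the second-order contributions by Taylor's theorem with remainder. Since $X$ is qac it is special and, by the characterization recalled before Lemma~\ref{lem:sm-est}, quasi-left continuous; hence $X^p$ is continuous and $\Delta X = \Delta X^m$, with $|\Delta X|\le\Delta_\star(X)$. By a routine localization --- stop $X$ when $|X|$ first exceeds $n$; the stopped process has range in a compact interval and the stopping times increase to $\infty$ --- we may assume $X$ takes values in a fixed compact set, so that $f,f',f''$ are bounded there, $f(X)$ has bounded jumps, and every process below is a genuine special semimartingale; since qac, the drift and the diffusivity are local properties, the conclusions then transfer to the original $X$. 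It\^o's formula for semimartingales yields
$$f(X_t)=f(X_0)+\int_0^t f'(X_{s-})\,dX_s+\tfrac12\int_0^t f''(X_{s-})\,d\langle X^c\rangle_s+\sum_{0<s\le t}g(X_{s-},\Delta X_s),$$
where $X^c$ is the continuous martingale part of $X$ and $g(x,y):=f(x+y)-f(x)-f'(x)y$.

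Next I would sort the terms. The stochastic integral splits as $\int_0^t f'(X_{s-})\,dX^m_s+\int_0^t f'(X_{s-})\mu_s(X)\,ds$, the first being a local martingale and the second absolutely continuous and predictable; and $\tfrac12\int_0^t f''(X_{s-})\,d\langle X^c\rangle_s$ is continuous, predictable and of finite variation. For the jump sum $J_t:=\sum_{0<s\le t}g(X_{s-},\Delta X_s)$ the integral form of Taylor's theorem gives $|g(x,y)|\le\tfrac12 y^2\sup_{|\xi-x|\le|y|}|f''(\xi)|$, so with $\phi_s:=\sup_{|x-X_s|\le\Delta_\star(X)}|f''(x)|$ and $|\Delta X_s|\le\Delta_\star(X)$ we get $\var(J)_t\le\tfrac12\sum_{0<s\le t}(\Delta X_s)^2\phi_{s-}=\tfrac12(\phi_-\cdot[X^{m,d}])_t$, where $[X^{m,d}]_t=\sum_{0<s\le t}(\Delta X_s)^2$. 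In particular $J$ has locally integrable variation, hence a compensator $J^p$, and collecting the predictable pieces shows that the compensator of $f(X)$ is $\int_0^\cdot f'(X_{s-})\mu_s(X)\,ds+\tfrac12\int_0^\cdot f''(X_{s-})\,d\langle X^c\rangle_s+J^p$. A parallel and routine computation on the martingale part, using $\langle X\rangle=\langle X^c\rangle+\langle X^{m,d}\rangle$ with $\langle X^{m,d}\rangle$ the compensator of $[X^{m,d}]$, shows $\langle f(X)\rangle$ is absolutely continuous as well, so $f(X)$ is qac.

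The one genuinely delicate point is the compensator $J^p$ of the jump-remainder term: I would show it is absolutely continuous with $\bigl|\tfrac{d}{dt}J^p_t\bigr|\le\tfrac12\phi_t\,\tfrac{d}{dt}\langle X^{m,d}\rangle_t$ for Lebesgue-a.e.\ $t$. This follows by writing $J=J^{(+)}-J^{(-)}$ as the difference of its increasing positive- and negative-jump parts, compensating each, and comparing with the compensator of $\var(J)\le\tfrac12(\phi_-\cdot[X^{m,d}])$ (the integrand $\phi_-$ being predictable and $\langle X^{m,d}\rangle$ absolutely continuous); equivalently one reads it off the predictable characteristics (L\'evy system) of $X$. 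Granting this, differentiating the compensator of $f(X)$ and using $X_{s-}=X_s$ for a.e.\ $s$ gives $\mu_t(f(X))-f'(X_t)\mu_t(X)=\tfrac12 f''(X_t)\tfrac{d}{dt}\langle X^c\rangle_t+\tfrac{d}{dt}J^p_t$, whence, bounding $|f''(X_t)|\le\phi_t$ and using the estimate on $\tfrac{d}{dt}J^p_t$,
$$\bigl|\mu_t(f(X))-f'(X_t)\mu_t(X)\bigr|\le\tfrac12\phi_t\Bigl(\tfrac{d}{dt}\langle X^c\rangle_t+\tfrac{d}{dt}\langle X^{m,d}\rangle_t\Bigr)=\tfrac12\phi_t\,\sigma^2_t(X),$$
which is the claimed inequality. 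The continuous-martingale and drift contributions are immediate from It\^o's formula; the crux is the bookkeeping for $J^p$.
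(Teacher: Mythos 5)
Your proposal is correct and follows essentially the same route as the paper: apply It\^o's formula for semimartingales, take compensators, bound the jump remainder by $\tfrac12(\Delta X_s)^2\sup_{|x-X_{s^-}|\le\Delta_\star(X)}|f''(x)|$, and differentiate, using $X_{t^-}=X_t$ for a.e.\ $t$. The only difference is bookkeeping for the jump compensator --- you compensate the jump sum directly via its positive/negative parts against the predictable integrand $\phi_-$, while the paper bounds increments of the compensated remainder by a windowed supremum $R(h)$ of $|f''|$ times increments of $\langle X\rangle$ and lets $h\to 0$ --- which is a minor variant of the same estimate.
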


Building on Lemma \ref{lem:sm-est} we derive the following simple result, which is the basis of several estimates in this article.

\begin{lemma}[Drift barrier]\label{lem:driftbar}
 Fix $x>0$ and let $X$ be a qac s-m on $\R$ with bounded jumps, such that $\Delta_\star(X) \le x/2$. Suppose there are positive reals $\mu_\star,\sigma^2_\star,C_{\mu_\star},C_\Delta$ with $\max\{\Delta_\star(X)\mu_\star/\sigma^2_\star, 1/2\} \le C_\Delta$ so that if $0<X_t<x$ then
\begin{align}\label{eq:driftbar-est}
\mu_t(X) \leq -\mu_\star,\quad |\mu_t(X)| \leq C_{\mu_\star} \quad\hbox{and}\quad \sigma^2_t(X) \leq \sigma^2_\star.
\end{align}
Let $\Gamma = \exp(\mu_\star x /(32C_\Delta\sigma^2_\star))$. Then we have
 \begin{equation}\label{eq:driftbar}
 P\left( \ \sup_{t \le \lfloor \Gamma \rfloor x/16C_{\mu_\star} }X_t \ge x \ \mid \ X_0 \leq x/2 \ \right) \le 4/\Gamma.
 \end{equation}
\end{lemma}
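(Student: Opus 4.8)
The plan is to reduce the drift barrier to a clean application of Lemma~\ref{lem:sm-est}, exploiting the negative drift assumption to turn the one-sided maximal inequality for the martingale part into an exponential bound on $\sup_t X_t$. First I would write $X = X_0 + X^m + X^p$ with $X^p$ absolutely continuous, so that on the event $\{X_s \in (0,x) \text{ for all } s \le t\}$ we have $X^p_t = \int_0^t \mu_s(X)\,ds \le -\mu_\star t$ and hence $X_t \le X_0 + X^m_t - \mu_\star t$. But this only controls $X$ up to the first time it leaves $(0,x)$, so the argument must be run against a stopped process: let $\tau = \inf\{t: X_t \notin (0,x)\}$ (more precisely, since jumps can overshoot, I would work with the process stopped at $\tau$, noting $X_{t\wedge\tau} \le x + \Delta_\star(X) \le 3x/2$, and recover the bound on $\{X_0 \le x/2\}$). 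On $[0,\tau)$ the hypotheses \eqref{eq:driftbar-est} hold, so the stopped process $\tilde X = X^\tau$ is still a qlc s-m with bounded jumps, with $\mu_t(\tilde X) \le -\mu_\star$, $\sigma^2_t(\tilde X) \le \sigma^2_\star$ for $t < \tau$.

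Next I would apply Lemma~\ref{lem:sm-est} to $\tilde X$ with a suitably chosen $\phi>0$. The condition $\phi\Delta_\star(\tilde X) \le 1/2$ must hold; since $\Delta_\star(\tilde X) \le \Delta_\star(X)$, it suffices that $\phi \le 1/(2\Delta_\star(X))$. The lemma gives, for any $a>0$,
\begin{equation*}
P\left(\sup_{t\ge 0}\{|\tilde X^m_t| - \phi\langle \tilde X\rangle_t\} \ge a\right) \le 2e^{-\phi a}.
\end{equation*}
On the complement of this event, for all $t$ we have $\tilde X^m_t \le a + \phi \langle \tilde X\rangle_t \le a + \phi \sigma^2_\star (t\wedge\tau) \le a + \phi\sigma^2_\star t$, using $\langle\tilde X\rangle_t = \int_0^{t\wedge\tau}\sigma^2_s(\tilde X)\,ds$. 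Combined with the drift bound, $\tilde X_t \le X_0 + a + (\phi\sigma^2_\star - \mu_\star)t$ for $t < \tau$. The natural choice is $\phi = \mu_\star/(2\sigma^2_\star)$, which makes $\phi\sigma^2_\star - \mu_\star = -\mu_\star/2 < 0$, so $\tilde X_t \le X_0 + a$ for all $t$; then take $a = x/4$ so that on $\{X_0 \le x/2\}$ we get $\tilde X_t \le 3x/4 < x$ for all $t < \tau$, which combined with the overshoot bound forces $\tau = \infty$ on this event (the process never actually crosses $x$). The probability of the bad event is then $\le 2e^{-\phi a} = 2\exp(-\mu_\star x/(8\sigma^2_\star))$.

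The remaining work is bookkeeping to match the stated constants: I must check the choice $\phi = \mu_\star/(2\sigma^2_\star)$ satisfies $\phi\Delta_\star(X) \le 1/2$, i.e. $\Delta_\star(X)\mu_\star/\sigma^2_\star \le 1$; this is where $C_\Delta$ enters --- the hypothesis gives $\Delta_\star(X)\mu_\star/\sigma^2_\star \le C_\Delta$, which may exceed $1$, so I would instead take $\phi = 1/(2\Delta_\star(X))$ when $C_\Delta > 1$, or more uniformly $\phi = \mu_\star/(2C_\Delta\sigma^2_\star) \le 1/(2\Delta_\star(X))$, at the cost of weakening the drift cancellation to $\phi\sigma^2_\star - \mu_\star = \mu_\star(1/(2C_\Delta) - 1) \le -\mu_\star/2$ (still strictly negative). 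Tracking this through with $a = x/4$ yields a bound of the form $2\exp(-c\mu_\star x/(C_\Delta\sigma^2_\star))$, and absorbing constants gives $4/\Gamma$ with $\Gamma = \exp(\mu_\star x/(32 C_\Delta \sigma^2_\star))$; the factor $4$ (rather than $2$) and the $32$ leave ample slack. Finally, the stated supremum is over the truncated horizon $t \le \lfloor\Gamma\rfloor x/(16 C_{\mu_\star})$ rather than all $t$; since the argument above actually bounds $\sup_{t\ge 0}$, restricting to a finite horizon only helps, and $C_{\mu_\star}$ plays no essential role here --- it is presumably retained because in the uses of this lemma one pairs the barrier estimate with the length of the time interval on which the drift estimate \eqref{eq:driftbar-est} is valid. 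The main obstacle is purely the constant-chasing: getting the negative-drift cancellation to survive the constraint $\phi\Delta_\star(X)\le 1/2$ while still producing an exponent of the advertised order in $\mu_\star x/(C_\Delta\sigma^2_\star)$. There is no conceptual difficulty beyond the clean decomposition $X = X_0 + X^m + X^p$ and one invocation of Lemma~\ref{lem:sm-est}.
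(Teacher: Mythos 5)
Your single-excursion estimate is fine and essentially coincides with the first step of the paper's proof, but the reduction that follows it contains a genuine gap. Your stopping time $\tau=\inf\{t:X_t\notin(0,x)\}$ can occur because $X$ exits through the \emph{bottom} of the interval, and the good event from Lemma \ref{lem:sm-est} does nothing to prevent this; indeed, with drift $\le-\mu_\star$ on $(0,x)$ a bottom exit is the typical outcome. Once $X_t\le 0$ the hypotheses \eqref{eq:driftbar-est} impose no constraint on the drift, so the process may be pushed back into $(0,x)$ and gets a fresh, essentially independent chance of order $e^{-c\mu_\star x/(C_\Delta\sigma^2_\star)}$ to climb to $x$. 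Consequently your conclusion that $\tau=\infty$ on the good event, and hence that $\sup_{t\ge0}X_t<x$ with probability $1-2e^{-\phi a}$, is false in general: a jump process with drift $-\mu_\star$ on $(0,x)$ and strong upward drift on $(-\infty,0]$ satisfies all the hypotheses and yet crosses $x$ almost surely over an infinite horizon. This is precisely why the lemma is stated only up to time $\lfloor\Gamma\rfloor x/16C_{\mu_\star}$ and why the bound $|\mu_t(X)|\le C_{\mu_\star}$ appears in the hypotheses; your remark that the argument ``actually bounds $\sup_{t\ge0}$'' and that ``$C_{\mu_\star}$ plays no essential role'' is exactly the point at which the proof breaks.

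The paper's proof supplies the two ingredients you are missing. First, for an excursion started within $\Delta_\star(X)/2$ of $x/2$, it proves your estimate (probability at most $2\exp(-\mu_\star x/8C_\Delta\sigma^2_\star)$ of exiting at the top), and then applies Lemma \ref{lem:sm-est} a \emph{second} time, now using $|\mu_t(X)|\le C_{\mu_\star}$, to show that the excursion lasts at least $x/16C_{\mu_\star}$ except with probability $2\exp(-\mu_\star x/16C_\Delta\sigma^2_\star)$: to leave the band quickly the martingale part would have to be large. Second, it iterates over successive excursions, alternately stopping when $|X_t-x/2|\le\Delta_\star(X)/2$ and when $|X_t-x/2|\ge x/2$, and takes a union bound over $\lfloor\Gamma\rfloor$ of them; the duration lower bound guarantees that $\lfloor\Gamma\rfloor$ excursions cover the stated horizon, and bounded jumps ($\Delta_\star(X)\le x/2$) force any upcrossing of $x$ to pass through the band around $x/2$ first, which yields the bound $4\lfloor\Gamma\rfloor\exp(-\mu_\star x/16C_\Delta\sigma^2_\star)\le 4/\Gamma$. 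To repair your write-up you must add both the per-attempt time lower bound (where $C_{\mu_\star}$ enters) and the union bound over the at most $\lfloor\Gamma\rfloor$ attempts that fit in the horizon.
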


We will occasionally need to apply Lemma \ref{lem:driftbar} to a stopped process, for which the following easy corollary will be helpful.

\begin{cor}[Drift barrier with stopped process]\label{cor:driftbar}
In the setting of Lemma \ref{lem:driftbar}, let $\tau$ be a stopping time and suppose that \eqref{eq:driftbar-est} holds assuming $t<\tau$ in addition to $0<X_t<x$. Then
 \begin{equation*}
 P\left( \ \sup_{t \le \tau \wedge \lfloor \Gamma \rfloor x/16C_{\mu_\star} }X_t \ge x \ \mid \ X_0 \leq x/2 \ \right) \le 4/\Gamma.
 \end{equation*}
\end{cor}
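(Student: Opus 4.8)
The plan is to deduce Corollary \ref{cor:driftbar} from Lemma \ref{lem:driftbar} by applying the latter to a modified process. First I would pass to the stopped process $X^\tau_t = X_{t\wedge\tau}$. Since $X$ is qac, $X^p$ and $\langle X\rangle$ are continuous, hence so are their stopped versions, so $X^\tau$ is again a qac semimartingale, with bounded jumps, $\Delta_\star(X^\tau)\le\Delta_\star(X)$, and (standard facts about stopped special semimartingales)
$$\mu_t(X^\tau)=\1(t<\tau)\,\mu_t(X),\qquad \sigma^2_t(X^\tau)=\1(t<\tau)\,\sigma^2_t(X)$$
for Lebesgue-a.e.\ $t$. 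The obstacle is that $X^\tau$ by itself does not meet the hypotheses of Lemma \ref{lem:driftbar}: on $\{t\ge\tau,\ 0<X^\tau_t<x\}$ its drift is $0$, not $\le -\mu_\star$.

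To repair this I would set $W_t = X^\tau_t - \mu_\star(t-\tau)^+$, letting the process keep falling at rate $\mu_\star$ after time $\tau$. The added term $-\mu_\star(t-\tau)^+$ is continuous, adapted and of finite variation, so $W$ is still a qac semimartingale with $W^m=(X^\tau)^m$, $\langle W\rangle = \langle X^\tau\rangle$, $\Delta_\star(W)\le\Delta_\star(X)\le x/2$, and $\mu_t(W) = \1(t<\tau)\mu_t(X) - \mu_\star\1(t\ge\tau)$. Now the hypotheses of Lemma \ref{lem:driftbar} hold for $W$ with the same constants $x,\mu_\star,\sigma^2_\star,C_{\mu_\star},C_\Delta$ (hence the same $\Gamma$): if $0<W_t<x$ and $t<\tau$ then $W_t=X_t\in(0,x)$ with $t<\tau$, so \eqref{eq:driftbar-est} applies verbatim; if $t\ge\tau$ then $\mu_t(W)=-\mu_\star$ and $\sigma^2_t(W)=0$, so the three inequalities hold provided $\mu_\star\le C_{\mu_\star}$ — and we may assume $\mu_\star\le C_{\mu_\star}$, the reverse case being vacuous, since then \eqref{eq:driftbar-est} forces $X$ to avoid $(0,x)$ on $[0,\tau)$ and, using $\Delta_\star(X)\le x/2$ to preclude jumping across $[0,x]$, the process cannot reach $x$ from $X_0\le x/2$ before $\tau$, so the left side of the claimed bound is $0$. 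The condition $\max\{\Delta_\star(W)\mu_\star/\sigma^2_\star,\,1/2\}\le C_\Delta$ carries over because $\Delta_\star(W)\le\Delta_\star(X)$.

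Finally I would apply Lemma \ref{lem:driftbar} to $W$ and translate back. It gives $P(\sup_{t\le T}W_t\ge x\mid W_0\le x/2)\le 4/\Gamma$ with $T=\lfloor\Gamma\rfloor x/16C_{\mu_\star}$, and $W_0=X_0$ so the conditioning agrees. On the event $\{\sup_{t\le\tau\wedge T}X_t\ge x\}$ there is $s\le\tau\wedge T$ with $X_s\ge x$; then $W_s=X_s\ge x$ if $s<\tau$, while if $s=\tau$ the correction $\mu_\star(t-\tau)^+$ vanishes at $t=\tau$ and $W_\tau=X_\tau\ge x$. Either way $\sup_{t\le T}W_t\ge x$, so the event of the corollary is contained in $\{\sup_{t\le T}W_t\ge x\}$ and inherits the bound $4/\Gamma$. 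The only points needing care are the semimartingale bookkeeping for the stopped-and-shifted process and the harmless degenerate case $\mu_\star>C_{\mu_\star}$; neither is a genuine difficulty, which is why this is an easy corollary.
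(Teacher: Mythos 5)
Your proposal is correct and uses essentially the same argument as the paper: the paper's proof is exactly this ``continuation trick,'' defining $\tilde X_t = X_{t\wedge\tau} - \mu_\star(t - t\wedge\tau)$, checking that \eqref{eq:driftbar-est} then holds without the restriction $t<\tau$, and applying Lemma \ref{lem:driftbar} to $\tilde X$, which agrees with $X$ up to time $\tau$. Your extra care with the semimartingale bookkeeping and the degenerate case $\mu_\star>C_{\mu_\star}$ is fine but not needed beyond what the paper states.
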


The final result of this section is about identifying the diffusion limit (or an ODE limit) for a stochastic process. Before stating the result let us introduce some more notations. A stochastic process $X_t = (X_{t,1},\dots,X_{t,d})$ is an $\RR^d$-valued semimartingale  if each component is a s-m. The  compensator (when it exists) of $X_t$ is defined component-wise and the predictable quadratic variation (when exists) is a $d\times d$ matrix-valued process with $\langle X \rangle_{t,ij} = \langle X_i^m, X_j^m \rangle_t$. Drift and diffusivity are similarly defined. Here we use Theorem 4.1 in \cite{EthierKurtz} to obtain easily checkable conditions for convergence to an ODE or diffusion limit.

\begin{lemma}[Diffusion limit]\label{lem:limproc}
Let $X_t^N$ be a sequence of qlc semimartingales, $a$ be a Lipschitz $d\times d$ matrix-valued function on $\R^d$ and $b:\R^d\to \R^d$ be Lipschitz. Suppose that a.s. $|\Delta X_t^N| \le c_N$ with $c_N \to 0$ as $N\to\infty$. Also assume that for each $T,R>0$,
\begin{align}
\sup_{t \le T\cap \tau_R^+(|X^N|)}|(X_i^N)^p - \int_0^t b(X_{i,s}^N)ds| & \to 0, \label{eq:diff-bd-1}\\
\sup_{t \le T\cap \tau_R^+(|X^N|)}|(\langle X_i^N,X_j^N \rangle_t - \int_0^t a_{ij}(XX^{\top}) ds| & \to 0,  \label{eq:diff-bd-2}
\end{align}
as $ N\to\infty$, where the convergence holds in probability, and $\tau_R^+(|X^N|) = \inf\{t:|X_t^N| \ge R\}$. Suppose $X_0^N \to x \in \RR^d$. Then $X_t^N$ converges in distribution to the diffusion process $x$ with
$$x_0=x \quad \text{and}\quad dx = b(x)dt + a(x)dB.$$
In particular, if $a=0$ then $X^N$ converges to the solution of the ODE system with
$$x_0=x \quad \text{and}\quad x' = b(x).$$
The above statements also hold if $X^N$ are qac with drift and diffusivity given by functions $\mu_N,\sigma^2_N$ satisfying
$$\sup_{|x| \le R}|\mu_N(x) - b(x)|,|\sigma_N^2(xx^{\top}) - a(xx^{\top})| \to 0.$$
\end{lemma}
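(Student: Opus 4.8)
The plan is to deduce Lemma~\ref{lem:limproc} from Theorem~4.1 of \cite{EthierKurtz}, which gives convergence of a sequence of semimartingales to the unique solution of a martingale problem, once one has (a) the right decomposition into predictable finite-variation part plus martingale, (b) convergence of those parts to the candidate drift and diffusion coefficients, (c) a bound on the jumps that vanishes in the limit, and (d) uniqueness of the limiting martingale problem. The point of the lemma is that these hypotheses reduce, for qlc semimartingales with bounded jumps, to the two explicit estimates \eqref{eq:diff-bd-1}--\eqref{eq:diff-bd-2} together with $c_N \to 0$ and $X_0^N \to x$, and that when $X^N$ is qac the estimates follow automatically from uniform convergence of the drift and diffusivity functions $\mu_N,\sigma^2_N$ to $b$ and $a$.

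\textbf{Step 1: Localization and reduction to a fixed compact set.} Since the candidate limit $x_t$ with $dx = b(x)\,dt + a(x)\,dB$ has Lipschitz coefficients, it does not explode, so for any $T>0$ and $\epsilon>0$ one can choose $R$ so large that $x$ stays in the ball of radius $R$ on $[0,T]$ with probability at least $1-\epsilon$; tightness of the $X^N$ on $[0,T\wedge\tau_R^+(|X^N|)]$ then gives, via a standard Aldous-type criterion, that the stopped processes $X^{N,R} := X^N_{\cdot \wedge \tau_R^+(|X^N|)}$ are tight in $D([0,T],\R^d)$, and that any limit point lies in $C([0,T],\R^d)$ because $|\Delta X^N_t| \le c_N \to 0$. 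Here the qlc assumption and the bounded-jumps assumption are what make $X^N$ a special semimartingale with the decomposition $X^N = X^N_0 + (X^N)^m + (X^N)^p$ of \eqref{eq:spec-sm}, and what let us use Lemma~\ref{lem:sm-est}-type estimates; $c_N\to 0$ also forces $\langle (X^N_i)^m \rangle$ to have vanishing jumps.

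\textbf{Step 2: Identifying the limit via the martingale problem.} For a test function $f \in C^2_b(\R^d)$, apply Itô's formula for semimartingales to $f(X^{N,R}_t)$; the resulting decomposition has a martingale part plus a drift part which, by Lemma~\ref{lem:taylor} (componentwise and with cross terms handled by Lemma~\ref{lem:qac-prod}) is close to $\int_0^{t} \big(\sum_i b_i(X^N_s)\partial_i f(X^N_s) + \tfrac12\sum_{ij} a_{ij}(X^N_s (X^N_s)^\top)\,\partial_{ij}f(X^N_s)\big)\,ds$ up to an error controlled by $\Delta_\star(X^N)\le c_N$ and by \eqref{eq:diff-bd-1}--\eqref{eq:diff-bd-2}. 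Passing to a weak limit point $x$ along a subsequence, the uniform-on-compacts convergence of $b,a$ (which are continuous, indeed Lipschitz) and the continuity of sample paths show that $f(x_t) - f(x_0) - \int_0^t \mathcal{L}f(x_s)\,ds$ is a martingale, where $\mathcal{L}$ is the generator with coefficients $b$ and $a a^\top$ — i.e., $x$ solves the martingale problem for $(b, aa^\top)$. Lipschitz coefficients give pathwise uniqueness for the SDE, hence uniqueness of the martingale problem (Yamada--Watanabe / \cite{EthierKurtz} Ch.~5), so every subsequential limit of $X^{N,R}$ agrees with the stopped diffusion; letting $R\to\infty$ and using the non-explosion of $x$ removes the stopping and yields $X^N \Rightarrow x$ on $[0,T]$. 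The ODE case $a=0$ is the same argument with the second-order term absent, where one may also simply cite the ODE part of \cite[Theorem~4.1]{EthierKurtz} directly.

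\textbf{Step 3: The qac case.} When $X^N$ is qac with drift $\mu_N$ and diffusivity $\sigma^2_N$, we have $(X^N_i)^p_t = \int_0^t \mu_{N,i}(X^N_s)\,ds$ and $\langle X^N_i, X^N_j\rangle_t = \int_0^t \sigma^2_{N,ij}(X^N_s (X^N_s)^\top)\,ds$ exactly (this is how $\mu,\sigma^2$ are defined in \eqref{eq:mu-sig}), so on the event $\{t \le \tau_R^+(|X^N|)\}$ the integrand in \eqref{eq:diff-bd-1} is $\mu_{N,i}(X^N_s) - b_i(X^N_s)$, bounded in absolute value by $\sup_{|x|\le R}|\mu_N(x)-b(x)|$, which by hypothesis tends to $0$; integrating over $s\le T$ gives \eqref{eq:diff-bd-1}, and identically for \eqref{eq:diff-bd-2}. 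Thus the qac hypotheses imply the general ones and we are back in Steps 1--2.

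\textbf{The main obstacle} I expect is Step 2: verifying carefully that the error terms in the Itô expansion of $f(X^{N,R})$ — the gap between $\mu_t(f(X^N))$ and $f'(X^N_t)\cdot\mu_t(X^N) + \tfrac12\,\mathrm{tr}(\sigma^2_t(X^N) D^2f(X^N_t))$, and the contribution of jumps of size $\le c_N$ — are uniformly small, and that the convergence \eqref{eq:diff-bd-1}--\eqref{eq:diff-bd-2} (which is only stated up to the exit time $\tau_R^+$) is enough to pin down the law of the limit on all of $[0,T]$. This is exactly the delicate localization/uniqueness bookkeeping that \cite[Theorem~4.1]{EthierKurtz} is designed to handle, so the bulk of the proof is really an invocation of that theorem after checking its hypotheses against ours; the qac reduction and the jump control via $c_N\to 0$ are routine.
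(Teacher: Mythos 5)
Your proposal is correct and takes essentially the same route as the paper: both reduce the statement to Theorem 4.1 of \cite[Chapter 7]{EthierKurtz}, with the Lipschitz property of $a,b$ giving well-posedness of the limiting martingale problem, the jump bound $c_N\to 0$ handling the jump conditions, \eqref{eq:diff-bd-1}--\eqref{eq:diff-bd-2} supplying the convergence hypotheses, and the qac case reduced to the general one exactly as in your Step 3. The only difference is presentational: the paper simply checks conditions (4.1)--(4.7) of that theorem directly (using \cite[I.4.24]{jacod} for the jumps of $X^p$ and $\langle X\rangle$), whereas your Steps 1--2 re-sketch the tightness/martingale-problem machinery that the cited theorem already packages.
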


\section{Deterministic limits} \label{sec:O1}

The goal of this section is show that on the fast time scale, $y^N$ and $(i^N,j^N,k^N)$ have deterministic limits, and to compute a relevant eigenvector for later on. In particular, none on the theorems of this section are used elsewhere in the paper; they are simply provided for context. It is worth noting that the limit for $(i^N,j^N,k^N)$ is a linear system, and only comes into force once $y^N$ is close to its equilibrium value $y_*$. \\

Recall that, in terms of original notation, $J^N = II$, $K^N = SI$ and $L^N = SS$.
From the transition rates of the partner model we get the following equations for the drift (see \cite[Section 5]{FED} for a detailed derivation):
\begin{align}
\mu(S^N)  & = 2r_- L^N + r_- K^N  - 2\frac{r_+}{N} \cdot \frac{S^N(S^N-1)}{2} - \frac{r_+}{N} S^NI^N + I^N
\nonumber\\
\mu(I^N) & = 2r_- J^N + r_- K^N - 2\frac{r_+}{N} \cdot \frac{I^N(I^N-1)}{2} - \frac{r_+}{N} S^NI^N - I^N
\nonumber\\
\mu(J^N) & = - r_- J^N  + \frac{r_+}{N} \cdot \frac{I^N(I^N-1)}{2} - 2 J^N + \lambda K^N
\label{ODE1}\\
\mu(K^N) & = - r_- K^N + \frac{r_+}{N}\cdot S^NI^N + 2 J^N  - (\lambda+1) K^N
\nonumber\\
\mu(L^N) & = - r_- L^N + \frac{r_+}{N}\cdot \frac{S^N(S^N-1)}{2} +  K^N
\nonumber
\end{align}
Recalling that $Y^N=S^N+I^N$ is the number of unpartnered individuals and considering only the rate of partnership formation and dissolution, we obtain
\beq
\mu(Y^N)   = r_- (N-Y^N) - \frac{r_+}{N} Y^N(Y^N-1).
\label{Y-drift}
\eeq
Since $y^N_t = Y^N_t/N$,
$$
\mu(y^N) = r_-(1-y^N) - r_+ (y^N)^2 + O(1/N).
$$
Since transition rates are $O(N)$ and $y^N$ jumps by $\pm 2/N$ we have $\sigma^2(y^N) \le CN /N^2 = C/N = o(1)$, for some absolute constant $C$. Using Lemma \ref{lem:limproc} we obtain the following result.

\begin{theorem}\label{thm:y-conv}
 If $y^N_0 \to y_0$ as $N\to\infty$ then $y^N_t \Rightarrow y_t$,
the solution to the initial value problem with $y_0$ and
\beq
\frac{dy}{dt} = r_-(1-y) - r_+ y^2
\label{yode}
\eeq
\end{theorem}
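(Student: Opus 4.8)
The plan is to apply the diffusion-limit result Lemma \ref{lem:limproc} in the degenerate case $a\equiv 0$, so that the limiting object is the ODE \eqref{yode}. First I would record the structural facts the lemma needs. Since $y^N=Y^N/N$ is a Lipschitz function of the finite-state continuous-time Markov chain $(S^N,I^N,J^N,K^N,L^N)$, it is a qac (hence qlc) semimartingale. Moreover $Y^N$ takes values in $\{0,1,\dots,N\}$, so $y^N_t\in[0,1]$ for all $t$, and the only transitions that change $Y^N$ are partnership formation ($Y^N\to Y^N-2$) and dissolution ($Y^N\to Y^N+2$); hence $|\Delta y^N|\le 2/N=:c_N\to 0$.

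Next I would pin down the drift and diffusivity. From \eqref{Y-drift} and $y^N=Y^N/N$ we get, for Lebesgue-a.e.\ $t$,
$$\mu_t(y^N)=r_-(1-y^N_t)-r_+(y^N_t)^2+\tfrac{r_+}{N}y^N_t,$$
so writing $b(y):=r_-(1-y)-r_+y^2$ we have $|\mu_t(y^N)-b(y^N_t)|\le r_+/N$ uniformly. For the diffusivity, the total rate of $Y^N$-transitions is at most $\tfrac{r_+}{N}\binom{Y^N}{2}+r_-(J^N+K^N+L^N)\le (r_++r_-)N/2$, and each such transition contributes $(2/N)^2$ to $\sigma^2(y^N)$, so $\sigma^2_t(y^N)\le 2(r_++r_-)/N$ uniformly; thus $\sup_{|y|\le R}|\sigma^2_N(yy^{\top})-0|\to 0$ for every $R$. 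Together with $y^N_0\to y_0$, these are exactly the hypotheses of the qac version of Lemma \ref{lem:limproc}, with drift $b$ and diffusivity $a\equiv 0$ — once we have dealt with the one caveat below.

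The one point that is not a direct substitution — and the step I expect to be the only mildly delicate one — is that $b$ is not globally Lipschitz (because of the $-r_+y^2$ term), whereas Lemma \ref{lem:limproc} asks for a Lipschitz $b$. I would circumvent this by replacing $b$ by a globally Lipschitz $\tilde b:\R\to\R$ agreeing with $b$ on $[-1,2]$ (extend $b$ affinely outside this interval). Since $y^N_t\in[0,1]$ at all times, every estimate above holds verbatim with $\tilde b$ in place of $b$, so Lemma \ref{lem:limproc} gives $y^N\Rightarrow\tilde y$, where $\tilde y$ solves $\tilde y_0=y_0$, $\tilde y'=\tilde b(\tilde y)$. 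To conclude, I would observe that the solution $y_t$ of the original problem \eqref{yode} with $y_0\in[0,1]$ never leaves $[0,1]$: $b(0)=r_->0$ prevents exit below $0$ and $b(1)=-r_+<0$ prevents exit above $1$ (equivalently, $y_t$ moves monotonically toward the unique root $y_*\in(0,1)$ of $b$). Hence $y_t$ is also a solution of $\tilde y'=\tilde b(\tilde y)$ with the same initial value, and by uniqueness $\tilde y=y$, giving $y^N\Rightarrow y$ as claimed. Everything apart from this Lipschitz-truncation/invariance argument is bookkeeping already essentially carried out in the paragraph preceding the statement.
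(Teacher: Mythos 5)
Your proposal is correct and follows essentially the same route as the paper: compute $\mu(y^N)=r_-(1-y^N)-r_+(y^N)^2+O(1/N)$ and $\sigma^2(y^N)=O(1/N)$ from the transition rates, then invoke Lemma \ref{lem:limproc} with $a\equiv 0$ to get the ODE limit. Your Lipschitz-truncation and $[0,1]$-invariance step is a small extra piece of bookkeeping the paper leaves implicit, but it does not change the argument.
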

\bigskip

\noindent
From the limiting differential equation, we see that in equilibrium
\beq
0 = r_- (1-y_*) - r_+ y_*^2.
\label{ystar}
\eeq

Since $S^N + I^N + 2(J^N+K^N+L^N) = N$ and $S^N$ is accounted for by $Y^N$, the three remaining equations are those for $I^N,J^N$ and $K^N$. Since $Y^N$ tends towards $Ny_*$ it is helpful to introduce the variable $Z^N=Y^N-Ny_*$ to describe its distance from equilibrium. Most of the terms in these equations are linear, with the exception of terms involving $S_tI_t$ and $I_t(I_t-1)$. Writing $S^N = Y^N-I^N = Ny_* + Z^N-I^N$, from \eqref{ODE1} we find that
\begin{align}
\begin{pmatrix} \mu(I^N) \\ \mu(J^N) \\ \mu(K^N) \end{pmatrix} = A \begin{pmatrix} I^N \\ J^N \\ K^N \end{pmatrix} 
+ r_+ \frac{Z^NI^N}{N}\begin{pmatrix} -1 \\ 0 \\ 1 \end{pmatrix}
+ \frac{r_+(I^N)^2}{N}\begin{pmatrix}0 \\ 1/2 \\ -1 \end{pmatrix}
+ \frac{r_+I^N}{N}\begin{pmatrix}1 \\ -1/2 \\0 \end{pmatrix}
\label{IJK}
\end{align}
where the matrix $A$ is given by
\beq
A = \begin{pmatrix} -(r_+y_*+1) & 2r_- & r_- \\ 0 & -(r_- +2) & \lambda \\ r_+y_* & 2 & -(r_- +\lambda+1) \end{pmatrix}.
\label{Adef}
\eeq
Since $I^N,J^N,K^N$ jump by $O(1)$ at rate $O(I^N+J^N+K^N)$, the diffusivity matrix has entries of size $O(I^N+J^N+K^N)$. 

Results in \cite{Fox} suggest, and our results will show that for any $\ep>0$, after $\ep N^{1/2}$ time $I^N$, $J^N$, and $K^N$ are $O(\sqrt{N})$. Looking at \eqref{IJK}, if $I^N+J^N+K^N$ and $Z^N$ are $o(1)$ then the entries of the diffusivity matrix, as well as the non-linear terms in the drift, are $o(I^N+J^N+K^N)$. Theorem \ref{thm:y-conv} implies that if $Z^N_0=o(N)$ then $Z^N_s=o(N)$ uniformly on (fixed) finite time intervals $[0,t]$. Thus, if we rescale to $i^N_t = I^N_t/\sqrt{N}$, $j^N_t = J^N_t/\sqrt{N}$ and $k^N_t = K^N_t/\sqrt{N}$ (in fact any rescaling by $1/f(N)$ where $f(N)\to\infty$ will work but $f(N)=\sqrt{N}$ is the most relevant to the main results) and use Lemma \ref{lem:limproc} we obtain the following result. 

\begin{theorem}\label{thm:detlim2}
If $y_0^N \to y_*$ and $(i_0^N,j_0^N,k_0^N) \to (i_0,j_0,k_0)$ as $N\to\infty$ then $(i_t^N,j_t^N,k_t^N) \Rightarrow (i_t,j_t,k_t)$, the solution to the initial value problem with $(i_0,j_0,k_0)$ and
\begin{align}
\frac{di}{dt}  & = - (r_+ y_*+1)i  + 2r_- j + r_- k
\nonumber\\
\frac{dj}{dt} & = - r_- j - 2 j + \lambda k
\label{ijk}\\
\frac{dk}{dt} & =  r_+ y_* \cdot i + 2 j  - (r_- +\lambda+1) k
\nonumber
\end{align}
\end{theorem}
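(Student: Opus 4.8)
The plan is to apply Lemma \ref{lem:limproc} in its ODE form (diffusivity $a\equiv 0$) to the sequence $X^N_t=(i^N_t,j^N_t,k^N_t)=(I^N_t,J^N_t,K^N_t)/\sqrt N$, with limiting drift $b(i,j,k)=A(i,j,k)^\top$ given by the right-hand side of \eqref{ijk} and $A$ as in \eqref{Adef}; being linear, $b$ is Lipschitz. Each $X^N$ is the image of the infection-process Markov chain under a linear (hence Lipschitz) map, so by the discussion in Section \ref{sec:sampath} it is a qac (in particular qlc) semimartingale whose drift and diffusivity are given by the explicit transition sums there; since the chain jumps by $O(1)$ amounts we have $|\Delta X^N|\le c_N$ with $c_N=O(1/\sqrt N)\to 0$, and $X^N_0\to(i_0,j_0,k_0)$ by hypothesis. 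It then remains to verify the two convergences \eqref{eq:diff-bd-1}--\eqref{eq:diff-bd-2} up to the exit times $\tau_R^+(|X^N|)$.

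For the drift, divide \eqref{IJK} by $\sqrt N$. Writing $Z^N/N=y^N-y_*$, the drift of $X^N_t$ equals $AX^N_t$ plus three remainder terms, of respective orders $r_+|y^N_t-y_*|\,|i^N_t|$, $r_+(i^N_t)^2/\sqrt N$ and $r_+|i^N_t|/N$. On $\{t\le T\wedge\tau_R^+(|X^N|)\}$ one has $|X^N_t|\le R+c_N$, so the last two remainders are bounded pointwise by $C_R/\sqrt N$ and $C_R/N$ and their integrals over $[0,t]$ vanish. For the first remainder we use that $y^N_0\to y_*$ and that, by \eqref{ystar}, $y_*$ is the (constant) solution of \eqref{yode} started at $y_*$; Theorem \ref{thm:y-conv} then gives $y^N\Rightarrow y_*$ in $C[0,T]$, and convergence in distribution to a deterministic limit upgrades to $\sup_{s\le T}|y^N_s-y_*|\to 0$ in probability. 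Hence the integral of the first remainder over $[0,t]$ is at most $C_RT\sup_{s\le T}|y^N_s-y_*|\to 0$ uniformly for $t\le T\wedge\tau_R^+$, which is \eqref{eq:diff-bd-1} with $b(x)=Ax$.

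For the diffusivity, the transitions that change $I^N$ (recovery of a single infected, $I+I\to II$, $S+I\to SI$, and dissolution of an $II$ or $SI$ pair) all have rates that are $O(I^N+J^N+K^N)=O(R\sqrt N)$ on $\{t<\tau_R^+(|X^N|)\}$ — note $r_+S^NI^N/N\le r_+I^N$ — while each such jump changes $I^N$ by $O(1)$, so $\sigma^2_t(i^N)=\sigma^2_t(I^N)/N=O(N^{-1/2})$, and likewise $\frac{d}{dt}\langle i^N,j^N\rangle_t$ and the other cross-terms are $O(N^{-1/2})$. Integrating over $[0,T\wedge\tau_R^+]$ gives \eqref{eq:diff-bd-2} with $a\equiv 0$. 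Lemma \ref{lem:limproc} then yields $(i^N_t,j^N_t,k^N_t)\Rightarrow(i_t,j_t,k_t)$, the solution of the linear system \eqref{ijk} started from $(i_0,j_0,k_0)$.

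Almost all of this is routine bookkeeping with the drift/diffusivity formulas of Section \ref{sec:sampath}; the one point that genuinely requires thought is the vanishing of the bilinear coupling term $r_+(Z^N/N)i^N$ in the drift, and this is exactly where the separation of time scales enters: on any fixed time interval $Z^N=o(N)$ because $y^N$ has already relaxed (deterministically) to its equilibrium $y_*$, so the equation for $(i^N,j^N,k^N)$ decouples from the $O(\sqrt N)$ fluctuations of the number of singles and reduces to the linear system \eqref{ijk}. This also makes precise the remark preceding the statement that \eqref{ijk} ``only comes into force once $y^N$ is close to $y_*$.''
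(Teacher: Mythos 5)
Your argument is correct and is essentially the paper's own proof: the paper likewise rescales \eqref{IJK} by $\sqrt{N}$, notes that the nonlinear drift terms and the diffusivity (of order $O((I^N+J^N+K^N)/N)$ after rescaling) vanish because $Z^N=o(N)$ uniformly on compact time intervals by Theorem \ref{thm:y-conv}, and then invokes Lemma \ref{lem:limproc} with $a\equiv 0$. Your write-up simply fills in the routine bounds that the paper leaves as a sketch, including the standard upgrade of $y^N\Rightarrow y_*$ to uniform convergence in probability.
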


\bigskip
Theorem \ref{thm:detlim2} is not used elsewhere in the paper; it only motivates the following calculations.
To analyze the limit behavior of \eqref{ijk}, which is a linear system, we write the condition for an equilibrium as $A (i,j,k)^{\sf T}=0$. To have a nontrivial solution we need $\hbox{det}(A)=0$.
Expanding around the first row
\begin{align}\label{detA}
\hbox{det}(A) & = -(r_+y_*+1) [ (r_-+2)(r_-+\lambda+1) - 2\lambda] \nonumber \\
& - 2 r_-(-\lambda)r_+ y_* + r_-(r_-+2) r_+ y_*
\end{align}
For $\det(A)=0$ we need
\beq
(r_+y_*+1)[ 2 + (3 + \lambda) r_- + r_-^2 ] = r_+ y_* r_- (r_- + 2 + 2\lambda)
\label{detA0}
\eeq
In Section \ref{sec:Rzero} we show that \eqref{detA0} is equivalent to $R_0=1$. This indeed shows that there is a non-trivial equilibrium. We now proceed to find the solution. 
To have $A (i,j,k)^{\sf T} = 0$ we must have
\begin{align}
-(r_+ y_*+1) i + 2r_- j + r_- k & = 0
\nonumber\\
-(r_-+2) j + \lambda k & = 0
\label{ILeq}\\
r_+ y_* i + 2j - (r_- + \lambda + 1) k & = 0
\nonumber
\end{align}
The second equation implies $j = \lambda k/(r_-+2)$. Using this in the first equation we want
$$
-(r_+ y_*+1) i + \frac{2r_- \lambda k}{r_-+2}  + r_- k  = 0.
$$
Solving we see that if
\beq
\alpha = \frac{r_-}{r_+ y_*+1} \left( \frac{2\lambda}{r_-+2} +1 \right) \qquad\text{ and } \qquad \beta = \frac{\lambda}{r_-+2}
\label{eq:invray}
\eeq
then the ray $(\alpha z, \beta z, z)$, $z \ge 0$ is invariant for the dynamical system \eqref{ijk}.

To prove the dynamical system converges to this ray we note that
$$
\theta I - A = \begin{pmatrix} \theta+ (r_+y_*+1) & -2r_- & -r_- \\
0 & \theta+ (r_- +2) & -\lambda \\
-r_+y_* & -2 & \theta + (r_- + \lambda+1) \end{pmatrix}.
$$
The eigenvalues of $A$ are the roots of $0 = \det(\theta I - A) = \theta^3 + b_1 \theta^2
+ b_2 \theta + b_3$, where
$$
b_1 = \hbox{trace}(-A) =  (r_+y_*+1) + (r_- +2) + r_- + (\lambda+1)
$$
and $b_2$ is the sum of the $2 \times 2$ principal minors of $-A$, which is given by
\begin{align*}
&  (r_+y_*+1)(r_-+2) + (r_- +2)(r_- + \lambda+1) - 2\lambda  + (r_+y_*+1)(r_- + \lambda+1) - r_- r_+ y_*.
\end{align*}
Note that the above is positive since each of the two negative terms are cancelled by a part of the positive term that precedes it. Since $b_3 = \det(-A)=0$ and $b_1b_2-b_3>0$ one can use the Routh Hurwicz conditions to conclude that the other two eigenvalues have negative real part. Alternatively one can observe  that the non-zero roots of the equation $\det(\theta I -A)$ are 
$$
\frac{-b_1 \pm \sqrt{b_1^2 - 4 b_2^2}}{2}.
$$
Therefore the dynamical system \eqref{ijk} indeed converges to the invariant ray $(\alpha z, \beta z, z), z \ge 0$. 
{A quantitative statement that applies to the infection process is given in Lemma \ref{ray}.}

\clearp

\section{Worklow and diffusion limits} \label{sec:workflow}

In this section we give a precise statement of the lemmas corresponding to the workflow steps outlined at the end of the Introduction, and use these lemmas to prove the main results, Theorems \ref{finiv},\ref{infiv}, and \ref{T0lim}. The lemmas are listed in the same numerical order as in the Introduction, which is also the order in which they will be proved in subsequent sections.

\subsection{Step 1: the number of singles $Y_t$} \label{subsec:Ybnd}

Recall that (see \eqref{ODE1}-\eqref{Y-drift}) on the original time scale
\begin{align*}
&Y^N \to Y^N+2\quad\hbox{at rate}\quad r_-(N-Y^N)/2 \\
&Y^N \to Y^N-2\quad\hbox{at rate}\quad r_+Y^N(Y^N-1)/2N = r_+(Y^N)^2/2N + O(1).
\end{align*}
If we let $z^N_t = N^{-1/2}(Y^N_t - Ny_*)$, that is, $Y^N_t = Ny_* +N^{1/2}z^N_t$  then
 \begin{align}
 z^N \to z^N + 2/N^{1/2} \quad \hbox{at rate} & \quad r_-[(1-y_*)N - z^NN^{1/2}]/2 \label{eq:z-trans} \\
 z^N \to z^N - 2/N^{1/2} \quad \hbox{at rate} & \quad Nr_+ (y_* + z^NN^{-1/2})^2/2 + O(1) \nonumber \\
 & = N r_+ y_*^2/2 + r_+y_* z^N N^{1/2} + O(1 \vee (z^N)^2). \nonumber
 \end{align}
Since $r_-(1-y_*) = r_+y_*^2$,
\beq
\mu(z^N) = -\mu_z z^N  + O((1 \vee (z^N)^2)/N^{1/2}) \quad\hbox{where}\quad \mu_z = r_-+2r_+y_* \label{mudiff}
\eeq
and
\beq
\sigma^2(z^N) = \sigma^2_z + O(N^{-1} \vee z^N/N^{1/2}) \quad \hbox{where} \quad \sigma^2_z = 2[r_-(1-y_*) + r_+ y_*^2].
\label{sigdiff}
\eeq
So, if $z^N = o(N^{1/4})$, then using Lemma \ref{lem:limproc}, $z_t^N \Rightarrow z_t$ which satisfies
$$
dz = - \mu_z z \, dt + \sigma_z \, dB
$$
i.e., the limit $z_t$ is an Ornstein-Uhlenbeck process. To control the behavior of $z_t^N$ we will prove the following two facts.

\begin{lemma}[Step 1]\label{maxz}
There is a constant $C_{\ref{maxz}}$ so that with high probability,
\begin{itemize}
 \item $|Z^N_t| \le C_{\ref{maxz}}$ for some $t \le C_{\ref{maxz}}\log N$ and
 \item if $|z_0^N| \le (C_{\ref{maxz}}/2)\sqrt{\log N}$ then $|z_t^N| \le C_{\ref{maxz}}\sqrt{\log N}$ for all $t \le N$.
\end{itemize}
\end{lemma}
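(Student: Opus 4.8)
The two bullets are proved by different methods: the first is a ``coming down'' statement for the unrescaled quantity $Z^N=\sqrt N\,z^N$, while the second is a ``stay in a shrinking window'' statement for the rescaled $z^N$. Throughout I work on the fast time scale, where (see \eqref{eq:z-trans} and the computation after it) $Z^N$ has bounded jumps of size $0$ or $\pm2$, so $Z^N$ always lives on a fixed coset of $2\Z$. One crucial consequence: if $Z^N_0>C_{\ref{maxz}}$ and $\tau:=\tau^-_{C_{\ref{maxz}}}(Z^N)$, then since the entering jump is a $-2$ we have $Z^N_\tau\in(C_{\ref{maxz}}-2,C_{\ref{maxz}}]\subseteq[-C_{\ref{maxz}},C_{\ref{maxz}}]$ as soon as $C_{\ref{maxz}}\ge2$, and the mirror statement holds if $Z^N_0<-C_{\ref{maxz}}$. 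Hence it suffices to show that, from any start, the band $|Z^N|\le C_{\ref{maxz}}$ is hit within $O(\log N)$ time whp; by the mirror symmetry I may assume $Z^N_0>C_{\ref{maxz}}$.

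\textbf{First bullet.} From \eqref{Y-drift} and \eqref{ystar} one computes $\mu_t(Z^N)\le-\mu_z Z^N_t+C_0$ for a constant $C_0$ whenever $0<Z^N_t$ (the quadratic correction $-r_+(Z^N_t)^2/N$ only helps). Fix a constant $T_0$ with $e^{-\mu_z T_0}=1/8$ and a large constant $a_0$. A stopped-process estimate using this drift bound together with Markov's inequality gives: conditionally on $\F_{(m-1)T_0}$, if $a:=Z^N_{(m-1)T_0}>a_0$ then with probability at least $1/3$ the process either hits the band by time $mT_0$ or satisfies $Z^N_{mT_0}\le a/4$; and if $a\in(C_{\ref{maxz}},a_0)$ then, since near equilibrium $Z^N$ is a near-symmetric random walk run at speed $\Theta(N)$ started $O(1)$ above an effectively absorbing level (absorbing because the drift there is downward), it hits the band within one more interval whp. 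Call any interval on which the above ``or'' occurs a \emph{success}; after $\lceil\log_4(N/C_{\ref{maxz}})\rceil$ successes the band must have been hit (one cannot quarter $Z^N$ that many times from $\le N$ and stay above $C_{\ref{maxz}}$). Since the number of successes among the first $K$ intervals stochastically dominates a $\mathrm{Binomial}(K,1/3)$, taking $K=6\lceil\log_4(N/C_{\ref{maxz}})\rceil$ and applying a Chernoff bound shows $\tau\le KT_0=O(\log N)$ whp.

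\textbf{Second bullet.} Work with $z^N$ stopped at $\tau_\ast:=\tau^+_{C_{\ref{maxz}}\sqrt{\log N}}(z^N)\wedge\tau^-_{-C_{\ref{maxz}}\sqrt{\log N}}(z^N)$; inside the window $(z^N)^2/\sqrt N=O(\log N/\sqrt N)=o(1)$, so by \eqref{mudiff}--\eqref{sigdiff} the drift is $-\mu_z z^N+o(1)$, the diffusivity is $\sigma_z^2+o(1)$, and the jumps are $O(N^{-1/2})\to0$. Smoothing $|\cdot|$ by $f_\eps(x)=\sqrt{x^2+\eps^2}$ for a small constant $\eps$ and applying Lemma \ref{lem:taylor}, the process $g_t:=f_\eps(z^N_{t\wedge\tau_\ast})$ has $\mu_t(g)\le-\mu_z g_t+C_1$ and diffusivity $\le\sigma_z^2+o(1)$. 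Partition $[0,N]$ into $\lceil N/T_0\rceil$ intervals of a fixed constant length $T_0$. On each interval $\langle g\rangle$ increases by at most $(\sigma_z^2+o(1))T_0=O(1)$, so the sub-Gaussian form of Lemma \ref{lem:sm-est} (optimizing $\phi$, so that $P(\sup|g^m|\ge M)\le2e^{-M^2/4V}$ for $V$ a bound on the pqv increment) bounds both the martingale part of $g$ and the exponentially weighted martingale $\int e^{\mu_z s}\,dg^m_s$ over the interval by $c\sqrt{\log N}$ with probability $\ge1-O(N^{-3})$, for a constant $c$ depending only on $\sigma_z,\mu_z,T_0$. Feeding these into, respectively, the trivial bound $g_t\le g_{(m-1)T_0}+\sup|g^m|+C_1T_0$ and the integrating-factor bound $g_{mT_0}\le e^{-\mu_z T_0}g_{(m-1)T_0}+|{\textstyle\int}e^{\mu_z s}dg^m_s|\,e^{-\mu_z T_0}+C_1/\mu_z$ shows, on this good event and assuming $|z^N_{(m-1)T_0}|\le(C_{\ref{maxz}}/2)\sqrt{\log N}$, that $\sup$ over the interval of $|z^N|$ is at most $(C_{\ref{maxz}}/2+c+o(1))\sqrt{\log N}$ and that $|z^N_{mT_0}|\le(e^{-\mu_z T_0}C_{\ref{maxz}}/2+c+o(1))\sqrt{\log N}$. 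Choosing $C_{\ref{maxz}}$ large enough that $2c\le C_{\ref{maxz}}$ and $2c\le(1-e^{-\mu_z T_0})C_{\ref{maxz}}$ (with slack for the $o(1)$'s), the half-window property propagates from each interval endpoint to the next, the full window is never exited (hence $\tau_\ast>N$ and the stopped process equals $z^N$), and a union bound over the $O(N)$ intervals gives total failure probability $O(N^{-2})\to0$. Combining with the first bullet --- restart the second bullet at the hitting time $t_1\le C_{\ref{maxz}}\log N$ of the band, where $|z^N_{t_1}|\le C_{\ref{maxz}}/\sqrt N$ trivially lies in the half-window --- yields $|z^N_t|\le C_{\ref{maxz}}\sqrt{\log N}$ for all $t\in[t_1,t_1+N]\supseteq[C_{\ref{maxz}}\log N,N]$, i.e.\ the estimate $|z_N(t)|=O(\sqrt{\log N})$ for $o(1)\le t\le\omega(1)$ needed in Step 1.

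\textbf{Main obstacle.} The delicate part is the first bullet: because $Z^N$ fluctuates on scale $\sqrt N$, at no fixed time is $|Z^N|=O(1)$ likely, so one cannot simply run the drift ODE down to $O(1)$ --- the descent is genuinely stochastic, and the geometric-in-expectation contraction must be combined with recurrence of the near-equilibrium walk and the $\pm2$-lattice structure that forces the first passage below $C_{\ref{maxz}}$ to actually land in $[-C_{\ref{maxz}},C_{\ref{maxz}}]$. In the second bullet the subtle calibration is keeping $T_0$ a \emph{fixed} constant --- so that $\langle g\rangle$ accrues only $O(1)$ per interval and the sub-Gaussian tail is $N^{-\Theta(1)}$, strong enough for a union bound over $\Theta(N)$ intervals --- while still taking $C_{\ref{maxz}}$ large relative to the resulting fluctuation constant, so the $\sqrt{\log N}$-window is self-maintaining.
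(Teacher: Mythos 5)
Your second bullet is essentially sound: partitioning $[0,N]$ into constant-length blocks, using Lemma \ref{lem:sm-est} with optimized $\phi$ to get sub-Gaussian $c\sqrt{\log N}$ fluctuation bounds with failure probability $N^{-\Theta(1)}$ per block, and propagating a half-window condition at block endpoints is a legitimate alternative to the paper's route, which instead feeds the same drift/diffusivity estimates \eqref{mudiff}--\eqref{sigdiff} into the drift-barrier Lemma \ref{lem:driftbar} applied to $X=-x+|z^N-x|$ with $x=(C_{\ref{maxz}}/2)\sqrt{\log N}$ (so the iteration and union bound are packaged inside that lemma rather than done by hand). Modulo routine care (stopping each block so the pqv bound applies, strong Markov restarts), that half of your argument would go through.

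The first bullet, however, has a genuine gap at the compounding step. You declare an interval a success if $Z^N$ either hits the band or is quartered by the end of the interval, show success probability $\ge 1/3$, and then claim that after $\lceil\log_4(N/C_{\ref{maxz}})\rceil$ successes ``the band must have been hit (one cannot quarter $Z^N$ that many times from $\le N$ and stay above $C_{\ref{maxz}}$).'' This inference is false: successes do not telescope across intervening failures, because during a failed constant-length interval $Z^N$ can \emph{increase} by $\Theta(\sqrt{N})$ with probability bounded away from zero (its diffusivity is $\Theta(N)$, so fluctuations over a fixed $T_0$ are of order $\sqrt{N}$). In the critical regime $Z^N\asymp\sqrt{N}$ --- exactly where drift and noise balance, and where the process typically sits --- each success takes the value from $\asymp\sqrt N$ to $\asymp\sqrt N/4$ and each failure can restore it to $\asymp\sqrt N$, so a trajectory hovering at scale $\sqrt N$ for the whole $O(\log N)$ window is entirely consistent with your Binomial count of successes; nothing forces the band to be reached. (Your near-equilibrium random-walk step only covers starts in $(C_{\ref{maxz}},a_0)$ with $a_0$ a constant, so it does not rescue this regime.) The paper avoids the issue with one clean step: recentre at the exact zero $Y_*^N$ of the drift $F_N$ (so that, by concavity, $\mu(\tilde Z^N)\le -r\tilde Z^N$ with no additive constant), observe that $\xi_t=e^{r(t\wedge\tau)}\tilde Z^N_{t\wedge\tau}$, stopped at level $2$, is a nonnegative supermartingale, and apply Markov's inequality at the single time $t=(2/r)\log N$: if the level has not been hit then $\xi_t\ge 2e^{rt}$, an event of probability at most $e^{-rt}N/2=o(1)$. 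This exponential-supermartingale argument is exactly the ``run the drift down'' strategy you dismissed in your obstacle paragraph; the stochasticity you worry about is absorbed by the supermartingale property, and no block decomposition or success counting is needed for this bullet.
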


\vskip10pt

To understand the $\sqrt{\log N}$ scaling note that the stationary distribution of the Ornstein-Uhlenbeck process is a normal,
whose tail scales roughly like $\exp(-x^2/2\sigma^2)/x$; using the heuristic that the time to reach a rare set scales like the reciprocal of its stationary probability and letting $x$ be a constant times $\sqrt{\log N}$, we find the time to reach level $x$ scales roughly like $N^{p}$ for constant $p$, which (with $p=1$) is the time scale on which we control $z^N_t$.

\subsection{Step 2: a special linear combination of $(I^N,J^N,K^N)$}\label{subsec:H}

In Section \ref{sec:O1} we showed that $(i^N,j^N,k^N)$ converges quickly (in $O(1)$ time) to the invariant ray \eqref{eq:invray} for the ODE \eqref{ijk}.
Thus the knowledge of one component determines the other two, provided we have good control on the distance of the triple from the invariant ray. 
Recall that in the example from the Introduction (contact process on a complete graph at criticality), the negative drift of $X_t$ brings it down to the natural spatial scale for the diffusion, $C_\ep N^{1/2}$, within $\ep N^{1/2}$ time. 
This suggests that in our model, we should look for a linear combination of $(I^N,J^N,K^N)$ that has negative drift when it takes values that are $\omega(N^{1/2})$.\\

Motivated by these observations we introduce the variable $H^N=I^N+\gamma J^N+\eta K^N$ where
$(1,\gamma,\eta) A = 0$ and $A$ is given by \eqref{Adef}. Existence is guaranteed since $\det(A)=0$, and the desired constants satisfy
\begin{align}
-(r_+y_*+1)  + r_+ y_* \eta & = 0
\nonumber \\
2r_-  - (r_-+2)\gamma + 2\eta & = 0
\label{leftev}\\
r_-  + \lambda \gamma -(r_-+\lambda + 1)\eta  & =  0
\nonumber
\end{align}
Solving for $\eta$ in the first equation, and $\gamma$ in the second, we find that
\begin{align}\label{eq:eta-gamma}
\eta &= (r_+y_*+1)/r_+y_* \nonumber \\ 
\gamma &= \left( \frac{2r_-}{r_-+2} + \eta \cdot \frac{2}{r_-+2} \right)
\end{align}
Clearly, $\eta>1$, which implies $\gamma>1$. By assumption, $R_0=1$, so in the notation of \cite{FED} we have $\lambda=\lambda_c$. Since $\lambda$ is finite it follows from \cite[Theorem 2.1]{FED} that $r_+y_*>1$ and so $\eta<2$, which easily implies $\gamma<2$. We record these in a display equation for later use:
\begin{align}
1 < \eta < 2 \quad \text{and} \quad 1< \gamma < 2.
\label{eq:eta-gamma2}
\end{align}
We now compute the drift of $H^N$, using \eqref{IJK}. From our choice of linear combination, the linear part drop out, and only the fluctuation term with $Z^N$, the quadratic part, and the lower order term remain:
\beq
\mu(H^N) = (\eta-1) r_+ \frac{Z^NI^N}{N} - (\eta -\gamma/2) r_+ \frac{(I^N)^2}{N} + (1-\gamma/2)r_+\frac{I^N}{N}.
\label{Hdrift}
\eeq
This gets a bit nicer if we rescale in space and time. With $h_N(t) = H^N(\sqrt{N}t)/\sqrt{N}$ we have
\begin{align}\label{fasthdrift}
\mu(h_N) &= (\eta-1) r_+ z_Ni_N - (\eta - \gamma/2) r_+ i^2_N + (1-\gamma/2)i_N/\sqrt{N}
\end{align}
From \eqref{eq:eta-gamma2} we have $\gamma/2 < 1 < \eta$ so the coefficient in the second term is negative. If $i_N = \omega(1)$ the second term dominates the third. Using the bound $|z^N| = O(\sqrt{\log N})$, the second term dominates the first term when $i_N = \omega(\sqrt{\log N})$. Thus, to obtain a closed-form differential inequality for $E[h_N]$ which is useful when $h_N = \omega(\sqrt{\log N})$ it would be enough to show that $I^N/H^N$ is bounded away from zero after a short time, which is done in Section \ref{sec:bdH}. Writing $\tau_{N^{1/5}}^-(H_N) = \inf\{t:H_N(t) \le N^{1/5}\}$ we will prove the following result. Note we are using the slow time scale here.

\begin{lemma}[Step 2] \label{bdI}
Let $\tau = \tau^-_{N^{1/5}}(H_N)\wedge \tau^+_{C_{\ref{maxz}}\sqrt{\log N}}(|z_N|)$. With high probability,
\begin{itemize}
 \item $h_N(t) \le \frac{1}{2}\log N$ for some $t \le 1/\sqrt{\log N}$ and
 \item if $h_N(0) \le \frac{1}{2}\log N$ then $h_N(t) \le \log N$ for all $t \le N^{1/2} \wedge \tau$.
\end{itemize}
\end{lemma}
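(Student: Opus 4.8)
The heart of the argument is the drift identity \eqref{fasthdrift}, which on the slow time scale reads $\mu(h_N)=(\eta-1)r_+ z_N i_N-(\eta-\tfrac{\gamma}{2})r_+ i_N^2+(1-\tfrac{\gamma}{2})i_N/\sqrt N$: our choice of $(1,\gamma,\eta)$ has cancelled all the linear terms, leaving a strictly negative quadratic in $i_N$ since $(\eta-\tfrac{\gamma}{2})r_+>0$ by \eqref{eq:eta-gamma2}. The plan has three steps: (1) show that after a short time $I^N\ge\delta H^N$ for a fixed $\delta\in(0,1)$, so that the negative term controls $h_N$ itself and not merely $i_N$; (2) combine this with the a priori bound $|z_N|\le C_{\ref{maxz}}\sqrt{\log N}$ (from Lemma \ref{maxz}, or from the stopping time $\tau$) to get $\mu(h_N)\le -c\,h_N^2$ whenever $h_N\ge C_1\sqrt{\log N}$, where $C_1$ may be taken as large as we wish; (3) feed this into a coming-down estimate for the first bullet and a drift-barrier estimate for the second. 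Throughout we may assume $H^N$ stays above $N^{1/5}$ on the time interval in question, since otherwise $h_N\le N^{-3/10}$ there and both conclusions are immediate.

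For step (1), I would use the auxiliary observable $\Phi^N=J^N+K^N-C_0 I^N$ for a large constant $C_0$; bounding $I^N/H^N$ below is the same as bounding $\Phi^N$ above. Using the rates \eqref{ODE1} and $S^N\le N$, the creation rates of $II$- and $SI$-partnerships contribute at most $r_+I^N$ to $\mu(J^N+K^N)$ while $J^N+K^N$ itself decays at rate at least $r_-$; together with the elementary bound $\mu(I^N)\ge r_-(J^N+K^N)-(1+r_+)I^N$ this gives $\mu(\Phi^N)\le -c\,\Phi^N$ whenever $\Phi^N>0$, for suitable $c,C_0>0$. The same bound on $\mu(I^N)$, which holds for every configuration, shows by an ODE comparison that $I^N$ rises to a fixed fraction of $J^N+K^N$ within $O(1)$ original-scale time regardless of the starting configuration, while $H^N$ changes by at most a bounded factor; so within $o(1)$ slow time $I^N\ge\delta H^N$. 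That this then persists up to time $N^{1/2}$ is most cleanly obtained from the bounded process $U^N=I^N/H^N$: on the event $\{h_N\le\log N,\ |z_N|\le C_{\ref{maxz}}\sqrt{\log N}\}$ one has $|\mu(H^N)|=O((\log N)^2)$, so $U^N$ has drift $\ge c_0>0$ whenever $U^N\le\delta$ and its drift dominates its diffusivity by a factor of order $\sqrt N$ on the slow scale; Corollary \ref{cor:driftbar} applied to $\delta-U^N$ then keeps $U^N\ge\delta/2$ for as long as that event holds.

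Granting $i_N\ge\tfrac{\delta}{2}h_N$ and $|z_N|\le C_{\ref{maxz}}\sqrt{\log N}$, \eqref{fasthdrift} gives $\mu(h_N)\le -c\,h_N^2$ for $h_N\ge C_1\sqrt{\log N}$. For the first bullet I would run a quantitative coming-down argument (cf.\ the proof of Theorem \ref{th:MFCP}): the process $1/h_N$ has $\mu(1/h_N)\ge c/2>0$ and $\sigma^2(1/h_N)=o(1)$ while $h_N$ is above the $C_1\sqrt{\log N}$ floor, so, controlling the small martingale part via Lemma \ref{lem:sm-est}, $1/h_N(t)\gtrsim (c/2)t$ with high probability; since the $Y^N$-transient of Lemma \ref{maxz} and the step-(1) transient are both $o(1/\sqrt{\log N})$ and $h_N\le 2\sqrt N$ throughout, this forces $h_N(t)=O(\sqrt{\log N})\le\tfrac12\log N$ for some $t\le 1/\sqrt{\log N}$ (either because $1/h_N$ has grown large enough, or because $h_N$ dropped below $C_1\sqrt{\log N}$ first). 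For the second bullet, on $\{t<\tau\}$ we have $|z_N|\le C_{\ref{maxz}}\sqrt{\log N}$ for all $t$, so the only transient is that of step (1), of length $o(1/\log N)$; during it $\mu(h_N)\le(\eta-1)r_+C_{\ref{maxz}}\sqrt{\log N}\,h_N$, so from $h_N(0)\le\tfrac12\log N$ the process grows by a factor $1+o(1/\log N)$ and stays well below $\log N$. Running the argument up to the first time $h_N$ exceeds $\log N$ (on which event the preceding estimates are in force), we invoke Corollary \ref{cor:driftbar} for the stopped, shifted process $h_N-C_1\sqrt{\log N}$ with barrier $x=\log N-C_1\sqrt{\log N}$: on the relevant window $\mu(h_N)\le-\mu_\star$ with $\mu_\star=c\,C_1^2\log N$, while $\sigma^2(h_N)\le\sigma^2_\star=O(\log N)$ (the jumps being $O(1/\sqrt N)$), $|\mu(h_N)|\le C_{\mu_\star}=O((\log N)^2)$, and $\Delta_\star(h_N)\mu_\star/\sigma^2_\star\to 0$, so $C_\Delta=\tfrac12$; hence $\Gamma=\exp(\mu_\star x/(16\sigma^2_\star))=N^{c'C_1^2}$ for some $c'>0$, and choosing $C_1$ large makes $4/\Gamma\to 0$ while the admissible horizon $\lfloor\Gamma\rfloor x/(16C_{\mu_\star})$ exceeds $N^{1/2}$. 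Thus the first-exceedance time lies past $N^{1/2}$ with high probability, which gives the second bullet and closes the bootstrap.

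The main obstacle is step (1), the uniform lower bound $I^N\ge\delta H^N$: it is exactly what turns the designed cancellation of the linear terms in $\mu(H^N)$ into useful negative drift, and it must be proved here from scratch, since the sharper ``collapse to the invariant ray'' estimate (Lemma \ref{ray}) comes later in the workflow. What makes it delicate is that the restoring rate of $\Phi^N$ is of the same order as its $O(\sqrt{H^N})$ noise floor, so controlling its fluctuations requires the right normalization --- working with the bounded ratio $U^N$ --- and that the transients of $Y^N$, of $U^N$, and the $\sqrt{\log N}$ scale of $z_N$ interact through logarithmic factors, so one must verify that $h_N$ cannot inflate past the barrier level $\log N$ during them; this is why it matters that these transients are as short as $O(\log N/\sqrt N)$ rather than merely $o(1)$.
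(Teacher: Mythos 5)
Your proposal is correct and follows essentially the same route as the paper: a lower bound on $I^N/H^N$ obtained in $O(1)$ fast time and then maintained with the drift-barrier corollary (this is exactly the paper's Lemma \ref{bdHI}), after which the resulting bound $\mu(h_N)\le -c\,h_N^2$, together with the $\sqrt{\log N}$ control on $z_N$, brings $h_N$ down for the first bullet and holds it below $\log N$ via Corollary \ref{cor:driftbar} for the second. The only differences are cosmetic: the paper proves the coming-down step by continuing $h_N$ along the ODE flow and applying Jensen plus Markov to $E[\tilde h_N]$ rather than tracking $1/h_N$ with Lemma \ref{lem:sm-est}, and it establishes the initial ratio bound by computing the drift of $I^N/H^N$ directly instead of through your auxiliary observable $\Phi^N$.
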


\vskip10pt

The choice of $N^{1/5}$ as a floor on $H_N$ is so that once $H_N\le N^{1/5}$ a branching process approximation can be used to take $H_N$ to $0$. See Section \ref{sec:T0bd}.

Our reliance on Lemma \ref{maxz} to bound the first part of the drift prevents us from showing $h_N$ comes all the way down to $O(1)$. 
To obtain the stronger bound we will have to show that the first term in the drift in \eqref{fasthdrift} averages out to 0.

\subsection{Step 3: $(I^N,J^N,K^N)$ stays close to the invariant ray}\label{subsec:Q}

To reduce $(I^N,J^N,K^N)$ to a one-dimensional system we let $U^N=I^N/H^N$, $V^N=\gamma J^N/H^N$, and $W^N=\eta K^N/H^N$.
The coefficients in $V^N$ and $W^N$ are there to make $U^N+V^N+W^N=1$.
Recalling the definitions of $\alpha, \beta, \gamma$, and $\eta$ (see \eqref{eq:invray} and \eqref{leftev}) we let $u_* = \alpha/d$, $v_*=\beta \gamma/d$, and $w_* = \eta/d$ where $d=\alpha + \beta \gamma + \eta$
which, as the reader will see, is the fixed point for the dynamical system corresponding to $(U^N,V^N,W^N)$. Let $Q^N= \theta_2(U^N-u_*)^2 + \theta_1(V^N-v_*)^2$. This result is stated on the fast time scale, since this is the time scale on which $Q^N$ naturally converges.

\begin{lemma}[Step 3]\label{ray}
Let $\tau = \tau^-_{N^{1/5}}(H^N) \wedge \tau^+_{C_{\ref{maxz}}\sqrt{\log N}}(|z^N|) \wedge \tau^+_{\log N}(h^N)$. There is a constant $C_{\ref{ray}}$ so that, for any sequence of constants $c_N^Q$ with $N^{-1/6} \le c_N^Q = o(1)$, with high probability,
\begin{itemize}
 \item $\tau^-_{N^{-1/6}/2}(Q^N) \wedge \tau \le C_{\ref{ray}}\log N$, and
 \item if $Q^N_0 \le c_N^Q/2$ then $Q^N_t \le c_N^Q$ for all $t\le N \wedge \tau$.
\end{itemize}
\end{lemma}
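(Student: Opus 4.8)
The plan is to derive a differential inequality for $\mu(Q^N)$ of the form $\mu(Q^N) \le -\kappa Q^N + (\text{error})$ that holds whenever $t < \tau$ and $Q^N$ is not yet below the floor $N^{-1/6}/2$, and then feed this into the drift-barrier machinery of Lemma \ref{lem:driftbar} (more precisely Corollary \ref{cor:driftbar}, since we are working with the process stopped at $\tau$). First I would compute the drift and diffusivity of $U^N = I^N/H^N$ and $V^N = \gamma J^N/H^N$. Since $H^N = I^N + \gamma J^N + \eta K^N$ is a linear combination whose drift \eqref{Hdrift} is purely quadratic/lower-order (the linear part killed by the left eigenvector), the pair $(U^N,V^N)$ — equivalently the triple $(U^N,V^N,W^N)$ on the simplex $U+V+W=1$ — satisfies, to leading order on the fast time scale, the linear ODE induced by $A$ projected onto the simplex, for which $(u_*,v_*,w_*)$ is the globally attracting fixed point (this is exactly the content of the eigenvalue computation at the end of Section \ref{sec:O1}: the two nonzero eigenvalues of $A$ have negative real part). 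The constants $\theta_1,\theta_2$ in $Q^N = \theta_2(U^N-u_*)^2 + \theta_1(V^N-v_*)^2$ are chosen (via \eqref{eq:thetas}) precisely so that $Q^N$ is a Lyapunov function for this linear flow, i.e. the quadratic form decays: $\frac{d}{dt}Q \le -\kappa Q$ for some $\kappa>0$ along the deterministic dynamics. Using Lemma \ref{lem:qac-prod} (product rule) and Lemma \ref{lem:taylor} (Taylor approximation, to handle the $1/H^N$ nonlinearity with $f(h)=1/h$, whose second derivative is controlled as long as $H^N \ge N^{1/5}$, i.e. $t<\tau$), I would write
\begin{align*}
\mu_t(Q^N) \le -\kappa Q^N_t + E_1 + E_2,
\end{align*}
where $E_1$ collects the contributions of the genuinely nonlinear drift terms in \eqref{IJK} (the $Z^NI^N/N$, $(I^N)^2/N$, $I^N/N$ pieces) and $E_2$ collects the diffusivity corrections coming from the jump terms. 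On the event $t<\tau$ we have $|z^N| \le C_{\ref{maxz}}\sqrt{\log N}$ and $h^N \le \log N$, and the jumps of $H^N$ are $O(1)$ at rate $O(H^N)$, so $\sigma^2(U^N),\sigma^2(V^N) = O(1/H^N) = O(N^{-1/5})$ and the nonlinear drift terms are $O((\log N)^{3/2}/\sqrt{N})$; hence $E_1 + E_2 = o(N^{-1/6})$, say bounded by $c_N^Q \kappa/2$ eventually. So on $\{Q^N > N^{-1/6}/2\} \cap \{t<\tau\}$ we get a strictly negative drift bounded away from $0$, and $|\mu(Q^N)|$ and $\sigma^2(Q^N)$ are bounded as required by \eqref{eq:driftbar-est}.

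With the differential inequality in hand, both bullets follow from Corollary \ref{cor:driftbar} applied to (a shift of) $Q^N$: for the second bullet, take $x = c_N^Q$, $\mu_\star \asymp \kappa c_N^Q$, $\sigma^2_\star \asymp N^{-1/5}$, $\Delta_\star \asymp N^{-1/5}$; then $\Gamma = \exp(\Omega(\mu_\star x/(\sigma^2_\star))) = \exp(\Omega(N^{2/15}))$ is super-polynomially large, so the barrier fails only with probability $4/\Gamma$, over a time horizon $\lfloor\Gamma\rfloor x / (16 C_{\mu_\star})$ which exceeds $N$; intersecting with $\{t<\tau\}$ via the corollary gives $Q^N_t \le c_N^Q$ for all $t \le N\wedge\tau$ whp. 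For the first bullet, one runs the same argument on the descent: as long as $Q^N \ge N^{-1/6}/2$ and $t<\tau$, the drift is $\le -\kappa N^{-1/6}/4$, so $Q^N$ decreases linearly on average and a standard supermartingale/optional-stopping estimate (or a second application of the drift barrier to the time-reversed target) shows $\tau^-_{N^{-1/6}/2}(Q^N)\wedge\tau \le C_{\ref{ray}}\log N$ whp — the factor $\log N$ comes from needing to absorb the $\sqrt{\log N}$ bound on $z^N$ and the possibility that $Q^N_0$ is only $O(1)$, so it takes $O(\log N)$ fast-time units of exponential contraction to bring $Q^N$ from $O(1)$ down to $N^{-1/6}$.

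The main obstacle is bookkeeping the error terms $E_1$ cleanly, in particular the cross term involving $Z^N I^N/N$: this is the term that does \emph{not} average out on the fast time scale (that is the job of Step 4, Lemma \ref{aveto0}), so here one must simply bound it crudely using $|z^N|\le C_{\ref{maxz}}\sqrt{\log N}$ and $h^N\le\log N$ — giving $O((\log N)^{3/2} N^{-1/2})$, comfortably $o(N^{-1/6})$ — and verify this still leaves the negative drift dominant down to the floor $N^{-1/6}/2$. The exponents have to be chosen consistently: the floor $N^{-1/6}$ on $Q^N$, the floor $N^{1/5}$ on $H^N$ (controlling the Taylor remainder for $1/H^N$ and the diffusivity $O(1/H^N)$), and the ceiling $\log N$ on $h^N$ must fit together so that $\mu_\star x/\sigma^2_\star \to \infty$ polynomially, which is what makes $\Gamma$ beat the $N$ time horizon. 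Verifying $\Delta_\star(Q^N)\le x/2$ and the condition $\Delta_\star \mu_\star/\sigma^2_\star = O(1)$ of Lemma \ref{lem:driftbar} is routine given $H^N\ge N^{1/5}$. The only other subtlety is that $(U^N,V^N,W^N)$ lives on the simplex and $Q^N$ is a Lyapunov function only for the projected linear dynamics; one should confirm that the choice of $\theta_1,\theta_2$ from \eqref{eq:thetas} indeed diagonalizes (or at least makes negative-definite) the relevant $2\times 2$ block, which is a finite linear-algebra check on the matrix $A$ restricted to the plane $\{U+V+W=1\}$.
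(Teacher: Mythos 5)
Your overall strategy coincides with the paper's: kill the linear part via the left eigenvector, write $\mu(Q^N)\le -\kappa Q^N + \text{error}$ with the cross-terms cancelled by the choice \eqref{eq:thetas}, verify the error is $o(N^{-1/6})$ using $H^N\ge N^{1/5}$, $|z^N|=O(\sqrt{\log N})$, $h^N\le\log N$, then use an exponential supermartingale for the approach and Corollary \ref{cor:driftbar} for the control. However, there is a genuine quantitative gap in your control step. You take $\sigma^2_\star \asymp N^{-1/5}$ (the crude bound $\sigma^2(Q^N)=O(1/H^N)$) together with $x\asymp c_N^Q$ and $\mu_\star\asymp \kappa c_N^Q$, and then claim $\Gamma=\exp(\Omega(\mu_\star x/\sigma^2_\star))=\exp(\Omega(N^{2/15}))$. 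With your own parameters the exponent is $\mu_\star x/\sigma^2_\star \asymp (c_N^Q)^2N^{1/5}$, which in the worst admissible case $c_N^Q=N^{-1/6}$ equals $N^{1/5-1/3}=N^{-2/15}\to 0$, not $N^{2/15}\to\infty$. So $\Gamma$ stays bounded, the failure probability $4/\Gamma$ is not $o(1)$, and the admissible time horizon $\lfloor\Gamma\rfloor x/16C_{\mu_\star}=O(1)$ falls far short of $N$. Since $c_N^Q=N^{-1/6}$ is precisely the value used in the control time $\tau_N(\mathtt{ctrl},t)$ and hence in the proofs of the main theorems, the argument as written fails exactly where it is needed.

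The missing ingredient is a diffusivity (and jump-size) bound that scales with $c_N^Q$, not just with $1/H^N$: on the event $Q^N\le c_N^Q$ one has $|U^N-u_*|,|V^N-v_*|=O((c_N^Q)^{1/2})$, so a jump of $(U^N-u_*)^2$ is $O\bigl((c_N^Q)^{1/2}/H^N + 1/(H^N)^2\bigr)=O((c_N^Q)^{1/2}N^{-1/5})$, whence $\Delta_\star = O((c_N^Q)^{1/2}N^{-1/5})$ and $\sigma^2(Q^N)=O(c_N^Q/H^N)\le Cc_N^QN^{-1/5}$. With $\sigma^2_\star = Cc_N^QN^{-1/5}$ the exponent becomes $\mu_\star x/\sigma^2_\star=\Omega(c_N^QN^{1/5})=\Omega(N^{1/5-1/6})=\Omega(N^{1/30})$, so $\Gamma=\omega(N)$ and the barrier covers the horizon $N\wedge\tau$; this is exactly how the paper closes the argument. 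A secondary, minor point: for the first bullet your phrase ``decreases linearly on average'' would only give a time of order $N^{1/6}$, not $\log N$; the correct mechanism (which you also mention, and which the paper uses) is the exponential contraction $\mu(Q^N)\le -2a_1Q^N+O((\log N)^2N^{-1/5})$ turned into the supermartingale $e^{2a_1(t\wedge\tau)}\bigl(Q^N-O((\log N)^2N^{-1/5})\bigr)$ plus Markov's inequality, which yields the $C_{\ref{ray}}\log N$ bound.
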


Steps 1,2 and 3 could be called the ``a priori'' bounds, since they provide the control needed to implement the averaging result. With this in mind, we make the following definition. Additional constants for both $h$ and $Q$ are specified since we'll need them later.

\begin{definition}\label{def:ctrl-time}
Let $c_N^Q,c_N^h$ be sequences of constants. Say that there is $c_N^h,c_N^Q$ control at time $t$, on the slow time scale, if
\begin{align*}
|z_N(t)| \le C_{\ref{maxz}}\sqrt{\log N}, \ h_N(t) \le c_N^h, \\
Q_N(t) \le c_N^Q \ \text{and} \ H_N(t) > N^{1/5}.
\end{align*}
Define the $c_N^h,c_N^Q$-control time on the slow time scale as
$$\tau_N(c_N^h,c_N^Q,t) = \inf\{s \ge t \ \colon \ \text{there is not} \ c_N^h,c_N^Q \ \text{control at time} \ s \}.$$
Define the control time $\tau_N(\ctl,t)$ as $\tau_N(\ctl,t) = \tau_N(\log N,N^{-1/6},t)$. Define $\tau^N(c_N^h,c_N^Q,t)$ on the fast time scale by $\sqrt{N}\tau_N(c_N^h,c_N^Q,t)$, similarly for $\tau^N(\ctl,t)$. 
\end{definition}

Applying Steps 1-3 (i.e., Lemmas \ref{maxz}, \ref{bdI} and \ref{ray}) sequentially in time, we obtain the following result, which is stated on the slow time scale.

\begin{lemma}\label{notransient}
For each $\ep>0$, with high probability
$$\tau^-_{N^{1/5}}(H_N) \wedge N^{1/2} \le \tau^-_{N^{1/5}}(H_N) \wedge \tau_N(\ctl,\ep).$$
\end{lemma}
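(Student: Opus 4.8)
First I would reduce the claim. Since $a\wedge b\le a\wedge c$ whenever $a\wedge b\le c$, it suffices to prove that with high probability $\tau^-_{N^{1/5}}(H_N)\wedge N^{1/2}\le\tau_N(\ctl,\ep)$. On $\{\tau^-_{N^{1/5}}(H_N)<\ep\}$ this holds with no randomness, since then the left-hand side equals $\tau^-_{N^{1/5}}(H_N)<\ep\le\tau_N(\ctl,\ep)$; so it is enough to work on $\{\tau^-_{N^{1/5}}(H_N)\ge\ep\}$, on which $H_N(s)>N^{1/5}$ for every $s<\ep$. Writing $T=\tau^-_{N^{1/5}}(H_N)\wedge N^{1/2}\ge\ep$, the inequality $T\le\tau_N(\ctl,\ep)$ is precisely the assertion that $\ctl$-control in the sense of Definition~\ref{def:ctrl-time} (i.e.\ with $c_N^h=\log N$, $c_N^Q=N^{-1/6}$) holds at every $t\in[\ep,T)$; and since $H_N(t)>N^{1/5}$ is automatic for such $t$, what remains is to exhibit a high-probability event on which $|z_N(t)|\le C_{\ref{maxz}}\sqrt{\log N}$, $h_N(t)\le\log N$ and $Q_N(t)\le N^{-1/6}$ all hold throughout $[\ep,\,N^{1/2}\wedge\tau^-_{N^{1/5}}(H_N))$.

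The plan is then to run the a priori Lemmas~\ref{maxz}, \ref{bdI} and \ref{ray} in that order, each time invoking the strong Markov property to restart the infection process at the random time by which the controls of the previous step have switched on. Because $Y^N$, hence $z^N$, is autonomous, Lemma~\ref{maxz} applied first at time $0$ and then at the resulting slow time $t_1\le C_{\ref{maxz}}(\log N)/\sqrt N$ (at which $|z_N(t_1)|\le C_{\ref{maxz}}/\sqrt N\le(C_{\ref{maxz}}/2)\sqrt{\log N}$) gives, with high probability, $|z_N(t)|\le C_{\ref{maxz}}\sqrt{\log N}$ for all $t\in[t_1,t_1+N^{1/2}]\supseteq[\ep,N^{1/2}]$. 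Next I would apply Lemma~\ref{bdI} from $t_1$: its first part yields a slow time $t_2\in[t_1,t_1+(\log N)^{-1/2}]$ with $h_N(t_2)\le\tfrac12\log N$ (on $\{\tau^-_{N^{1/5}}(H_N)\ge\ep\}$ the competing possibility that $H_N$ reaches $N^{1/5}$ within this window is excluded), and its second part, applied from $t_2$, keeps $h_N(t)\le\log N$ for all $t\le N^{1/2}\wedge\tau$, where $\tau$ is that lemma's stopping time restarted at $t_2$; since $t_2<\ep$ and $z_N$ is already controlled on $[t_1,N^{1/2}]$, the $\tau^+_{C_{\ref{maxz}}\sqrt{\log N}}(|z_N|)$ factor in $\tau$ does not fire before $N^{1/2}$, so $\tau\ge N^{1/2}\wedge\tau^-_{N^{1/5}}(H_N)$. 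Finally I would apply Lemma~\ref{ray} from $t_2$ with $c_N^Q=N^{-1/6}$: there $h_N\le\tfrac12\log N<\log N$ and $z_N$ is controlled, so that lemma's stopping time restarted at $t_2$ is again at least $N^{1/2}\wedge\tau^-_{N^{1/5}}(H_N)$; its first part produces a slow time $t_3\in[t_2,t_2+C_{\ref{ray}}(\log N)/\sqrt N]$ with $Q_N(t_3)\le N^{-1/6}/2$, and its second part then gives $Q_N(t)\le N^{-1/6}$ for all $t\le N^{1/2}\wedge\tau^-_{N^{1/5}}(H_N)$.

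Intersecting these three high-probability events and observing that $t_1\le t_2\le t_3\le C_{\ref{maxz}}(\log N)/\sqrt N+(\log N)^{-1/2}+C_{\ref{ray}}(\log N)/\sqrt N\to0$, so that $t_3<\ep$ for all large $N$ and all three start-up windows lie inside $[0,\ep)$ where $H_N>N^{1/5}$, we get on this event that $|z_N(t)|\le C_{\ref{maxz}}\sqrt{\log N}$, $h_N(t)\le\log N$, $Q_N(t)\le N^{-1/6}$ and $H_N(t)>N^{1/5}$ simultaneously for every $t\in[\ep,\,N^{1/2}\wedge\tau^-_{N^{1/5}}(H_N))$. Hence $\ctl$-control holds throughout that interval, so $\tau_N(\ctl,\ep)\ge N^{1/2}\wedge\tau^-_{N^{1/5}}(H_N)$; combined with the deterministic case $\tau^-_{N^{1/5}}(H_N)<\ep$, this gives the claimed inequality with high probability.

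The one genuinely delicate point is the nested stopping-time bookkeeping: Lemmas~\ref{bdI} and \ref{ray} are each stated with a stopping time referring back to the controls of the earlier steps, so they cannot be applied from time $0$ (where $|z_N(0)|$ or $h_N(0)$ may exceed the relevant thresholds) but must instead be restarted at the random times $t_1$ and $t_2$; this forces the use of the strong Markov property and hence requires the ``with high probability'' conclusions of those lemmas to be uniform over valid initial configurations of the infection process, which is what their proofs supply. The only further input is the elementary observation that the three start-up times are bounded by a deterministic $o(1)$ quantity, so the whole transient phase is over before time $\ep$.
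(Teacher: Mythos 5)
Your argument is correct and is essentially the paper's own route: the paper offers no detailed proof of Lemma \ref{notransient} beyond the remark that it follows by applying Lemmas \ref{maxz}, \ref{bdI} and \ref{ray} sequentially in time, and your proposal carries out exactly that sequential application, with the trivial case $\tau^-_{N^{1/5}}(H_N)<\ep$, the strong-Markov restarts at the random switch-on times, and the fast/slow time-scale bookkeeping all handled correctly. No gaps.
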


In words, this result says that for any fixed $\eps>0$, so long as $H_N$ has not hit the interval $[0,N^{1/5}]$, then w.h.p.\,the variables $|z_N|, Q_N$ and $h_N$ have the desired upper bounds (i.e., those specified by Definition \ref{def:ctrl-time}) on the slow time scale, on the time interval $[\eps,N^{1/2}]$.

\subsection{Step 4: averaging the drift to 0}

Recall from Section \ref{subsec:Ybnd} that $z^N$ is approximately an O.U. process that oscillates on the fast time scale, and once $(i^N,j^N,k^N)$ converges on the invariant ray, we expect it to diffuse along that ray on the slow time scale and thus move slowly when viewed on the fast time scale. Thus, it should not be surprising that we have the following result.

\begin{lemma}[Step 4]\label{aveto0}
Fix $T<\infty$. Let $L:\R^3 \to \R$ be Lipschitz in the $\ell^1$ norm with constant $c_L$ and such that $L(0,0,0)=0$. Let $\omega(N^{-1/2}) = c_N^h \le \log N$ and $0<c_N^Q=o(1)$ be sequences of constants and let $\tau_N = \tau_N(c_N^h,c_N^Q,0)$. With high probability,
$$
\sup_{t \le T} \left|\int_0^{t \wedge \tau_N} z_N(s)L(i_N(s),j_N(s),k_N(s)) \,ds  \right| = O(c_L (c_N^h)^{1/2}(N^{-1/4}\vee (c_N^hc_N^Q)^{1/2})\log^2 N).
$$
\end{lemma}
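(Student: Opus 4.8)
The plan is to show that $\int_0^t z_N(s)L(i_N,j_N,k_N)\,ds$ is small by integrating by parts: replace the slowly-varying factor $L(i_N,j_N,k_N)$ with a primitive of $z_N$. Since $z^N$ (on the fast scale) is an O.U.-type process with fast mixing, while $i_N,j_N,k_N$ move only on the slow scale, the oscillations of $z_N$ should cancel. Concretely, let $\Phi^N_t$ be the process defined by $\Phi^N_0 = 0$ and $d\Phi^N = z^N\,dt$ on the fast time scale (equivalently, $\Phi_N(t) = N^{-1/2}\Phi^N(\sqrt{N}t)$ satisfies $\frac{d}{dt}\Phi_N = \sqrt{N}\,z_N$ on the slow scale — this extra $\sqrt N$ is the source of the awkward powers of $N$). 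The key point is that $\Phi^N$ stays bounded of size $O(\sqrt{\log N})$ on a long fast-time interval. To see this, note $\Phi^N + z^N/\mu_z$ has drift $O((1\vee(z^N)^2)/\sqrt N)$ by \eqref{mudiff}, and diffusivity $\sigma^2_z/\mu_z^2 + o(1)$ coming from $z^N$; so $\Phi^N + z^N/\mu_z$ is, up to a negligible drift correction, itself like a mean-zero process with bounded diffusivity, and on the event $|z^N|=O(\sqrt{\log N})$ one can apply Lemma \ref{lem:sm-est} (or Lemma \ref{lem:driftbar} applied to $\pm(\Phi^N + z^N/\mu_z)$ with the appropriate barrier) to conclude $\sup_{t\le N^{3/2}}|\Phi^N_t| = O(\sqrt{\log N})$ w.h.p., hence $\sup_{t\le N}|\Phi_N(t)| = O(\sqrt{\log N}/\sqrt N)$.

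Next I would run the integration by parts on the slow time scale. On $\{t\le\tau_N\}$, using $\frac{d}{dt}\Phi_N = \sqrt N\, z_N$,
\begin{align*}
\int_0^{t\wedge\tau_N} z_N L(i_N,j_N,k_N)\,ds
&= \frac{1}{\sqrt N}\Big[\Phi_N L(i_N,j_N,k_N)\Big]_0^{t\wedge\tau_N}
- \frac{1}{\sqrt N}\int_0^{t\wedge\tau_N}\Phi_N \, d\big(L(i_N,j_N,k_N)\big).
\end{align*}
The boundary term is bounded by $N^{-1/2}\cdot O(\sqrt{\log N}/\sqrt N)\cdot c_L\|(i_N,j_N,k_N)\|_1$; since on $\{t\le\tau_N\}$ we have $i_N+j_N+k_N \le C h_N \le C c_N^h$ (here one uses that $Q_N$ small, i.e. $(i^N,j^N,k^N)$ near the invariant ray, forces $I^N,J^N,K^N$ to be comparable to $H^N$, together with $\eta,\gamma\in(1,2)$ from \eqref{eq:eta-gamma2}), the boundary term is $O(c_L (c_N^h)^{1/2}\cdots)$ — in fact of smaller order than the claimed bound. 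For the remaining integral, decompose $L(i_N,j_N,k_N)$ into its compensator part and martingale part. The drift of $L(i_N,j_N,k_N)$: using the Lipschitz bound for $L$ and the drift formulas \eqref{fasthdrift} and the analogous drifts for $j_N,k_N$ derived from \eqref{IJK}, the drift of each of $i_N,j_N,k_N$ on the slow scale is a bounded linear functional of $(i_N,j_N,k_N)$ (of size $O(c_N^h)$) plus terms of size $O(z_N i_N) = O(\sqrt{\log N}\, c_N^h)$ plus lower order; so $|\mu(\text{slow})(L(i_N,j_N,k_N))| = O(c_L \cdot c_N^h\sqrt{\log N})$ on $\{t\le\tau_N\}$, contributing $N^{-1/2}\cdot O(\sqrt{\log N}/\sqrt N)\cdot O(c_L c_N^h \sqrt{\log N})\cdot T$ after multiplying by $\sup|\Phi_N|$ and integrating — again of lower order. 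The martingale part $M^L$ of $L(i_N,j_N,k_N)$ has $\frac{d}{dt}\langle M^L\rangle$ (slow scale) $= O(c_L^2)\cdot \sigma^2(\text{slow})(i_N+j_N+k_N) = O(c_L^2 c_N^h/\sqrt N)$ by the diffusivity estimate ``jumps of $O(1/\sqrt N)$ at rate $O(\sqrt N(I^N+J^N+K^N))$,'' so $\langle M^L\rangle_{T\wedge\tau_N} = O(c_L^2 c_N^h/\sqrt N)$; hence $\int_0^{t\wedge\tau_N}\Phi_N\,dM^L$ is a martingale with pqv bounded by $\sup|\Phi_N|^2 \langle M^L\rangle = O((\log N/N)\cdot c_L^2 c_N^h/\sqrt N)$, and by Lemma \ref{lem:sm-est} (or Doob) it is $O(c_L (c_N^h)^{1/2} N^{-3/4}\sqrt{\log N}\cdot\log N)$ w.h.p., which after the outer $N^{-1/2}$ is far below the stated bound. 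Collecting the dominant contribution — which is the boundary term / the drift piece — gives the asserted order $O(c_L (c_N^h)^{1/2}(N^{-1/4}\vee(c_N^h c_N^Q)^{1/2})\log^2 N)$, where the factor $(c_N^h c_N^Q)^{1/2}$ enters because a sharper accounting of the drift of $L(i_N,j_N,k_N)$ uses that the component of the drift \emph{along} the ray is what matters, and the deviation of $(i_N,j_N,k_N)$ from the ray is controlled by $\sqrt{Q_N} = O((c_N^Q)^{1/2})$.

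The main obstacle I expect is the bookkeeping in the last step: one must carefully separate, in the drift of $L(i_N,j_N,k_N)$, the part that genuinely averages (the $z_N i_N$ fluctuation term, handled by a \emph{second} integration by parts or by re-using the boundedness of $\Phi_N$) from the part that does not (the deterministic linear drift), and show the non-averaging part is nonetheless small because it is $O(\sqrt{Q_N}\,h_N)$ once projected correctly — this is where the $\sqrt{c_N^Q}$ shows up and where the invariant-ray geometry from Section \ref{sec:O1} and Lemma \ref{ray} is essential. A secondary technical point is justifying that $\Phi^N$ really is bounded of size $O(\sqrt{\log N})$ over a time horizon as long as $N^{3/2}$ on the fast scale (equivalently $T\le\tau_N\le N$ on the slow scale): the barrier argument via Corollary \ref{cor:driftbar} needs $\mu_\star, \sigma^2_\star$ chosen so that the resulting $\Gamma$ beats $N$, which works because $\Gamma$ is exponential in the barrier height squared and we take the barrier to be a large multiple of $\sqrt{\log N}$, exactly as in the heuristic following Lemma \ref{maxz}. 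Everything else is routine application of Lemma \ref{lem:sm-est}, Lemma \ref{lem:taylor} and the product rule Lemma \ref{lem:qac-prod}.
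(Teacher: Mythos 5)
Your strategy (a corrector/integration-by-parts argument replacing the oscillating factor $z_N$ by its primitive) is a genuinely different route from the paper, which instead couples $z_N$ to a symmetrized discrete O.U.\ process $\tld z_N$ (Lemma \ref{lem:D}), decomposes time into i.i.d.\ excursions of $\tld z_N$ from $0$, exploits the sign symmetry to make $\sum_j \xi_j L_N^*(\tau^*_{2j})$ a martingale (Doob's $L^2$ inequality gives the $N^{-1/4}$ term), and bounds the within-excursion variation of $L_N$ via the modulus of continuity of $h_N$ plus the ray-deviation $O((c_N^Q)^{1/2}c_N^h)$ from Lemma \ref{ray} (giving the $(c_N^hc_N^Q)^{1/2}$ term). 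However, as written your argument has a fatal quantitative error at its core: the claim that $\sup_{t\le N^{3/2}}|\Phi^N_t|=O(\sqrt{\log N})$ is false. From \eqref{mudiff}, $z^N_t-z^N_0=-\mu_z\int_0^t z^N_s\,ds+M_t+{\rm small}$, where $M$ is the martingale part of $z^N$ with $\lng M\rng_t\approx\sigma_z^2t$; hence $\Phi^N_t=\mu_z^{-1}(z^N_0-z^N_t+M_t+{\rm small})$ is, up to bounded terms, a martingale with linearly growing quadratic variation, i.e.\ of size $\sqrt{t}$ (order $N^{1/4}$ by slow time $O(1)$, order $N^{3/4}$ by fast time $N^{3/2}$). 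The process $\Phi^N+z^N/\mu_z$ has \emph{no} restoring drift, so neither Lemma \ref{lem:sm-est} (whose bound $a+\phi\lng X\rng_t$ grows linearly in $t$) nor Lemma \ref{lem:driftbar}/Corollary \ref{cor:driftbar} (which require drift $\le-\mu_\star$ inside the barrier) can confine it; the heuristic after Lemma \ref{maxz} applies to the mean-reverting $z^N$ itself, not to its time integral.

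The second gap is in your treatment of $\int\Phi_N\,dL_N$: you assert that the slow-scale drift of each of $i_N,j_N,k_N$ is $O(c_N^h)$ plus fluctuation terms, but the linear system \eqref{IJK} acts on the \emph{fast} scale, so on the slow scale the individual components have drift of order $\sqrt{N}\,c_N^h$; the linear part cancels only for the eigen-combination $h_N$ (that is the whole point of introducing $H^N$), and near the ray it is reduced only to order $\sqrt{N}(c_N^Q)^{1/2}c_N^h$, not eliminated. Consequently $|\mu(L(i_N,j_N,k_N))|$ on the slow scale is generically of order $c_L\sqrt{N}c_N^h$, and the drift contribution to $\int\Phi_N\,dL_N$ is not of the claimed size; your closing remark that one must ``separate the averaging part from the non-averaging part and project along the ray'' identifies exactly the missing step, but that separation is the substantive content of the lemma, not bookkeeping. (It is worth noting that with the \emph{correct} bound $\sup_{t\le T}|\Phi_N|=O(N^{-1/4}\sqrt{\log N})$ your boundary term and the martingale term $\int\Phi_N\,dL_N^m$ do still come out consistent with the stated rate, so a corrector proof is not hopeless; but to close it you would need to replace $L(i_N,j_N,k_N)$ by a slowly varying function of $h_N$ up to an $O((c_N^Q)^{1/2}c_N^h)$ error — essentially re-deriving the comparison the paper performs inside each excursion via Lemma \ref{ray} and the $G_k$ estimates of Lemma \ref{lem:tldaveto0}.)
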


\noindent

\subsection{Step 5: Show $h_N$ comes down to $O(1)$}

In order to prove Theorem \ref{infiv} we need to show that w.h.p., $h_N(t)=O(1)$ for fixed $t>0$. 

\begin{lemma}[Step 5] \label{downfast}
Let $\tau_C^-(h_N)=\inf\{t \colon h_N(t) \le C\}$. For small enough $\eps>0$,
$$\lim_{C\to\infty} \limsup_N P(\eps/C^3 < \tau_C^-(h_N) \le 1/\eps C) = 1.$$
\end{lemma}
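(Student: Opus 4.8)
The plan is to show that $h_N$ reaches level $C$ within time $1/\eps C$ whp (the upper bound), and does not do so before time $\eps/C^3$ whp (the lower bound), then combine. Throughout we work under the control time $\tau_N(\ctl,0)$, which by Lemma \ref{notransient} (and the assumption $h_N(0)=\omega(1)$, so in particular $h_N$ is still above $N^{1/5}$ at the relevant small times, with room to spare since $\log N\gg C$) we may treat as $+\infty$ on the relevant window; in particular on this window $|z_N|\le C_{\ref{maxz}}\sqrt{\log N}$, $h_N\le\log N$, and $Q_N\le N^{-1/6}$, so (by Step 3, i.e. smallness of $Q_N$) $U^N=I^N/H^N$ is within $O(N^{-1/12})$ of $u_*>0$, hence $i_N$ is comparable to $h_N$, say $i_N=(u_*+o(1))h_N$. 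Plugging this into the drift formula \eqref{fasthdrift} and rescaling to the slow time scale (multiply by $\sqrt N$), the drift of $h_N$ on the slow scale is
$$
\mu(h_N)\;=\;-\,c_1 h_N^2\;+\;\sqrt N\big[(\eta-1)r_+ z_N i_N + (1-\gamma/2)i_N/\sqrt N\big],
$$
with $c_1=(\eta-\gamma/2)r_+ u_*^2>0$ by \eqref{eq:eta-gamma2}; the diffusivity is $\sigma^2(h_N)=O(h_N)$ since $H^N$ jumps by $O(1)$ at rate $O(H^N)$. The term $\sqrt N z_N i_N$ is the obstruction to a clean differential inequality, and it is handled exactly by the averaging Lemma \ref{aveto0}: with $L(i,j,k)$ a suitable linear functional, the time integral of $z_N i_N$ up to $\tau_N$ is $O((\log N)^{O(1)} N^{-1/24})\to0$ whp, so after integrating the drift this term contributes negligibly on any fixed time interval.

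For the upper bound ($\tau_C^-(h_N)\le 1/\eps C$ whp), I would run the drift-barrier machinery "downward." While $h_N\ge C$, the negative quadratic drift gives $\mu(h_N)\le -c_1 C h_N + (\text{error})$, so $h_N$ is a supermartingale up to the averaging correction and a martingale-fluctuation correction. Write $h_N(t\wedge\tau)-h_N(0)=\int_0^{t\wedge\tau}\mu(h_N)\,ds + M_N(t\wedge\tau)$ where $\tau=\tau_C^-(h_N)\wedge\tau_N(\ctl,0)$. On $\{\tau_C^-(h_N)>1/\eps C\}$ we would have $\int_0^{1/\eps C}\mu(h_N)\,ds\le -c_1 C\int_0^{1/\eps C}h_N\,ds\le -c_1 C\cdot C\cdot(1/\eps C)=-c_1 C/\eps$, a large negative number for $C$ large; meanwhile $h_N(0)=O(\log N)\le \log N$ and the martingale term $M_N$ on this window has quadratic variation $\le \int_0^{1/\eps C}\sigma^2(h_N)\,ds = O(\log N\cdot 1/\eps C)$, so by Lemma \ref{lem:sm-est} $\sup|M_N|$ is $O(\sqrt{\log N/\eps C})$ whp, and the averaging error is $o(1)$. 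These forces cannot balance — the left side $h_N(1/\eps C)-h_N(0)\ge -\log N$ contradicts a drift of order $-C^2/\eps$ once $C\gg\sqrt{\log N}$... here one must be slightly careful, since $C$ is a constant and $N\to\infty$: instead I would first take $N\to\infty$ (so the $\log N$ errors and averaging errors vanish and $h_N$ genuinely behaves like the ODE $h'=-c_1 h^2$ with small fluctuations of order depending on $C$ only after the comedown), reducing to the corresponding statement for the limiting diffusion $dX=-\mu_X X^2 dt + \sigma_X\sqrt X\,dB$ — but that is circular with Theorem \ref{finiv}. The cleaner route, which I would actually take, is the branching/comparison route used elsewhere: bound $\int_0^t h_N$ from below by $C\cdot t$ on $\{\tau_C^->t\}$ and derive $h_N(t)\le h_N(0)-c_1 C^2 t + O(\sqrt{\log N}) + o(1)$, then note $h_N(0)\le\log N$, so at $t=2\log N/(c_1 C^2)=o(1/\eps C)$ for fixed $C$ we would get $h_N(t)<0$, impossible; this gives $\tau_C^-(h_N)\le 2\log N/(c_1C^2)$, which is $\le 1/\eps C$ for all large $N$. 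That settles the upper bound.

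For the lower bound ($\tau_C^-(h_N)>\eps/C^3$ whp), the point is that $h_N$ cannot fall from scale $\omega(1)$ (in the $\infty$-start case it is still $\ge \log N$... but in the generality needed here we only know $h_N(0)$ is large, possibly only $\omega(1)$) — actually the precise claim is: starting from $h_N(0)$ which is large, $h_N$ cannot reach the bounded level $C$ too quickly because the drift pushing it down, $-c_1 h_N^2$, integrated over a time of order $\eps/C^3$ starting from $h_N(0)\approx$ large, produces a decrease whose magnitude I can control: as long as $h_N$ stays above $C$, reverse the Grönwall estimate for $h'=-c_1h^2$, whose solution started at $h_0$ reaches $C$ only at time $(1/c_1)(1/C-1/h_0)\ge (1/c_1)(1/C - o(1))$. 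For the stochastic process the fluctuation term over $[0,\eps/C^3]$ has quadratic variation $O(\,(\sup h_N)\cdot \eps/C^3\,)=O(\log N\cdot\eps/C^3)$, giving fluctuations $O(\sqrt{\eps\log N}/C^{3/2})$ whp by Lemma \ref{lem:sm-est}, and the averaging error is again $o(1)$; since the deterministic comedown from $h_0$ to $C$ takes time $\ge(1-o(1))/(c_1 C)\gg \eps/C^3$ for small $\eps$, and the fluctuations are too small to bridge the gap from $h_0$ down to $C$ in time $\eps/C^3$, we conclude $\tau_C^-(h_N)>\eps/C^3$ whp. (If $h_N(0)$ is merely $\omega(1)$ but bounded by, say, $M$, the deterministic time to fall from $M$ to $C$ is $\ge (1/c_1)(1/C-1/M)$, still $\gg\eps/C^3$ for $\eps$ small depending on nothing problematic; one takes the liminf over $N$ first so $h_N(0)\to\infty$ kicks in.) Combining: for small $\eps$ and large $C$, whp $\eps/C^3<\tau_C^-(h_N)\le 1/\eps C$, and letting $C\to\infty$ after $\limsup_N$ gives the stated limit $=1$.

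The main obstacle is the averaging term $\sqrt N z_N i_N$ in the drift: it is nominally of order $\sqrt N\cdot\sqrt{\log N}\cdot h_N$, which is enormous pointwise and would swamp the quadratic drift, so the entire argument rests on Lemma \ref{aveto0} showing that its time-integral is negligible, and on doing the comparison at the level of integrated (not pointwise) quantities. The secondary subtlety, as flagged above, is the order of limits — $C$ is a fixed constant while $N\to\infty$ — so the $\log N$-sized a priori bounds on $h_N(0)$ and on fluctuations must be shown to be dominated by the $C$-dependent (but $N$-independent) deterministic drift effects, which forces one to take $N\to\infty$ first and only then $C\to\infty$, matching the order of limits in the statement.
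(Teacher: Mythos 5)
Your proposal has genuine gaps in both halves. For the upper bound, the step ``bound $\int_0^t h_N$ below by $Ct$'' replaces the quadratic drift $-c_1h_N^2$ by the linear bound $-c_1C\,h_N\le -c_1C^2$, so the total guaranteed decrease by time $t$ is only $c_1C^2t$; to contradict $h_N(0)$, which is merely known to be $\omega(1)$ and after the initial comedown is only bounded by $\log N$, you need $t\gtrsim \log N/C^2$. Your assertion that $2\log N/(c_1C^2)=o(1/\eps C)$ is backwards: this quantity diverges as $N\to\infty$ while $1/\eps C$ is fixed, so the argument does not prove $\tau^-_C(h_N)\le 1/\eps C$ at all. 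Discarding the quadratic decay at high levels is exactly what loses the $1/C$ time scale; the paper avoids this by passing to $x=1/h_N$, for which (after averaging the $z_N i_N$ term via Lemmas \ref{notransient} and \ref{aveto0}, starting the estimate only at time $\nu=1/C$, and using Lemma \ref{bdHI} to get $i_N/h_N$ bounded below) the drift is bounded below by a positive constant $c_6$, so $x$ reaches $\nu$ within time $O(\nu)$ uniformly in the unknown initial height. A correct fix along your lines would need a genuine stochastic comparison with the ODE $h'=-c_1h^2$ below level $\log N$, where the $z_N i_N$ term is not pointwise dominated, and your sketch does not supply one.

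For the lower bound, you invoke control (and hence Lemma \ref{aveto0}) from time $0$, but Lemma \ref{notransient} only provides control after an initial transient, and under the hypotheses of Theorem \ref{infiv} one only knows $z_N(0)=o(\sqrt N)$; the window $[0,\eps/C^3]$ sits precisely in the regime where the paper must work to handle the transient (Lemma \ref{lem:zint} plus the excursion bound on $\int|z_N|$). Moreover your quantitative conclusion is not valid: the fluctuation bound $O(\sqrt{\eps\log N}/C^{3/2})$ grows with $N$, and comparing it to ``the gap from $h_0$ down to $C$'' does not rule out hitting $C$ (e.g.\ $h_N(0)$ could grow slower than $\sqrt{\log N}$), while the drift itself, through the $z_N i_N$ and $i_N^2$ terms at levels up to $\log N$, can move $h_N$ by far more than the gap over a fixed time. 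The exponent in $\eps/C^3$ has to be produced by an $N$-independent estimate; in the paper it comes from $\sigma^2(1/h_N)=O(1/C^3)$ for $t<\tau^-_C(h_N)$ together with the integrated bound on $|z_N|$, and nothing in your sketch plays this role. (Two smaller points: equation \eqref{fasthdrift} carries no extra $\sqrt N$ on the $z_Ni_N$ term, so that term is $O(\sqrt{\log N}\,h_N)$, not $O(\sqrt{N\log N}\,h_N)$; and the control condition $H_N>N^{1/5}$ means $h_N>N^{-0.3}$, not $h_N>N^{1/5}$.)
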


\subsection{Step 6: convergence to diffusion}\label{subsec:weakconv}

Here we prove Theorem \ref{finiv} and Theorem \ref{infiv}. We will need the following result, proved in Section \ref{sec:T0bd}, that shows that once $H_N$ hits $[0,N^{1/5}]$, before long the infection process hits $0$.

\begin{lemma}\label{lem:Hdown}
Suppose that $H^N_0 \le N^{1/5}$, and that w.h.p. $\sup_{t \le N^{1/4}}|z^N_t| \le C_{\ref{maxz}}\sqrt{\log N}$. Then, w.h.p., $\tau_0(H^N) \le N^{1/4}$ and $H^N_t \le N^{.24}$ for all $t \le \tau_0(H^N)$.
\end{lemma}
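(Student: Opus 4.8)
\medskip

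\noindent\textbf{Proof proposal.}
The idea is a branching-process comparison valid near the absorbing state. When $I^N+J^N+K^N$ is small the nonlinear terms of \eqref{IJK} are negligible, so the process is, to leading order, the continuous-time Markov chain generated by the linear system with matrix $A$ of \eqref{Adef}; and this chain is exactly the three-type branching process $(\tilde I,\tilde J,\tilde K)$ --- types being single infecteds, $II$ pairs, $SI$ pairs --- in which a single infected recovers at rate $1$ and forms an $SI$ pair at rate $r_+y_*$; an $II$ pair loses a member at rate $2$ (becoming an $SI$ pair) and dissolves at rate $r_-$ (into two single infecteds); and an $SI$ pair loses its infected at rate $1$, dissolves at rate $r_-$ (into one single infected), and has an internal transmission at rate $\lambda$ (becoming an $II$ pair). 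Its mean matrix (in the sense of \eqref{IJK}) is $A$; since $R_0=1$ we have $\det A=0$ by \eqref{detA0}, so the Perron root is $0$ and, as noted after \eqref{detA0}, the remaining eigenvalues have negative real part. Hence the branching process is critical and non-singular (its three types communicate), and, $(1,\gamma,\eta)>0$ being the left null vector of $A$ (see \eqref{leftev}, \eqref{eq:eta-gamma2}), the weighted sum $\tilde H_t:=\tilde I_t+\gamma\tilde J_t+\eta\tilde K_t$ is a non-negative martingale with bounded jumps.

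The plan is then: (1) work on the high-probability event $G_N=\{\sup_{t\le N^{1/4}}|z^N_t|\le C_{\ref{maxz}}\sqrt{\log N}\}$ from the hypothesis, and couple $(I^N,J^N,K^N)$ with the critical branching process $(\tilde I,\tilde J,\tilde K)$ started from the same state, by a common-clock construction in which the two agree until a ``mismatch'' occurs; (2) bound the mismatch intensity; (3) read off the two conclusions from known facts about the critical branching process. For (2): on $G_N$ and while $I^N_t\le N^{0.24}$, the only differences between the two dynamics are that the pair-formation rate $r_+I^N(y_*+z^N/\sqrt N-I^N/N)$ deviates from $r_+y_*I^N$ by $O(N^{0.24}\cdot N^{-1/2}\sqrt{\log N})$ per unit time, and that the quadratic reaction $I+I\to II$ fires at per-unit-time rate $O(N^{0.48}/N)$; integrated over $[0,N^{1/4}]$ the total mismatch intensity is $O(N^{-0.01}\sqrt{\log N})\to0$, so with high probability no mismatch occurs before $N^{1/4}$, provided $\tilde H$ (hence, up to the first mismatch, $I^N$) stays below $N^{0.24}$. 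For (3): $\tilde H$ being a non-negative martingale with $\tilde H_0=H^N_0\le N^{1/5}$, the maximal inequality gives $P(\sup_{t\le N^{1/4}}\tilde H_t\ge N^{0.24})\le N^{1/5}/N^{0.24}\to0$; and since the branching process is non-singular, critical, and has bounded offspring, the classical $\asymp 1/t$ tail of its extinction time $\tilde\zeta$ gives $P(\tilde\zeta>N^{1/4})\le C\tilde H_0/N^{1/4}\le CN^{-1/20}\to0$. On the intersection of $G_N$, $\{\sup_{t\le N^{1/4}}\tilde H_t<N^{0.24}\}$, the no-mismatch event, and $\{\tilde\zeta\le N^{1/4}\}$ --- a high-probability event --- one has $(I^N,J^N,K^N)_t=(\tilde I,\tilde J,\tilde K)_t$ for all $t\le N^{1/4}$, so $H^N_t=\tilde H_t<N^{.24}$ throughout $[0,N^{1/4}]$ and $\tau_0(H^N)=\tilde\zeta\le N^{1/4}$; in particular $H^N_t\le N^{.24}$ for all $t\le\tau_0(H^N)$, which is the claim.

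The step I expect to be the main obstacle is (2), the coupling with the \emph{exactly} critical branching process: the perturbation of the pair-formation rate is two-sided (when $z^N>0$ the true rate exceeds $r_+y_*I^N$), so this is not a stochastic domination but a coupling whose error is governed by the total mismatch intensity, and one must check carefully that this intensity genuinely vanishes --- which is what dictates the ceiling $N^{0.24}$ and the horizon $N^{1/4}$ in combination with the a priori bound $|z^N|=O(\sqrt{\log N})$ supplied by Lemma \ref{maxz} through the hypothesis. A secondary technical point is the branching-process input (a.s.\ extinction, the martingale $\tilde H$, and the $1/t$-tail of the extinction time; all classical, e.g.\ Athreya--Ney or Harris): these need non-singularity of the offspring structure --- which is precisely $\det A=0\Leftrightarrow R_0=1$ --- together with finiteness of the offspring second moments, immediate here since all transition effects $\Delta_i$ are bounded.
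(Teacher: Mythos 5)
Your proposal is correct, and for the extinction-time half it is essentially the paper's own strategy: the paper likewise replaces $(I^N,J^N,K^N)$ on $[0,\tau'\wedge N^{1/4}]$ (with $\tau'$ capping $H^N$ at $N^{0.24}$ and $|z^N|$ at $C_{\ref{maxz}}\sqrt{\log N}$) by the critical three-type branching process obtained by dropping the $I+I$ transition and the nonlinear part of the $S+I$ rate, with exactly your mismatch-intensity accounting. You diverge in the two sub-arguments. For extinction of the branching process the paper does not invoke the multitype Kolmogorov $1/t$ estimate; it extracts an embedded single-type Galton--Watson structure (each single-infected particle reproduces via the chain of Figure \ref{fig:critval}, offspring mean $R_0=1$, offspring in $\{0,1,2\}$) and uses height and total-size tails of critical GW trees plus $O(\log N)$ waiting times to get extinction within $O(N^{0.22}\log N)$; your appeal to the classical multitype result buys brevity at the cost of a heavier citation, while the paper's embedding is self-contained and reuses the $R_0$ chain already introduced. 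For the bound $H^N_t\le N^{0.24}$ the paper does not use the coupling at all: it works directly with the true process, using $\mu_t(H^N)\le a_1\sqrt{\log N/N}\,H^N$ and $\sigma^2(H^N)=\Omega(H^N)$ to show that $e^{-bH^N}$ with $b=N^{-0.22}$ is a submartingale up to $\tau'$, and concludes by optional stopping; you instead bound $\sup_{t\le N^{1/4}}\tilde H_t$ by Doob's maximal inequality for the exactly-critical martingale $\tilde H$ and transfer it through the same coupling. Your route is arguably cleaner, but it makes the coupling do double duty, so (as you note) the no-mismatch estimate must be run up to the first mismatch time on the event $\{\sup_{t\le N^{1/4}}\tilde H_t<N^{0.24}\}$ intersected with the $z^N$-bound --- this does work, whereas the paper's exponential-submartingale argument sidesteps that interleaving and handles the positive $z^N$-drift term without reference to the branching process. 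One small correction: non-singularity of the branching process is not ``precisely $\det A=0\Leftrightarrow R_0=1$'' --- that equivalence gives criticality; non-singularity (offspring not a.s.\ equal to one) holds here simply because recovery produces $0$ offspring and dissolution of an $II$ pair produces $2$.
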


\mn
\begin{proof}[Proof of Theorem \ref{finiv}]
We use Lemma \ref{lem:limproc} to prove this theorem. 
Since the jump size of $i_N$ is $O(N^{-1/2})=o(1)$ it suffices to find $a,b$ and show convergence of the compensator and predictable quadratic variation. By assumption,
\begin{enumerate}[label={\alph*)}]
\item $|z_N(0)| = O(1)$ so by Lemma \ref{maxz}, for fixed $T>0$, w.h.p.\,$|z_N(t)| \le C_{\ref{maxz}}\sqrt{\log N}$ for all $t \le T$, and
\item $Q_N(0) = O(N^{-\eps})$, so letting $c_N^Q=2Q_N(0)\vee N^{-1/6}$, Lemma \ref{ray} shows that for fixed $T>0$, w.h.p.\,$Q_N(t)\le c_N^Q$ for all $t \le T \wedge \tau^-_{N^{1/5}}(H_N)$.
\end{enumerate}
Point b) implies that $h_N(t) = (1/u_* + O(Q_N(t)^{1/2}))i_N(t) = (1/u_*+o(1))i_N(t)$ for all $t \le T\wedge \tau^-_{N^{1/5}}(H_N)$.
Point a) and Lemma \ref{lem:Hdown} imply that w.h.p.~for all $\tau^-_{N^{1/5}}(H_N)\le t \le T$, $i_N(t) \le h_N(t) = O(N^{0.24-1/2}) = o(1)$. 
Thus if Theorem \ref{finiv} can be proved for $h_N$ for some constants $\mu_*,\sigma_*^2$ then it holds for $i_N$ with different constants. We recall \eqref{fasthdrift}:
\begin{align}\label{eq:fasthdrift2}
\mu(h_N) = (\eta-1) r_+ z_Ni_N - (\eta - \gamma/2) r_+ i^2_N + (1-\gamma/2)i_N/\sqrt{N}
\end{align}
Also $i_N(0),j_N(0),k_N(0) = O(1)$ so the same holds for $h_N(0)$. Letting $c_N^h=\log N$, by Lemma \ref{bdI} and a) above, w.h.p.\,$h_N(t) \le c_N^h$ for all $t \le T \wedge \tau_{N^{1/5}}^-(H_N)$. Combining a) and b) with the bound on $h_N$, w.h.p.
$$\tau_N(c_N^h,c_N^Q,0)\wedge T = \tau_{N^{1/5}}^-(H_N) \wedge T.$$
Using this and Lemma \ref{aveto0}, for any fixed $T>0$, w.h.p.
$$\sup_{t \le T}\left|\int_0^{t \wedge \tau_{N^{1/5}}^-(H_N)} (\eta-1) r_+ z_N(s)i_N(s) ds  \right| = O(N^{-(\frac{\eps}{2} \wedge \frac{1}{4})}(\log^3 N))=o(1),$$
uniformly for $t \le T$. Since $i_N \le h_N$, w.h.p.~for $t \le T \wedge \tau_{N^{1/5}}^-(H_N)$ the third term in the RHS of \eqref{eq:fasthdrift2} is $O(N^{-1/2}\log N)=o(1)$. 
Using the bound on $Q_N$, if $h_N(t) \le R$ for fixed $R>0$ and $t \le T\wedge \tau_{N^{1/5}}^-(H_N)$ then w.h.p.
$$i_N(t) = (u_*+o(1))h_N(t) = u_*h_N(t) + o(1).$$
Looking back to \eqref{eq:fasthdrift2}, we let
$$
b(x) = -\mu_* x^2 \qquad\hbox{with}\quad \mu_* = (\eta-\gamma/2)r_+u_*^2,
$$
and let $\tau^+_R(h_N) = \inf\{t:h_N(t)\ge R\}$. Recall that $h_N^p$ denotes the compensator of $h_N$ (see Section \ref{sec:sampath}). Since $h_N^p(t) = \int_0^t \mu_s(h_N)ds$, we find that w.h.p.
$$\sup_{t \le T \wedge \tau^+_R(h_N) \wedge \tau_{N^{1/5}}^-(H_N)}| h_N^p(t) - \int_0^t b(h_N(s))ds| = o(1).$$
Next let us consider the easier case $\tau^-_{N^{1/5}}(H_N) \le t \le T \wedge \tau^+_R(H_N)$. For this range of values of $t$, from \eqref{eq:fasthdrift2}, and using Lemma \ref{maxz} it easily follows that $\mu(h_N)=o(1)$ and $h_N=o(1)$. So w.h.p.
 $$\sup_{\tau^-_{N^{1/5}}(H^N) \le  t \le T \wedge \tau^+_R(h_N) }| h_N^p(t) - \int_0^t b(h_N(s))ds| = o(1).$$
This proves the assertion about the compensator of $h_N$, as required by Lemma \ref{lem:limproc}. 

Now to calculate the diffusivity of $h$ we let $m$ index the possible transitions and write
$$\sigma^2(h_N) = \sum_m q_m\,(\Delta_m h_N)^2,$$
where $q_m$ is the rate of transition $m$ (it is a function of the state) and $\Delta_m h_N$ is the change in $h_N$ at that transition. 
Note that there are constants $c_m$ so that $(\Delta_m h_N)^2 = c_m/N$. 
Recall that on the fast (original) time scale the transitions of $I^N,J^N,K^N$ have the following rates:
\begin{center}
\begin{tabular}{cc}
transition & rate \\
$I \to S$ & $I^N$ \\
$J \to K$ & $2J^N$ \\
$K \to J$ & $\lambda K^N$ \\
$I+I\to J$ & $r_+(I^N)^2/N$ \\
$I+S \to K$ & $r_+ S^NI^N/N$ \\
$J \to I+I$ &$r_-J^N$ \\
$K \to S+I$ & $r_-K^N$
\end{tabular}
\end{center}

\noindent
Most rates are linear in $I^N,J^N$, or $K^N$. Those which are not are the $I+I\to J$ transition and the $I+S \to K$ transition. As we have seen above, w.h.p., for all $t \le T \wedge \tau^-_{N^{1/5}}(H^N)$, $I_N(t) \le N^{1/2}\log N$.  Therefore the $I+I\to J$ transition has rate $O((\log N)^2)$. By a similar reasoning the $I+S \to K$ transition has rate $r_+(y_* + O(N^{-1/2}\log N))I^N = r_+y_*I^N + O((\log N)^2)$.
Speeding up time by $N^{1/2}$ and writing in lower case, the rates are equal to $Ni_N$, $2Nj_N$, etc and the error terms have rate $O(\sqrt{N}(\log N)^2)$. Since each of $i_N(t),j_N(t),k_N(t)$ is equal to $(\text{constant} + o(1))h_N(t)$ for $t \le T \wedge \tau_{N^{1/5}}^-(H_N)$, if in addition $h_N(t) \le R$ then there are constants $d_m$ so that for each $m$,
$$q_m = Nd_m h_N(1+ o(1)) + O(\sqrt{N}(\log N)^2)) = Nd_mh_N + o(N).$$
Thus there is a constant $\sigma_*^2$ so that if $h_N \le R$ and $t \le T \wedge \tau^-_{N^{1/5}}(H_N) \wedge \tau^+_R(h_N)$ then w.h.p.
$$\sigma^2(h_N) = \sum_m Nd_mh_N\frac{c_m}{N} + o(1) = \sigma_*^2 h_N + o(1).$$
If $\tau^-_{N^{1/5}}(H_N) \le t \le T$ then since w.h.p.~$h_N(t)=O(N^{.24-1/2}) = o(1)$, an easy computation gives $\sigma^2(h_N)=o(1)$. 
This implies the desired convergence of predictable quadratic variation with $a(x) = \sigma_*^2 x$. An application of Lemma \ref{lem:limproc} now shows $h_N$ (and hence $i_N$) converges to a diffusion of the desired form.
\end{proof}

\vskip10pt
We now prove Theorem \ref{infiv}.

\vskip10pt

\begin{proof}[Proof of Theorem \ref{infiv}]
We first explain why it makes sense to start the limiting diffusion $X$ from $\infty$. For $C>0$, let $\tau^-_C(X) = \inf\{t \colon X_t \le C\}$. Using Jensen's inequality,
$$\frac{d}{dt}E[X_t] = -\mu E[X_t^2] \le -\mu (E[X_t])^2,$$
so we find that
$$E[X_t \mid X_0=x] \le \frac{1}{\mu t + 1/x}$$
and using Markov's inequality,
$$P(\tau^-_C(X) \ge t \mid X_0=x) \le C^{-1} E[X_t \mid X_0=x] \le \frac{1}{C\mu t  + C/x}.$$
In particular, if $x,C\to\infty$ with $C\le x$ then $\tau^-_C(X)$ converges in probability to zero. It is then not hard to show that the law of $X$, conditioned on $X_0=x$, converges in distribution as $x\to\infty$.\\

Since the limiting diffusion is continuous, it crosses any level $C>0$, if it starts from $\infty$. Thus if we let $\tau_C(X) = \inf\{t:X_t=C\}$, $\tau_C(X) \da 0$ in probability as $C \ua \infty$. 
Since convergence in distribution allows for small time change, given the proof of Theorem \ref{finiv}, it is enough to show there are sequences $\ep_m\to 0, C_m \to \infty$ so that for each $m$, w.h.p.~there is a $t_m \le \ep_m$ such that
\begin{itemize}[noitemsep]
 \item $h_N(t_m)=C_m + O(N^{-1/2})$,
 \item $|z_N(t_m)| \le C_{\ref{maxz}}\sqrt{\log N}$ and
 \item $|Q_N(t_m)| \le N^{-1/6}$.
\end{itemize}
Letting $C_m = m$ and $\ep_m = 1/\eps m$ for small $\eps>0$, Lemma \ref{downfast} gives the bound on $h_N$. Since Lemma \ref{downfast} also gives $t_m \ge \eps/m^3$, we may apply Lemma \ref{notransient} to obtain the desired bounds on $|z_N|$ and $Q_N$. This completes the proof.
\end{proof}

\subsection{Step 7: extinction time}\label{subsec:extime}

To prove a result for the time for the infection to die out, note that $\tau_0(h_N) = \inf\{t \colon h_N(t)=0\}$ is the first time (on the $N^{1/2}$ time scale) there are no infected individuals.\\

The continuous mapping theorem makes half of the
proof easy. To complete it we need to show that $\tau_0(h_N)$ converges in probability to $0$ as $h_N(0) \to 0$, uniformly for large $N$. This is accomplished by combining Lemma \ref{lem:Hdown} with the following result, that by the definition of $\tau(\ctl,0)$ implies that if $h_N$ is initially small and if the values of $|z_N|,Q_N$ can be kept under control then within a short time, $H_N$ hits $[0,N^{1/5}]$.

\begin{lemma}\label{lem:Hdn1}
Fix $\ep>0$. There is $\dlt>0$ so that
$$P(\tau_N(\ctl,0) < \tau^+_{\log N}(h_N) \wedge \eps \mid h_N(0) \le \dlt) \ge 1-\eps.$$
\end{lemma}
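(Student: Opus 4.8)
The plan is to show that when $h_N(0) \le \delta$ for suitably small $\delta$, the linear combination $H_N$ is driven down to the floor $[0,N^{1/5}]$ within time $\varepsilon$, provided the a priori control variables stay in their good ranges. First I would set $c_N^h = \log N$, $c_N^Q = N^{-1/6}$, and $\tau = \tau_N(\ctl,0) \wedge \tau^+_{\log N}(h_N)$, and work on the fast time scale where the drift of $H^N$ is given by \eqref{Hdrift}. On the event of control, $Q_N$ small forces $I^N = (u_* + o(1))H^N$, so the quadratic term $-(\eta-\gamma/2)r_+ (I^N)^2/N$ is $-(\eta-\gamma/2)r_+ u_*^2 (H^N)^2/N (1+o(1))$, strictly negative, while the $Z^N I^N/N$ term is $O(\sqrt{\log N}\, H^N/N)$ and the last term is $O(H^N/N)$. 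Since on $\{h_N(0)\le\delta\}$ and before $\tau^+_{\log N}(h_N)$ we have $h_N \le \log N$, i.e.\ $H^N/\sqrt N \le \log N$, the drift of $H^N$ is bounded above by $-c_1 (H^N)^2/N + c_2 \sqrt{\log N}\, H^N/N$ for constants $c_1>0$, $c_2$; this is negative once $H^N \ge c_3\sqrt{\log N}$ and is bounded below by $-c_4 (H^N)^2/N \ge -c_4 H^N \log N/\sqrt N$ in absolute value.

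Next I would run a supermartingale / drift argument on $h_N = H^N/\sqrt N$ on the slow time scale. Once $h_N \ge $ any fixed level above $O(\sqrt{\log N}/\sqrt N)$, its drift is $\le -c_1 h_N^2(1+o(1))$, which is the same comparison used in the proof of Theorem \ref{infiv}: from $\frac{d}{dt}E[h_N] \lesssim -c_1 E[h_N]^2$ on the controlled event we get $E[h_N(t)] \lesssim 1/(c_1 t)$ modulo the stopped-process bookkeeping, so a Markov-inequality argument (exactly as in the Example/Theorem \ref{infiv} proofs) gives that $h_N$ drops below any small threshold $\delta'$ within time $\varepsilon/2$ with probability $\ge 1-\varepsilon/2$. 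Alternatively, and more robustly given that we only control things up to $\tau$, I would invoke Corollary \ref{cor:driftbar} (the drift barrier with stopped process) applied to $-h_N$ or directly track that $h_N$ cannot climb back up: the barrier estimate shows $\sup_{t\le \tau}h_N(t)$ stays below $\delta$ (starting from $\le\delta/2$, say, after absorbing the initial drop) with high probability, since the negative drift provides the needed $\mu_\star$ and $\Gamma$ is polynomially large.

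Then I would use Lemma \ref{aveto0}: the contribution of the sign-indefinite term $(\eta-1)r_+ z_N i_N$ integrated over $[0,t\wedge\tau]$ is $o(1)$ w.h.p., so it does not obstruct the downward push and the effective drift of $h_N$ is $\le -c_1 h_N^2 + o(1)$, making the comparison with the ODE $x' = -c_1 x^2$ legitimate up to $\tau$. Combining: for $\delta$ small enough, with probability $\ge 1-\varepsilon$, either $H_N$ hits $[0,N^{1/5}]$ (i.e.\ $\tau^-_{N^{1/5}}(H_N)$ occurs) before time $\varepsilon$, or $h_N$ stays in $[0,\delta]\subset[0,\log N)$ throughout $[0,\varepsilon]$ and, by the drift argument, $h_N(\varepsilon)$ is so small that $H_N(\varepsilon) = \sqrt N h_N(\varepsilon)$ is already below $N^{1/5}$ — a contradiction unless the hitting already happened. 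Either way $\tau^-_{N^{1/5}}(H_N) < \varepsilon$ on this event, and since on the same event $|z_N|$ and $Q_N$ never leave their control windows before $\tau^-_{N^{1/5}}(H_N)$ (this is exactly Lemma \ref{notransient} / the sequential application of Lemmas \ref{maxz}, \ref{bdI}, \ref{ray}), we get $\tau_N(\ctl,0) \ge \tau^-_{N^{1/5}}(H_N)$ on that event, and also $\tau^+_{\log N}(h_N)$ does not occur before. Hence $\tau_N(\ctl,0) < \tau^+_{\log N}(h_N)\wedge\varepsilon$ fails only on an event of probability $\le\varepsilon$, which is the claim.

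The main obstacle I anticipate is the bookkeeping near the floor: the drift of $h_N$ is genuinely negative only while $h_N = \omega(\sqrt{\log N}/\sqrt N)$, i.e.\ $H^N = \omega(\sqrt{\log N})$, which is far below $N^{1/5}$, so the drift argument alone drives $H^N$ well past $N^{1/5}$ and one has to argue carefully that $H^N$ crosses the level $N^{1/5}$ — that the process, being at a low but still $\omega(N^{1/5})$ value with negative drift, cannot avoid entering $[0,N^{1/5}]$ within the remaining time budget. This is handled by the same Markov-inequality estimate ($E[h_N(t)]\lesssim 1/(c_1 t)$ forces $h_N(\varepsilon/2)$ to be $o(N^{1/5-1/2})$ with high probability, hence $H^N(\varepsilon/2) < N^{1/5}$) combined with the control time $\tau$ not expiring first — which in turn is guaranteed by Lemma \ref{notransient}. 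The only subtlety is making the constants and the choice of $\delta$ uniform in large $N$, which follows because all the error terms above are $o(1)$ uniformly and the comparison ODE constants $c_1,\dots,c_4$ depend only on the model parameters.
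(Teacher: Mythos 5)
There is a genuine gap, and it is exactly at the point you flag as the ``main obstacle.'' Your mechanism for making the control time occur is the quadratic drift: from $\mu(h_N)\lesssim -c_1h_N^2$ you deduce $E[h_N(t)]\lesssim 1/(c_1t)$ and then claim Markov's inequality forces $h_N(\eps/2)=o(N^{1/5-1/2})$, hence $H_N<N^{1/5}$. This does not work. The ODE comparison starting from $h_N(0)\le\dlt$ gives $E[h_N(t)]\le 1/(c_1t+1/\dlt)$, which at a fixed time $t=\eps/2$ is of order $\min(\dlt,1/(c_1\eps))$ --- a constant, nowhere near $N^{-0.3}$; Markov's inequality then gives $P(h_N(\eps/2)>N^{-0.3})\le CN^{0.3}$, which is vacuous. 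Worse, the failure is structural, not just a lossy bound: the quadratic drift $-c_1h_N^2$ is negligible once $h_N$ is of constant size or smaller (the deterministic flow $x'=-c_1x^2$ needs time of order $N^{0.3}$ to descend from $\dlt$ to $N^{-0.3}$), so no drift-based argument can produce $\tau^-_{N^{1/5}}(H_N)<\eps$. Your fallback via Corollary \ref{cor:driftbar} has the same defect in a different guise: a barrier estimate only shows $h_N$ \emph{stays} below $\dlt$, which does not make $\tau_N(\ctl,0)$ occur before $\eps$; moreover, for small $h_N$ the pointwise drift is not even negative, since the sign-indefinite term $c_1z_Ni_N$ can be of order $\sqrt{\log N}\,h_N\gg h_N^2$, and Lemma \ref{aveto0} only neutralizes it after integration, not pointwise, so the hypotheses of the drift-barrier lemma are not available at these scales.

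What actually drives $H_N$ down to $N^{1/5}$ is the noise, not the drift: near $0$ the process $h_N$ behaves like $dX=\sigma_*\sqrt{X}\,dB$ plus a negligible drift, and such a process reaches tiny levels in time of order its starting value because $\sigma^2(h_N)\asymp h_N$. The paper's proof implements this through dyadic levels $a_k=2^k\dlt$: Lemma \ref{lem:exit-exp} shows (working with $h_N^2$, whose compensator grows like $\sigma_*^2a_k t$ thanks to the diffusivity, with the $z_Ni_Nh_N$ cross term killed by Lemma \ref{aveto0}) that the exit time from $(a_{k-1},a_{k+1})$ has expectation $O(a_k)$, and Lemma \ref{lem:exit-pr} shows via an optional-stopping/averaging argument that the exit is upward with probability at most $1/3+\ep$. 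Comparing the level index to a downward-biased random walk then gives expected total time $O(D_m\dlt)$ before the control time (or before climbing $m$ levels, which has probability $(1/2+o(1))^m$), and Markov's inequality with $\dlt$ small finishes the proof; note that this is also why shrinking $\dlt$ shortens the time, whereas in your scheme shrinking $\dlt$ only weakens the drift. To repair your proposal you would need to replace the ODE/Markov step by some version of this excursion or square-function argument exploiting $\sigma^2(h_N)\asymp h_N$.
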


\vskip10pt

Lemmas \ref{lem:Hdown} and \ref{lem:Hdn1} are postponed to Section \ref{sec:T0bd}. For now we use them to prove Theorem \ref{T0lim}.

\begin{proof}[Proof of Theorem \ref{T0lim}]
 
Recall that $\mathcal{Q}$ is the law of the limiting diffusion for $h_N$ and $\tau_x(X)$ is the hitting time of $x>0$ for the limiting diffusion. By the strong Markov property and Blumenthal's 0-1 law, after hitting $x$, the process
will with probability one immediately hit $(0,x)$ and $(x,\infty)$. 
From this it follows easily that $\tau_x(X) : C \to \RR$ is continuous $\mathcal{Q}$-almost surely. 
Suppose the infection process satisfies the conditions of Theorem \ref{finiv} with $h_N(0) \to x$. 
Let $P_N$ denote its law and let $\tau^-_x(h_N)  = \inf\{t:h_N(t) \le x\}$. Using the continuous mapping theorem,
$$
P_N( \tau^-_x(h_N) > t ) \to \mathcal{Q}( \tau_x(X) > t)
$$
Let $\ep>0$. If $x$ is small enough then $\mathcal{Q}_x(\tau_0(X) > \ep) < \ep$ and hence $\mathcal{Q}(\tau_0(X) > t+\ep) \le \mathcal{Q}(\tau_x(X) >t ) + \ep$. Combining this with the last result
and noting that $x \mapsto \tau^-_x(h_N)$ is increasing,
$$
\liminf_{N \to \infty} P_N( \tau_0(h_N) > t ) \ge \mathcal{Q}( \tau_0(X) > t+\ep) - \ep.
$$
Letting $\ep\to 0$ we have half of the desired convergence in distribution. 

To get the other half we again fix any arbitrary $\ep >0$. Note that for $\dlt>0$
$$
P_N( \tau_0(h_N) > t + \ep) \le P_N( \tau^-_{\dlt}(h_N)>t ) + \sup_{h \le \dlt}P_N(\tau_0(h_N) > \ep \mid h_N(0) = h).
$$
Letting $N\to\infty$, $P_N( \tau^-_\dlt(h_N)>t) \to \mathcal{Q}( \tau_\dlt(X) > t) \le \mathcal{Q}( \tau_0(X) > t)$. So it suffices to show that for each $\ep>0$ there is a $\dlt>0$ so that the second term is at most $O(\ep)$, uniformly for large $N$. Since by convergence in distribution, on the $N^{1/2}$ time scale it takes at least $s>0$ amount of time, w.h.p.\,as $s\to 0^+$ and $N\to\infty$, for $h$ to reach $\dlt$ if $h_N(0) \to x >\dlt$, using Lemma \ref{notransient} we may assume when taking the above $\sup$ that not only $h\le \dlt$ but also that $|z_N(t)|$ and $Q_N(t)$ are small (in the sense of the definition of $\tau(\ctl,t)$ given in Definition \ref{def:ctrl-time}) for all $t \le \omega(1) \wedge \tau_{N^{1/5}}^-(H_N)$. The desired bound then follows directly from Lemmas \ref{lem:Hdown} and \ref{lem:Hdn1}. This completes the proof.
\end{proof}

\clearp

\section{Step 1: upper bound on $|z^N|$} \label{sec:bdY}

Let $C_{\ref{maxz}}$ be a sufficiently large constant. In this short section we prove Lemma \ref{maxz}, in two parts:
\begin{itemize}[noitemsep]
\item \textit{approach:} show that $|Z^N_t| \le C_{\ref{maxz}}$ for some $t \le C_{\ref{maxz}}\log N$ w.h.p., then
\item \textit{control:} show that if $|z^N_t| \le (C_{\ref{maxz}}/2)\sqrt{\log N}$ then w.h.p., $|z^N_t| \le C_{\ref{maxz}}\sqrt{\log N}$ for all $t \le N$.
\end{itemize}

\vskip10pt

\noindent\textbf{Approach.} Recall from \eqref{Y-drift} that
$$\mu(Y^N) = F_N(Y^N) \quad \hbox{where} \quad F_N(Y^N) = r_-(N-Y^N) - \frac{r_+}{N}Y^N(Y^N-1),$$
where, for each $N$, $F_N:\RR \to \RR$ is just some function. Let $Y_*^N \in (0,N)$ be the unique value with $F_N(Y_*^N)=0$.
Note that $Y_*^N \ne Ny_*$ since we used $Y^N(Y^N-1)$ and not $(Y^N)^2$ to compute it. However note that $F_N(Ny_*) = r_+y_*$ and by concavity of $F_N$, $|F_N'|$ is bounded below by $|F_N'(0)| = r_- - r_+/N$, so
$$
|Y_*^N - Ny_*| \le r_+y^*/(r_--r_+/N) = O(1),
$$
and letting $\tilde Z^N = Y^N-Y_*^N$ we have $\tilde Z^N - Z^N = Y_*^N-Ny_* = O(1)$, so it is enough to prove the result with $\tilde Z^N$ in place of $Z^N$. Since $F_N$ is concave, if we let $r_N = F_N(0)/Y_*^N$ then for $Y^N \in [0,N]$ with $Y^N \ne Y_*^N$ we have
$$F_N(Y^N)/(Y^N-Y_*^N) \le -r_N.$$
Letting $r=\frac{1}{2}\liminf_{N\to\infty} r_N$, it follows that $F_N(Y_*^N + \tilde Z^N)/\tld Z^N \le -r$ for large $N$ and $\tilde Z^N \ne 0$. 
If $\tilde Z^N_0 \ge 1$ then letting $\tau_2^-(\tilde{Z}^N) = \inf\{t: \tilde Z^N_t \le 2\}$ and using the product rule (Lemma \ref{lem:qac-prod}), for $t<\tau_2^-(\tilde Z^N)$
$$\mu(e^{rt}\tilde Z^N_t) = e^{rt}(r\tilde Z^N_t + F(Y_* + \tilde Z^N_t)) \le 0,$$
so $\xi_t = \exp(r(t \wedge \tau_2^-(\tilde Z^N)))\tld Z^N_{t\wedge \tau_2(\tilde Z^N)}$ is a supermartingale.
If $\tau_2^-(\tilde Z^N)>t$ then $\xi_t \ge 2e^{rt}$. Moreover $\xi_0 =\tilde Z^N_0 \le N$. So
$$P(\tau_2^-(\tilde Z^N) > t) \le P(\xi_t \ge 2e^{rt}) \le \frac{e^{-rt}}{2} E[\xi_t] \le e^{-rt}N/2.$$
If $\tilde Z^N_0 \le -1$ then letting $\tau_2^-(-\tilde Z^N) = \inf\{t: \tilde Z^N_t \ge -2\}$, we obtain the same estimate for $P(\tau_2^-(-\tilde Z^N) >t)$, so for $\tau_2^-(|\tilde Z^N|) = \inf\{t:|Z^N_t| \le 2\}$, taking a union bound and $t=C\log N$ with $C = 2/r$ we find
$$P(\tau_2^-(\tilde Z^N)>(2/r)\log N) \le 1/N = o(1),$$
which proves the result.

\vskip10pt

\noindent\textbf{Control.} We use Lemma \ref{lem:driftbar} to control $z^N = N^{-1/2}Z^N$. 

Let $x = (C_{\ref{maxz}}/2)\sqrt{\log N}$ with $C_{\ref{maxz}}$ to be determined and let $X = -x + |z^N-x|$, then $\Delta_\star(X) \le 2/N^{1/2}\le x/2$ for large $N$. Since $z^N$ jumps by at most $x$, if $|z^N|\ge x$ then $\mu(X) = \sgn(z^N)\mu(z^N)$. From the proof of approach, $\mu(z^N)/z^N \le - r$ if $z^N \ne 0$. If $X \ge x/2$ then $|z^N| \ge 3x/2$ so letting $\mu_\star=3rx/2$, $\mu(X) \le -\mu_\star$.  If $X\le x$ then $|z^N| \le 2x$ so using \eqref{mudiff}-\eqref{sigdiff}, $|\mu(z^N)|\le Cx$ and $\sigma^2(z^N)\le C$ for some $C>0$, so let $C_{\mu_\star}$ be a large enough multiple of $x$ and $\sigma^2_\star$ a large enough constant. Then, $\Delta_\star(X)\mu_\star/\sigma^2_\star = o(1)$ which allows us to take $C_\Delta=1$. Then, $\Gamma \ge \exp(\delta L^2)$ and $\lfloor \Gamma \rfloor x/16C_{\mu_\star} \ge \dlt \Gamma$ for some $\dlt>0$. Taking $C_{\ref{maxz}} > 1/\sqrt{\dlt}$ makes $\Gamma,\dlt\Gamma \ge N$ for large $N$ and the result follows from Lemma \ref{lem:driftbar}.

\section{Step 2: upper bound on $h_N$} \label{sec:bdH}

Recall $H^N = I^N+\gamma J^N + \eta K^N$ and $\tau^-_{N^{1/5}}(H^N)$ is the first time that $H^N \le N^{1/5}$. 
Recalling \eqref{Hdrift} we see that the negative term in $\mu(H^N)$ involves $I^N$. Therefore, the first step is to get a lower bound on $I^N/H^N$. Note the event below is taken to be vacuous if $\tau^-_{N^{1/5}}(H^N)<C_{\ref{bdHI}}$.

\begin{lemma} \label{bdHI}
There is a constant $C_{\ref{bdHI}}>0$ so that with high probability,
$$H^N_t \le C_{\ref{bdHI}} I^N_t \quad \hbox{for all} \quad C_{\ref{bdHI}} \le t \le N \wedge \tau^-_{N^{1/5}}(H^N).$$
\end{lemma}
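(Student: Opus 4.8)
The plan is to control the ratio $U^N_t = I^N_t/H^N_t$ — the variable introduced for Step 3 in the workflow — rather than $H^N - C I^N$ directly. Before $\tau^-_{N^{1/5}}(H^N)$ we have $H^N > N^{1/5}$, so $U^N$ is well defined, and since $U^N_t \ge \delta$ forces $I^N_t \ge \delta N^{1/5} > 0$, the lemma is equivalent to: there is a constant $\delta > 0$ so that w.h.p.\ $U^N_t \ge \delta$ for all $C_{\ref{bdHI}} \le t \le N \wedge \tau^-_{N^{1/5}}(H^N)$ (then take $C_{\ref{bdHI}} = 1/\delta$). The mechanism is that when $I^N$ is small relative to $J^N$ and $K^N$, partnership breakups ($J \to I+I$ at rate $r_-J^N$, $K \to S+I$ at rate $r_-K^N$) replenish $I^N$ at a rate comparable to $H^N$, so $U^N$ is pushed back up.

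\emph{Step 1: a scale-free positive drift for $U^N$ when $U^N$ is small.} Computing $\mu(U^N)$ via the product rule (Lemma \ref{lem:qac-prod}) applied to $I^N\cdot (H^N)^{-1}$, together with the Taylor estimate $\mu((H^N)^{-1}) = -\mu(H^N)/(H^N)^2 + O(\sigma^2(H^N)/(H^N)^3)$ (Lemma \ref{lem:taylor}, using that $H^N \ge N^{1/5}$ so $1/x$ may be replaced by a $C^2$ function), one gets
$$ \mu(U^N) = \frac{\mu(I^N)}{H^N} - \frac{I^N\mu(H^N)}{(H^N)^2} + O(1/H^N). $$
From \eqref{ODE1}, $\mu(I^N) = 2r_-J^N + r_-K^N - r_+I^N(I^N-1)/N - r_+S^NI^N/N - I^N \ge r_-(J^N+K^N) - (2r_++1)I^N$, while by \eqref{eq:eta-gamma2}, $J^N+K^N \ge \tfrac12(\gamma J^N+\eta K^N) = \tfrac12(H^N-I^N)$; hence $\mu(I^N)/H^N \ge \tfrac{r_-}{2}(1-U^N) - (2r_++1)U^N$. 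Meanwhile, from \eqref{Hdrift} with the crude bounds $|Z^N|\le N$, $I^N\le N$ one has $|\mu(H^N)| = O(I^N)$, and $\sigma^2(H^N) = O(H^N)$ since every transition changes $H^N$ by $O(1)$ at total rate $O(H^N)$; so the last two terms above are $O((U^N)^2 + 1/H^N) = O((U^N)^2 + N^{-1/5})$. Choosing a small constant $u_0 > 0$ makes $\mu(U^N) \ge c_* := r_-/4$ whenever $U^N\le u_0$ (and $t < \tau^-_{N^{1/5}}(H^N)$, $N$ large).

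\emph{Step 2: approach, then control.} As just noted, $\Delta_\star(U^N) = O(1/H^N) = O(N^{-1/5})$ and $\sigma^2(U^N) = O(N^{-1/5})$, so the fluctuations of $U^N$ are negligible on $O(1)$ time scales. For the \emph{approach}, Lemma \ref{lem:sm-est} (with $\phi = N^{1/10}$, say) shows the martingale part of $U^N$ is $o(1)$ uniformly on any fixed interval, so with Step 1, w.h.p.\ $U^N$ reaches $u_0/2$ by some constant time $T_0$ (unless $\tau^-_{N^{1/5}}(H^N)$ occurs first, in which case the statement is vacuous). For the \emph{control}, from that time onward apply Corollary \ref{cor:driftbar} with stopping time $N\wedge \tau^-_{N^{1/5}}(H^N)$ to $X_t = 3u_0/4 - U^N_t$ at level $x = u_0/2$: on $\{0 < X_t < x\}$ we have $U^N_t \in (u_0/4, 3u_0/4) \subset (0,u_0)$, so $\mu(X) \le -c_*$, while $\Delta_\star(X), \sigma^2(X) = O(N^{-1/5})$ and $X_0 \le 3u_0/4 - u_0/2 = x/2$. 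Then $C_\Delta = \max\{\Delta_\star(X)c_*/\sigma^2_\star, 1/2\} = O(1)$, so $\Gamma = \exp(\Omega(N^{1/5}))$ and the time horizon $\lfloor\Gamma\rfloor x/(16C_{\mu_\star})$ exceeds $N$; the corollary gives, w.h.p., $U^N_t \ge u_0/4$ for all $T_0 \le t \le N\wedge\tau^-_{N^{1/5}}(H^N)$. Taking $\delta = u_0/4$ and $C_{\ref{bdHI}} = \max(T_0, 4/u_0)$ completes the proof.

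\emph{Main obstacle.} The crux is Step 1: getting a scale-free positive lower bound on $\mu(U^N)$ for $U^N$ small, which amounts to checking that the errors from differentiating the quotient $I^N/H^N$ (the cross term $I^N\mu(H^N)/(H^N)^2$ and the It\^o correction $\sigma(I^N,(H^N)^{-1})$) are dominated by the replenishment term $r_-(J^N+K^N)/H^N$ — which works because every relevant transition rate is $O(H^N)$ while every jump is $O(1)$, so those errors are $O((U^N)^2)$ or smaller. This same smallness of $\Delta_\star(U^N)$ and $\sigma^2(U^N)$ is what makes the drift-barrier step essentially free, so the only real work is the algebra in Step 1.
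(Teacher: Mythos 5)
Your proposal is correct and follows essentially the same route as the paper: control the ratio $U^N=I^N/H^N$, compute its drift via the product rule and Taylor approximation, observe that breakups of $J,K$ give a constant positive drift when $U^N$ is small while $\mu(H^N)=O(I^N)$ and the It\^o/covariation corrections are $O(1/H^N)=O(N^{-1/5})$ before $\tau^-_{N^{1/5}}(H^N)$, and then do an approach step via Lemma \ref{lem:sm-est} followed by a control step via Corollary \ref{cor:driftbar}. The only differences are cosmetic (e.g.\ you bound the replenishment via $J^N+K^N\ge\tfrac12(H^N-I^N)$ using $\gamma,\eta<2$, whereas the paper uses $\max\{\gamma J^N,\eta K^N\}/H^N\ge(1-\ep)/2$, and you leave the easy bound $|\mu_t(U^N)|=O(1)$ implicit), so no further comment is needed.
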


\begin{proof}
Let $\ep>0$ be a small enough constant, then there are two steps:
\begin{itemize}[noitemsep]
\item \textit{approach}: show that $\tau^+_\ep(I^N/H^N) \wedge \tau^-_{N^{1/5}}(H^N) \le 1/\ep$ w.h.p., then
\item \textit{control}: show that if $I^N_0/H^N_0\ge \ep$ then w.h.p.\,$I^N_t/H^N_t \ge \ep/2$ for all $t \le N\wedge \tau^-_{N^{1/5}}(H^N)$.
\end{itemize}

\noindent\textbf{Approach.} From equations \eqref{IJK} and \eqref{Hdrift}, we have
\begin{align}\label{eq:IH}
\mu(I^N) &=  - r_+ \frac{Z^N}{N}I^N - (r_+y_* + 1)I^N + 2r_- J^N + r_- K^N + r_+\frac{I^N}{N}  \\
\mu(H^N) &= (\eta-1) r_+ \frac{Z^N}{N}I^N - (\eta -\gamma/2) r_+ \frac{(I^N)^2}{N} + (1-\gamma/2)r_+\frac{I^N}{N}. \nonumber
\end{align}
Using the product rule of Lemma \ref{lem:qac-prod} and the Taylor approximation of Lemma \ref{lem:taylor}, we find that if $H^N>N^{1/5}$, then since $H^N=\omega(1)$,
\begin{align}
\mu\left (\frac{I^N}{H^N} \right) &= \frac{\mu(I^N)}{H^N} + I^N\mu\left(\frac{1}{H^N}\right) + \sigma\left (I^N,\frac{1}{H^N} \right) \label{IoH}\\ 
&= \frac{\mu(I^N)}{H^N} - \frac{I^N}{(H^N)^2} \mu(H^N) + \sigma^2(H^N) \, O\bigg(\frac{I^N}{(H^N)^3}\bigg)+ \sigma\left (I^N,\frac{1}{H^N} \right).  \nonumber
\end{align}
The assumption $H^N=\omega(1)$ is used together with the fact $\Delta_\star(H^N)=O(1)$ to obtain $O(I^N/(H^N)^3)$ from the Taylor approximation. Next we show the last two terms are $O(1/H^N)$. To compute $\sigma(I^N,1/H^N)$ note that $I^N$ or $H^N$ jumps at rate $O(H^N)$, $I^N$ jumps by $O(1)$, and $1/H^N$ jumps by $O(1/(H^N)^2)$ when $H^N=\omega(1)$. Multiplying, we obtain $\sigma(I,1/H^N) = O(1/H^N)$. By a similar argument $\sigma^2(H^N)=O(H^N)$, and since $I^N\le H^N$ it follows that $\sigma^2(H^N)I^N/(H^N)^3 = O(1/H^N)$.\\

If $I^N/H^N \le \ep$ then
$$\max\{ \gamma J^N/H^N, \eta K^N/H^N \} \ge (1-\ep)/2.$$
Since $|Z^N|,I^N \le N$ it follows that $\mu(H^N) = O(I^N)$. 
From \eqref{eq:IH} and \eqref{IoH} we then deduce that if $I^N/H^N \le \ep$ and $H^N = \omega(1)$ then
\begin{equation}\label{eq:mu_lbd}
\begin{array}{rl}
\mu(I^N/H^N) & \ge -\ep(r_+ + r_+y_*+1) + r_-\min(2/\gamma,1/\eta)(1-\ep)/2 \\
 & - O(\ep^2) - o(1).
 \end{array}
 \end{equation}
If $\ep>0$ is taken small enough, the right-hand side is at least a constant $\mu_0>0$. To estimate $\sigma^2(I^N/H^N)$ we note that if $H^N=\omega(1)$ then $I^N/H^N$ jumps at rate $O(H^N)$, by an amount $O(1/H^N)$ when $I^N$ jumps, and an amount $O(I^N/(H^N)^2) = O(1/H^N)$ (since $I^N\le H^N$) when $H^N$ jumps. Thus $\sigma^2(I^N/H^N) = O(1/H^N)$ when $H^N=\omega(1)$.\\

To summarize our progress so far, if we let
$$\tau_1 = \inf\{t\colon I^N_t/H^N_t > \ep \ \text{or} \ H^N_t \le N^{1/5}\}$$
and note that $t<\tau_1$ implies $H^N>N^{1/5}=\omega(1)$ and $1/H^N < N^{-1/5}$, then there are constants $\mu_0,C>0$ such that
\begin{align}\label{eq:hi-summ}
\mu_t(I^N/H^N) \ge \mu_0\quad\text{and}\quad\sigma^2(I^N/H^N) \le CN^{-1/5} \ \text{for all} \ t<\tau_1.
\end{align}
Define $\xi_t=I^N_{t \wedge \tau_1}/H^N_{t\wedge \tau_1}$. In the notation of Section \ref{sec:sampath},
$$\xi_t^m = \xi_t - \xi_0 - \xi_t^p \le \xi_t - \mu_0 (t\wedge \tau_1)\quad\text{and}\quad\lng \xi \rng_t \le CN^{-1/5}t \wedge \tau_1.$$
So for $a,\phi >0$
\begin{equation}\label{eq:ineq-compen}
\xi_t^m + (a + \phi \, \lng \xi \rng_t ) \le \xi_t + a - (\mu_0 - \phi CN^{-1/5})t \wedge \tau_1.
\end{equation}
Using Lemma \ref{lem:sm-est}, if $\phi\Delta_\star(\xi) \le 1/2$, then the left-hand side of \eqref{eq:ineq-compen} is $\ge 0$ for all $t\ge 0$ with probability $\ge 1-2e^{-\phi a}$, or in other words,
$$P(\xi_t \ge -a + (\mu_0-\phi CN^{-1/5})t \wedge \tau_1 \ \text{for all} \ t\ge 0) \ge 1-2e^{-\phi a}.$$
Letting $a=\ep$ and $\phi = (\mu_0/2C)N^{1/5}$ gives $e^{-\phi a} = o(1)$. Since $\xi$ is stopped if ever $H^N \le N^{1/5}$, it follows that $\Delta_\star(\xi)$, which is $O(1/H^N)$, is a fortiori $O(N^{-1/5})$, which means that $\phi\Delta_\star(\xi) \le 1/2$ if $C>0$ is taken large enough. Summarizing, we find that
$$P(\xi_t \ge -\ep + \mu_0 t \wedge \tau_1/2 \ \text{for all} \ t \ge 0) = 1-o(1).$$
Since $t>\tau_1$ if ever $\xi_t\ge \ep$, it follows that
$$P(\tau_1 > 4\ep/\mu_0) = 1-o(1).$$
Since $4\ep/\mu_0$ is a constant and $\tau_1 = \tau^-_{N^{1/5}}(H^N) \wedge \tau^+_\ep(I^N/H^N)$, the first part is proved.\\

\noindent \textbf{Control.} We now show that if $I^N_0/H^N_0 \ge \ep$ then w.h.p.\,$I^N_t/H^N_t \ge \ep/2$ for all $t\le N \wedge \tau^-_{N^{1/5}}(H^N)$. To do so we use Corollary \ref{cor:driftbar} to Lemma \ref{lem:driftbar}. Let $\tau = \smash{\tau^-_{N^{1/5}}(H^N)}$, let $X_t = \ep-I^N(t\wedge \tau)/H^N(t\wedge \tau)$,
and let $x = \ep/2$. Similarly as for $\xi$, we find that $\Delta_\star(X) = O(1/H^N) = O(N^{-1/5})$, which is $o(x)$. From \eqref{eq:IH}-\eqref{IoH} it is easy to check that $|\mu(I^N/H^N)| = O(1)$ when $H^N=\omega(1)$ so let $C_{\mu_\star}$ be a large constant. From \eqref{eq:hi-summ} we have $\mu_t(X) \le -\mu_0$ and $\sigma^2_t(X)\le CN^{-1/5}$ when $t<\tau$ and $X_t\ge 0$, so let $\mu_\star=\mu_0$ and $\sigma^2_\star = CN^{-1/5}$ for some $C>0$. In this way $\Delta_\star(X)\mu_\star/\sigma^2_\star = O(1)$ which allows us to let $C_\Delta$ be a large constant. Then, $\Gamma \ge \exp(\dlt N^{1/5})$ and $\lfloor\Gamma \rfloor x/16C_{\mu_\star} \ge \dlt\Gamma \ge N$ for some $\dlt>0$ and large $N$, so Corollary \ref{cor:driftbar} gives
$$\lim_{N\to\infty}P\left( \sup_{t < N\wedge \tau^-_{N^{1/5}}(H^N)} I^N_t/H^N_t \le \ep/2 \ \mid \ I^N_0/H^N_0 \ge 3\ep/4 \right) = 0,$$
which completes the proof.
\end{proof}

\vskip10pt

\begin{proof}[Proof of Lemma \ref{bdI}] The result has two parts.\\

\noindent\textbf{Approach.} First we show that
\begin{center}
w.h.p., $h_N(t) \le \frac{1}{2}\log N$ for some $t \le 1/\sqrt{\log N}$.
\end{center}
Note the slow time scale is used. We will need the estimates we've proved so far. Let
\begin{align*}
\tau_1 &= \inf\{t \ge C_{\ref{maxz}}N^{-1/2}\log N \colon |z_N(t)| > C_{\ref{maxz}}\sqrt{\log N}\}  \quad \text{and} \quad \\
\tau_2 &= \inf\{t \ge C_{\ref{bdHI}}N^{-1/2}\colon h_N(t) > C_{\ref{bdHI}}i_N(t) \ \text{or} \ H_N(t)\le N^{1/5}\}.
\end{align*}
By Lemma \ref{maxz}, $\tau_1>N^{1/2}$ w.h.p., and by Lemma \ref{bdHI}, $\smash{\tau_2 > N^{1/2} \wedge \tau^-_{N_{1/5}}(H_N)}$ w.h.p. It is more convenient if we shift the time variable over by $-C_{\ref{maxz}}N^{-1/2}\log N$ so that both estimates begin to hold at $t=0$; since $N^{-1/2}\log N=o(1/\sqrt{\log N})$ this will not affect the conclusion.

We recall \eqref{fasthdrift}:
$$
\mu(h_N) = (\eta-1) r_+ z_Ni_N - (\eta - \gamma/2) r_+ i^2_N + (1-\gamma/2)i_N/\sqrt{N}
$$
From \eqref{eq:eta-gamma2}, $\eta > 1 > \gamma/2$, moreover $i_N \le h_N$, so for $t<\tau_1 \wedge \tau_2$,
$$
\mu(h_N) \le \left((\eta-1) r_+ C_{\ref{maxz}}\sqrt{\log N} + (1-\gamma/2)r_+\frac{1}{\sqrt{N}}\right)\, \frac{h_N}{C_{\ref{bdHI}}} - (\eta-\gamma/2) r_+ \frac{(h_N)^2}{C^2_{\ref{bdHI}}}.
$$
If $h_N>\frac{1}{2}\log N$ the first term is $o((h_N)^2)$. Since $\eta>\gamma/2$, we see there is $\delta>0$ so that, if we let $\tau_3=\tau_1\wedge \tau_2 \wedge \tau^-_{\frac{1}{2}\log N}(h_N)$, then for $t<\tau_3$,
$$\mu(h_N) \le -\delta \, (h_N)^2.
$$
Next we'd like to set up a differential inequality for $h_N$; the only trouble is, the drift estimate only holds up to a stopping time. To fix this, let $\psi(t;h)$ denote the solution flow for the differential equation $h'(t)=-\delta h(t)^2$ (i.e., the function with maximal domain containing $\{0\}\times \R$ such that $\partial_t\psi(t;h) = -\delta \psi(t;h)^2$ and $\psi(0;h)=h$) and define the continued process
$$\tilde h_N(t) = \psi(t-t\wedge \tau_3;h_N(t \wedge \tau_3)).$$
In words, $\tilde h_N$ is equal to $h_N$ up to time $\tau_3$, at which point it evolves according to the flow $\psi$. It is then clear that for all $t \ge 0$,
$$\mu(\tilde h_N) \le -\delta \, (\tilde h_N)^2.$$
Taking expectations and using Jensen's inequality we then find
$$\frac{d}{dt}E[\tilde h_N(t)] \le -\delta \left(E[\tilde h_N(t)] \right)^2,$$
which implies $E[\tilde h_N(t)] \le \psi(t;E[h_N(0)])$ (note $\tilde h_N(0)=h_N(0)$). Solving the DE we have
$$\psi(t;h) = 1/(\delta t + 1/h) \le 1/(\delta t).$$
If $\tau_3>t$ then $\tilde h_N(t) = h_N(t) > \frac{1}{2}\log N$, so it follows that
$$P(\tau_3>t) \le P(\tilde h_N(t) = h_N(t) > \frac{1}{2}\log N) \le \frac{2}{\log N}E[\tilde h_N(t)] \le 2(\delta t \log N)^{-1}.$$
Taking $t = 1/\sqrt{\log N}$ the above is $o(1)$. As noted above, $\tau_1\wedge \tau_2 > N^{1/2} \wedge \tau^-_{N^{1/5}}(H_N)$ w.h.p. Since $h_N>\frac{1}{2}\log N$ implies $H_N>N^{1/5}$, if $h_N(s)>\frac{1}{2}\log N$ for all $s\le t$ then w.h.p.\, $\tau_3>t$. It follows that
$$P(h_N(s)>\frac{1}{2}\log N \ \text{for all} \ s \le 1/\sqrt{\log N}) = P(\tau_3> 1/\sqrt{\log N}) + o(1) = o(1)$$ 
and the statement is proved.\\

\noindent\textbf{Control.} We now show that
\begin{center}
if $h_N(0) \le \frac{1}{2}\log N$ then w.h.p.\,$h_N(t) \le \log N$ for all $t \le N^{1/2} \wedge \tau^-_{N^{1/5}}(H_N) \wedge \tau^+_{C_{\ref{maxz}}}(|z^N|).$
\end{center}
This time, let $\smash{\tau_1 = \tau^+_{C_{\ref{maxz}}}(|z_N|)}$, and define $\tau_2$ as in the proof of approach. By Lemma \ref{bdHI},\\ $\smash{\tau_2 > N^{1/2} \wedge \tau^-_{N_{1/5}}(H_N)}$. For the present result we cannot ignore small times, so first we show the conclusion holds for all $t\le C_{\ref{bdHI}}N^{-1/2}$ without assuming a bound on $i_N/h_N$. Let $\tau_3 = \tau_1 \wedge \tau^+_{\log N}(h_N)$. From \eqref{fasthdrift}, if $t<\tau_3$ then since $i_N \le h_N$, $\eta-\gamma/2>0$ and $z_N(t)=O(\sqrt{\log N})$, for large $N$ we have $\mu(h_N(t)) \le \log(N)h_N(t)$. Since $h_N$ jumps by $O(N^{-1/2})$ at rate $Nh_N$ on the slow time scale, $\sigma^2(h_N)\le Ch_N$ for some constant $C>0$. Let $\xi_t = h_N(t \wedge \tau_3)$, then for $a,\phi>0$ and all $t\ge 0$,
$$\xi_t^m - a -\phi \lng \xi \rng_t \ge \xi_t - \xi_0 - a - \big((\log N)^2+C\phi \log N)t.$$
Using Lemma \ref{lem:sm-est}, if $\phi\Delta_\star(\xi)\le 1/2$, the LHS is $\le 0$ for all $t \ge 0$ with probability at least $1-2e^{-\phi a}$. Letting $\phi = (\log N)/C$ and $a=1$, $\phi\Delta_\star(\xi) = O(N^{-1/2}\log N)\le 1/2$ for large enough $N$ and $e^{-\phi a}=o(1)$, so
$$P(\xi_t \le \xi_0 + 1 + 2t(\log N)^2 \ \text{for all} \ t \ge 0) = 1-o(1).$$
It follows that $P(\xi_t \le \log N \ \text{for all} \ t\le C_{\ref{bdHI}}N^{-1/2})=1-o(1)$, since $\xi_0 \le \frac{1}{2}\log N$ and $1 + 2t(\log N)^2 \le \frac{1}{4}\log N$ for all $t \le C_{\ref{bdHI}}N^{-1/2}$, which implies that
$$P(h_N(t) \le \frac{3}{4}\log N \ \text{for all} \ t \le C_{\ref{bdHI}}N^{-1/2} \wedge \tau_1) = 1-o(1).$$
With the above estimate in hand, to prove the desired result we may assume $h_N(0) \le \frac{3}{4}\log N$, redefine $\tau_2 = \tau^+_{C_{\ref{bdHI}}}(h_N/i_N) \wedge \tau^-_{N^{1/5}}(H_N)$ and suppose that $\tau_2 > N^{1/2} \wedge \tau^-_{N^{1/5}}(H_N)$ w.h.p. We will use Corollary \ref{cor:driftbar}. Let $\tau = \tau_1 \wedge \tau_2$, let $X_t= h_N(t\wedge \tau) - \log(N)/2$ and let $x = \log(N)/2$, then $\Delta_\star(X) = O(N^{-1/2}) = o(x)$. Since $h_N$ jumps by $O(N^{-1/2})$ at rate $O(Nh_N)$, if $h_N \le \log N$ then $|\mu(h_N)|=O(N^{1/2}h_N) = O(N^{1/2}\log N)$ and $\sigma^2(h_N) = O(h_N) = O(\log N)$, so let $C_{\mu_\star}= CN^{1/2}\log N$ and $\sigma^2_\star= C\log N$ for large $C$. From the proof of approach, if $t<\tau$ and $h_N(t) \ge \log(N)/2$ then $\mu_t(h_N) \le -\delta (h_N)^2$, so let $\mu_\star = -\delta  (\log N)^2/4$. With these choices $\Delta_\star(X)\mu_\star/\sigma^2_\star = o(1)$ so let $C_\Delta=1$. We now find $\mu_\star x/\sigma^2_\star = \Omega((\log N)^2)$. Therefore, $\Gamma = e^{\Omega((\log N)^2)} = \omega(N)$ and $x/16C_{\mu_\star} = \Omega(N^{-1/2})$. Using Corollary \ref{cor:driftbar},
$$P( h_N(t) \ge \log N \ \  \hbox{for some} \ \ t \le N^{1/2} \wedge \tau \mid h_N(t) \le \frac{3}{4}\log N) = o(1),$$
and the result follows since $N^{1/2} \wedge \tau = N^{1/2} \wedge \tau^-_{N^{1/5}}(H_N) \wedge \tau_1$ w.h.p.
\end{proof}

\clearp

\section{Step 3: $(I^N,J^N,K^N)$ stays close to the invariant ray} \label{sec:ray}

In this section we prove Lemma \ref{ray}. As in the statement of the lemma, let
$$\tau = \tau^-_{N^{1/5}}(H^N) \wedge \tau^+_{C_{\ref{maxz}}\sqrt{\log N}}(|z^N|) \wedge \tau^+_{\log N}(h^N).$$
Looking back to \eqref{IoH}, we showed that if $H^N=\omega(1)$ then
$$\sigma(I^N,1/H^N)+\sigma^2(H_N) O(I^N/(H^N)^3)=O(1/H^N);$$
it is clear the same estimate holds with $J^N,K^N$ in place of $I^N$. If, moreover, $H^N,|Z^N|\le N^{1/2}\log N$, then since $\eta,\gamma\ge 1$ (see \eqref{eq:eta-gamma2}), $I^N,J^N,K^N \le H^N$, from \eqref{IoH} we obtain the estimate $\mu(H_N) = O((\log N)^2)$. Recalling that $(U^N,V^N,W^N) = (I^N,\gamma J^N,\eta K^N)/H^N$, writing \eqref{IoH} but with $\gamma J^N$ and $\eta K^N$ as well, and using the above estimates, we find that if $t<\tau$ then
\begin{align}
\mu_t\begin{pmatrix} U^N \\ V^N \\ W^N \end{pmatrix} = 
\frac{1}{H^N_t} \ \mu_t \begin{pmatrix} I^N \\ \gamma J^N \\ \eta K^N \end{pmatrix} + O((\log N)^2N^{-1/5}). \label{muUVW}
\end{align}
Let $\mathcal{J}^N$ denote the vector $(I^N,J^N,K^N)$. Again, if $H^N,|Z^N| \le N^{1/2}\log N$ then from \eqref{IJK} we find that $\mu(\J^N) = A \J^N + O((\log N)^2N^{-1/5})$. Letting $D = \diag(1,\gamma,\eta)$ denote the diagonal matrix with $1,\gamma,\eta$ along the main diagonal, and $\mathcal{V}^N$ denote the vector $(U^N,V^N,W^N)^{\top}$, so that $\V^N = D\J^N/H^N$. Moreover let $\Lambda = D \, A \, D^{-1}$ denote the conjugation of $A$ by $D$. Combining the estimate on $\mu(\J^N)$ with \eqref{muUVW} we find that if $t<\tau$ then
\begin{align}
\mu_t(\V^N) &= \frac{1}{H^N_t} D \mu_t(\J^N) + O((\log N)^2N^{-1/5} \nonumber \\
&= \Lambda \, \V^N_t + O((\log N)^2N^{-1/5} \label{muUVW2}
\end{align}
To reduce the dimensionality, since from \eqref{ILeq} we have $A(\alpha,\beta,1)^{\top}\R=0$ it follows that $\Lambda(\alpha,\beta\gamma,\eta)^{\top}\R=0$. We select the vector $\V_*=(u_*,v_*,w_*)$ with $u_*+v_*+w_*=1$, namely
\begin{align}
u_* = \alpha/d \qquad v_* = \beta \gamma/d \qquad w_* = \eta/d \qquad\hbox{where $d=\alpha+\beta \gamma +\eta.$}
\label{eq:uvw-lim}
\end{align}
Then, $W^N-w_* = -(U^N-u_*)-(V^N-v_*)$, so \eqref{muUVW2} can be re-written as
\begin{align}
\mu_t\begin{pmatrix} U^N_t-u_* \\ V^N_t-v_* \end{pmatrix}  =
\begin{pmatrix} \Lambda_{11}-\Lambda_{13} & \Lambda_{12}-\Lambda_{13}  \\
 \Lambda_{21}-\Lambda_{23} & \Lambda_{22}-\Lambda_{23} \end{pmatrix}
\begin{pmatrix} U^N_t-u_* \\ V^N_t-v_* \end{pmatrix} + O((\log N)^2N^{-1/5})
\label{muUVW3}
\end{align}
Note that $\Lambda$ is obtained from $A$ by multiplying entries in rows 2,3 by $\gamma,\eta$ and dividing entries in columns 2,3 by $\gamma,\eta$, respectively. Referring to \eqref{Adef} we find that
\begin{align}
\begin{pmatrix} \Lambda_{11}-\Lambda_{13} & \Lambda_{12}-\Lambda_{13}  \\
  \Lambda_{21}-\Lambda_{23} & \Lambda_{22}-\Lambda_{23} \end{pmatrix} = \begin{pmatrix} - (r_+y_* + 1 +r_-/\eta) & \left(\frac{2r_-}{\gamma} - \frac{r_-}{\eta} \right)  \\
 - (\gamma\lambda/\eta) & - (r_- + 2 +\gamma\lambda/\eta) \end{pmatrix}
\label{eq:Lbda-sml}
\end{align}
The diagonal entries in the matrix are negative, so the trace is negative. From \eqref{eq:eta-gamma2}, $\gamma/2<\eta$ which implies $2r_-/\gamma - r_-/\eta>0$, so the determinant is positive and so both eigenvalues have negative real part (which we already knew from analyzing $A$). To turn these calculations into control on the distance of $\V$ from $\V_*$ we let
\begin{align}\label{eq:thetas}
\theta_1 = 2r_-/\gamma - r_-/\eta \quad \text{and} \quad \theta_2=\gamma\lambda/\eta
\end{align}
and examine $Q^N_t = \theta_2 (U_t^N-u_*)^2 + \theta_1 (V_t^N-v_*)^2$.\\

\noindent\textbf{Approach.} First we show that w.h.p., $\tau^-_{(N^{-1/6}/2)}(Q^N) \wedge \tau \le C_{\ref{ray}}\log N$. From Lemma \ref{lem:qac-prod}, for a process $X$ we have $\mu(X^2)=2X \mu(X) + \sigma^2(X)$, and of course, $\sigma^2(X-c)=\sigma^2(X)$ and $\mu(X-c)=\mu(X)$ for any constant $c$. As noted in the proof of Lemma \ref{bdHI}, $U^N=I^N/H^N$ jumps by $O(1/H^N)$ at rate $O(H^N)$, so $\sigma^2(U^N)=O(1/H^N)$ and similarly for $V^N$. Let $a_1 = \min(r_+y_*+1+r_-/\eta,r_-+2+\gamma\lambda/\eta)$, so that both diagonal entries in \eqref{eq:Lbda-sml} are at most $-a_1$. Since the cross-terms cancel (by choice of $\theta_1,\theta_2$), from \eqref{muUVW3} we find that for $t<\tau$,
\begin{align*}
\mu_t(Q^N) &\le -2a_1\theta_2(U^N-u_*)^2 - 2a_1\theta_1(V^N-v_*)^2 + O((\log N)^2N^{-1/5}) + O(1/H^N) \\
&\le -2a_1 Q^N + a_2 (\log N)^2N^{-1/5} 
\end{align*}
for some constant $a_2$ and large $N$. It is also not hard to check that $|\mu_t(Q^N_t)| = O(Q^N_t)$ for $t<\tau$, which we will need in a moment. Letting $\tilde Q = Q - (a_2/a_1)(\log N)^2N^{-1/5}$, $\mu_t(\tilde Q^N) \le -2a_1 \tilde Q^N$ for $t<\tau$ so $\xi_t = \exp(2a_1 (t \wedge \tau))\tilde Q^N(t \wedge \tau)$ is a supermartingale. Since $(a_2/a_1)(\log N)^2N^{-1/5} \le N^{-1/6}/4$ for large $N$ and since $\xi_0 = \tilde Q^N_0 \le Q^N_0 \le \theta_1+\theta_2$,
$$P(\tau^-_{N^{-1/6}/2}(Q^N) \wedge \tau > t) \le P(\xi_t \ge e^{2a_1 t} N^{-1/6}/4) \le 4N^{1/6}e^{-2a_1t}(\theta_1+\theta_2).$$
Choosing $t$ equal to a large enough multiple of $\log N$, the RHS is $o(1)$.\\

\noindent\textbf{Control.} Now we suppose that $c_N^Q$ are constants with $N^{-1/6} \le c_N^Q = o(1)$ and show that
\begin{center}
if $Q^N_0 \le c_N^Q/2$ then w.h.p.\,$Q^N_t \le c_N^Q$ for all $t\le N \wedge \tau$.
\end{center}
We will use Corollary \ref{cor:driftbar}. Let
\begin{center}
$\tau_1 = \smash{\tau\wedge \tau^+_{c_N^Q}(Q^N)}$, \ $x=c_N^Q/2$ \ and \ $X_t = Q^N_{t\wedge \tau_1}-x$.
\end{center}
As noted above, $U^N$ jumps by $O(1/H^N)$ when $H^N=\omega(1)$. Thus when $H^N=\omega(1)$, $(U^N-u_*)^2$ jumps by $O((U^N-u_*)/H^N + 1/(H^N)^2)$. If $t<\tau$ and $Q^N \le c_N^Q$ then since $H^N\ge N^{1/5}$ and $c_N^Q\ge N^{-1/6}=\omega(1/H^N)$,
$$\frac{U^N-u_*}{H^N} + \frac{1}{(H^N)^2)} = O\left(\frac{(c_N^Q)^{1/2}}{H^N}\right ) = O(N^{-1/5}(c_N^Q)^{1/2}).$$
An analogous estimate holds for $(V-v_*)^2$, which shows that $\Delta_\star(X) = O(N^{-1/5}(c_N^Q)^{1/2}) = o(x)$. If $Q^N_t \ge c_N^Q/2$ and $t<\tau$, we have $\mu(Q^N) \le - (2a_1-o(1))c_N^Q/2 \le -\mu_\star$ for large $N$, where $\mu_\star = a_1c_N^Q/2$. As noted above, if $t<\tau$ then $|\mu(Q^N_t)| = O(Q^N_t)$ so if in addition $Q^N_t \le c_N^Q$ then $|\mu(Q^N_t)| = O(c_N^Q) \le C_{\mu_\star} = Cc_N^Q$ for large enough constant $C>0$. 
$Q^N$ has transition rate $O(H^N)$, and as shown above, if $Q^N \le c_N^Q$ and $t<\tau$ then $Q^N$ jumps by $O((c_N^Q)^{1/2}/H^N)$ and so $\sigma^2(Q^N_t) = O(c_N^Q/H^N) \le \sigma_\star^2 = Cc_N^QN^{-1/5}$ for large enough $C>0$. Since $\Delta_\star(X)\mu_\star/\sigma_\star^2 = O((c_N^Q)^{1/2})=o(1)$, let $C_\Delta=1$. Since $\mu_\star x/\sigma_\star^2 = \Omega(c_N^QN^{1/5}) = \Omega(N^{1/5-1/6})$, we find $\Gamma = \exp(\Omega(N^{1/30})) = \omega(N)$ and $x/16C_{\mu_\star} = \Omega(1)$. Corollary \ref{cor:driftbar} then gives the desired result.

\clearp

\section{Step 4: averaging $z_Ni_N$ to 0} \label{sec:aveto0}

In this section we prove Lemma \ref{aveto0}.
Letting $\sigma_z^2=4r_-(1-y_*)$, using \eqref{eq:z-trans} and recalling $r_-(1-y_*) = r_+y^2_*$, we see
 \begin{align*}
 z^N \to z^N + 2/N^{1/2} \quad \hbox{at rate} & \quad q_+ = \sigma_z^2N/8 - r_-z^NN^{1/2}/2\\
 z^N \to z^N - 2/N^{1/2} \quad \hbox{at rate} & \quad q_- = \sigma_z^2N/8 + r_+y_* z^NN^{1/2} + O(1 \vee (z^N)^2).
 \end{align*}
In order to prove an averaging result we'd like to work with a process whose transitions are symmetric on reflection about 0. Thus we define the following process $\tld z^N$, which can be thought of as a spatially discrete Ornstein-Uhlenbeck process. Letting $\mu_z = r_- + 2r_+y_*$ we let $\tld z^N$ have transitions
 \begin{align}\label{eq:tldz}
 \tld z^N \to \tld z^N + 2/N^{1/2} \quad \hbox{at rate} & \quad \tld q_+ = \tld q_+(\tld z^N)=\sigma_z^2N/8 - \mu_z N^{1/2} \tld z^N/4 \\
 \tld z^N \to \tld z^N - 2/N^{1/2} \quad \hbox{at rate} & \quad \tld q_- = \tld q_-(\tld z^N)=\sigma_z^2N/8 + \mu_z N^{1/2} \tld z^N/4 . \nonumber 
 \end{align}
Furthermore take $\tld z^N(0) = 2N^{-1/2}\lfloor N^{1/2}z^N(0)/2 \rfloor$ so that $\tld z^N$ takes values in $2N^{-1/2}\ZZ$. Couple $z^N$ with $\tld z^N$ in the obvious way, i.e.~couple jumps of $+2/N^{1/2}$ at the minimum of the two rates and similarly for jumps of $-2/N^{-1/2}$, and let $D^N = z^N-\tld z^N$ with respect to this coupling. The following result controls the size of $D^N$. The power of $\log N$ in the bound is not optimal but it's good enough and frees us from having to track yet another constant.

\begin{lemma}\label{lem:D}
With high probability,
$$\sup\{ \ |D^N_t| \ \colon t \le N \wedge \tau^+_{C_{\ref{maxz}}\sqrt{\log N}}(|z^N|) \ \}\le N^{-1/4}\log N.$$
\end{lemma}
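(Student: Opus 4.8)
The plan is to read the drift and diffusivity of $D^N$ off the coupling and then apply the drift barrier (Corollary \ref{cor:driftbar}) to the processes $D^N - x$ and $-D^N - x$, with $x = N^{-1/4}\log N/2$ and with stopping time $\tau := \tau^+_{C_{\ref{maxz}}\sqrt{\log N}}(|z^N|)$ (so that $|z^N| \le C_{\ref{maxz}}\sqrt{\log N}$ while $t<\tau$). Here $(z^N,\tld z^N)$ is a continuous-time Markov chain and $D^N$ a Lipschitz function of it, hence a qac semimartingale with $|\Delta D^N| \le 2N^{-1/2}$; also $D^N_0 = z^N_0 - \tld z^N_0 \in [0, 2N^{-1/2})$ since $\tld z^N_0$ is $z^N_0$ rounded down onto $2N^{-1/2}\ZZ$.

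Two estimates are needed. The drift is immediate: $\mu(\tld z^N) = -\mu_z\tld z^N$ exactly by \eqref{eq:tldz} and $\mu(z^N) = -\mu_z z^N + O((1 \vee (z^N)^2)/N^{1/2})$ by \eqref{mudiff}, so $\mu_t(D^N) = -\mu_z D^N_t + O((1\vee (z^N_t)^2)/N^{1/2})$, an error of $O(N^{-1/2}\log N)$ while $t < \tau$. The diffusivity is the one place that needs a little care, and is really the crux: under the coupling $D^N$ is unchanged at every coupled jump and moves by $\pm 2N^{-1/2}$ only at the uncoupled jumps, so $\sigma^2_t(D^N) = (4/N)(|q_+ - \tld q_+| + |q_- - \tld q_-|)$; comparing \eqref{eq:z-trans} with \eqref{eq:tldz}, each $q_\pm - \tld q_\pm$ is $N^{1/2}$ times a fixed linear function of $(z^N,\tld z^N)$ plus an $O(1\vee (z^N)^2)$ term, so on $\{t<\tau\}\cap\{|D^N_t| \le 2x\}$ --- on which $|\tld z^N|\le |z^N|+|D^N| = O(\sqrt{\log N})$ --- we get $|q_\pm - \tld q_\pm| = O(N^{1/2}\sqrt{\log N})$ and hence $\sigma^2_t(D^N) = O(N^{-1/2}\sqrt{\log N})$. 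Thus $D^N$ lives on the ``error scale'': all of its noise comes from the $O(N^{1/2})$-rate discrepancy between the two jump-rate pairs, not from the $O(N)$ total jump rate.

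With these estimates the drift barrier closes the argument. Applying Corollary \ref{cor:driftbar} to $X_t = D^N_t - x$ with stopping time $\tau$: when $t<\tau$ and $0 < X_t < x$ we have $x < D^N_t < 2x$, so the estimates above give $\mu_t(X) = \mu_t(D^N) \le -\mu_z x + O(N^{-1/2}\log N) \le -\mu_\star$ with $\mu_\star := \mu_z x/2 = \Omega(N^{-1/4}\log N)$, together with $|\mu_t(X)| \le C_{\mu_\star}$ for a suitable multiple $C_{\mu_\star}$ of $x$ and $\sigma^2_t(X) \le \sigma^2_\star$ for a suitable multiple $\sigma^2_\star$ of $N^{-1/2}\sqrt{\log N}$; also $\Delta_\star(X) = O(N^{-1/2}) \le x/2$. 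Then $\Delta_\star(X)\mu_\star/\sigma^2_\star = O(N^{-1/4}\sqrt{\log N}) = o(1)$, so one may take $C_\Delta = 1$, and $\mu_\star x/(32 C_\Delta \sigma^2_\star) = \Omega((\log N)^{3/2})$, so $\Gamma = \exp(\Omega((\log N)^{3/2})) = \omega(N^k)$ for every $k$; since $x/C_{\mu_\star} = \Omega(1)$, the horizon $\lfloor\Gamma\rfloor x /(16 C_{\mu_\star})$ exceeds $N$ for large $N$. As $X_0 = D^N_0 - x < x/2$, Corollary \ref{cor:driftbar} gives $P(\sup_{t \le N \wedge \tau} D^N_t \ge 2x) \le 4/\Gamma = o(1)$. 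The identical argument with $-D^N$ in place of $D^N$ (its drift is $\mu_z D^N + O(N^{-1/2}\log N)$, its diffusivity and jump sizes unchanged) bounds $\inf_{t \le N\wedge\tau} D^N_t \ge -2x$ with probability $1-o(1)$; combining the two with $2x = N^{-1/4}\log N$ proves the lemma. I expect no genuine obstacle --- it is a routine coupling plus drift-barrier argument --- the only point to get right being the diffusivity bound, i.e.\ that the total uncoupled-jump rate is only $O(N^{1/2}\sqrt{\log N})$, which is what pins $D^N$ down to the claimed scale over the whole interval $[0,N]$.
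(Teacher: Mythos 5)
Your proof is correct and follows essentially the same route as the paper's: compute the drift $\mu(D^N)=-\mu_z D^N+O((1\vee (z^N)^2)/N^{1/2})$ and the diffusivity $(4/N)(|q_+-\tld q_+|+|q_--\tld q_-|)=O(N^{-1/2}\sqrt{\log N})$ under the coupling, then invoke the drift barrier (Corollary \ref{cor:driftbar}) with $x$ of order $N^{-1/4}\log N$, getting $\Gamma=\exp(\Omega((\log N)^{3/2}))=\omega(N)$. The only cosmetic difference is that the paper folds both signs into a single application via $X=-x+|D^N-x|$ and adds the stopping time $\tau^+_{N^{-1/4}\log N}(|D^N|)$ to control $\tld z^N$, whereas you run the barrier separately for $\pm D^N$ and obtain the bound on $\tld z^N$ from the barrier region $|D^N|<2x$ itself; both are valid.
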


\begin{proof}

$D^N$ has the following transitions:
 \begin{align*}
D^N \to D^N + 2/N^{1/2} \quad \hbox{at rate} & \quad q_+(D^N) = \max(q_+-\tld q_+,0) + \max(\tld q_- - \tld q_-,0) \\
D^N \to D^N - 2/N^{1/2} \quad \hbox{at rate} & \quad q_-(D^N) = \max(\tld q_+ - q_+,0) + \max(q_- - \tld q_-,0).
 \end{align*}
Using the fact that $\max(a,0) - \max(-a,0) = a$ for $a \in \R$, we compute
\begin{align*}
\mu(D^N) &= 2N^{-1/2}(q_+(D^N) - q_-(D^N)) \\
&= 2N^{-1/2}((q_+-\tld q_+) - (q_--\tld q_-))
\end{align*}
From their definition and the choice of constant $\mu_z=r_-+2r_+y_*$,
\begin{align*}
& ((q_+-\tld q_+) - (q_--\tld q_-)) \\
&= q_+-q_- - (\tld q_+-\tld q_-) \\
&= (-r_--2r_+y_*)z^NN^{1/2}/2 - (-\mu_z/4 - \mu_z/4)\tld z^NN^{1/2}/2 + O(1\vee (z^N)^2) \\
&= -\mu_z(z^N-\tld z^N)N^{1/2}/2 + O(1\vee (z^N)^2)
\end{align*}
and so
\begin{align}\label{eq:D-drift}
\mu(D^N) &= -\mu_z D^N + O((1\vee (z^N)^2)/N^{1/2}).
\end{align}
Computing the diffusivity,
\begin{align}\label{eq:D-diff}
\sigma^2(D^N) &= (4/N)(|q_+-\tld q_+| + |q_--\tld q_-|) \\
&= O(\max(z^N,\tld z^N,(1\vee (z^N)^2)/N^{1/2})/N^{1/2}. \nonumber 
\end{align}
Define $\tau = \tau^+_{C_{\ref{maxz}}\sqrt{\log N}}(|z^N|) \wedge \tau^+_{N^{-1/4}\log N}(|D^N|)$, and observe that if $t<\tau$ then not only is $|z^N_t| \le C_{\ref{maxz}}\sqrt{\log N}$ but also
$$|\tld z^N_t| \le |z^N_t| + |D^N_t| \le C_{\ref{maxz}}\sqrt{\log N} + N^{-1/4}\log N \le 2C_{\ref{maxz}}\sqrt{\log N}$$
for large $N$. We use Corollary \ref{cor:driftbar} to control $|D^N|$. Let $x = \frac{1}{2}N^{-1/4}\log N$ and let $X = -x + |D^N - x|$, then $\Delta_\star(X) \le 2/N^{1/2} \le x/2$ for large $N$. Since $D^N$ jumps by at most $x$, if $|D^N| \ge x$ then $\mu(X) = \sgn(D^N)\mu(D^N)$. Suppose $X\ge x/2$ and $t<\tau$, then $|D^N_t| \ge 3x/2$, and since $(1\vee(z^N_t)^2)/N^{1/2}=o(D^N_t)$, from \eqref{eq:D-drift} we have $\mu(D^N_t)/D^N_t \le -\mu_z/2$. Thus, letting $\mu_\star=3x\mu_z/4$, we find that $\mu_t(X) \le -\mu_\star$. Now suppose that $X \le x$ and $t<\tau$, then $|D^N_t| \le 2x$, and using \eqref{eq:D-drift}-\eqref{eq:D-diff} and $|z^N_t|,\tld |z^N_t|=O(\sqrt{\log N})$, $|\mu_t(z^N)| \le 4\mu_zx$ and $\sigma^2_t(D^N) \le CN^{-1/2}\sqrt{\log N}$ for some $C>0$ and large $N$, so let $C_{\mu_\star} = 4\mu_zx$ and let $\sigma^2_\star = CN^{-1/2}\sqrt{\log N}$. Then, $\Delta_\star(X)\mu_\star/\sigma^2_\star = o(1)$ which allows us to take $C_\Delta=1$. Then, $\Gamma = \exp(\Omega((\log N)^{3/2})) = \omega(N)$ and $x/16C_{\mu_\star} = \Omega(1)$, so the result follows from Corollary \ref{cor:driftbar}.
\end{proof}

In the context of Lemma \ref{aveto0}, if $t<\tau_N = \tau_N(c_N^h,c_N^Q,0)$ then $h_N(t) \le c_N^h \le \log N$. Letting $L_N(t)$ denote $L(i_N(t),j_N(t),k_N(t))$, since $L(0,0,0)=0$, $i_N,j_N,k_N\le h_N$ and $L$ is Lipschitz with constant $c_L$, $|L_N(t)| \le c_Lc_N^h \le c_L(c_N^h)^{1/2}(\log N)^{1/2})$. Let $\tau_{N,D} = \tau_N \wedge \tau^+_{N^{-1/4}\log N}(|D_N(s)|)$, then
$$\sup_{t \le T}\left|\int_0^{t\wedge \tau_{N,D}}|D^N(s) L_N(s)|ds \right| = O(c_LN^{-1/4}(c_N^h)^{1/2}(\log N)^{3/2} T).$$
Since by definition $\tau_N \le \tau^+_{C_{{\ref{maxz}}}\sqrt{\log N}}(|z^N|)$, Lemma \ref{lem:D} implies that w.h.p.~$\tau_{N,D} = \tau_N$, so it is enough to prove Lemma \ref{aveto0} with $z^N$ replaced by $\tld z^N$. Thus, we will prove the following result.

\begin{lemma}\label{lem:tldaveto0}
Let $L_N(s)$ denote $L(i_N(s),j_N(s),k_N(s))$. In the context of Lemma \ref{aveto0} except with $\tld z^N$ in place of $z^N$, with high probability
$$
\sup_{t\le T } \left|\int_0^{t \wedge \tau_N} \tld z_N(s)L_N(s) \,ds \right|  = O(c_L(c_N^h)^{1/2}(N^{-1/4}\vee (c_N^hc_N^Q)^{1/2})\log^2 N ).
$$
\end{lemma}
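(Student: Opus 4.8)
The plan is to exploit the \emph{exact} linear drift of $\tld z^N$ that was built into its definition \eqref{eq:tldz}: on the slow time scale one has $\mu_t(\tld z_N)=-\sqrt N\,\mu_z\,\tld z_N(t)$ identically (with $\mu_z=r_-+2r_+y_*$), so $-\tfrac{1}{\sqrt N\mu_z}\tld z_N$ is an \emph{exact} corrector for $\tld z_N$ and a Dynkin/product-rule computation converts the oscillating integrand into manageable pieces. All estimates below are carried out on the event implicit in the definition of $\tau_N=\tau_N(c_N^h,c_N^Q,0)$, i.e.\ for $t<\tau_N$ we have $h_N\le c_N^h$, $Q_N\le c_N^Q$, $H_N>N^{1/5}$ and $|z_N|\le C_{\ref{maxz}}\sqrt{\log N}$, hence also $|\tld z_N|\le 2C_{\ref{maxz}}\sqrt{\log N}$ by Lemma \ref{lem:D}; every process is understood to be stopped at $\tau_N$. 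Only the martingale piece below needs the high-probability qualifier.

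\textbf{Step 1 (collapse $L$ onto the invariant ray).} Let $\vec w_*=(u_*,v_*/\gamma,w_*/\eta)$ be the invariant direction of \eqref{ijk} written in $(i,j,k)$-coordinates (using \eqref{eq:uvw-lim}), and set $\phi(h):=L(h\vec w_*)$, a Lipschitz function with constant $c_\phi=O(c_L)$ and $\phi(0)=0$. Since $Q_N$ controls $|U_N-u_*|,|V_N-v_*|,|W_N-w_*|=O(\sqrt{Q_N})$ and $(i_N,j_N,k_N)=h_N(U_N,V_N/\gamma,W_N/\eta)$, we get $|(i_N,j_N,k_N)-h_N\vec w_*|_1=O(h_N\sqrt{Q_N})$, so $|L_N-\phi(h_N)|=O(c_L c_N^h(c_N^Q)^{1/2})$ and $\sup_{t\le T}|\int_0^{t}\tld z_N(L_N-\phi(h_N))\,ds|=O(c_L c_N^h(c_N^Q)^{1/2}\sqrt{\log N})$, which is within the claimed bound. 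It remains to bound $\int_0^t\tld z_N\,\phi(h_N)\,ds$.

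\textbf{Step 2 (mollify, then Dynkin).} Mollify $\phi_\epsilon:=\phi*\rho_\epsilon$ with $\epsilon:=(c_N^h)^{1/2}N^{-1/4}$, so $\|\phi-\phi_\epsilon\|_\infty\le c_\phi\epsilon$, $\|\phi_\epsilon'\|_\infty\le c_\phi$, $\|\phi_\epsilon''\|_\infty\le c_\phi/\epsilon$; replacing $\phi$ by $\phi_\epsilon$ costs $O(c_\phi\epsilon\sqrt{\log N})=O(c_L(c_N^h)^{1/2}N^{-1/4}\sqrt{\log N})$. Put $G:=-\tfrac{1}{\sqrt N\mu_z}\tld z_N\phi_\epsilon(h_N)$; by Lemma \ref{lem:qac-prod} and $\mu(\tld z_N)=-\sqrt N\mu_z\tld z_N$, $\mu(G)=\tld z_N\phi_\epsilon(h_N)-\tfrac{1}{\sqrt N\mu_z}\sigma(\tld z_N,\phi_\epsilon(h_N))-\tfrac{1}{\sqrt N\mu_z}\tld z_N\mu(\phi_\epsilon(h_N))$, hence $\int_0^t\tld z_N\phi_\epsilon(h_N)\,ds=(G_t-G_0)-G^m_t+\tfrac{1}{\sqrt N\mu_z}\int_0^t\sigma_s(\tld z_N,\phi_\epsilon(h_N))\,ds+\tfrac{1}{\sqrt N\mu_z}\int_0^t\tld z_N\mu_s(\phi_\epsilon(h_N))\,ds$. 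The boundary term is $O(c_L c_N^h\sqrt{\log N}/\sqrt N)$; the covariation term is harmless because $\tld z_N$ and $h_N$ jump together only at the partnership formation/dissolution transitions, so $|\sigma(\tld z_N,\phi_\epsilon(h_N))|=O(c_\phi c_N^h)$ and that piece is $O(c_L c_N^h/\sqrt N)$. For $G^m_t$, a routine count gives $\sigma^2(G)=O(c_L^2 c_N^h(\log N)^2/N)$ and $\Delta_\star(G)=O(c_L\log N/N)$ on the control event; applying Lemma \ref{lem:sm-est} with $\beta$ of order $1/(\sqrt{\langle G\rangle_T}\log N)$ (so $\beta\Delta_\star(G)=o(1)$) and $a$ of order $\sqrt{\langle G\rangle_T}(\log N)^2$ (so $\beta a\asymp\log N$) yields $\sup_{t}|G^m_t|=O(c_L(c_N^h)^{1/2}(\log N)^3/\sqrt N)$ w.h.p., again within the bound.

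\textbf{Step 3 (the main term — and the crux).} Here one uses the fact that the linear ($\sqrt N$-sized) terms cancel in the drift of $H^N$: by \eqref{fasthdrift}, on the control event $|\mu(h_N)|=O(\sqrt{\log N}\,c_N^h+(c_N^h)^2)$ (using $i_N\le h_N\le c_N^h$ and $|z_N|=O(\sqrt{\log N})$) and $\sigma^2(h_N)=O(c_N^h)$. By Lemma \ref{lem:taylor} applied to $\phi_\epsilon\in C^2$, $|\mu(\phi_\epsilon(h_N))-\phi_\epsilon'(h_N)\mu(h_N)|\le\tfrac12\sigma^2(h_N)\|\phi_\epsilon''\|_\infty=O(c_\phi c_N^h/\epsilon)$, whence $|\mu(\phi_\epsilon(h_N))|=O\!\big(c_\phi(\sqrt{\log N}\,c_N^h+(c_N^h)^2+c_N^h/\epsilon)\big)$; so $\tfrac{1}{\sqrt N}\int_0^t|\tld z_N\mu_s(\phi_\epsilon(h_N))|\,ds=O\!\big(\tfrac{\sqrt{\log N}}{\sqrt N}c_L(\sqrt{\log N}\,c_N^h+(c_N^h)^2+c_N^h/\epsilon)\big)$. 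With $\epsilon=(c_N^h)^{1/2}N^{-1/4}$ the dominant piece $c_N^h/\epsilon=(c_N^h)^{1/2}N^{1/4}$ contributes $O(c_L(c_N^h)^{1/2}N^{-1/4}\sqrt{\log N})$, and the other two are $O(c_L(\log N)^2/\sqrt N)$ and $O(c_L(\log N)^{5/2}/\sqrt N)$; using $c_N^h\le\log N$, summing the estimates of Steps 1–3 gives the bound stated in Lemma \ref{lem:tldaveto0}. I expect Step 3 to be the real obstacle: without the cancellation of linear terms in $\mu(H^N)$, and without first replacing $L_N$ by a function of $h_N$ alone in Step 1, the drift $\mu(\phi_\epsilon(h_N))$ would carry an $O(\sqrt N)$ term coming from the near-ray piece of the drift of $(i_N,j_N,k_N)$, and the whole argument would break; moreover the mollification parameter $\epsilon$ has to be tuned precisely to balance the mollification error against the $\|\phi_\epsilon''\|$ blow-up, which is the source of the $N^{-1/4}$ in the statement.
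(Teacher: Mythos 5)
Your argument is essentially correct, but it takes a genuinely different route from the paper. The paper decomposes time into excursions of $\tld z_N$ away from $0$ (with the crossing level $c_*$ of Lemma \ref{lem:tldz}), freezes $L_N$ at the start of each excursion to produce a discrete-time martingale $S_k$ whose increments have variance $O(1/N)$ over the $O(\sqrt N)$ excursions (this is where the $N^{-1/4}$ comes from there), and controls the within-excursion error $G_k$ by exactly the two ingredients you isolate: the $Q_N$-control to write $L_N$ as a function of $h_N$ plus $O(c_Lc_N^h(c_N^Q)^{1/2})$, and the absence of $O(\sqrt N)$ terms in $\mu(h_N)$ to bound the oscillation of $h_N$ over an excursion. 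You instead exploit that $\tld z_N$ has \emph{exactly} linear drift, so $-\tld z_N/(\sqrt N\mu_z)$ is an exact corrector: the product rule (Lemma \ref{lem:qac-prod}) plus mollification and the Taylor estimate (Lemma \ref{lem:taylor}) converts the oscillatory integral into boundary, covariation, martingale and drift terms, with the $N^{-1/4}$ emerging from optimizing the mollification scale $\epsilon=(c_N^h)^{1/2}N^{-1/4}$ against the Taylor remainder $\sigma^2(h_N)\|\phi_\epsilon''\|_\infty$, and from the martingale term. Your route avoids the excursion-length tail estimates of Lemma \ref{lem:tldz} entirely (the paper's Lemma \ref{lem:tldz} is still needed elsewhere, e.g.\ in the proof of Lemma \ref{downfast}), at the cost of a smoothing argument; the paper's route avoids mollification and works directly with the Lipschitz $L$.

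Two corrections of detail. First, your diffusivity count for $G$ is wrong: on the slow time scale $\tld z_N$ itself jumps at rate $\Theta(N^{3/2})$ (see \eqref{eq:tldz}), not only at the $O(Nc_N^h)$ rate of the infection transitions, so $\sigma^2(G)=O(c_L^2(c_N^h)^2N^{-1/2}+c_L^2c_N^h\log N\,N^{-1})$ rather than $O(c_L^2c_N^h(\log N)^2/N)$. Rerunning Lemma \ref{lem:sm-est} with the corrected value gives $\sup_t|G^m_t|=O(c_Lc_N^h\sqrt{\log N}\,N^{-1/4})$ w.h.p., which is still within the stated bound (using $c_N^h\le\log N$), so the slip is harmless but should be fixed. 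Second, the bound $|\tld z_N|\le 2C_{\ref{maxz}}\sqrt{\log N}$ for $t<\tau_N$ is itself only a w.h.p.\ statement (it rests on Lemma \ref{lem:D}), so more than the martingale piece carries a high-probability qualifier; as in the paper, one should stop at $\tau_N\wedge\tau^+_{2C_{\ref{maxz}}\sqrt{\log N}}(|\tld z_N|)$ (or intersect with the event of Lemma \ref{lem:D}) before making the drift and diffusivity estimates, and note that w.h.p.\ this does not change the time interval.
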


We begin by estimating excursions of $\tld z^N$.

\begin{lemma}\label{lem:tldz}
Define $c_* = \sqrt{\sigma_z^2/4\mu_z}$. Let $\tau^*_0 = \inf\{t\colon \tld z^N_t=0\}$ and let
\begin{align*}
\tau^*_1 & = \inf\{ s > \tau^*_0 : |\tld z^N(s)| > c_* \}\\
\tau^*_2 & = \inf\{ s > \tau^*_1 : \tld z^N(s) = 0 \}.
\end{align*}
\begin{itemize}[noitemsep]
\item If $C>1/\mu_z$ then w.h.p.~$\tau^*_0 \le C\log N$.
\item $E[\tau^*_1-\tau^*_0] \ge 1/4\mu_z$.
\item There are constants $\theta,\Theta>0$ so that $E[\exp(\theta(\tau^*_2-\tau^*_0))] \le \Theta$ for large $N$.
\item With $\theta,\Theta$ as above, $P\left(\displaystyle \int_{\tau^*_0}^{\tau^*_2}|\tld z^N_s|ds  > 3c_*M + (\theta+\sigma^2_*)M^2\right) \le (1+3\Theta)e^{-\theta m}$.
\end{itemize}
\end{lemma}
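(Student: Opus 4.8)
\emph{Plan.} All four statements concern excursions of the spatially discrete Ornstein--Uhlenbeck chain $\tld z^N$ of \eqref{eq:tldz}; the two features that make everything go through cleanly are that $\tld z^N$ has \emph{exactly linear} drift $\mu(\tld z^N)=-\mu_z\tld z^N$ while $\tld z^N\neq 0$ and \emph{constant} diffusivity $\sigma^2(\tld z^N)=\sigma_z^2=4\mu_z c_*^2$, and that $\tld z^N$ cannot leave the interval $[-\sigma_z^2 N^{1/2}/2\mu_z,\ \sigma_z^2 N^{1/2}/2\mu_z]$ since $\tld q_+$ vanishes at its upper edge. Throughout I assume $|\tld z^N(0)|=O(\sqrt{\log N})$, which holds in the context where the lemma is applied. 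For the first bullet, by the $z\mapsto -z$ symmetry of \eqref{eq:tldz} assume $\tld z^N(0)=a\ge 0$; the product rule gives $\mu(e^{\mu_z t}\tld z^N_t)=e^{\mu_z t}(\mu_z\tld z^N_t+\mu(\tld z^N_t))=0$ while $\tld z^N_t>0$, so $e^{\mu_z(t\wedge\tau^*_0)}\tld z^N_{t\wedge\tau^*_0}$ is a bounded martingale, and optional stopping with $\tld z^N_t\ge 2N^{-1/2}$ on $\{\tau^*_0>t\}$ yields $P(\tau^*_0>t)\le aN^{1/2}e^{-\mu_z t}/2$, which is $o(1)$ once $t=C\log N$ with $C\mu_z>1$. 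For the second bullet, $\mu((\tld z^N_t)^2)=-2\mu_z(\tld z^N_t)^2+\sigma_z^2\le\sigma_z^2$, so $(\tld z^N_t)^2-\sigma_z^2 t$ is a supermartingale; stopping at $T_1=\tau^*_1-\tau^*_0$ (the hitting time of $\{|z|>c_*\}$ started from $0$) and passing to the limit via Fatou gives $\sigma_z^2 E[T_1]\ge E[(\tld z^N_{T_1})^2]\ge c_*^2$, i.e.\ $E[T_1]\ge c_*^2/\sigma_z^2=1/4\mu_z$ (if $T_1=\infty$ with positive probability the bound is trivial).

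The third bullet is the heart of the argument, and the main obstacle is to obtain the exponential moment \emph{uniformly in $N$}: because $\tld z^N$ must reach $0$ exactly and near $0$ looks like a symmetric walk, the bare exponential--martingale estimate of the first bullet only gives an $N$-dependent bound. The fix is a Foster--Lyapunov argument: whenever $F\ge 1$ satisfies $\mathcal L_N F+\theta F\le 0$ on the relevant part of the state space, where $\mathcal L_N$ is the generator of $\tld z^N$ (so $\mu(F(\tld z^N_t))=\mathcal L_N F(\tld z^N_t)$), the process $e^{\theta t}F(\tld z^N_{t\wedge\sigma})$ is a supermartingale and $E^z[e^{\theta\sigma}]\le F(z)$ after letting $t\to\infty$. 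I split $\tau^*_2-\tau^*_0=T_1+T_2$ with $T_2=\tau^*_2-\tau^*_1$. For $T_1$ (escape from $[-c_*,c_*]$ started at $0$) take the quadratic $g(z)=A-Bz^2$ with $B=2(\theta+\eps)/\sigma_z^2$, $A=1+Bc_*^2$: since $g$ is quadratic, $\mathcal L_N g=\tfrac{\sigma_z^2}{2}g''-\mu_z z g'$ \emph{exactly}, and the identity $2\mu_z c_*^2=\sigma_z^2/2$ gives $\mathcal L_N g+\theta g\le-\eps$ on $[-c_*,c_*]$ for $\theta$ small; accounting for the one-jump overshoot (where $g\ge 1-O(N^{-1/2})$) gives $E^0[e^{\theta T_1}]\le 2g(0)=2+(\theta+\eps)/\mu_z$ for $N$ large. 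For $T_2$, note that after $\tau^*_1$ the chain keeps a fixed sign until $\tau^*_2$, and take the concave power $h(z)=(1+|z|)^p$ with $p\in(0,1)$ and $0<\theta<\min\{\tfrac{\sigma_z^2}{2}p(1-p),\ \mu_z p/2\}$: the continuous inequality $\tfrac{\sigma_z^2}{2}h''-\mu_z z h'+\theta h\le 0$ on $(0,\infty)$ holds because, after factoring out $(1+z)^{p-2}>0$, the remaining bracket is nonpositive at $0$ and decreasing in $z$, the $O(N^{-1})$ lattice corrections to $\mathcal L_N h$ either share the favourable sign of the leading term or are dominated by it (including at the reflecting upper edge), and very near $0$ concavity with $\tld q_->\tld q_+$ makes $\mathcal L_N h$ a negative constant $\le-\theta$. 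Hence $E^z[e^{\theta T_2}]\le h(c_*+2N^{-1/2})\le(2+c_*)^p$ uniformly in $N$, and the strong Markov property at $\tau^*_1$ gives $E[e^{\theta(\tau^*_2-\tau^*_0)}]=E\big[e^{\theta T_1}E[e^{\theta T_2}\mid\F_{\tau^*_1}]\big]\le(2+c_*)^p\big(2+(\theta+\eps)/\mu_z\big)=:\Theta$. The only genuine work is checking the handful of discrete generator inequalities with $N$-independent constants.

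For the fourth bullet I write $M$ for the parameter (so $m=M$) and note that the relevant diffusivity bound, denoted $\sigma^2_*$ in the statement, equals $\sigma^2(\tld z^N)=\sigma_z^2$. Split the integral at $\tau^*_1$. On $\{\tau^*_2-\tau^*_0\le M\}$, which by the third bullet and Markov's inequality has probability $\ge 1-\Theta e^{-\theta M}$, we have $\int_{\tau^*_0}^{\tau^*_1}|\tld z^N_s|\,ds\le c_*(\tau^*_1-\tau^*_0)\le c_* M$, since $|\tld z^N|\le c_*$ on $[\tau^*_0,\tau^*_1)$. On $(\tau^*_1,\tau^*_2)$ the chain has a fixed sign, so $|\tld z^N_{\tau^*_1+t}|=|\tld z^N_{\tau^*_1}|-\mu_z\int_0^t|\tld z^N_{\tau^*_1+s}|\,ds+M_t$ with $M$ its martingale part, which has $\langle M\rangle_t=\sigma_z^2(t\wedge(\tau^*_2-\tau^*_1))$ and jumps $\le 2N^{-1/2}$; conditioning on $\F_{\tau^*_1}$ and applying Lemma \ref{lem:sm-est} with $\phi=1$, $a=\theta M$ (valid since $2N^{-1/2}\le 1/2$) shows that, off an event of probability $\le 2e^{-\theta M}$, $\sup_{t\le\tau^*_2-\tau^*_1}|M_t|\le\theta M+\sigma_z^2(\tau^*_2-\tau^*_1)\le(\theta+\sigma_z^2)M$ on $\{\tau^*_2-\tau^*_0\le M\}$, hence $\sup_{[\tau^*_1,\tau^*_2]}|\tld z^N|\le(c_*+2N^{-1/2})+(\theta+\sigma_z^2)M$ and $\int_{\tau^*_1}^{\tau^*_2}|\tld z^N_s|\,ds\le M\big((c_*+2N^{-1/2})+(\theta+\sigma_z^2)M\big)$. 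Adding the two pieces and using $2N^{-1/2}\le c_*$ for large $N$, the integral is at most $3c_* M+(\theta+\sigma_z^2)M^2$ off an event of probability at most $\Theta e^{-\theta M}+2e^{-\theta M}\le(1+3\Theta)e^{-\theta M}$, since $\Theta\ge 1$.
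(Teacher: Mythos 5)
Your proposal is correct, and for the first, second and fourth bullets it is essentially the paper's own argument: the exponential supermartingale $e^{\mu_z t}\tld z^N_t$ with optional stopping at level $2N^{-1/2}$ for the hitting time of $0$; the supermartingale $(\tld z^N_t)^2-\sigma_z^2 t$ with optional stopping for the lower bound $E[\tau^*_1-\tau^*_0]\ge c_*^2/\sigma_z^2$; and, for the integral bound, the split at $\tau^*_1$, the bound $c_*(\tau^*_1-\tau^*_0)$ on the first piece, and control of $\sup_{[\tau^*_1,\tau^*_2]}|\tld z^N|$ via the supermartingale decomposition and Lemma \ref{lem:sm-est} with $\phi=1$, $a=\theta M$, exactly as in the paper (including the same reading of $m$ as $M$ and $\sigma_*^2$ as the diffusivity bound).

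Where you genuinely diverge is the third bullet. The paper proves exponential tails for $\tau^*_1-\tau^*_0$ and $\tau^*_2-\tau^*_1$ by establishing a constant-probability exit estimate on a constant time window (via Lemma \ref{lem:sm-est} applied to $(\tld z^N)^2$, respectively to the exponential supermartingale), iterating it with the Markov property to get geometric decay, and, for the return to $0$, decomposing the excursion into a $\geo(1/2)$ number of commutes between the levels $j^*/\sqrt N$ and $2j^*/\sqrt N$ and using that a geometric sum of i.i.d.\ exponential-tail variables has an exponential tail. You instead run a Foster--Lyapunov argument: a quadratic $g(z)=A-Bz^2$ with $\mathcal L_N g+\theta g\le -\eps/2$ on $[-c_*,c_*]$ (exact for quadratics, since the second-order Taylor expansion of the discrete generator has no remainder) to get $E^0[e^{\theta T_1}]=O(1)$, and a concave power $h(z)=(1+|z|)^p$ for the return time, where the bracket $-\tfrac{\sigma_z^2}{2}p(1-p)+\theta+z(2\theta-\mu_z p)+z^2(\theta-\mu_z p)$ is indeed negative and bounded away from $0$ (in fact the continuous expression is $\le -c(1+z)^p$) once $\theta<\min\{\tfrac{\sigma_z^2}{2}p(1-p),\mu_z p/2\}$, while the discrete correction to $\mathcal L_N h$ is uniformly $O(N^{-1/2})$ because $h',h'',h'''$ are bounded on $[0,\infty)$ and $\tld q_++\tld q_-=\sigma_z^2N/4$; so your generator inequalities do hold uniformly in $N$ and in the state, and the strong Markov property at $\tau^*_1$ gives a uniform $\Theta$. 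The trade-off: your route yields the exponential moment directly, with explicit constants in terms of $\mu_z,\sigma_z$, and avoids both the commute decomposition and the geometric-sum lemma; the paper's route uses only the already-established crude tools (Lemma \ref{lem:sm-est} plus Markov-property iteration) and needs no verification of generator inequalities over the whole half-lattice, at the cost of implicit constants and a slightly longer combinatorial argument. One cosmetic remark: you add the hypothesis $|\tld z^N(0)|=O(\sqrt{\log N})$ for the first bullet, which the lemma does not assume; it is unnecessary, since the state space of $\tld z^N$ is contained in an interval of width $O(N^{1/2})$ (the rates $\tld q_\pm$ vanish at $\mp\sigma_z^2N^{1/2}/2\mu_z$), so the same supermartingale bound gives $P(\tau^*_0>C\log N)\le O(N)\,e^{-\mu_z C\log N}=o(1)$ for any $C>1/\mu_z$, as in the paper.
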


\begin{proof}
From \eqref{eq:tldz} we compute
$$\mu(\tld z^N) = -\mu_z \tld z^N \quad \text{and} \quad \sigma^2(\tld z^N) = \sigma_z^2.$$
If $\tld z^N_0>0$ then $\xi_t = e^{\mu_z (t \wedge \tau^*_0)}\tld z^N_{t \wedge \tau^*_0}$ is a supermartingale. Since $\tld z^N_t \ge 2N^{-1/2}$ if $\tau^*_0>t$,
$$P(\tau^*_0 > t \mid \tld z^N_0=z) \le P(\xi_t \ge 2N^{-1/2}e^{\mu_z t} \mid \tld \xi_0 = z) \le \frac{\sqrt{N}z}{2}e^{-\mu_z t}.$$
By symmetry of $\tld z^N$, the same estimate holds if $-\tld z^N_0=z$. Since $|\tld z^N| \le N^{1/2}+2N^{-1/2}$, the first statement follows by taking $t=C\log N$ for $C>1/\mu_z$.\\

To prove the second statement, let $\tau = \tau^+_{c_*}(|\tld z^N|)$, and note that by the strong Markov property, $\tau^*_1-\tau^*_0$ is equal to distribution to $\tau$ conditioned on $\tld z^N_0=0$. We compute
\begin{align}\label{eq:tldzsq-drift}
\mu((\tld z^N)^2) &= 2\tld z^N \mu(\tld z^N) + \sigma^2(\tld z^N) \nonumber \\
&= -2\mu_z(\tld z^N)^2 + \sigma_z^2.
\end{align}
In particular, $\mu((\tld z^N)^2) \le \sigma_z^2$, so $(\tld z^N_{t\wedge \tau})^2 - \sigma_z^2(t\wedge \tau)$ is a supermartingale. Since $(\tld z^N_0)^2=0$ and $(\tld z^N_\tau)^2 \ge c_*^2$, using optional stopping
$$E[\tau] \ge E[\tld z^N_\tau]/\sigma_z^2 \ge c_*^2/\sigma_z^2 = 1/4\mu_z.$$

To prove the third statement it suffices to show that for large $N$, both $P(\tau^*_1-\tau^*_0>t)$ and $P(\tau^*_2-\tau^*_1>t)$ are bounded by some function that decays exponentially in $t$. We prove the two parts in the order given. Suppose $|\tld z^N_0| \le c_*$ and let $\tau = \tau^+_{c_*}(|\tld z^N|)$. From \eqref{eq:tldzsq-drift}, if $|\tld z^N| \le c_* = \sqrt{\sigma_z^2/4\mu_z}$ then $\mu((\tld z^N)^2) \ge \sigma_z^2/2$. Moreover if $| \tld z^N | \le c_*$ then $(\tld z^N)^2$ jumps by $2(2N^{-1/2})\tld z^N-(\tld z^N)^2 \le 4N^{-1/2}c_*$ for large $N$, and jumps at rate $\sigma_z^2N/4$, so $\sigma^2((\tld z^N)^2) \le 4c_*^2\sigma_z^2$. 
Let $X_t = (\tld z^N_{t\wedge \tau})^2 - (\tld z^N_0)^2$, so that for $\phi>0$,
$$-X_t^m - \phi \lng X \rng_t \ge - X_t + (\sigma_z^2/2 - 4c_*^2\sigma_z^2 \phi)(t \wedge \tau).$$
Let $\phi=1/16c_*^2$ so that there is $\sigma_z^2/4$ in the above parentheses, noting that $\phi\Delta_\star(X) =O(N^{-1/2}) \le 1/2$ for large $N$. Using Lemma \ref{lem:sm-est} with $a=1/\phi = 16c_*^2$, we find that
$$P(\tau>t \ \text{and} \ X_t \le - 16c_*^2 + \sigma_z^2 t/4) \le 2/e.$$
If $\tau>t$ then $|\tld z^N_t| \le c_*$, so $X_t \le 4(c_*)^2$. Denoting the constant $t^* = 80c_*^2/\sigma_z^2$ we find that
$$P(\tau > t^*) \le 2/e.$$
Using the Markov property to iterate, $P(\tau > kt^*) \le (2/e)^k$, which proves the first part.\\

Taking now $\tau = \inf\{t \colon \tld z^N=0\}$ and $j^*$ the least multiple of $2N^{-1/2}$ larger than $c_*$, we bound $P(\tau^*_2-\tau^*_1>t) = P(\tau >t \mid |\tld z^N_0|=2j^*/\sqrt{N})$. Since this quantity is non-decreasing in $j^*$ and we only need an upper bound we may assume $j^*$ is even, and by symmetry we may assume $\tld z^N>0$. For any $j$ let $\tau_j = \inf\{t \colon \tld z^N_t=2j/\sqrt{N}\}$ and let $\tau_{0,j} = \inf\{t \colon \tld z^N_t \in \{0,2j/\sqrt{N}\}\}$. The approach is to condition on the number of commutes from $2j^*/\sqrt{N}$ to $j^*/\sqrt{N}$ and back, before hitting $0$. Since $\tld z^N$, stopped at zero, is a supermartingale,
$$P(\tld z^N_{\tau_{0,j^*}}=0 \mid \tld z^N_0 = j^*/\sqrt{N}) \ge 1/2,$$
so the number of commutes is at most $\geo(1/2)$. It is easy to check that if a random variable $X$ has an exponential tail, then a geometric sum (with positive stopping probability) of i.i.d.~copies of $X$ itself has an exponential tail. Thus it suffices to show that both
$$P(\tau_{j^*/2}>t \mid \tld z^N_0 = 2j^*/\sqrt{N}) \ \text{and} \ P(\tau_{0,j^*} >t \mid \tld z^N_0= j^*/\sqrt{N})$$
are bounded by some function that decays exponentially in $t$, for large $N$. Using the supermartingale $\xi$ defined above, it is easy to check that
$$P(\tau_{j^*/2}>t \mid \tld z^N_0=2j^*/\sqrt{N}) \le 2e^{-\mu_z t}.$$
Then, since $\tau_{0,j^*} \le \inf\{t \colon |\tld z^N_t| \ge 2j^*/\sqrt{N}\}$, using the proof of the bound on $\tau^*_1-\tau^*_0$ we deduce that $P(\tau_{0,j^*}> kt^* \mid \tld z^N_0= j^*/\sqrt{N}) \le (2/e)^k$. Since $j^*$ is even, in applying the proof we may need to replace $c_*$ with a quantity up to $2N^{-1/2}$ larger, but the only effect is an $o(1)$ change in $t^*$.\\

To prove the fourth statement, we first note that
$$\int_{\tau^*_0}^{\tau^*_2}|\tld z^N_s|ds \le (\tau^*_1-\tau^*_0)c_* + \int_{\tau^*_1}^{\tau^*_2}|\tld z^N_s|ds.$$
Since $\tau^*_1\le \tau^*_2$, the third statement shows $P((\tau^*_1-\tau^*_0)c_*>c_*M) \le \Theta e^{-\theta M}$. To bound the second term we use the fact that 
$$\int_{\tau^*_1}^{\tau^*_2} |\tld z^N_s|ds \le (\tau^*_2-\tau^*_1)\sup_{t \in [\tau^*_1,\tau^*_2]}|\tld z^N_t|,$$ 
then control the latter quantity. For ease of notation we suppose $\tld z^N_0 = 2j^*/\sqrt{N}$, let $\tau=\inf\{t \colon \tld z^N_t=0\}$ and control $\sup_{t \le \tau}\tld z^N_t$. Define
$X_t = \tld z^N_{t \wedge \tau}$, then $X$ is a supermartingale with $X_0\le c_*+2/\sqrt{N} \le 2c_*$ for large $N$, and for $\phi>0$
$$X_t^m - \phi \lng X\rng_t \ge X_t-X_0-\sigma^2_*\phi(t\wedge \tau).$$
Using Lemma \ref{lem:sm-est}, if $2\phi/\sqrt{N} \le 1/2$ then $P(X_t > 2c_*+a+\sigma^2_*\phi(t \wedge \tau) \ \text{for some} \ t\ge 0) \le 2e^{-\phi a}$. Fix $\phi=1$. Since $\tau$ is equal in distribution in $\tau^*_2-\tau^*_1$, which is at most $\tau^*_2-\tau^*_0$, from the third statement $P(\tau>M) \le \Theta e^{-\theta M}$. Letting $a=\theta M$ we find
$$P(\sup_{s \le \tau}X_s > 2c_*+(\theta  + \sigma^2_*)M) \le (1+\Theta)e^{-\theta M}.$$
Combining with the estimate of the first term, and another estimate on $\tau^*_2-\tau^*_1$, we obtain the desired result.
\end{proof}

\begin{proof}[Proof of Lemma \ref{lem:tldaveto0}]
We begin by whittling down the statement of Lemma \ref{lem:tldaveto0} until it is ready to analyze in detail. Define
$$L_N^*(s) = \1(s<\tau_N)L_N(s) + \1(s \ge \tau_N)\lim_{t \ua \tau_N}L_N(t).$$
In words, $L^*_N$ is equal to $L_N$ until $\tau_N$, at which point it remains at the last value assumed by $L_N$ before $\tau_N$. Since $L_N^*(s)=L_N(s)$ for $s<\tau_N$, the statement of Lemma \ref{lem:tldaveto0} is unchanged if we replace $L_N$ with $L_N^*$.\\

In the same manner as $\tld z_N$ is coupled to $\tld z^N$ as described at the beginning of this section, it can be coupled to the full infection process. Recall the $c_N^h,c_N^Q$ control time $\tau_N = \tau_N(c_N^h,c_N^Q,\ep)$, defined in Definition \ref{def:ctrl-time} and given on the slow time scale. Define $\tau_{N,\tld z} = \inf\{t \colon |\tld z_N(t)| \ge 2C_{\ref{maxz}}\sqrt{\log N}\}$. By Lemma \ref{lem:D} and the definition of $\tau_N$, w.h.p.
$$\tau_{N,\tld z} \ge \tau^+_{C_{\ref{maxz}}\sqrt{\log N}}(|z_N|) \ge \tau_N,$$
so it is enough to show the statement of Lemma \ref{lem:tldaveto0} holds with $\tau_{N,\tld z}$ in place of $\tau_N$. As in Lemma \ref{lem:tldz} let $c_*=\sqrt{\sigma_z^2/4\mu_z}$. Define $\tau^*_0 = \inf\{t \colon \tld z_N(t)=0\}$ and for $j\ge 0$,
\begin{align*}
\tau^*_{2j+1} & = \inf\{ s > \tau^*_{2j} : |\tld z_N(s)|>c_* \} \\
\tau^*_{2j+2} & = \inf\{ s > \tau^*_{2j+1} : \tld z_N(s) = 0 \}.
\end{align*}
The definition of $\tau^*_i$, $i=0,1,2$ differs from Lemma \ref{lem:tldz} only by the choice of time scale. We show the contribution to the integral up to time $\tau^*_0$ can be ignored. By Lemma \ref{lem:tldz}, w.h.p.~$\tau^*_0 \le (2/\mu_z)N^{-1/2}\log N$. Since $i_N,j_N,k_N\le h_N$, $L(0,0,0)=0$ and $L$ has Lipschitz constant $c_L$ in the $\ell^1$ norm, $|L_N(s)| \le c_Lh_N(s)$. If $s<\tau_N$ then by definition $h_N(s)\le c_N^h$. Since $L^*_N$ only sees the values $\{L_N(s) \colon s<\tau_N\}$ it follows that $|L_N^*(s)| \le c_L c_N^h$ for all $s\ge 0$. Since $|\tld z^N_t|\le 2C_{\ref{maxz}}\sqrt{\log N}$ for $t<\tau_{N,\tld z}$, using the above estimate on $\tau^*_0$ and $c_N^h \le (c_N^h\log N)^{1/2}$, with high probability
\begin{align*}
\sup_{t \le T} \left|\int_0^{t \wedge \tau^*_0\wedge \tau_{N,\tld z}} z_N(s)L^*_N(s) \,ds \right| 
& \le 2C_{\ref{maxz}}\sqrt{\log N}c_Lc_N^h(2/\mu_z)N^{-1/2}\log N \\
& = O(c_L(c_N^h)^{1/2} N^{-1/2}\log^2N).
\end{align*}
Thus, without affecting the conclusion, we may assume that $\tau^*_0=0$, equivalently, $\tld z_N(0)=0$. At this point we will also replace $t\wedge \tau_{N,\tld z}$ with $t$ as the upper endpoint; this does not decrease the supremum over $t\le T$. To summarize thus far, it remains to show that, if $\tld z_N(0)=0$ and $T>0$ then w.h.p.,
\begin{align}\label{eq:aveint}
\sup_{t\le T } \left|\int_0^t \tld z_N(s)L_N^*(s) \,ds \right|  = O(c_L(c_N^h)^{1/2}(N^{-1/4}\vee (c_N^hc_N^Q)^{1/2})\log^2 N ).
\end{align}
For $j,k\ge 0$ define $\xi_j = \int_{\tau^*_{2j}}^{\tau^*_{2j+2}} \tld z_N(s) \, ds$,
$$
S_k = \sum_{j=0}^k \xi_j L_N^*(\tau^*_{2j}) \quad \text{and} \quad G_k = \int_{\tau^*_{2k}}^{\tau^*_{2k+2}} |\tld z_N(s)(L_N^*(s)-L_N^*(\tau^*_{2k}))|ds.
$$
Let $\I$ denote the LHS of \eqref{eq:aveint}. If $K_N$ is such that w.h.p.~$\tau^*_{2K_N} \ge T$, then w.h.p.
\begin{align}\label{eq:aveint-terms}
\I \le |\sup_{k < K_N}S_k| + \sum_{k < K_N} G_k.
\end{align}
Since the variables $\{\tau^*_{2j+2}-\tau^*_{2j}\}_{j\ge 0}$ are i.i.d., by Lemma \ref{lem:tldz},
$$E[\tau^*_{2j+2}-\tau^*_{2j}] \ge 1/(4\mu_z\sqrt{N})=\ep/\sqrt{N},$$
and since for random variable $X\ge 0$, $E[\theta^2 X^2/2] \le E[e^{\theta X}]$, by Lemma \ref{lem:tldz},
$$\var(\tau^*_{2j+2}-\tau^*_{2j}) \le E[(\tau^*_{2j+2}-\tau^*_{2j})^2] \le (2\Theta/\theta^2)/N = C/N.$$
Letting $K_N = 2T/E[\tau^*_2-\tau^*_0] = 2\sqrt{N}T/\ep$ we have $E[\tau^*_{2K_N}] = 2T$ and $\var(\tau^*_{2K_N}) \le K_N C/N = 2CT/\ep\sqrt{N}$, so using Chebyshev's inequality,
$$P(\tau^*_{2K_N} < T) \le \frac{\var(\tau^*_{2K_N})}{T^2} = O(1/\sqrt{N}),$$
and $\tau^*_{2K_N} \ge T$ w.h.p., as desired. It remains to estimate the terms in \eqref{eq:aveint-terms}.

For $t\ge 0$ let $\F(t)$ denote the information up to time $t$. The $\{\xi_j\}_{j\ge 0}$ are i.i.d., and by symmetry of $\tld z^N$, $\xi_j$ and $-\xi_j$ are equal in distribution. Because of this and since $L_N^*(\tau^*_{2j})$ is $\F(\tau^*_{2j})$-measurable, $S$ is a discrete time martingale. Using Doob's $L^2$ maximal inequality for martingales and orthogonality of martingale increments,
\begin{align*}
E [ ( \sup_{k < K_N} S_k )^2 ]
& \le 4 E \bigg[ \bigg( \sum_{j=0}^{K_N-1} \xi_j L^*_N(\tau^*_{2j}) \bigg)^2 \bigg] \le c_L^2(c_N^h)^2 \sum_{j=0}^{K_N-1} E[ \, \xi^2_j \, ]
\end{align*}
From Lemma \ref{lem:tldz} and noting the change of time scale, there are $\theta,C>0$ so that for large $M$, $P( \big ( \int_{\tau_0^*}^{\tau_{2}^*} |\tld z_N(s)|ds \big)^2 > CM^4/N) \le Ce^{-\theta M}$, which implies that
\beq
E[ \, \xi_j^2 \, ] \le E\bigg[ \big( \int_{\tau_0^*}^{\tau_{2}^*} |\tld z_N(s)|ds \big)^2 \bigg] \le C/N
\label{2ndterm}
\eeq
for some possibly larger $C>0$. It follows that
\begin{align}\label{pwc}
E [ \, |\sup_{k < K_N} S_k| \, ] & \le (E[ \, (\sup_{k < K_N}S_k)^2 \, ])^{1/2} \nonumber \\
& = O(c_Lc_N^h(K_N/N)^{1/2}) = O(c_Lc_N^hN^{-1/4}).
\end{align}
Noting that $c_N^h \le (c_n^h \log N)^{1/2}$, then using Markov's inequality we find that
$$\text{w.h.p.} \ \sup_{k < K_N}S_k = O(c_L(c_N^h)^{1/2}N^{-1/4}\log N).$$
It remains to estimate $\sum_{k < K_N}G_k$. Let $\overline \xi_k$ denote $\int_{\tau_{2k}^*}^{\tau_{2k+2}^*} |\tld z_N(s)|ds$. Then
$$
G_k \le \, \overline \xi_k \, \sup_{s\in [\tau^*_{2k},\tau^*_{2k+2}]} |L^*_N(s) - L^*_N(\tau^*_{2k})|
$$
so by the Cauchy-Schwarz inequality
\beq
E[ G_k ] \le ( E[ \, \overline\xi_k^2 \, ])^{1/2} \bigg( E \big[ \big(\sup_{s\in [\tau^*_{2k},\tau^*_{2k+2}]} |L^*_N(s) - L^*_N(\tau^*_{2k})| \big)^2 \big] \bigg)^{1/2}
\cdot 
\label{eq:Gk-mean}
\eeq
The first term is $O(1/\sqrt{N})$ by the latter part of \eqref{2ndterm}. Let $M_k^L$ denote the supremum inside the second term. To estimate $E[(M_k^L)^2]$, recall \eqref{eq:uvw-lim} and note that if $s<\tau_N$ then
$$(i_N(s), \gamma j_N(s), \eta k_N(s))=(u_*,v_*,w_*)h_N(s) + O((c_N^Q)^{1/2}c_N^h).$$
It follows that
$$M_k^L = O(c_L \sup_{\tau^*_{2k} \le s < \tau^*_{2k+2}}|h_N(s \wedge \tau_N)-h_N(\tau^*_{2k})|) + O(c_L(c_N^Q)^{1/2}c_N^h).$$
Let $M_k^h$ denote the above supremum. Using the simple inequality $(a+b)^2 \le 2(a^2+b^2)$,
\begin{align}\label{eq:mkl}
E[(M_k^L)^2] &= O(c_L^2E[(M_k^h)^2] + c_L^2c_N^Q(c_N^h)^2).
\end{align}
We estimate $M_k^h$. If $s<\tau_N$ then since $|z_N(s)| \le C_{\ref{maxz}}\sqrt{\log N}$ and $h_N(s) \le c_N^h$, referring to \eqref{fasthdrift} we have
$$
\mu_s(h_N) = O(\sqrt{\log N}c_N^h + (c_N^h)^2) = O(c_N^h\log N)
$$
and so
$$
\int_{\tau^*_{2k}}^{s \wedge \tau_N}|\mu_r(h_N)| \, dr = (s \wedge \tau_N-\tau^*_{2j}) O(c_N^h\log N).
$$
Since $h_N$ jumps by $1/\sqrt{N}$ at rate $O(Nh_N)$, if $s<\tau_N$ then $\sigma^2_s(h_N) = O(c_N^h)$.\\
Applying Doob's $L^2$-maximal inequality,
\begin{align*}
&E\bigg[\big( \sup_{\tau^*_{2k} \le s \le t \wedge \tau_N} h_N(s) - h_N(\tau^*_{2k}) - \int_{\tau^*_{2k}}^{s \wedge \tau_N} \mu_r(h_N) \, dr \big )^2 \mid \F_{\tau^*_{2k}} \bigg] \\
& \le 4 E \bigg[ \big( h_N(t \wedge \tau_N) - h_N(\tau^*_{2k}) - \int_{\tau^*_{2k}}^{t \wedge \tau_N} \mu_r(h_N) \, dr \big)^2 \mid \F_{\tau^*_{2k}} \bigg] \le (t -\tau^*_{2k})O(c_N^h). \notag
\end{align*}
Using again the inequality $(a+b)^2 \le 2(a^2 + b^2)$, from the above two displays we obtain
$$
E\bigg[ \bigg( \sup_{\tau^*_{2k} \le s \le t \wedge \tau_N} |h_N(s) - h_N(\tau^*_{2k})| \mid \F_{\tau^*_{2k}} \bigg)^2 \bigg] \le (t-\tau^*_{2k})O(c_N^h) + (t-\tau^*_{2k})^2 O((c_N^h)^2\log^2N)
$$
Letting $t = \tau^*_{2k+2}$ and taking an expectation we find that
\begin{align*}
E[ \, (M_k^h)^2 \, ] & \le O(c_N^h/\sqrt{N}) + O(N^{-1}(c_N^h)^2\log^2N) \\
&= O(c_N^h/\sqrt{N}). 
\end{align*}
Using \eqref{eq:mkl} we find that
$$(E[(M_k^L)^2])^{1/2} = O(c_L(c_N^h)^{1/2}(N^{-1/4} \vee (c_N^hc_N^Q)^{1/2}).$$
Recalling that $K_N = O(\sqrt{N})$ and $(E[\overline \xi_k^2])^{1/2}=O(1/\sqrt{N})$, from \eqref{eq:Gk-mean} we have
$$E\big[\, \sum_{k < K_N}G_k \, \big] = O(c_L(c_N^h)^{1/2}(N^{-1/4} \vee (c_N^hc_N^Q)^{1/2}).$$
Again, using Markov's inequality we find that
$$\text{w.h.p.} \ \sum_{k < K_N}G_k = O(c_L(c_N^h)^{1/2}(N^{-1/4}\vee (c_N^hc_N^Q)^{1/2})\log N)$$
and the proof is complete.
\end{proof}

\clearp

\section{Step 5: bounding the time for $h_N$ to reach $O(1)$ } \label{sec:Hdown}

In this section our goal is to prove Lemma \ref{downfast}. Before proving Lemma \ref{downfast}, we need one additional estimate, that controls the transient behaviour of $z^N$.

\begin{lemma}\label{lem:zint}
 Let $\tau = \inf\{t:|z_t^N| \le C_{\ref{maxz}}N^{-1/2}\}$. Then with high probability
 $$\int_0^{\tau}|z_t^N| dt= O(|z_0^N|) + O((\log N)^2).$$
\end{lemma}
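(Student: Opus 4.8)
The plan is to control $\int_0^\tau |z^N_t|\,dt$ by comparing $z^N$ to an Ornstein--Uhlenbeck-type supermartingale and exploiting the exponential decay of $|z^N|$ between returns to the band $[-C_{\ref{maxz}}N^{-1/2},C_{\ref{maxz}}N^{-1/2}]$. Recall from \eqref{mudiff}--\eqref{sigdiff} that $\mu(z^N) = -\mu_z z^N + O((1\vee(z^N)^2)/\sqrt N)$ and $\sigma^2(z^N)\le \sigma_z^2 + o(1)$. The first thing I would do is handle the large-$|z^N|$ regime: as long as $|z^N|$ is bounded by $C_{\ref{maxz}}\sqrt{\log N}$ (which holds w.h.p.\ until time $N$ by Lemma \ref{maxz}, applied after the process first enters that band), the error term in the drift is $o(z^N)$ whenever $|z^N| = \omega(1/\sqrt N)$, so there is a constant $c>0$ with $\mu(z^N)/z^N \le -c$ whenever $z^N\ne 0$ and $t<\tau$. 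Hence, using Lemma \ref{lem:qac-prod} as in the ``approach'' part of Section \ref{sec:bdY}, both $e^{ct}z^N_{t\wedge\tau}$ and $-e^{ct}z^N_{t\wedge\tau}$ (whichever matches the sign of $z^N_0$) are supermartingales up to $\tau$, which already gives $E[\tau\mid |z^N_0|] = O(\log(\sqrt N |z^N_0|)) = O(\log N)$ when $|z^N_0|$ is polynomially bounded.

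Next I would bound the integral directly rather than just $\tau$. Split $[0,\tau]$ into the initial descent and the subsequent excursions. For the descent, the supermartingale bound $E[\,|z^N_{t\wedge\tau}|\mid\mathcal F_0\,]\le e^{-ct}|z^N_0|$ integrates to $\int_0^\tau |z^N_t|\,dt = O(|z^N_0|)$ in expectation for the part of the path before $z^N$ first reaches the band, which is the source of the $O(|z^N_0|)$ term. For the remainder, the relevant picture is the one already developed in Lemma \ref{lem:tldz}: excursions of (a coupled symmetrized version of) $z^N$ away from the band have $O(1/\sqrt N)$ expected duration and contribute $O(\log N/N)$ to the integral per excursion in expectation, with exponential tails; over the $O(\sqrt N \cdot \log N)$ excursions that can occur before $\tau$ (since $E[\tau]=O(\log N)$ and each excursion takes $\Omega(1/\sqrt N)$ time), the total is $O(\log^2 N)$ in expectation. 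One could alternatively avoid re-deriving excursion estimates by using Lemma \ref{lem:sm-est}: apply it to $(z^N_{t\wedge\tau})^2$, whose drift is $\le -2c(z^N)^2 + \sigma_z^2 + o(1)$, to get that $(z^N_{t\wedge\tau})^2 + 2c\int_0^{t\wedge\tau}(z^N_s)^2\,ds$ stays below $(z^N_0)^2 + (\sigma_z^2+o(1))(t\wedge\tau) + a$ with probability $\ge 1-2e^{-\phi a}$ for suitable $\phi$, then combine with $\tau = O(\log N)$ w.h.p.\ and Cauchy--Schwarz, $\int_0^\tau |z^N_t|\,dt \le (\tau \int_0^\tau (z^N_t)^2\,dt)^{1/2}$, to conclude $\int_0^\tau|z^N_t|\,dt = O((\log N \cdot \log N)^{1/2}\cdot) $ --- carefully, this gives $O(\log N \cdot (\tau)^{1/2}) = O((\log N)^{3/2})$, which is within the stated $O((\log N)^2)$ bound. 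Either route works; I would present the $L^2$/Cauchy--Schwarz route since it reuses Lemma \ref{lem:sm-est} and avoids a second excursion analysis.

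Finally I would assemble: w.h.p.\ $|z^N_t|\le C_{\ref{maxz}}\sqrt{\log N}$ for $t\le N$ (Lemma \ref{maxz}, after noting $|z^N_0|$ is at most polynomial so the process enters the controlled band quickly), w.h.p.\ $\tau \le C\log N$ for a suitable constant (from the exponential supermartingale and Markov's inequality), and w.h.p.\ the integral is $O(|z^N_0|) + O((\log N)^2)$ by combining the descent estimate with the Cauchy--Schwarz bound $\int_0^\tau|z^N_t|\,dt\le (\tau\int_0^\tau(z^N_t)^2dt)^{1/2}$ and the $L^2$ bound above, all via Markov's inequality to upgrade expectation bounds to w.h.p.\ bounds (absorbing the extra $\log N$ from Markov into the stated power). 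The main obstacle is the bookkeeping around the drift error term $O((1\vee(z^N)^2)/\sqrt N)$: one must verify that on the event $\{|z^N|\le C_{\ref{maxz}}\sqrt{\log N}\}$ this term is genuinely negligible compared to $\mu_z z^N$ down to the scale $C_{\ref{maxz}}N^{-1/2}$ where $\tau$ stops the process, so that the negative-drift comparison is valid throughout $[0,\tau]$; this is where one uses that $\tau$ is defined to stop exactly when $|z^N|$ reaches $C_{\ref{maxz}}N^{-1/2}$ rather than $0$.
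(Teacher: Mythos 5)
There is a genuine gap: your argument does not deliver the $O(|z_0^N|)$ term with high probability. Your descent estimate is an expectation bound, $E\big[\int_0^{\tau}|z^N_t|\,dt\big]=O(|z_0^N|)$, and upgrading it to an event of probability $1-o(1)$ by Markov's inequality forces a factor $\lambda_N\to\infty$; that factor cannot be ``absorbed into the stated power'' because it multiplies $|z_0^N|$, which may be polynomially large (in the application only $|z_0^N|=o(N^{1/2})$ is assumed), and $\lambda_N|z_0^N|$ is not $O(|z_0^N|)+O((\log N)^2)$. Your preferred Cauchy--Schwarz route has the same defect: Lemma \ref{lem:sm-est} applied to $(z^N_{t\wedge\tau})^2$ retains the initial term $(z_0^N)^2$, so it gives $\int_0^\tau (z^N_t)^2dt=O((z_0^N)^2+\log N)$ w.h.p., and then $\int_0^\tau|z^N_t|\,dt\le \big(\tau\int_0^\tau (z^N_t)^2dt\big)^{1/2}=O(\sqrt{\log N}\,|z_0^N|)+O(\log N)$, which is off by a $\sqrt{\log N}$ factor on the leading term (your ``$O((\log N)^{3/2})$'' silently drops $(z_0^N)^2$). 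What is needed is a pathwise bound in which the martingale fluctuation enters additively rather than multiplicatively: the paper applies the drift-barrier Lemma \ref{lem:driftbar} to $X_t=z^N_t-z^N_0-\int_0^t\mu_s(z^N)ds-(\log N)\,t$ to get, w.h.p., $z^N_t\le z^N_0-r\int_0^t z^N_s\,ds+(t+1)\log N$ for all $t\le N$, solves this inequality to obtain $z^N_t\le z^N_0e^{-rt}+O(\log N)$, hence $\int_0^\tau z^N_t\,dt=O(z^N_0)+O(\tau\log N)$, and finishes with $\tau\le C_{\ref{maxz}}\log N$ w.h.p.

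Two secondary problems. First, your justification of the uniform negative drift $\mu(z^N)/z^N\le -c$ on $[0,\tau)$ is circular: the control part of Lemma \ref{maxz} requires $|z_0^N|\le(C_{\ref{maxz}}/2)\sqrt{\log N}$, so it cannot be invoked during the initial descent, which is exactly the regime where $|z^N|$ exceeds $\sqrt{\log N}$ and where the quadratic error $O((z^N)^2/\sqrt N)$ in \eqref{mudiff} must be controlled; the paper instead gets $\mu(z^N)/z^N\le -r$ for the whole range of $z^N$ from concavity of $F_N$ in the ``approach'' part of Section \ref{sec:bdY}, and you need that global argument (or its equivalent), not \eqref{mudiff} plus Lemma \ref{maxz}. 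Second, the ``subsequent excursions'' picture is misplaced: $\tau$ is the first entrance time of $|z^N|$ into the band of width $C_{\ref{maxz}}N^{-1/2}$, so $[0,\tau]$ contains no returns to that band, and the per-excursion figures you quote ($O(1/\sqrt N)$ duration, $O(\log N/N)$ contribution) are slow-time-scale quantities from Section \ref{sec:aveto0}, while this lemma is on the fast time scale. These two points are repairable; the missing pathwise (additive) control of the descent is the substantive gap.
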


\vskip10pt

\begin{proof}
We use Lemma \ref{lem:driftbar}, and assume $z_0^N>0$; the case $z_0^N<0$ is analogous. Let $ \mu_\star= x =\log N$. Define $X_t = z^N_t - z^N_0 - \int_0^t\mu_s(z^N)ds - \mu_\star t$. We note that $X_0=0$, $\mu(X) = -\mu_\star$ and $\sigma^2(X) = \sigma^2(z^N) = O(1)$, by \eqref{sigdiff}, so let $\sigma_\star^2$ be a large enough constant.
Since $\Delta_\star(X)=2N^{-1/2},$ $\Delta_\star(X)\mu_\star/\sigma_\star^2=o(1)$, so we can take $C_\Delta=1$. Since $|\mu(X)|=\mu_\star$, let $C_{\mu_\star}=\mu_\star$. With these choices $\Gamma = \exp(\Omega(\mu x))=\omega(N)$ and $x/C_{\mu_\star} = \Omega(1)$. By Lemma \ref{lem:driftbar}, w.h.p.~$X_t \le x$ for all $t \le N$.

Next we show this implies the desired bound. Since $z^N_t>0$ for $t<\tau$ by assumption, $\mu_t(z^N) \le -rz^N_t$, with $r$ as in the proof of Lemma \ref{maxz}. Thus if $X_t \le x$ then
$$z^N_t \le z^N_0 - r \int_0^tz^N_sds + (t+1)x.$$
Solving by repeated substitution we find $z^N_t \le z^N_0e^{-rt} + x((1-e^{-rt})/r + e^{-rt}) = z^N_0e^{-rt} + O(x)$. So $\int_0^tz^N_sds = O(z^N_0) + O(xt)$. By Lemma \ref{maxz}, $\tau \le C_{\ref{maxz}}\log N$ w.h.p., and the result follows.
\end{proof}

\vskip10pt

\begin{proof}[Proof of Lemma \ref{downfast}]
Recall the goal is to estimate $\tau = \tau^-_C(h_N) = \inf\{t \colon h_N(t) \le C\}$, as $C\to\infty$, assuming $h_N(0)=\omega(1)$. 
We may assume $C\ge 1$ so that $h_N(t) \ge 1$ for $t<\tau$. Recall from  \eqref{fasthdrift} that
$$
\mu(h_N) = c_1 z_N i_N - c_2i_N^2 + c_3 i_N/\sqrt{N},
$$
where $c_i, i=1,2,3$ are positive constants, and that $\sigma^2(h_N) = O(h_N)$. 
Using the Taylor approximation of Lemma \ref{lem:taylor}, if $h_N=\omega(N^{-1/2})$ then
\begin{align*}
 \left|\mu\left(\frac{1}{h_N}\right) + \frac{1}{h_N^2}\,\mu(h_N)\right| = \sigma^2(h_N)O\left(\frac{1}{h_N^3}\right) = O\left(\frac{1}{h_N^2}\right),
 \end{align*}
 and combining with the previous display,
 \begin{align*}
 \mu\left(\frac{1}{h_N}\right) = -\frac{c_1z_Ni_N}{h_N^2} + c_2\frac{i_N^2}{h_N^2} + o(1) + O\left(\frac{1}{h_N^2}\right).
\end{align*}
Let $x(t)=1/h_N(t)$ and $\nu=1/C$, so that $\tau=\inf\{t:x(t) \ge \nu\}$ and $x(t) <\nu$ for $t<\tau$. Since $h_N(0)\to\infty$, $x(0)=o(1)$. From the above display, if $t<\tau$ then
\begin{align}\label{eq:xdrift}
x(t)^p = -\int_0^{t}\frac{c_1z_N(s)i_N(s)}{h^2_N(s)}ds + \int_0^{t} c_2i^2_N(s)/h^2_N(s)ds + o(t) +\int_0^t O\left(\frac{1}{h_N^2}\right) dt. 
\end{align}
Since $i_N\le h_N$ and $h_N(t)\ge 1/\nu$ for $t<\tau$, for large $N$
\begin{align}\label{eq:x-compen}
x(t \wedge \tau)^p \le (c_1/\nu)\int_0^t|z_N(s)|ds + c_4(t\wedge \tau)
\end{align}
where $c_4$ is another positive constant. Let $\tau_1 = \inf\{t \colon |z_N(t)| \le C_{\ref{maxz}}N^{-1/2}\}$. Since $z_N$ has constant sign on $[0,\tau_1]$, using Lemma \ref{lem:zint} and recalling that $|z_0| = o(N^{1/2})$ by assumption, w.h.p.
\begin{align}\label{eq:zint-tau1}
\int_0^{\tau_1} |z_N(s)| ds = O(N^{-1/2}|z_0|) + O(N^{-1/2}(\log N)^2) = o(1).
\end{align}
To estimate the integral on $s \in [\tau_1,t]$, couple $z_N$ to $\tld z_N$ beginning at time $\tau_1$ so that $|D_N(\tau_1)| \le 2N^{-1/2}$. Noting that $z_N(\tau_1) \le C_{\ref{maxz}}N^{-1/2}$, by Lemma \ref{maxz} and \ref{lem:D}, w.h.p. $|D_N(s)| \le N^{-1/4}\log N$ for all $s \in [\tau_1,t]$. Let $\tau^*_k$ be as in the proof of Lemma \ref{lem:tldaveto0} except beginning with $\tau^*_0 = \tau_1$. Since $\tld z_N(\tau_1) \in [0,c_*]$, the value of $\tau^*_2-\tau^*_0$ is not larger than if $\tld z_N(\tau^*_0)=0$. Following that proof, if $M_1>0$ is a large enough constant then w.h.p. as $N\to\infty$ for fixed $t$, $\tau^*_{2\lfloor M_1\sqrt{N}t \rfloor} \ge t$. Since $E[ \, |\xi_j| \, ]=O(1/\sqrt{N})$, letting $M_2 = M_1\limsup_n(\sqrt{N}E[ \, |\xi_j| \, ])$, it follows that for large $N$,
$$E\big[\sum_{k \le M_1\sqrt{N}t} \int_{\tau^*_{2k}}^{\tau^*_{2k+2}}|\tld z_N(s)|ds \big] \le M_2t.$$
Using Markov's inequality on the last display, combining with the bound on $|D_N|$, and using the fact that $\tau^*_{2\lfloor M_1\sqrt{N}t \rfloor} \ge t$ w.h.p.,
$$P\big ( \ \int_{\tau_1}^t |z_N(s)| ds > (N^{-1/4}\log N + M_2/2\ep)t \ \big) \le \ep/2 + o(1).$$
Combining with \eqref{eq:zint-tau1}, we find that with probability $\ge 1-\ep/2-o(1)$,
\begin{align*}
x(t \wedge \tau)^p \le (c_4 + c_1(M_2/2\ep + o(1))/\nu )t.
\end{align*}
Since $1/h_N$ jumps by $O(N^{-1/2}/h_N^2)$ at rate $O(Nh_N)$, $\sigma^2(x) = \sigma^2(1/h_N) \le c_5/h_N^3 = c_5x^3$ for some $c_5>0$. Using Lemma \ref{lem:sm-est}, if $a>0$ and $\Delta_\star(x)\phi \le 1/2$ then with probability at least $ 1-2e^{-\phi a}$, for all $t\ge 0$
\begin{align}\label{eq:x-sm-est}
|x(t\wedge \tau) - x(0) - x(t \wedge \tau)^p| \le a + \phi c_5 \nu^3 (t \wedge \tau).
\end{align}
Using the above bound on $x(t)^p$, noting $x(0)=o(1)$ (since $h(0)\to\infty$) and $\nu \le 1$, with probability $\ge 1-2e^{-\phi a}-\ep$ for large $N$,
$$x(t \wedge \tau) \le (c_4 + c_1(M_2/2\ep+o(1))/\nu + c_5\phi)t + a+o(1).$$
Let $\ep=\nu$, $a=\nu/4$, $\phi=1/\nu^2$, let $M_3 = 2(c_4+c_1M_2 + c_5)$ and let $t=\nu^3/M_3$. If $N$ is large then $M_2/2\ep+o(1) \le M_2/\ep= M_2/\nu$. Since $\nu \le 1$ by assumption,
$$x(t \wedge \tau) \le \nu/2 + \nu/4 + o(1) < \nu$$
for large $N$, implying $\tau > t$. Since $\phi a = 1/4\nu$ and $\ep=\nu$, we have shown that
$$\lim_{\nu\to 0^+}\limsup_N P(\tau \le \nu^3/M_3) = 0.$$

To obtain the upper bound we need to take one more term in the Taylor series for $1/h_N$. Expanding to third order and noting that $1/h_N$ jumps by $O(N^{-1/2}/h_N^2)$ at rate $O(Nh_N)$,
\begin{align}\label{eq:xdrift2}
\mu\left(\frac{1}{h_N}\right) = -\frac{c_1z_Ni_N}{h_N^2} + c_2\frac{i_N^2}{h_N^2} +\frac{1}{h_N^3}\sigma^2(h_N)  + O({N^{-1/2} h_N^{-5}}).
\end{align}
Since we only need an upper bound on $\tau$, we wait for an amount of time $\nu \wedge \tau$ before estimating the compensator. To estimate the first term we note that $L(i_N,h_N) = i_N/h_N^2$ is Lipschitz with constant $c_L=2$, when $h_N\ge 1$. By Lemma \ref{notransient}, we may use Lemma \ref{aveto0} with $c_h=\log N$ and $c_Q=N^{-1/6}$ to find that w.h.p.,
$$\sup_{\nu \le t \le 1}\big|\int_{\nu\wedge \tau}^{t \wedge \tau}\frac{c_1z_N(s)i_N(s)}{h_N^2(s)}ds \big| = O(N^{-1/12}\log^3 N)=o(1).$$
By Lemma \ref{bdHI}, w.h.p.~$c_2i_N(t)^2/h_N(t)^2\ge c_6$ for all $\nu \wedge \tau \le t \le N^{1/2}\wedge \tau$ (since $H_t \ge N^{1/5}$ for $t \le \tau$), and some $c_6>0$. Combining these observations with \eqref{eq:xdrift2} we find that w.h.p.,
$$\int_\nu^{t\wedge \tau} \mu_s(x) \ge (c_6-o(1))(t\wedge \tau -\nu\wedge \tau).$$
Recalling that $\sigma^2_t(x) \le c_5\nu^3$ for $t<\tau$, using Lemma \ref{lem:sm-est} and the above display, if $a>0$ and $\Delta_\star(x)\phi \le 1/2$ then with probability at least $1-2e^{-\phi a}-o(1)$, for $t\ge \nu$,
\begin{align*}
x(t\wedge \tau) & \ge x(\nu \wedge \tau) + \int_{\nu \wedge \tau}^{t \wedge \tau} \mu_s(x)ds - a - \phi c_5 \nu^3 (t \wedge \tau - \nu\wedge \tau) \\
& \ge ((c_6-o(1)-\phi c_5\nu^3)(t \wedge \tau - \nu \wedge \tau) - a.
\end{align*}
On the above event, if $\tau>t \ge \nu$ then $t\wedge \tau=t$, $\nu\wedge \tau=\nu$ and $x(t\wedge \tau)=x(t)<\nu$. Taking $\phi=\nu^{-2}$ and $a=\nu$, if $\nu$ is small enough and $N$ large enough that $c_5\nu +o(1) \le c_6/2$, then
$$\nu \ge (c_6/2)(t-\nu)-\nu$$
and so $t \le (1+2/c_6)\nu$. Since our choice of $\phi,a$ gives $e^{-\phi a}=e^{-1/\nu}\to 0$ as $\nu \to 0$, it follows that
$$\lim_{\nu \to 0^+}\limsup_N P(\tau > (1+2/c_6)\nu)=0.$$
\clearp
\end{proof}

\section{Small values of $H^N$}\label{sec:T0bd}

In this section our goal is to prove Lemmas \ref{lem:Hdown} and \ref{lem:Hdn1}. First we prove Lemma \ref{lem:Hdn1}. Thus, given $\eps>0$, we suppose $h_N(0) \le \dlt $ where $\delta$ is to be determined during the course of the proof.  Let $a_k = 2^k \dlt$ for integer $k$ and recall $\tau_N(\ctl,0)$, given by Definition \ref{def:ctrl-time}. To prove Lemma \ref{lem:Hdn1} we will repeat the following step up to time $\tau_N(\ctl,0)$: start from $a_k + O(N^{-1/2})$ and run the process up to time $\tau_k^* = \tau_k\wedge \tau_N(\ctl,0)$, where $\tau_k$ is the first exit time of $h_N$ from $(a_{k-1},a_{k+1})$. Since jump sizes are $O(N^{-1/2})$, if $\tau_k^*=\tau_k$ then $h_N(\tau_k^*) = a_j + O(N^{-1/2})$ for some $j \in \{k-1,k+1\}$. Since $H_N(t)\ge N^{1/5}$ for $t<\tau_N(\ctl,0)$, $a_k=\Omega(N^{-0.3})$ for the duration of this iterative procedure. We begin by estimating the expectation of the exit times.

\begin{lemma} \label{lem:exit-exp}
Fix a positive integer $m>0$. There is a constant $C_{\ref{lem:exit-exp}}$ so that if $4^m \delta$ is small enough, $k\le m$ and $a_k \ge 2N^{-0.3}$, if $h_N(0) \in (a_{k-1},a_{k+1})$ then $E[\tau_k^*] \le C_{\ref{lem:exit-exp}} a_k$ for large $N$.
\end{lemma}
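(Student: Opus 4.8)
Proof proposal for Lemma~\ref{lem:exit-exp}.

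The plan is a standard exit‑time estimate with a quadratic test function; the one wrinkle is that on the relevant region the drift of $h_N$ cannot be treated as negligible pointwise, so the oscillatory part of the drift must first be removed using the averaging estimate of Lemma~\ref{aveto0}. First, the set‑up. For $s<\tau_k^{*}=\tau_k\wedge\tau_N(\ctl,0)$ all the control conditions of Definition~\ref{def:ctrl-time} hold and $h_N(s)\in(a_{k-1},a_{k+1})$; note $H_N(s)=\sqrt N h_N(s)>\sqrt N\,a_{k-1}\ge N^{1/5}$ automatically because $a_k\ge 2N^{-0.3}$, and if $\tau_N(\ctl,0)=0$ there is nothing to prove. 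From $Q_N\le N^{-1/6}$ and \eqref{eq:uvw-lim} one gets $i_N=(u_*+O(N^{-1/12}))h_N$, so $i_N=\Omega(a_k)$ and $i_N=O(a_k)$ for large $N$. Splitting \eqref{fasthdrift} as $\mu_s(h_N)=(\eta-1)r_+z_Ni_N+R_s$ with $R_s:=-(\eta-\gamma/2)r_+i_N^{2}+(1-\gamma/2)i_N/\sqrt N$, the relation $\eta>\gamma/2$ from \eqref{eq:eta-gamma2} together with $i_N=\Omega(a_k)$ gives $R_s\le -c_1a_k^{2}$ and $|R_s|\le C_1a_k^{2}$ for large $N$. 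Since $h_N$ decreases by $1/\sqrt N$ at the recovery transition $I\to S$, which on the slow scale has rate $Ni_N$, the diffusivity obeys $\sigma_s^{2}(h_N)\ge i_N=\Omega(a_k)$ (and $=O(a_k)$ as well), so $\sigma_s^{2}(h_N)$ is of order $a_k$.

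Next, cleaning the drift. Apply Lemma~\ref{aveto0} with $L(i,j,k)=(\eta-1)r_+i$ (Lipschitz in $\ell^1$, vanishing at the origin) and $c_N^h=\log N$, $c_N^Q=N^{-1/6}$; since $\tau_k^{*}\le\tau_N(\ctl,0)$, for any fixed $T$ there is an event of probability $1-o(1)$ on which $\sup_{t\le T}\bigl|(\eta-1)r_+\int_0^{t\wedge\tau_k^{*}}z_N(s)i_N(s)\,ds\bigr|\le\epsilon_N$, where $\epsilon_N=O(N^{-1/12}\log^3N)\to0$. Set $g_N(t)=h_N(t\wedge\tau_k^{*})-(\eta-1)r_+\int_0^{t\wedge\tau_k^{*}}z_Ni_N\,ds$ and introduce the auxiliary stopping time $\sigma_N=\inf\{t:|(\eta-1)r_+\int_0^{t\wedge\tau_k^{*}}z_Ni_N\,ds|>\epsilon_N\}$, and put $\bar\tau=\tau_k^{*}\wedge\sigma_N$. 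Then for $t<\bar\tau$ we have $h_N(t)\in(a_{k-1},a_{k+1})$ and $|g_N(t)-h_N(t)|\le\epsilon_N$, while $\mu_t(g_N)=\mathbf{1}(t<\bar\tau)R_t$ and $\sigma_t^{2}(g_N)=\mathbf{1}(t<\bar\tau)\sigma_t^{2}(h_N)$, so the troublesome $z_Ni_N$ term no longer appears in the generator.

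Now the exit estimate. Let $\hat f(x)=(x-\ell)(r-x)$ with $\ell=a_{k-1}-2\epsilon_N-N^{-1/2}$, $r=a_{k+1}+2\epsilon_N+N^{-1/2}$, so $\hat f\ge0$ on $[\ell,r]$, $\max\hat f=(r-\ell)^{2}/4=O(a_k^{2})$, $|\hat f'|\le r-\ell=O(a_k)$ on $[\ell,r]$, and $\hat f''\equiv-2$. Because $\hat f$ is quadratic, $\mu_t(\hat f(g_N))=\mathbf{1}(t<\bar\tau)\bigl(\hat f'(g_N)R_t-\sigma_t^{2}(h_N)\bigr)\le\mathbf{1}(t<\bar\tau)\bigl(O(a_k)\cdot C_1a_k^{2}-c_2a_k\bigr)\le-\tfrac{c_2}{2}a_k\,\mathbf{1}(t<\bar\tau)$ for large $N$, since $a_k^{3}\ll a_k$ once $4^{m}\delta$ is small. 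Moreover $g_N(t\wedge\bar\tau)\in[\ell,r]$ for all $t$ (for $t<\bar\tau$ as above, and at $\bar\tau$ up to an $O(N^{-1/2})$ jump, which is absorbed by the buffer in $\ell,r$), so $\hat f(g_N(\cdot\wedge\bar\tau))\ge0$. Hence $M_t:=\hat f(g_N(t\wedge\bar\tau))+\tfrac{c_2}{2}a_k(t\wedge\bar\tau)$ is a nonnegative supermartingale with $M_0=\hat f(h_N(0))=O(a_k^{2})$ and $M_t\ge\tfrac{c_2}{2}a_k(t\wedge\bar\tau)$; taking expectations and letting $t\to\infty$ gives $E[\bar\tau]=O(a_k)$, and the maximal inequality for nonnegative supermartingales gives $P(\bar\tau\ge\lambda)\le M_0/(\tfrac{c_2}{2}a_k\lambda)=O(a_k/\lambda)$.

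Finally, upgrade $E[\bar\tau]=O(a_k)$ to $E[\tau_k^{*}]=O(a_k)$. Since $\bar\tau\le\tau_k^{*}$, we only need to control the contribution of $\{\sigma_N<\tau_k^{*}\}$, an $o(1)$‑probability event on which $\tau_k^{*}$ is a priori only bounded by $\tau_N(\ctl,0)$. On that event restart the argument at time $\sigma_N$: $h_N(\sigma_N)\in(a_{k-1},a_{k+1})$ and control still holds, so the same exit estimate and tail bound apply to the fresh exit and auxiliary times. Choosing the horizon $T$ in Lemma~\ref{aveto0} large (so that $P(\bar\tau\ge T)$ is small by the tail bound and $P(\sigma_N<\tau_k^{*}\wedge T)=o(1)$ by averaging), each successive restart is needed with probability at most some $q<1$ for large $N$; the number of restarts is then dominated by a geometric variable and $E[\tau_k^{*}]\le\sum_{j\ge0}q^{\,j}O(a_k)=O(a_k)$, which gives $C_{\ref{lem:exit-exp}}$. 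The main obstacle is exactly this interaction between Lemma~\ref{aveto0} and the exit time: the pointwise drift of $h_N$ is of order $\sqrt{\log N}\,a_k$, not negligible against the diffusive ``speed'' $\sigma_s^{2}(h_N)/a_k=\Theta(1)$, so $h_N$ is not a near‑martingale and the $z_Ni_N$ term must be averaged out — but Lemma~\ref{aveto0} is only a high‑probability statement whereas the present lemma asks for an expectation, which is what forces the auxiliary stopping time $\sigma_N$, the supermartingale tail bound, and the geometric iteration (and uses that the convergence in Lemma~\ref{aveto0}, obtained via Chebyshev's inequality, is at least polynomially fast).
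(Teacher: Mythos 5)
Your overall strategy is workable and is a cousin of the paper's: both exploit that the diffusivity $\sigma^2(h_N)\asymp a_k$ forces exit from an interval of width $\asymp a_k$ in time $O(a_k)$, both kill the oscillatory $z_Ni_N$ drift with Lemma~\ref{aveto0}, and both convert a one-round bound into the expectation bound by a geometric iteration (the paper iterates the one-step probability bound $P(\tau_k^*>\epsilon C a_k)\le 1-\epsilon$, proved via $h_N^2$ and Lemma~\ref{lem:sm-est}, rather than restarting at a random time). However, there is a genuine quantitative gap in your averaging step: you invoke Lemma~\ref{aveto0} with $c_N^h=\log N$ and $c_L=O(1)$, which gives $\epsilon_N=O(N^{-1/12}\log^3N)$. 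The lemma must hold uniformly for $a_k$ down to $2N^{-0.3}$, and in that range $\epsilon_N\gg a_k$ (indeed $\epsilon_N/a_k\gtrsim N^{0.3-1/12}\log^3N\to\infty$). Your interval $[\ell,r]$ then has length $\Theta(\epsilon_N)$ rather than $O(a_k)$, so the assertions $\max\hat f=(r-\ell)^2/4=O(a_k^2)$ and, fatally, $M_0=\hat f(h_N(0))=O(a_k^2)$ are false: since $h_N(0)$ lies at distance $\Theta(\epsilon_N)$ from both endpoints, $\hat f(h_N(0))=\Theta(\epsilon_N^2)$. (The drift bound happens to survive, because $|\hat f'|R_t=O(\epsilon_N a_k^2)=o(a_k)$, but the conclusion does not.) The supermartingale argument then only yields $E[\bar\tau]=O(\epsilon_N^2/a_k)$, which for $a_k\asymp N^{-0.3}$ is of order $N^{2/15}\log^6N$, nowhere near $C a_k$; the restart/geometric step cannot repair this, since it addresses only the failure event of the averaging estimate, not its size.

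The fix is to run the averaging at the scale of $a_k$, which is exactly what the paper does. Since $h_N(t)<a_{k+1}=2a_k$ for $t<\tau_k^*$, you may take $c_N^h=2a_k$ (legitimate because $2a_k=\omega(N^{-1/2})$ when $a_k\ge 2N^{-0.3}$), which with your $L=(\eta-1)r_+i$ gives an error $O\bigl(a_k^{1/2}(a_kN^{-1/6})^{1/2}\log^2N\bigr)=O(a_kN^{-1/12}\log^2N)=o(a_k)$ uniformly over the allowed $k$; the paper additionally puts the $a_k$-scaling into the test function by working with $h_N^2$ and $L=i_Nh_N$ (Lipschitz constant $O(a_k)$), obtaining an error $o(a_k^2)$ against a compensator growth of order $a_k^2$. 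With the scale-adapted choice your buffer $2\epsilon_N$ becomes $o(a_k)$, $M_0=O(a_k^2)$, and your argument goes through; you should also make explicit how the w.h.p. statement of Lemma~\ref{aveto0} is applied conditionally at your random restart times (the paper avoids this by iterating a probability bound via the Markov property), but that is a presentational rather than structural issue.
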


\vskip10pt

\begin{proof}
We claim that to complete the proof it is enough to find $C_{\ref{lem:exit-exp}},\ep>0$ so that if $h_N(0) \in (a_{k-1},a_{k+1})$ then
\beq\label{eq:tau-k-pr-bd}
P\left(\tau_k^* > \ep C_{\ref{lem:exit-exp}}a_k \right) \le 1-\ep.
\eeq
Indeed, one can deduce from \eqref{eq:tau-k-pr-bd} that 
$$P\left(\tau_k^* > n \ep C_{\ref{lem:exit-exp}} a_k \right) \le (1-\ep )^n,$$
for any positive integer $n$. 
Now the proof finishes by noting that  
$$E[\tau_k^*] \le \sum_{n \ge 1} \ep C_{\ref{lem:exit-exp}}a_k(1-\ep )^n \le C_{\ref{lem:exit-exp}}a_k.$$
Thus it remains to prove \eqref{eq:tau-k-pr-bd}. To do so we use $h_N^2$. If $t<\tau_N(\ctl,0)$ then $i_N(t)\le h_N(t) \le \log N$, and as shown in the proof of Theorem \ref{finiv}, for some constant $\sigma_*^2$, $\sigma^2_t(h_N) = (\sigma_*^2 + o(1))h_N(t)$. Omitting the $t$, from \eqref{fasthdrift} we then have, for constants $c_1,c_2>0$,
\begin{align}\label{eq:h^2-mu}
\mu(h_N^2) &= 2h_N\mu(h_N) + \sigma^2(h_N) \nonumber \\
&= 2c_1z_Ni_Nh_N - 2c_2i_N^2h_N + (\sigma_*^2 + o(1))h_N + O(N^{-1/2}\log N).
\end{align}
Let $c_N^h = 2a_k$, $c_N^Q=N^{-1/6}$ and $L=i_Nh_N$, which is Lipschitz with constant $c_L = O(a_k)$ (to see this expand $i'h' - ih = i'(h'-h) + h(i'-i)$). By the assumptions $k\le m$ and $4^m\delta$ small, $a_k=O(1)$. Using Lemma \ref{aveto0} and noting $(c_N^hc_n^Q)^{1/2} = (N^{-1/6}a_k)^{1/2}=\Omega(N^{-(.15+1/12)})=\omega(N^{-1/4})$, for fixed $T>0$, w.h.p.
\beq\label{eq:h^2-mu-t1}
\sup_{t \le T}\left|\int_0^{t \wedge \tau_k^*} z_N(s)i_N(s)h_N(s) ds\right| = O(a_k^2 N^{-1/12}\log^2 N) = o(a_k).
\eeq
Since $k\le m$, if $t<\tau_k^*$ then $i_N(t) \le h_N(t) \le 2^{m+1}\dlt$ so $2c_2 i_N(t)^2 h_N(t) \le c_2' 4^m \dlt^2h_N(t)$ for some $c_2'$. Thus if $4^m\dlt>0$ is sufficiently small and $t < \tau_k^* \wedge T$, from \eqref{eq:h^2-mu}-\eqref{eq:h^2-mu-t1} we obtain 
$$(h^2(t))^p \ge (\sigma_*^2 - c_2'4^m \dlt^2- o(1))a_{k-1}t \ge (\sigma^2a_{k-1}/2)t,$$
for all large $N$. On the other hand, if $h_N \ge N^{-1/2}$, then $h_N^2$ jumps by $O(N^{-1/2})h_N + O(N^{-1}) = O(N^{-1/2}h_N)$ at rate $O(Nh_N)$. Thus if $t<\tau_k^* \wedge T$ then $\sigma^2(h_N^2(t)) = O(h_N^3(t)) \le c_3 a_{k+1}^3$ for some $c_3>0$. Noting that $a_k=2a_{k-1}=a_{k+1}/2$ and letting $\phi = \sigma_*^2/(64c_3a_k^2)$, if $t<\tau_k^* \wedge T$ then
$$(h_N^2(t))^p - \phi\langle h_N^2\rangle_t \ge (\sigma_*^2/4 - 8\phi c_3 a_k^2)a_kt \ge (\sigma_*^2a_k/8)t.$$
Since for $t<\tau_k^*$, $h_N^2(t) \le a_{k+1}^2\dlt^2 = 4a_k^2\dlt^2$, we find that w.h.p.
$$(h_N^2(t))^p - (h_N^2(t) - h_N^2(0)) - \phi\langle (h_N^2)^m\rangle_t \ge (\sigma_*^2 t/8 - 4a_k\dlt^2)a_k.$$
Letting $t=T=a_k$, $a = \sigma^2 a_k^2/16$ and taking $\dlt \le \sigma_*^2/32$, if $\tau_k^*>t$ then
$$h_N^2(t) - h_N^2(0) -(h_N^2(t))^p \le  -(a+ \phi\langle (h_N^2)^m\rangle_t).$$
Since $\Delta_\star(h_N^2) = O(N^{-1/2}h_N) = O(N^{-1/2}a_k)$,
$$\Delta_\star(h_N^2)\phi = O(N^{-1/2}/a_k) = O(N^{-0.5+0.3}) = o(1),$$
using the fact that $a_k=\Omega(N^{-0.3})$. Using Lemma \ref{lem:sm-est} it follows that 
$$P(\tau_k^* > a_k ) \le 1-\ep$$
with $\ep = 1-2e^{-\phi a}$ and $\phi a = (\sigma_*^2)^2/(2^{10}c_3)$. Now the proof of the claim \eqref{eq:tau-k-pr-bd} follows by setting $C_{\ref{lem:exit-exp}}=1/\ep $. This finishes the proof of the lemma.
\end{proof}

Next, we estimate the exit probabilities.

\begin{lemma}\label{lem:exit-pr}
Fix a positive integer $m>0$ and $\ep>0$. Let $\dlt,\tau_k,\tau_k^*$ be as in Lemma \ref{lem:exit-exp} and its proof. Assume $h_N(0) = a_k + O(N^{-1/2}) \ge N^{-0.3}$. Then for large $N$ and all $k\le m$,
 $$P(h_N(\tau_k^*) \ge a_{k+1}) \le 1/3 + \ep.$$
\end{lemma}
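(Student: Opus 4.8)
Write $p_+ := P(h_N(\tau_k^*) \ge a_{k+1})$. The plan is a near-martingale argument that exploits the geometric spacing $a_{k-1}=a_k/2$, $a_{k+1}=2a_k$: a true martingale started at $a_k$ exits $(a_{k-1},a_{k+1})$ at the top with probability exactly $(a_k-a_{k-1})/(a_{k+1}-a_{k-1})=1/3$, and the drift of $h_N$ will be shown to perturb this by at most $\ep$.

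First observe that $\{h_N(\tau_k^*)\ge a_{k+1}\}$ forces $\tau_k^*=\tau_k$ with $h_N(\tau_k^*)=a_{k+1}+O(N^{-1/2})$ (jumps are $O(N^{-1/2})$), that in all cases $h_N(\tau_k^*)\ge a_{k-1}-O(N^{-1/2})$, and that $h_N(0)=a_k+O(N^{-1/2})$. Write $h_N=h_N(0)+h_N^m+h_N^p$ with $h_N^p(t)=\int_0^t\mu_s(h_N)\,ds$. On $[0,\tau_k^*]$ the control conditions of Definition \ref{def:ctrl-time} hold (since $\tau_k^*\le\tau_N(\ctl,0)$) and $h_N\le a_{k+1}+O(N^{-1/2})\le 3a_k$ for large $N$ (since $\tau_k^*\le\tau_k$), so there $i_N\le 3a_k$ and $\sigma^2(h_N)=O(h_N)=O(a_k)$; hence $E[\langle h_N\rangle_{\tau_k^*}]\le Ca_k\,E[\tau_k^*]\le C'a_k^2<\infty$ by Lemma \ref{lem:exit-exp}, and as $h_N^m$ has bounded jumps, optional stopping gives $E[h_N^m(\tau_k^*)]=0$. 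Therefore $E[h_N(\tau_k^*)]=h_N(0)+E[\int_0^{\tau_k^*}\mu_s(h_N)\,ds]$, which combined with the three relations above and $a_{k\pm1}=2^{\pm1}a_k$ yields
$$a_k\Big(\tfrac32 p_++\tfrac12\Big)-O(N^{-1/2})\ \le\ a_k+O(N^{-1/2})+E\Big[\int_0^{\tau_k^*}\mu_s(h_N)\,ds\Big].$$
Rearranging, $p_+\le\tfrac13+\tfrac23\big(a_k^{-1}E[\int_0^{\tau_k^*}\mu_s(h_N)\,ds]+O(N^{-1/2}/a_k)\big)$, and $N^{-1/2}/a_k\le N^{-0.2}\to0$ since $a_k\ge N^{-0.3}$.

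It remains to make the drift contribution a small fraction of $a_k$. From \eqref{fasthdrift} and $\eta>1>\gamma/2$ (see \eqref{eq:eta-gamma2}) we discard the favorable quadratic term: $\mu_s(h_N)\le(\eta-1)r_+ z_Ni_N+(1-\gamma/2)i_N/\sqrt N$ on $[0,\tau_k^*]$. Using $i_N\le 3a_k$ there, the last term contributes $\le Ca_k E[\tau_k^*]/\sqrt N=O(a_k^2/\sqrt N)=o(a_k)$. For the averaging term, $L(i,j,k)=i$ is $1$-Lipschitz with $L(0,0,0)=0$; since on $[0,\tau_k^*)$ we have $h_N<a_{k+1}\le 3a_k$ together with the remaining control conditions, $\tau_k^*\le\tau_N(3a_k,N^{-1/6},0)$, so Lemma \ref{aveto0} with $c_N^h=3a_k\in[\omega(N^{-1/2}),\log N]$ and $c_N^Q=N^{-1/6}$ gives, with high probability (using $a_k\ge N^{-0.3}$, so that $(c_N^hc_N^Q)^{1/2}$ dominates $N^{-1/4}$), $|\int_0^{\tau_k^*}z_Ni_N\,ds|=O\big((3a_k)^{1/2}(3a_kN^{-1/6})^{1/2}\log^2N\big)=O(a_kN^{-1/12}\log^2N)=o(a_k)$. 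On the complementary event, bound crudely $|\int_0^{\tau_k^*}z_Ni_N\,ds|\le 3a_k C_{\ref{maxz}}\sqrt{\log N}\,\tau_k^*$ using $|z_N|\le C_{\ref{maxz}}\sqrt{\log N}$ on $[0,\tau_N(\ctl,0)]$; since that event has probability $O(1/\log N)$ (the proof of Lemma \ref{aveto0} passes through $L^1$ bounds and Markov's inequality) and $\tau_k^*$ has an exponential tail at scale $a_k$ — so $E[(\tau_k^*)^2]=O(a_k^2)$, from the proof of Lemma \ref{lem:exit-exp} — Cauchy--Schwarz bounds its expected contribution to $E[\int z_Ni_N\,ds]$ by $O(a_k^2)$, an $O(a_k)$ fraction of $a_k$ with constant depending only on $m$, which is made $\le\tfrac34\ep\,a_k$ by taking $\delta$ (hence $a_k\le 2^{m+1}\delta$) small enough. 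Combining, $a_k^{-1}E[\int_0^{\tau_k^*}\mu_s(h_N)\,ds]\le\tfrac32\ep$ for such $\delta$ and $N$ large, and the displayed bound gives $p_+\le\tfrac13+\ep$.

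The crux is the averaging term: because the half-width $a_k$ can be polynomially small ($a_k\ge N^{-0.3}$), the convenient ``$o(1)$'' bound one gets from Lemma \ref{aveto0} with $c_N^h=\log N$ is useless — one genuinely needs an error that is $o(a_k)$, which forces the sharp choice $c_N^h\asymp a_k$, and the low-probability event where the averaging estimate fails must then be controlled via the exponential tail of $\tau_k^*$ supplied by the proof of Lemma \ref{lem:exit-exp}. Everything else — the optional-stopping identity for $h_N^m$, dropping the negative quadratic drift, and the factor $\tfrac13$ coming directly from the geometric spacing of the $a_k$ — is routine.
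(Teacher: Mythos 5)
Your skeleton is the same as the paper's: stop $h_N$ at the exit of $(a_{k-1},a_{k+1})$, use the geometric spacing to get the factor $1/3$, drop the favorable $-c_2i_N^2$ term, kill the $(1-\gamma/2)i_N/\sqrt N$ term using $a_k\ge N^{-0.3}$, and control the $z_Ni_N$ term with Lemma \ref{aveto0} applied with $c_N^h\asymp a_k$ and $c_N^Q=N^{-1/6}$ (the paper takes $c_N^h=2a_k$, you take $3a_k$); your optional-stopping step at $\tau_k^*$ via $E[\langle h_N\rangle_{\tau_k^*}]\le Ca_kE[\tau_k^*]<\infty$ is fine. The genuine gap is in how you handle the random duration of the excursion. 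Lemma \ref{aveto0} is a \emph{fixed-horizon} statement: it controls $\sup_{t\le T}\bigl|\int_0^{t\wedge\tau_N}z_Ni_N\,ds\bigr|$ for a fixed $T<\infty$, so your ``good event'' only covers $\int_0^{\tau_k^*}$ on $\{\tau_k^*\le T\}$, and the event $\{\tau_k^*>T\}$ has to be part of your bad event. For $k$ of constant order, $a_k\asymp 2^k\dlt$ is a constant, so for any fixed $T$ the tail $P(\tau_k^*>T)$ supplied by the proof of Lemma \ref{lem:exit-exp} is a constant in $N$, not $O(1/\log N)$; your Cauchy--Schwarz bound on the bad-event contribution then reads $a_k\sqrt{\log N}\,\bigl(E[(\tau_k^*)^2]\bigr)^{1/2}\bigl(P(\mathrm{bad})\bigr)^{1/2}\asymp a_k^2\sqrt{\log N}\,\bigl(P(\tau_k^*>T)\bigr)^{1/2}$, which diverges with $N$ and so is not $o(a_k)$ and cannot be rescued by shrinking $\dlt$. (Separately, the $O(1/\log N)$ failure rate you attribute to Lemma \ref{aveto0} is not in its statement; it can be extracted from its proof when $c_N^h=O(1)$, but that needs to be said and checked, not asserted.)

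The paper's proof closes exactly this hole by truncating at a deterministic time: it runs the expectation identity for $h_N(\tau_k^*\wedge T)$, where the averaging error on the low-probability event is bounded by the \emph{deterministic} quantity $O(a_k\sqrt{\log N}\,T)$ rather than by $\tau_k^*$, obtains $P(h_N(\tau_k^*\wedge T)\ge 2a_k)\le 1/3+o(1)$, and only at the end uses Lemma \ref{lem:exit-exp} together with Markov's inequality to choose $T$ (depending on $m,\ep$ but not $N$) so that $P(\tau_k^*>T)\le\ep/2$; this is precisely where the $+\ep$ in the statement comes from, and it avoids making $\dlt$ depend on $\ep$. If you restructure your argument around $\tau_k^*\wedge T$ in this way (keeping your otherwise correct accounting), the proof goes through; as written, the unbounded-horizon step fails.
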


\begin{proof}
Using the facts that $\tau_k$ is the exit time from $(a_{k-1},a_{k+1}) = (a_k/2,2a_k)$, $\tau_k^* \le \tau_k$ and $h_N$ jumps by $O(N^{-1/2})$, for any $T>0$
\begin{align*}
E[h_N(\tau_{k^*} \wedge T)] &\ge 2a_kP(h_N(\tau_k^* \wedge T) \ge 2a_k) + (a_k/2)(1-P(h_N(\tau_k^* \wedge T) \ge 2a_k)) - O(N^{-1/2}). 
\end{align*}
Rearranging and noting $N^{-1/2}=o(a_k)$,
\begin{align}\label{eq:h-hit-est}
\frac{3}{2}P(h_N(\tau_k^* \wedge T) \le 2a_k) \le \frac{1}{a_k}E[h_N(\tau_k^* \wedge T)] - \frac{1}{2} + o(1).
\end{align}
As with the derivation of \eqref{eq:h^2-mu}, for $t<\tau_k^*$,
$$\mu_t(h_N) = c_1z_N(t)i_N(t) - c_2i_N(t) + O(N^{-1/2}\log N),$$
and so
\beq\label{eq:h_t-exp}
E[h_N(\tau_k^* \wedge T)] - E[h_N(0)] = E\left[\int_0^{\tau_k^* \wedge T} (c_1z_N(s)i_N(s) - c_2 i_N(s)^2 + O(N^{-1/2}\log N))ds \right].
\eeq
Using Lemma \ref{aveto0} with $L=i_N$, $c_N^h=2a_k$ and $c_N^Q = N^{-1/6}$ and noting as in the previous proof that $(c_N^hc_N^Q)^{1/2}=\omega(N^{-1/4})$, 
\[
\int_0^{\tau_k^* \wedge T} z_N(s) i_N(s) \, ds = O\left(a_kN^{-1/12}\log^2 N\right) = o(a_k).
\]
Now, the second term on the RHS of \eqref{eq:h_t-exp} is negative and since $a_k=\Omega(N^{-0.3})$ the third term is $o(a_k)$. So $E[h_N(\tau_k^* \wedge T)] \le E[h_N(0)] + o(a_k) = a_k(1+o(1))$, noting that $h_N(0) = a_k+O(N^{-1/2}) = a_k + o(a_k)$. Combining with \eqref{eq:h-hit-est},
\begin{align*}
P(h_N(\tau_k^* \wedge T) \ge 2a_k)  \le (2/3)(1 - 1/2 + o(1))  \le 1/3 + o(1).
\end{align*}
Using Lemma \ref{lem:exit-exp} we see that we can take $T>0$ large enough so that $P(\tau_k^* > T) )\le \ep/2$, uniformly for $k\le m$. Combining with the above display gives the desired result.
\end{proof}

\vskip10pt
Equipped with Lemma \ref{lem:exit-exp} and Lemma \ref{lem:exit-pr} we now prove Lemma \ref{lem:Hdn1}.

\begin{proof}[Proof of Lemma \ref{lem:Hdn1}]
We first recall the following two facts about a simple random walk on $\Z$ with probability $p<1/2$ of increasing by 1 and $(1-p)$ of decreasing by 1, at each time step.
\begin{enumerate}[noitemsep]
 \item Starting from $0$ the probability to ever reach $k>0$ is $(p/(1-p))^k$, and  
 \item starting from $k$, the expected number of jumps out of $k$ is equal to  $1/(1-2p)$.
\end{enumerate}

Fix a positive integer $m$ and suppose $h_N(0) \le \dlt = a_0$. Using Lemma \ref{lem:exit-pr} and by comparison it follows that, uniformly for $k\le m$ 
\begin{enumerate}[noitemsep]
 \item For any $k > 0$, the probability that $h_N$ reaches $[a_k,\infty)$ is at most $ (1/2 + o(1))^k$. 
 \item If $h_N(t)=a_k+O(N^{-1/2})$, the expected number of times we perform \\ the step of exiting $(a_{k-1},a_{k+1})$ before time $\tau_N(\ctl,0)$ is at most $3+o(1)$.
\end{enumerate}
Using point 2 and summing over the expected number of exits from each level (and adding one for the initial exit, since $h_N(0)$ may not be equal to $a_k+O(N^{-1/2})$ for some $k$) to find that
\begin{align*}
E[\tau_N(\ctl,0) \wedge \tau^+_{2^m\dlt}(h_N)] &\le C_{\ref{lem:exit-exp}}( a_0 + 3\sum_{k=-\infty}^0 a_k + 3\sum_{k=1}^m(1/2+o(1))^ka_k \\
&= C_{\ref{lem:exit-exp}}\dlt ( 1 + 3 + C_m) = D_m\dlt
\end{align*}
where $C_m,D_m$ are constants that depend only on $m$.  Using Markov's inequality,
$$P(\tau_N(\ctl,0) \wedge \tau^+_{2^m\dlt}(h_N) \ge MD_m\dlt) \le 1/M.$$
Let $M=2/\eps$ so the above probability is at most $\eps/2$. Since $\tau_N(\ctl,0) \le \tau_{N^{-0.3}}^-(h_N)$, using point 1, $P(\tau_N(\ctl,0) \ge \tau^+_{2^m\dlt}(h_N)) \to 0$ as $m\to\infty$, so take $m$ large enough that this probability is at most $\ep/2$. Combining,
$$P(\tau_N(\ctl,0) \ge 2D_m\dlt/\eps \wedge \tau_{2^m\dlt}(h_N)) \le \eps.$$
Take $\dlt>0$ small enough that $2D_m\dlt/\eps \le \eps$. Since $\tau_{2^m\dlt}^+(h_N) \le \tau_{\log N}^+(h_N)$ for large $N$, the result is proved.
\end{proof}

\vskip10pt
Next we prove Lemma \ref{lem:Hdown}.

\vskip10pt

\begin{proof}[Proof of Lemma \ref{lem:Hdown}]
Recall that $\tau = \inf\{t:H_t^N =0 \ \text{or} \ H_t^N \ge N^{0.24}\}$ and define
$$\tau' = \tau \wedge \tau^+_{C_{\ref{maxz}}\sqrt{\log N}}(|z^N|).$$
Note that $H_0^N \le N^{1/5}$ and that $\tau' \wedge N^{1/4} \ge \tau \wedge N^{1/4}$ w.h.p.~by assumption. We will show that i) $\tau' \le N^{1/4}$ w.h.p.~and that ii) $H_{\tau'} < N^{.24}$ w.h.p. To deduce the result from these, first combine i) with the second assumption to find that w.h.p.
$$\tau \wedge N^{1/4} \le \tau'\wedge N^{1/4} = \tau',$$
and that since $\tau' \le \tau$, w.h.p.~$\tau=\tau' \le N^{1/4}$. Then, note that $H_{\tau}$ is either $\ge N^{.24}$ or is equal to $0$, so that if $\tau=\tau'$ and $H_{\tau'} < N^{.24}$, then $H_{\tau}=0$.\\

\noindent\textbf{Showing that $\tau' \le N^{1/4}$ w.h.p.} The idea is to approximate $(I^N,J^N,K^N)$ by a multi-type continuous time branching process. Such processes are characterized by having transition rates that are a homogeneous linear function of the variables. In $(I^N,J^N,K^N)$ there are two non-linear interactions: $I+I$ and $S+I$ partnership formation. If $t<\tau'$ then since $I^N_t \le H^N_t \le N^{.24}$, the rate at which a pair of single $I$ form a partnership is $O((I^N)^2/N) = O(N^{0.48-1}) = o(N^{-1/4})$. Therefore, with high probability no such events occur on the interval $[0,\tau' \wedge N^{1/4}]$. $S+I$ partnerships form at rate $O(S^NI^N/N)$. If $t<\tau'$ then using the bound on $|z^N|$ (and omitting $t$),
\begin{align*}
S^NI^N/N= (Y^N-I^N)I^N/N &= y_* I^N + z^N I^N/\sqrt{N} - (I^N)^2/N \\
&= y_*I + O(\sqrt{\log(N)/N}I^N) + o(N^{-1/4}) \\
&= y_*I^N + o(N^{-1/4}).
\end{align*}
Thus, if we pretend the rate is $y_*I^N$ (i.e., generate transitions using two independent sources of randomness, one with rate $y_*I^N$ and the other with rate $o(N^{-1/4})$), then w.h.p.~the process so obtained will be identical to the original process on the time interval $[0,\tau' \wedge N^{1/4}]$.\\

Thus, the continuous time three-type (Markov) branching process $(\I,\J,\K)$ obtained by letting $(\I_0,\J_0,\K_0) = (I^N_0,J^N_0,K^N_0)$ and ignoring $I+I$ transitions and the non-linear part of $S+I$ transitions is such that
$$P((\I_t,\J_t,\K_t) \ne (I^N_t,J^N_t,K^N_t) \ \text{for some} \ t \le \tau' \wedge N^{1/4}) = o(1).$$
If $(I^N,J^N,K^N)=(0,0,0)$ then $t \ge \tau'$, so it would be enough to show the extinction time of $(\I,\J,\K)$ is $o(N^{1/4})$ w.h.p. We first extract an embedded one-type CMJ (non-Markov) branching process. To do this, we note the following two points:
\begin{enumerate}
\item \textit{Initial decay:} each initial particle of type $\J,\K$ decays at rate $\ge r_-$ (regardless of its type) into $0,1$ or $2$ type $\I$ particles before ever producing additional particles of other types, and
\item \textit{Reproduction cycle:} each type $\I$ particle follows the evolution described in Figure \ref{fig:critval}, \\ yielding $0,1$ or $2$ type $\I$ particles upon reaching the set $\{D,E,F,G\}$.
\end{enumerate}
Since there are initially $O(N^{0.2})$ particles, by point 1., with high probability, within constant times $\log N$ time every initial particle of type $\J,\K$ has turned into $0,1$ or $2$ type $\I$ particles. Thus, since $\log N = o(N^{1/4})$, we may assume all initial particles have type $\I$.\\

Point 2. says that we can use the Markov chain described in Figure \ref{fig:critval} to determine the timing and number of type $\I$ offspring of each type $\I$ particle. The offspring distribution has mean $R_0=1$ and is supported on the set $\{0,1,2\}$. Referring to Figure \ref{fig:critval}, the waiting time to produce offspring is the absorption time at $\{D,E,F,G\}$ starting from $A$, which is at most exponential($1 + r_+y_*$) + exponential($r_-$).\\

In the embedded one-type process of type $\I$ particles, the set of descendants of any particle forms a critical Galton-Watson tree. We recall a couple of facts that hold for such trees, when the offspring distribution has finite variance. The height of the tree (maximum distance to the root) is greater than or equal to $n$ with probability $O(1/n)$, and the total number of vertices is greater than or equal to $n$ with probability $O(n^{-1/2})$. Thus with $O(N^{0.2})$ initial particles, with high probability the tallest tree has height $O(N^{0.22})$ and the sum of tree sizes is $O(N^{0.44})$. In particular there are in total $O(N^{0.44})$ offspring production events. Since the waiting time for offspring is at most the sum of two exponential random variables with fixed constant rates, the longest waiting time for offspring is whp bounded by constant times $\log N$. Bounding the waiting times by their maximum and noting the height bound, whp the process dies out within $O(N^{0.22}\log N) = o(N^{1/4})$ amount of time.\\

\noindent\textbf{Showing that $H_{\tau'} < N^{.24}$ w.h.p.} Using the bound on $|z^N|$ and the fact that $\gamma/2 < \eta$ and $I^N \le H^N$, from \eqref{Hdrift} we find that for $t < \tau'$,
\beq
\label{eq:muH_ubd}
\mu_t(H^N) \le \frac{a_1}{2}\left(\sqrt{\frac{\log N}{N}} + O(1/N)\right) I^N_t \le a_1 \sqrt{\frac{\log N}{N}} H^N_t
\eeq
for some constant $a_1>0$. For $b>0$ to be determined, letting $f(x) = e^{-bx}$ and using Taylor's theorem, for fixed $x,y$ we get
$$f(y)-f(x) = (-b(y-x) + \frac{b^2}{2}(y-x)^2)e^{-bx} - \frac{b^3(y-x)^3}{3!}e^{-bz}$$
where $z$ is between $x$ and $y$. Applying this to $e^{-bH^N}$, since $\Delta_\star(H^N)=O(1)$ we will have $e^{-bz}=e^{-b(x+O(1))}$ and $(y-x)^3=O(1)$ across jump times of $H^N$. Noting that transition rates are bounded by $O(H^N)$ and multiplying by those transition rates, upon summing over $y$ we obtain
$$\mu(e^{-bH^N}) = \left(-b\mu(H^N) + \frac{b^2}{2}\sigma^2(H^N) + O(b^3H^Ne^{O(b)})\right)e^{-bH^N}.$$
Since $\sigma^2(H^N) = \Omega(H^N)$, applying \eqref{eq:muH_ubd} we obtain
$$\mu(e^{-b H^N}) \ge \left(-a_1 \sqrt\frac{\log N}{N} + \Omega(b) + O(b^2e^{O(b)})\right)bH^Ne^{-bH^N}.$$
Taking $b = N^{-0.22}$ and noting that $b^2e^{O(b)} = o(b)$ we deduce $\mu(e^{-bH^N})$ is non-negative for large $N$, so the process $\xi_t = e^{-bH^N_{t \wedge \tau'}}$ is a submartingale. From the definition of $\tau'$, the fact that the jumps are of size $O(1)$ and the fact that $e^{-bH^N} \le 1$, we use the optional stopping theorem to obtain
$$\begin{array}{cc}
& P(H_{\tau'} \ge N^{0.24})\exp(-b(N^{0.24} + O(1))) + P(H_{\tau'}<N^{.24}) \\
& \ge E[e^{-bH_{\tau'}}] \ge E[e^{-bH_0}].
\end{array}$$
Since $H_0 \le N^{0.2}$, $bH_0 = o(1)$ and $bN^{0.24} = \omega(1)$, so
$$P(H_{\tau'} < N^{.24}) \ge e^{-o(1)} - e^{\omega(1)} = 1 - o(1).$$

\end{proof}

\clearp

\section{Computing $R_0$} \label{sec:Rzero}

In this section we show that \eqref{detA0} is equivalent to the condition $R_0=1$. To do this we recall \eqref{r0eq}, namely the definition of $R_0$:
$$
R_0 = P_A(X_\tau = F) + 2 P_A(X_\tau = G)
$$
where $\tau$ is the hitting time of $\{D,E,F,G\}$ for the Markov chain with rates drawn in Figure \ref{fig:critval}.
To calculate $R_0$ we let
$$
f(x) = P_x(X_\tau = F) + 2 P_x(X_\tau = G)
$$
and note that $f(D)=0$, $f(E)=0$, $f(F)=1$, and $f(G)=2$. By considering what happens on the first jump from each state we see that
\begin{align}
f(A) & = \frac{ r_+ y_*}{1+ r_+ y_*} f(B),
\label{ha}\\
f(B) & = \frac{\lambda}{\lambda + 1 + r_-} f(C) + \frac{r_-}{\lambda + 1 + r_-} \cdot 1,
\label{hb}\\
f(C) & = \frac{2}{2 + r_-} f(B) + \frac{r_-}{2 + r_-} \cdot 2.
\label{hc}
\end{align}
The equations \eqref{hb} and \eqref{hc} can be rewritten as
\begin{align*}
\frac{\lambda + 1 + r_-}{\lambda} f(B) &= f(C) + \frac{r_-}{\lambda}, \\
- \frac{2}{2 + r_-} f(B) &= - f(C)+ \frac{2r_-}{2 + r_-}.
\end{align*}
Adding these last two equations we have
$$
\left( \frac{\lambda + 1 + r_-}{\lambda} - \frac{2}{2 + r_-} \right) f(B) = r_- \left( \frac{1}{\lambda} +  \frac{2}{2 + r_-} \right).
$$
Adding the fractions in the parentheses we have
$$
\frac{(2+r_-)(\lambda + 1 + r_-) - 2\lambda}{\lambda (2+r_-)} \cdot  f(B) = r_- \frac{r_- + 2 + 2\lambda}{\lambda (2+r_-)}.
$$
Therefore we deduce
$$
f(B) = r_- \frac{r_- + 2 + 2\lambda}{(2+r_-)(\lambda+1 + r_-) - 2\lambda}
=  r_- \frac{r_- + 2 + 2\lambda}{2 + (3 + \lambda) r_- + r_-^2}
$$
where we have used the simplification of the denominator used in going from \eqref{detA} to \eqref{detA0}.
Using \eqref{ha} now we have
$$
f(A) =\frac{ r_+ y_*}{1+ r_+y_*} \cdot  r_- \frac{r_- + 2 + 2\lambda}{2 + (3+ \lambda) r_- + r_-^2}
$$
The expression on the RHS above will be equal to 1 when
$$
(r_+ y_* +1)(2 + (3+ \lambda) r_- + r_-^2) = r_- (r_+ y_*)(r_- + 2 + 2\lambda)
$$
which is the same as \eqref{detA0}.

When $R_0=1$, $f(A)=1$, so $f(B)=(1+r_+ y_*)/r_+ y_*$ and
\beq
f(C) = \frac{2r_-}{2+r_-} = \frac{2}{2+r_-} \cdot f(B)
\label{hC}.
\eeq

\section*{Appendix}

Here we prove any uncited results from Section \ref{sec:sampath}.

\begin{proof}[Proof of Lemma \ref{lem:taylor}]
By It{\^o}'s lemma \cite[Theorem I.4.57]{jacod},
$$f(X_t) = f(X_0) + (f'(X_-) \cdot X)_t + \frac{1}{2}(f''(X_-) \cdot \lng X^c \rng)_t + \sum_{s \le t}(f(X_s)-f(X_{s^-})-f'(X_{s^-})\Delta X_s),
$$
where $X^c$ is the continuous part of $X^m$. Since $X$ has bounded jumps, both $X$ and $\lng X^c \rng$ are locally integrable. Furthermore, since $f,f',f''$ are continuous, both are locally bounded, so since $X$ is locally bounded, $f(X),f'(X_-),f''(X_-)$ are locally integrable. Taking the compensator of both sides (and noting that $\lng X^c \rng$ is its own compensator),
$$f(X_t)^p - (f'(X_-) \cdot X^p)_t = \frac{1}{2}(f''(X_-)\lng X^c \rng)_t + \bigg(\sum_{s \le t}(f(X_s)-f(X_{s^-})-f'(X_{s^-})\Delta X_s)\bigg)^p.
\label{eq:taylor-1}
$$
To obtain the result we take the derivative, but we need to estimate the last term more carefully. By a Taylor expansion, the term under the sum is at most \\$\frac{1}{2}(\Delta X_s)^2\sup_{|x-X_{s^-}|\le \Delta_\star(X)}|f''(x)|$. Thus
$$\big|\sum_{t < s \le t+h}f(X_s)-f(X_{s^-})-f'(X_{s^-})\Delta X_s \big|\le \frac{R(h)}{2}\sum_{t<s \le t+h}(\Delta X_s)^2,$$
where
$$R(h) = \sup\{|f''(x)| \colon |x-X_{s^-}| \le \Delta_\star(X) \ \text{for some} \ t < s \le t+h\}.$$
Moreover,
$$\bigg(\sum_{t<s \le t+h}(\Delta X_s)^2 \bigg)^p = \lng X^d \rng_{t+h} - \lng X^d \rng_t$$
where $X^d$ is the discontinuous part of $X^m$. Since $X$ is right-continuous,
$$\lim_{h\to 0^+}R(h) = \sup_{|x-X_t| \le \Delta_\star(X)}|f''(x)|.$$
Let $Q_t$ denote the right-hand side of \eqref{eq:taylor-1}. Since $\lng X \rng = \lng X^c \rng + \lng X^d \rng$, it follows that
$$|Q_{t+h}-Q_t| \le \frac{1}{2}R(h) (\lng X \rng_{t+h} -\lng X \rng_t).$$
Then, dividing by $h$ and letting $h\to 0$ and referring again to \eqref{eq:taylor-1} we obtain the desired result, since $\frac{d}{dt}f(X_t)^p = \mu_t(f(X)), \frac{d}{dt}X_t^p = \mu(X), \frac{d}{dt}\lng X \rng_t = \sigma_t^2(X)$, and $X_{t^-}=X_t$ for a.e.$\,t$, since $\{t\colon\Delta X_t\ne 0\}$ is countable.
\end{proof}

\begin{proof}[Proof of Lemma \ref{lem:driftbar}]
Suppose $|X_0 - x/2| \leq \Delta_\star(X)/2$. Let $\tau = \inf\{t:|X_t - x/2| \geq x/2\}$. If $t \le \tau$ and $\phi>0$ then it follows that
  $$X_t^m - \phi \langle X\rangle_t \geq X_t - X_0 + (\mu_\star - \phi \sigma_\star^2)t.$$
  Let $\phi = \min(\mu_\star/\sigma_\star^2,1/(2\Delta_\star(X)))$. Hence, $X_t^m - \phi \langle X \rangle_t \ge X_t -X_0$ for any $t \le \tau$. Next set $a=(x-\Delta_\star(X))/2$. Therefore, $X_{\tau} \geq x$ implies that $X_\tau-X_0 \ge a$. Since $\phi\Delta_\star(X) \le 1/2$, $\phi \ge \mu_\star/2C_\Delta\sigma_\star^2$, and $a \ge x/4$, we can apply Lemma \ref{lem:sm-est} to conclude that
 \beq\label{eq:X_tau-bd}
 P(X_{\tau} \geq x \ \mid \ |X_0 - x/2| \leq \Delta_\star(X)/2) \le 2\exp(-\mu_\star x / 8C_\Delta\sigma^2_\star).
 \eeq
  Since $|\mu_t(X)| \le C_{\mu_\star}$ for $t <\tau$ and $|X_{\tau} - X_0| \ge (x-\Delta_\star(X))/2$, it follows that
  $$|X_\tau^m|  - \phi \langle X\rangle_\tau \ge (x-\Delta_\star(X))/2 - (C_{\mu_\star}+\phi\sigma_\star^2) \tau \ge (x-\Delta_\star(X))/2 - 2C_{\mu_\star} \tau,$$
 where the last step follows from the fact that 
 $\phi \sigma^2_\star \le \mu_\star \le C_{\mu_\star}$. 
Using the same value of $\phi$, but changing the value of $a$ to $a = (x-\Delta_\star(X))/4 \ge x/8$, let $t = (x-\Delta_\star(X))/8C_{\mu_\star} \ge x/16C_{\mu_\star}$. Then $\tau \le t$ implies that $|X_\tau^m| - \phi \langle X \rangle_{\tau} \ge a$. Therefore, using Lemma \ref{lem:sm-est} we further deduce
\beq\label{eq:tau-bd}  
 P(\tau \le x/16C_{\mu_\star} \ \mid \ |X_0 - x/2| \leq \Delta_\star(X)/2) \le 2\exp(-\mu_\star x / 16C_\Delta\sigma^2_\star).
 \eeq
Thus taking a union bound, from \eqref{eq:X_tau-bd}-\eqref{eq:tau-bd}, we deduce
\beq\label{eq:X-tau-tau-bd}
P(\tau \leq x/16C_{\mu_\star} \quad\hbox{or}\quad X_{\tau} \geq x \ \mid \ |X_0 - x/2| \leq \Delta_\star(X)/2) \leq 4\exp(-\mu_\star x/16C_\Delta\sigma^2_\star).
\eeq
Iterating the estimate $\lfloor \Gamma \rfloor$ times, alternately stopping the process when $|X_t - x/2| \leq \Delta_\star(X)/2$ and $|X_t- x/2| \geq x/2$, the result follows from a union bound.  

Indeed, setting $\tau_0=0$, 
\[
\tau_{2j-1}=\inf\{ t >\tau_{2j-2}: |X_t -x/2| \le \Delta_\star(X)/2\},
\]
and
\[
\tau_{2j}=\inf\{ t >\tau_{2j-1}: |X_t -x/2| \ge x/2\},
\]
for $j=1,2,\ldots,$ from \eqref{eq:X-tau-tau-bd} we see
\[
P(X_{\tau_{2j}} \ge x \text{ or } \tau_{2j} - \tau_{2j-1} \le  x/16C_{\mu_\star} | \tau_{2j-1} <\infty) \le 4\exp(-\mu_\star x/16C_\Delta\sigma^2_\star).
\]
Hence, taking a union bound, as mentioned above, (note that $\tau_{2j-1}=\infty$ for some $j$ automatically implies $\sup_t X_t \le x$) we see
\[
P(X_{\tau_{2j}} \ge x \text{ for some } j \le \lfloor \Gamma \rfloor, \text{ or } \tau_{2 \lfloor \Gamma \rfloor}  \le \lfloor \Gamma \rfloor x/16C_{\mu_\star} | X_0 \le x/2) \le 4\lfloor \Gamma \rfloor\exp(-\mu_\star x/16C_\Delta\sigma^2_\star).
\]
Since $\Delta_\star(X) <x$ it can be easily checked that if $X_{\tau_{2j}} < x$  for all $j \le \lfloor \Gamma \rfloor$ then $X_t < x$ for all $t \le \tau_{2\lfloor \Gamma \rfloor}$. Now the desired probability bound is immediate. This completes the proof.
\end{proof}

\begin{proof}[Proof of Corollary \ref{cor:driftbar}]
We use the following ``continuation trick''. Define a new process $\tilde X$ by
$$\tilde X_t = X_{t\wedge \tau} - \mu_\star(t - t\wedge \tau).$$
In words, $\tilde X$ is equal to $X$ up to time $\tau$, at which point it decreases at a fixed deterministic speed $\mu_\star$. Since $t\mapsto \tilde X_t$ is continuous on $[\tau,\infty)$, $\Delta_\star(\tilde X) \le \Delta_\star(X)$. Moreover, it is easy to check that $\tilde X$ satisfies \eqref{eq:driftbar-est} assuming only $0<\tilde X_t<x$ (i.e., without assuming $t<\tau$). Thus, Lemma \ref{lem:driftbar} applies to $\tilde X$. Since $X_t=\tilde X_t$ for $t\le\tau$, the result follows.
\end{proof}

\begin{proof}[Proof of Lemma \ref{lem:limproc}]
We show the conditions (4.1)-(4.7) of Theorem 4.1 of \cite[Chapter 7]{EthierKurtz} are satisfied. The fact that $a$ and $b$ are Lipschitz ensures the martingale problem for the limit process is well-posed. $(X^N)^p$ and $\langle X^N \rangle$ play the role of $B_N$ and $A_N$ respectively. Since $v^{\top}\langle X^N \rangle_tv = \langle v^{\top} X^N \rangle_t$ for vector $v$ and the latter is $\R$-valued, the process $\langle X^N \rangle$ is non-negative definite. We also note that $X_i-X_i^p$ and $X_iX_j - \langle X \rangle_{ij}$ are local martingales as required by (4.1) and (4.2).  

Since the jump size of $X^N$ tends to $0$ (4.3) is satisfied. Applying \cite[Lemma I.4.24]{jacod} we see that jump size of $X^p$ and $\langle X \rangle$ also converge to $0$, which gives (4.4) and (4.5). The requirements of (4.6) and (4.7) are immediate from \eqref{eq:diff-bd-1}-\eqref{eq:diff-bd-2} above. The rest is straightforward, so we omit the details.
\end{proof}

\end{document}